\documentclass[reqno]{amsart}

\usepackage{etoolbox}

\patchcmd{\abstract}{\null\vfil}{}{}{}

\usepackage{hyperref}
\usepackage[dvipsnames]{xcolor}
\usepackage{smartdiagram}
\usepackage{tikz}
\usepackage{amsmath,bm,bbm,amsthm, amssymb}
\usepackage[scale=0.8,twoside=false]{geometry}
\usepackage{color}
\usepackage{dsfont}
\usepackage{animate}
\usepackage{etoolbox}
\usepackage{comment}
\usetikzlibrary{calc}
\usetikzlibrary{patterns}
\usetikzlibrary{arrows}
\usetikzlibrary{decorations.pathreplacing}
\usepackage[utf8]{inputenc}

\usepackage[most]{tcolorbox}
\usepackage{adjustbox}
\usepackage[final]{pdfpages}
\usepackage[ruled,vlined,linesnumbered,algosection,resetcount]{algorithm2e}
\usepackage{blkarray}
\usepackage{wrapfig}
\usepackage{ marvosym }
\usepackage{xfrac,mathrsfs}
\usepackage{mathtools}

\usepackage[foot]{amsaddr}
\usepackage{float}
\theoremstyle{plain}
\newtheorem{theorem}{Theorem}
\newtheorem{proposition}[theorem]{Proposition}
\newtheorem{corollary}[theorem]{Corollary}
\newtheorem{lemma}[theorem]{Lemma}

\newtheorem{claim}[theorem]{Claim}

\theoremstyle{definition}
\newtheorem{definition}[theorem]{Definition}

\newtheorem{example}[theorem]{Example}

\theoremstyle{remark}
\newtheorem{remark}[theorem]{Remark}

\definecolor{wiasblue}   {cmyk}{1.0, 0.60, 0, 0}

\def\ni{\noindent}

\def\bep{\begin{proof}}
\def\enp{\end{proof}}
\def\bepr{\begin{proposition}}
\def\enpr{\end{proposition}}
\def\bec{\begin{corollary}}
\def\enc{\end{corollary}}
\def\bea{\begin{align}}
\newcommand\eea{\end{align}}
\def\beas{\begin{align*}}
\def\eeas{\end{align*}}
\def\bet{\begin{theorem}}
\def\ent{\end{theorem}}
\def\bee{\begin{example}}
\def\ene{\end{example}}

\def\bede{\begin{definition}}
\def\ende{\end{definition}}
\def\ber{\begin{remark}}
\def\enr{\end{remark}}
\def\beca{\begin{cases}}
\def\enca{\end{cases}}
\def\bel{\begin{lemma}}
\def\enl{\end{lemma}}
\def\been{\begin{enumerate}}
\def\enen{\end{enumerate}}

\def\beit{\begin{itemize}}
\def\enit{\end{itemize}}
\def\befr{\begin{frame}}
\def\enfr{\end{frame}}

\renewcommand\le{\leqslant}
\renewcommand\ge{\geqslant}

\def\becbb{\begin{center}\begin{tcolorbox}[{colback=Dandelion!20}]}
\def\encbb{\end{tcolorbox}\end{center}}
\def\beccb{\begin{center}\begin{tcolorbox}[{colback=Dandelion!20}]}
\def\enccb{\end{tcolorbox}\end{center}}
\def\becb{\begin{center}\begin{tcbox}[{colback=Dandelion!20}]}
\def\encb{\end{tcbox}\end{center}}

\def\bef{\begin{figure}[!h]}
\def\enf{\end{figure}}
\def\bels{\begin{lstlisting}}
\def\enls{\end{lstlisting}}

\def\betp{\begin{tikzpicture}}
\def\entp{\end{tikzpicture}}

\def\endo{\end{document}}

\usepackage{todonotes}

\usepackage{comment}

\begin{document}

\title{The contact process on a bipartite spatial network}
\author{John Fernley}
\author{Christian Hirsch}
\author{Daniel Valesin}
\address[John Fernley]{Department of Statistics (CR{\rm i}SM), University of Warwick, Coventry CV4 7AL, United Kingdom}
\address[Christian Hirsch]{Department of Mathematics, Aarhus University, Ny Munkegade, 119, 8000, Aarhus C, Denmark}
\address{\emph{and} DIGIT Center, Aarhus University, Finlandsgade 22, 8200 Aarhus N, Denmark}
\address[Daniel Valesin]{Department of Statistics, University of Warwick, Coventry CV4 7AL, United Kingdom}
\email{hirsch@math.au.dk, john.fernley@warwick.ac.uk, daniel.valesin@warwick.ac.uk}

\begin{abstract}
    We study the contact process on a random bipartite connection hypergraph generated from two Poisson point processes, with mark-dependent connection thresholds. For asymmetric infection rates and asymmetric power law tail decays of the two degree distributions, we determine the dominant survival strategies in all parameter regimes and provide asymptotics for the epidemic probability up to logarithmic factors. 
\end{abstract}

\keywords{contact process, random graphs, bipartite networks, Poisson point process, spatial networks, percolation}
\subjclass[2020]{60K35 (primary), 82C22, 05C80, 60J80}

\maketitle
\thispagestyle{empty}

\frenchspacing

\section{Introduction}

\begin{wrapfigure}{R}{0.5\textwidth}
    \centering
    \fbox{\begin{tikzpicture}[scale=1.5]
  \draw[black!60] (0.856,0.685) -- (1.284,1.341);
  \draw[black!60] (0.709,3.136) -- (0.663,2.058);
  \draw[black!60] (0.709,3.136) -- (2.428,4.626);
  \draw[black!60] (0.709,3.136) -- (1.284,1.341);
  \draw[black!60] (0.709,3.136) -- (0.251,3.582);
  \draw[black!60] (0.709,3.136) -- (0.915,2.390);
  \draw[black!60] (0.709,3.136) -- (0.344,4.201);
  \draw[black!60] (0.709,3.136) -- (0.916,3.634);
  \draw[black!60] (0.709,3.136) -- (0.937,1.315);
  \draw[black!60] (0.709,3.136) -- (4.066,0.217);
  \draw[black!60] (0.709,3.136) -- (0.583,2.756);
  \draw[black!60] (3.056,3.554) -- (3.888,3.176);
  \draw[black!60] (3.056,3.554) -- (2.428,4.626);
  \draw[black!60] (3.056,3.554) -- (1.284,1.341);
  \draw[black!60] (3.056,3.554) -- (3.694,4.563);
  \draw[black!60] (3.056,3.554) -- (4.066,0.217);
  \draw[black!60] (3.056,3.554) -- (2.901,2.191);
  \draw[black!60] (3.056,3.554) -- (3.159,3.248);
  \draw[black!60] (3.056,3.554) -- (0.583,2.756);
  \draw[black!60] (3.056,3.554) -- (4.657,2.123);
  \draw[black!60] (0.954,1.555) -- (1.284,1.341);
  \draw[black!60] (0.954,1.555) -- (1.057,1.252);
  \draw[black!60] (0.954,1.555) -- (0.937,1.315);
  \draw[black!60] (2.787,0.855) -- (2.637,0.146);
  \draw[black!60] (2.787,0.855) -- (4.066,0.217);
  \draw[black!60] (3.060,3.118) -- (3.159,3.248);
  \draw[black!60] (2.565,4.239) -- (2.428,4.626);
  \draw[black!60] (3.191,4.837) -- (2.428,4.626);
  \draw[black!60] (2.412,2.618) -- (1.284,1.341);
  \draw[black!60] (2.412,2.618) -- (2.318,1.480);
  \draw[black!60] (2.412,2.618) -- (4.066,0.217);
  \draw[black!60] (2.412,2.618) -- (2.901,2.191);
  \draw[black!60] (2.412,2.618) -- (3.159,3.248);
  \draw[black!60] (2.412,2.618) -- (0.583,2.756);
  \draw[black!60] (1.444,0.485) -- (1.284,1.341);
  \draw[black!60] (1.444,0.485) -- (1.729,0.059);
  \draw[black!60] (1.444,0.485) -- (2.318,1.480);
  \draw[black!60] (1.444,0.485) -- (1.057,1.252);
  \draw[black!60] (1.444,0.485) -- (2.637,0.146);
  \draw[black!60] (1.444,0.485) -- (0.937,1.315);
  \draw[black!60] (1.444,0.485) -- (4.066,0.217);
  \draw[black!60] (1.444,0.485) -- (2.264,1.009);
  \draw[black!60] (1.444,0.485) -- (1.628,1.208);
  \draw[black!60] (0.206,4.771) -- (0.344,4.201);
  \draw[black!60] (0.206,4.771) -- (0.916,3.634);
  \draw[black!60] (0.935,4.082) -- (0.916,3.634);
  \draw[black!60] (1.465,2.800) -- (1.284,1.341);
  \draw[black!60] (1.465,2.800) -- (0.915,2.390);
  \draw[black!60] (1.465,2.800) -- (0.583,2.756);
  \draw[black!60] (3.643,0.160) -- (4.066,0.217);
  \draw[black!60] (2.473,3.800) -- (2.428,4.626);
  \draw[black!60] (2.473,3.800) -- (1.284,1.341);
  \draw[black!60] (2.473,3.800) -- (0.915,2.390);
  \draw[black!60] (2.473,3.800) -- (3.694,4.563);
  \draw[black!60] (2.473,3.800) -- (0.916,3.634);
  \draw[black!60] (2.473,3.800) -- (4.066,0.217);
  \draw[black!60] (2.473,3.800) -- (2.901,2.191);
  \draw[black!60] (2.473,3.800) -- (3.159,3.248);
  \draw[black!60] (2.473,3.800) -- (0.583,2.756);
  \draw[black!60] (2.473,3.800) -- (4.657,2.123);
  \draw[black!60] (0.793,0.992) -- (1.284,1.341);
  \draw[black!60] (0.793,0.992) -- (1.729,0.059);
  \draw[black!60] (0.793,0.992) -- (0.915,2.390);
  \draw[black!60] (0.793,0.992) -- (1.057,1.252);
  \draw[black!60] (0.793,0.992) -- (0.937,1.315);
  \draw[black!60] (0.793,0.992) -- (0.583,2.756);
  \draw[black!60] (0.793,0.992) -- (1.628,1.208);
  \draw[black!60] (2.219,4.306) -- (2.428,4.626);
  \draw[black!60] (3.358,2.053) -- (3.119,2.117);
  \draw[black!60] (1.492,2.442) -- (1.284,1.341);
  \draw[black!60] (1.492,2.442) -- (0.915,2.390);
  \draw[black!60] (1.492,2.442) -- (0.583,2.756);
  \draw[black!60] (2.827,4.456) -- (2.428,4.626);
  \draw[black!60] (0.256,4.252) -- (0.344,4.201);
  \draw[black!60] (2.030,0.883) -- (1.284,1.341);
  \draw[black!60] (2.030,0.883) -- (1.729,0.059);
  \draw[black!60] (2.030,0.883) -- (2.318,1.480);
  \draw[black!60] (2.030,0.883) -- (2.637,0.146);
  \draw[black!60] (2.030,0.883) -- (4.066,0.217);
  \draw[black!60] (2.030,0.883) -- (2.264,1.009);
  \draw[black!60] (2.030,0.883) -- (1.628,1.208);
  \draw[black!60] (3.761,3.589) -- (3.888,3.176);
  \draw[black!60] (3.761,3.589) -- (2.428,4.626);
  \draw[black!60] (3.761,3.589) -- (3.159,3.248);
  \draw[black!60] (3.761,3.589) -- (4.657,2.123);
  \draw[black!60] (4.720,1.252) -- (4.066,0.217);
  \fill[blue] (0.856,0.685) circle (0.032);
  \fill[blue] (4.510,2.866) circle (0.031);
  \fill[blue] (0.709,3.136) circle (0.149);
  \fill[blue] (3.056,3.554) circle (0.118);
  \fill[blue] (4.555,4.787) circle (0.047);
  \fill[blue] (0.954,1.555) circle (0.031);
  \fill[blue] (2.787,0.855) circle (0.049);
  \fill[blue] (3.060,3.118) circle (0.046);
  \fill[blue] (2.565,4.239) circle (0.037);
  \fill[blue] (3.191,4.837) circle (0.047);
  \fill[blue] (2.412,2.618) circle (0.086);
  \fill[blue] (1.444,0.485) circle (0.108);
  \fill[blue] (0.206,4.771) circle (0.079);
  \fill[blue] (0.935,4.082) circle (0.042);
  \fill[blue] (1.465,2.800) circle (0.050);
  \fill[blue] (3.643,0.160) circle (0.031);
  \fill[blue] (2.473,3.800) circle (0.154);
  \fill[blue] (0.793,0.992) circle (0.096);
  \fill[blue] (2.164,3.594) circle (0.045);
  \fill[blue] (2.027,3.160) circle (0.032);
  \fill[blue] (2.219,4.306) circle (0.053);
  \fill[blue] (3.358,2.053) circle (0.037);
  \fill[blue] (1.492,2.442) circle (0.063);
  \fill[blue] (1.847,3.045) circle (0.039);
  \fill[blue] (2.827,4.456) circle (0.042);
  \fill[blue] (0.256,4.252) circle (0.034);
  \fill[blue] (2.030,0.883) circle (0.068);
  \fill[blue] (3.761,3.589) circle (0.083);
  \fill[blue] (4.720,1.252) circle (0.036);
  \fill[red] (0.663,2.058) circle (0.032);
  \fill[red] (3.888,3.176) circle (0.034);
  \fill[red] (2.428,4.626) circle (0.078);
  \fill[red] (4.735,0.310) circle (0.078);
  \fill[red] (1.284,1.341) circle (0.112);
  \fill[red] (0.251,3.582) circle (0.038);
  \fill[red] (3.740,2.344) circle (0.030);
  \fill[red] (1.729,0.059) circle (0.056);
  \fill[red] (2.318,1.480) circle (0.050);
  \fill[red] (0.915,2.390) circle (0.054);
  \fill[red] (3.484,1.110) circle (0.035);
  \fill[red] (1.057,1.252) circle (0.043);
  \fill[red] (3.694,4.563) circle (0.040);
  \fill[red] (0.344,4.201) circle (0.032);
  \fill[red] (0.916,3.634) circle (0.063);
  \fill[red] (2.637,0.146) circle (0.060);
  \fill[red] (0.937,1.315) circle (0.048);
  \fill[red] (4.066,0.217) circle (0.124);
  \fill[red] (3.817,1.858) circle (0.032);
  \fill[red] (3.119,2.117) circle (0.031);
  \fill[red] (3.630,0.652) circle (0.055);
  \fill[red] (3.273,0.629) circle (0.035);
  \fill[red] (2.901,2.191) circle (0.047);
  \fill[red] (3.159,3.248) circle (0.046);
  \fill[red] (0.583,2.756) circle (0.079);
  \fill[red] (2.264,1.009) circle (0.036);
  \fill[red] (4.657,2.123) circle (0.075);
  \fill[red] (1.628,1.208) circle (0.045);
\end{tikzpicture}
}\\[0.5\baselineskip]
    \caption{Random connection hypergraph with two types of vertices (red and blue). Each vertex has a mark (represented by the size of the circle) that determines its connection reach and so its expected degree.}
    \label{fig:rch}
\end{wrapfigure}
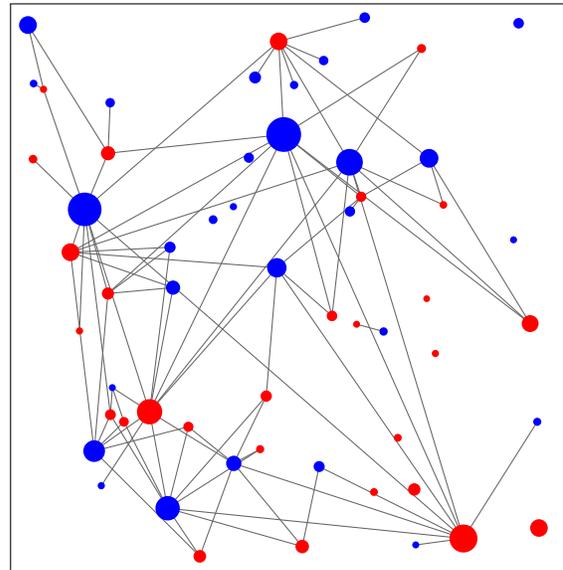

The contact process is one of the most fundamental stochastic processes for modelling the spread of an infection on a network \cite{liggett1985,liggett2013stochastic}. In the present work, we study the contact process on a complex spatial network based on a bipartite vertex set. 

Before defining the model, we give two motivations for bipartite structures in epidemic modelling. The first example is the spreading of sexually transmitted infections on heterosexual contact networks. In this situation, physiological differences produce a difference in infection rate between the two parts \cite{wong2004gender}. Moreover, \cite{gomez2008spreading} show that both parts exhibit power-law behaviour, with different tail exponents.
Our results identify the epidemic probability in this context, and show that it can differ by orders of magnitude between the two sexes.

The second example comes from higher-order infection processes. For instance, \cite{simp_cont} report an experiment in which social contagion is shown to depend on group-level reinforcement rather than purely pairwise transmission. Hypergraph models therefore offer a natural setting in which local groups of vertices may jointly support infection, even when individual pairwise connections are too weak. Bipartite graphs naturally represent such hypergraphs: one part corresponds to individuals, while the other represents groups. An individual is connected to a group if they belong to it, and infection can spread from groups to individuals and vice versa. This interpretation is natural for gathering places, where an airborne infection can be viewed as first infecting the place and then being transmitted from the place to other occupants.

In this work we study the contact process on the random connection hypergraph introduced in \cite{juh} and illustrated in Figure \ref{fig:rch}. This model can be thought of as a bipartite variant of the weight-dependent random connection model studied in \cite{glm2,komjathy}. The construction starts from two independent Poisson point processes on the real line, each equipped with a mark in the unit interval. Points from the first process are connected to points from the second whenever their spatial distance is at most a mark-dependent threshold. For suitable choices of the mark exponents, this produces a bipartite graph whose connectivity depends delicately on the balance of the two mark distributions. A further motivation for considering spatial models is that they capture structural features of real networks that are often absent in purely combinatorial models.  More precisely, in spatial models vertices that are close to the same region of space are likely to have common neighbours, leading naturally to nontrivial clustering, a feature that is frequently observed in real-world networks.

Conditioned on this environment, we consider the contact process with infection rates \(\lambda_1\) from the first type to the second and \(\lambda_2\) in the opposite direction. We start the infection from a typical point and write \(\theta(\lambda_1,\lambda_2)\) for the survival probability. Our focus is the regime where \(\lambda_1=\lambda\) and \(\lambda_2=\lambda^a\) with \(\lambda\) tending to zero. 
This question is analogous to the questions studied in \cite{grac,link} for age-dependent random connection models and random hyperbolic  graphs. However, the bipartite structure makes the present model more challenging, and the resulting phase diagram exhibits substantially more complicated behaviour.

The main new feature is that the bipartite structure allows survival strategies based on strongly unbalanced degrees, and the infection may exploit several qualitatively different routes to long-time survival, compared to those that appear for the classical contact process on power-law random graphs \cite{grac,link,jacob,jacob2024metastability}.

Our main theorem, Theorem~\ref{thm_main}, identifies the dominant survival strategy in every parameter regime and describes the resulting asymptotics of \(\theta(\lambda_1,\lambda_2)\) up to logarithmic factors. The outcome is a phase diagram with several distinct regions, each associated with a different optimal route to survival.

The rest of this paper is organised as follows. Section~\ref{s_model} contains a precise description of the model and the statement of the main theorem.  The proof of the theorem is split into a lower bound, which is proved in Section~\ref{s_lower_bound}, and an upper bound, proved in Section~\ref{s_upper_bound}. Finally, in the \hyperref[appn]{Appendix}, we carry out several computations needed to give an explicit  description of the phase diagram.


\section{Background, model definition and main results}\label{s_model}

\subsection{Classical contact process}
Although our focus is the contact process with asymmetric infection rates on bipartite graphs, we begin with some background on the classical contact process, i.e. the $\lambda_1=\lambda_2$ case. 

The \emph{contact process} (introduced by Harris~\cite{harris}) with infection rate~$\lambda > 0$ on a graph~$G=(V,E)$ is a Markov process on~$\{0,1\}^V$ where: vertices in state~$0$ are said to be \emph{healthy}, and vertices in state~$1$ are said to be \emph{infected}; infected vertices recover with rate~1, and infected vertices transmit the infection to each neighbour with rate~$\lambda$. We denote the process (with some arbitrary initial configuration) by~$(\xi_t)_{t \ge 0} = (\xi_t(x):x \in V)_{t \ge 0}$. We adopt the usual identification of~$\{0,1\}^V$ with the power set of~$V$, via~$\{0,1\}^V \ni \xi \mapsto \{x\in V: \xi(x)=1\} \subseteq V$.

The identically-zero, ``all-healthy'' configuration (denoted by~$\varnothing$, via the above identification) is an absorbing state. We define the probability of survival of the infection as
\[
	\theta(\lambda) := \mathbb P( \forall t, \;\; \xi_t \neq \varnothing );
\]
this of course depends on~$G$,~$\lambda$, and~$\xi_0$. If one restricts attention to initial configurations with finitely many infected vertices, then it holds that either~$\theta(\lambda)=0$ for all~$\xi_0$, or~$\theta(\lambda) > 0$ for all~$\xi_0$. We say that the process \emph{dies out} in the former case, and \emph{survives} in the latter one.

It is known since~\cite{harris} that, on the Euclidean lattice, there is a phase transition: there is a positive threshold~$\lambda_c$ such that the process dies out if~$\lambda < \lambda_c$ and survives if~$\lambda > \lambda_c$ (later, in~\cite{bezuidenhout1990critical}, Bezuidenhout and Grimmett proved that the process dies out at~$\lambda=\lambda_c$).

 In contrast, on graphs where vertex degrees are unbounded, it can happen that the infection survives \emph{regardless of~$\lambda$} (so that~$\lambda_c = 0$). This is known to hold for Bienaym\'e--Galton--Watson (BGW) trees where the offspring distribution has no finite exponential moment, see Huang and Durrett~\cite{huang2020contact}; Bhamidi, Nam, Nguyen, and Sly~\cite{bhamidi2021survival} proved that the moment assumption is sharp.

Studying the asymptotics of~$\theta(\lambda)$ as~$\lambda \downarrow \lambda_c$ (with a fixed initial configuration, say, consisting of a single infected site) is illuminating: it reveals the ``best strategy'' adopted by the infection, in the event that it survives (which becomes rare when~$\lambda \to 0$). This is mathematically more feasible in settings where~$\lambda_c=0$, because the process with near-zero infection rate is easier to study.

This kind of analysis was first carried out in~\cite{dch} and~\cite{mountford2013metastable} for BGW trees, with implications for the so-called \emph{metastable density} of the contact process on the configuration model, a random graph that converges locally to a two-stage BGW tree. See also~\cite{van2015metastability}, and~\cite[Chapter 5]{valesin2024survey} for an overview of this topic.
In that setting, depending on the degree distribution, one of the following two survival strategies is adopted:
\begin{itemize}
	\item \emph{direct propagation}: the infection survives by propagating outwards from the root (so that re-infections in the direction of the root are not needed), and travelling through vertices of increasingly high degrees. This strategy is optimal in cases where the degree distribution has heavier tail.
	\item \emph{propagation through stars}: the infection reaches a vertex with degree much larger than~$1/\lambda^2$ (``star''), which sustains it for a long time; while being active there, the infection manages to find another nearby star, which also sustains it for long enough to reach another star etc.
\end{itemize}

In further works, it has been found that these strategies (and corresponding expressions for the survival probability) reappear in other families of random graphs. These include the preferential attachment graph~\cite{vanhaocan17}, random hyperbolic graphs~\cite{link}, and scale-free geometric random graphs~\cite{grac}; the latter two are random graphs embedded in space, with local clustering. We should also mention that similar analyses have been carried out for the contact process on \emph{dynamic} random graphs, with very rich phase diagrams; see~\cite{jacob2024metastability,jacob}.

\subsection{Random connection hypergraph}
We now define the \emph{random connection hypergraph}, a random bipartite graph~$\mathcal G$ embedded in space, that was introduced in \cite{juh}. We take two independent point processes ~$\mathcal P_1,\mathcal P_2 \subset \mathbb R \times [0,1]$, where:
\begin{itemize}
	\item $\mathcal P_1$ is a Poisson point process with intensity equal to the Lebesgue measure, with one extra element (called the \emph{root}, and denoted by~$o$) placed at~$(0,U)$, where~$U \sim \mathrm{Unif}(0,1)$, independently of the Poisson points;
	\item $\mathcal P_2$ is a Poisson point process with intensity equal to the Lebesgue measure.
\end{itemize}
The vertex set of~$\mathcal G$ is~$\mathcal P_1 \cup \mathcal P_2$; elements of~$\mathcal P_1$ and of~$\mathcal P_2$ are called \emph{type-1 vertices} and \emph{type-2 vertices}, respectively. For any vertex~$(x,u) \in\mathcal P_1 \cup \mathcal P_2$, we refer to the second coordinate~$u$ as the \emph{height} or~\emph{mark} of the vertex.

We now fix parameters $\gamma_1, \gamma_2 \in (0, 1)$. The edge set of~$\mathcal G$ is then
\[
	\{\{(x,u),(y,v)\}: (x,u) \in \mathcal P_1,\; (y,v) \in \mathcal P_2,\; |x-y| \le u^{-\gamma_1}v^{-\gamma_2}\}.
\]

This produces a random graph where, roughly speaking, the degree distributions of type-1 and type-2 vertices, denoted~$p_1$ and~$p_2$ respectively, satisfy as $m\rightarrow\infty$
\begin{equation*}
	p_1((m,\infty))	\sim \left(\frac{\bar{\gamma}_2}{2} m \right)^{-1/\gamma_1},\qquad p_2((m,\infty))\sim \left(\frac{\bar{\gamma}_1}{2} m \right)^{-1/\gamma_2}.
\end{equation*}

\begin{remark}[Metastable density]
By adding a point at $(0,U)$, by Slivnyak's theorem, we are conditioning that there is a point at spatial position $0$. In the finite box version of our model (restricting to points inside some large spatial interval), this is also the local distribution of a uniform point translated to $0$: so it is the correct root for the survival probability  of the infection in our main theorem to correspond also to a \emph{metastable density} for that finite model. Note that we follow \cite{grac,fernley2024targeted} in finding the survival probability for the local limit and omitting the standard duality proof to actually equate this to the density.
\end{remark}

We assume throughout this work that the pair~$(\gamma_1,\gamma_2)$ belongs to the set
\begin{equation*}
	\mathcal S:= \{(\gamma_1,\gamma_2) \in (0,1)^2:\; \gamma_1 + \gamma_2 > 1\}.
\end{equation*}

The reason for this assumption is that we want a graph where the contact process can potentially survive forever; in particular, a graph with at least one infinite connected component. If~$\gamma_1 + \gamma_2 < 1$, then the graph does not have infinite components, a fact that we now prove for completeness.

\begin{proposition}[Sub-critical regime]
\label{pr:subcrit}
If $\gamma_1 + \gamma_2 < 1$, then~$\mathcal G$ has no infinite connected component almost surely.
\end{proposition}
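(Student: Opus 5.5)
We reduce the claim to showing that a fixed spatial location is almost surely \emph{not crossed} by any edge of arbitrary length, and then use stationarity. Call a point $z\in\mathbb R$ \emph{bridged} if some edge $\{(x,u),(y,v)\}$ has $\min(x,y)<z<\max(x,y)$. If for every $\varepsilon$ there are, almost surely, infinitely many unbridged points in both $(-\infty,0)$ and $(0,\infty)$, then every connected component lies between two consecutive unbridged points. Each such interval is bounded and contains finitely many Poisson points, so every component is finite. The root is only a single extra point and does not affect this almost-sure statement. More precisely, its finitely many incident edges have almost surely finite length.

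\textbf{Step 1: bounded expected number of edges crossing a point.} Fix $z=0$ and ignore the root for now. By the multivariate Mecke formula, the expected number of edges crossing $0$ is
\[
\int_0^1\!\!\int_0^1\!\!\int\!\!\int \mathbb 1\{|x-y|\le u^{-\gamma_1}v^{-\gamma_2},\ 0\in[x\wedge y,x\vee y]\}\,dx\,dy\,du\,dv .
\]
For fixed $u,v$ the inner double integral equals the area of $\{(x,y):x\le 0\le y,\ y-x\le r\}$ plus its mirror image, where $r=u^{-\gamma_1}v^{-\gamma_2}$. This is $r^2$ in total. We are therefore left with
\[
\int_0^1 u^{-2\gamma_1}\,du\int_0^1 v^{-2\gamma_2}\,dv,
\]
which is finite only when $\gamma_1,\gamma_2<1/2$. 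That is stronger than the hypothesis $\gamma_1+\gamma_2<1$, so this naive first moment is the main obstacle. When, say, $\gamma_1\ge 1/2$, the expected count diverges because of many edges from very low-mark type-1 vertices to ordinary type-2 vertices.

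\textbf{Step 2: fix the obstacle by replacing edges with components.} The remedy is to use the fact that when $\gamma_1+\gamma_2<1$, the edge from $(x,u)$ to $(y,v)$ only reaches a distance of order $u^{-\gamma_1}$, which is shorter than the local spacing relevant at level $u$. So I would prove that the cluster of a typical vertex has finite expected spatial extent, by comparison with a multitype branching process.

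Explore from a vertex of mark $u$. Its type-2 neighbours of mark in $dv$ have Poisson intensity $2u^{-\gamma_1}v^{-\gamma_2}\,dv$. Composing two steps gives a kernel on type-1 marks,
\[
K(u,du')=4\,u^{-\gamma_1}u'^{-\gamma_1}\Big(\int_0^1 v^{-2\gamma_2}dv\Big)du'.
\]
This kernel is again not integrable in general, so I would instead weight by distance. The spatial displacement in one step is at most $2u^{-\gamma_1}v^{-\gamma_2}$. Using a test function $f(u)=u^{-\beta}$ with $\gamma_1<\beta<1-\gamma_2$ (possible precisely because $\gamma_1+\gamma_2<1$), the goal is to show that the expected $f$-weighted sum over offspring is at most $c f(u)$.

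The trouble is the constant $c$: it need not be less than $1$. I would therefore add a truncation. Fix $\delta$, and say a vertex is \emph{big} if its mark is at most $\delta$. Small-mark (large-reach) vertices have density $\delta$, while the exponent condition makes the expected number of big vertices reached from a big vertex tend to $0$ as $\delta\to0$. The truncated big-vertex process is then subcritical, and non-big vertices have bounded reach, so they only glue big clusters at bounded scale. The $f$-weighting with a suitable $\beta$ is where $\gamma_1+\gamma_2<1$ enters, by keeping the relevant integrals $\int u^{\beta-\gamma_1-1}$ and $\int v^{-\gamma_2}$-type terms finite.

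\textbf{Step 3: conclude.} A subcritical branching domination of the cluster of the origin gives a finite expected number of vertices, hence a finite cluster almost surely. By Mecke's formula applied to every Poisson point, all clusters are then almost surely finite, so there is no infinite component. The step I expect to cost the most work is the choice of $\beta$ and $\delta$ in Step 2, so that the one-step expected weighted offspring is strictly less than $1$.
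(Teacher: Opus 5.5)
Your opening reduction (unbridged points on both sides $\Rightarrow$ only finite components) is the same as the paper's. The gap is in Step~2, which does not supply the missing input, and it cannot be repaired within a first-moment/branching framework.

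\textbf{Why the weighted branching bound fails.} Your condition $\gamma_1<\beta<1-\gamma_2$ is only what the type-1$\to$type-2 step of the weighted offspring bound requires. The type-2$\to$type-1 step symmetrically needs $\gamma_2\le\beta<1-\gamma_1$. Even with separate exponents $\beta_1,\beta_2$ for the two types, the conditions are $\gamma_i\le\beta_i<1-\gamma_i$. These force $\gamma_1,\gamma_2<\tfrac12$, which is exactly the regime where Step~1 already worked.

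No better weight exists. Suppose $\gamma_2\ge\frac12$, which is allowed (e.g.\ $\gamma_1=0.1$, $\gamma_2=0.7$). Then for every positive $f$ and every $\delta$, the two-step mean operator of your truncated process is
\[
4u^{-\gamma_1}\int_0^\delta v^{-2\gamma_2}\,dv\int_0^\delta u'^{-\gamma_1}f(u')\,du'=\infty .
\]
Consistently with this, the claim in Step~3 is false. In this regime the expected number of type-1 vertices sharing a type-2 neighbour with the root is already infinite, so the root's cluster has infinite expected size and no domination can give a finite mean. Separately, the gluing of big clusters by non-big vertices, which you dispose of in one sentence, would also need a real argument.

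\textbf{The missing idea.} You need to use one-dimensionality, which your reduction sets up but you never exploit. The divergence in Step~1 comes from low-mark vertices near the cut point. Rather than averaging over them, forbid them at a positive probability cost.

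The paper takes $\kappa\in(1,(\gamma_1+\gamma_2)^{-1})$ and $S=\{(z,t):t\le|z|^{-\kappa}\}$. This set has finite Lebesgue measure, so with positive probability neither Poisson process has a point in $S$. On that event every vertex $(x,u)$ has $|x|>1$ and $u>|x|^{-\kappa}$. Hence any pair with $x<0<y$ satisfies
\[
u^{-\gamma_1}v^{-\gamma_2}<|x|^{\kappa\gamma_1}y^{\kappa\gamma_2}\le\max\{|x|,y\}^{\kappa(\gamma_1+\gamma_2)}\le\max\{|x|,y\}\le|x|+y,
\]
so no edge crosses $0$. Ergodicity under spatial shifts then gives, almost surely, unboundedly many unbridged points in both directions. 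That is exactly the input your reduction needs.
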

\begin{proof}
	The proof is inspired by the proof of Theorem~3(ii) in~\cite{gracar2023emergence}.
For~$z \in \mathbb R$, define the event
\[
A(z):=\left\{ \text{$\mathcal G$ has no edge $\{(x,u),(y,v)\}$ with $(x,u) \in \mathcal P_1$, $(y,v) \in \mathcal P_2$ and $x \le z \le y$} \right\}.
\]

We will prove that~$A(0)$ has positive probability. By ergodicity, it will then follow that almost surely, there is an unbounded set of~$z$ for which~$A(z)$ occurs, hence there is no infinite cluster.

	Fix~$\kappa \in (1,(\gamma_1+\gamma_2)^{-1})$. Define~$S:=\{(z,t)\in \mathbb R\times [0,1]:\; t \le |z|^{-\kappa}\}$, and let~$B$ be the event that~$\mathcal P_1 \cap S = \mathcal P_2 \cap S = \varnothing$. Since~$S$ has finite Lebesgue measure, we have~$\mathbb P(B) > 0$. We now claim that~$B \subseteq A(0)$. To see this, assume that~$B$ occurs, and let~$(x,u) \in \mathcal P_1$ and~$(y,v) \in \mathcal P_2$ with~$x<0<y$; we then have
\[
u^{-\gamma_1}v^{-\gamma_2} < |x|^{\kappa \gamma_1} y^{\kappa\gamma_2} \le \max\{|x|,y\}^{\kappa(\gamma_1+\gamma_2)} \le \max\{|x|,y\} \le  |x|+y,
\]
so there is no edge between~$(x,u)$ and~$(y,v)$. 
\end{proof}

It will be useful to introduce the quantities
\[
	\bar{\gamma}_i = 1- \gamma_i,\qquad 	\Delta_i := (2\gamma_i - 1) \vee 0,\qquad i = 1,2,
\]
and
\[
\Delta:=\gamma_1+\gamma_2-1.
\]

\subsection{Contact process on~$\mathcal G$ with type-dependent infection rates}

Next, we introduce a version of the {contact process} on~$\mathcal G$ where we allow distinct infection rates for the two types (one could also allow for distinct recovery rates, but we have chosen not to do so for simplicity). More concretely, the dynamics is given by the rules:
\begin{itemize}
	\item an infected vertex of either type recovers with rate~1;
	\item an infected vertex of type 1 transmits the infection to each neighbour with rate~$\lambda_1$;
	\item an infected vertex of type 2 transmits the infection to each neighbour with rate~$\lambda_2$.
\end{itemize}
We will typically be interested in the process started from the configuration where the (type-1) root is infected, and all other vertices are healthy. Moreover, we will study asymptotics when the infection rates are taken to~$0$, and with this in mind, we introduce an additional parameter~$a > 0$ and take infection rates
\[
\lambda_1 = \lambda,\qquad \lambda_2 = \lambda^a.
\]

With these rates, recall that we define
\[
	\theta(\lambda) := \mathbb P( \forall t, \;\; \xi_t \neq \varnothing ),
\]
where the process is started from only the root infected. Note that the above probability is \emph{annealed} (it involves both the randomness in the choice of the graph and in the evolution of the infection).

Our main result, Theorem~\ref{thm_main} below, describes~$\theta(\lambda)$ as~$\lambda \to 0$. Before we state it, we give some heuristic explanations and some definitions.  We begin with three heuristic observations, and throughout this discussion we assume that~$\lambda$ is small.

\begin{itemize}
	\item[$(\mathrm{a})$] \textit{Stars.} As explained earlier, for the classical contact process with small~$\lambda$, a vertex with degree much larger than~$1/\lambda^2$ sustains the infection for a long time. In our bipartite setting, we show that regardless of the type of a vertex (and hence of its neighbours), if its degree is much larger than~$1/(\lambda_1\lambda_2)=1/\lambda^{1+a}$, then it sustains the infection for a long time -- hence, such vertices  are considered ``stars'' in our analysis. 
Here and throughout this section, statements such as `much larger' are understood up to multiplicative factors that are powers of~$\log \tfrac{1}{\lambda}$.
	\item[$(\mathrm{b})$] \textit{Mark-degree conversion.} Given~$u,v \in (0,1)$, in the absence of any other information about the graph, 
		\begin{equation}\begin{split}
			&\text{a type-1 vertex with mark $u$ has, in expectation, }\\
			&u^{-\gamma_1}\cdot \int_0^v t^{-\gamma_2}\;\mathrm{d}t = \frac{u^{-\gamma_1}v^{\bar{\gamma}_2}}{\bar{\gamma}_2}  \text{ neighbours with mark below } v.
		\end{split}
		\label{eq_type1_ne}
		\end{equation}
		
	 In particular, taking~$v=1$,
\begin{equation}\label{eq_conversion}
	\text{a type-1 vertex with mark $u$ has expected degree } \frac{u^{-\gamma_1}}{\bar{\gamma}_2}.
\end{equation}

Symmetric statements hold with types~$1$ and~$2$ reversed.

	\item[$(\mathrm{c})$] \textit{Direct transmission.} A type-1 vertex transmits the infection to a fixed neighbour before recovering with probability~$\frac{\lambda}{1+\lambda}$ (or roughly~$\lambda$, since~$\lambda$ is small). In view of this, and of point~$(\mathrm b)$, if a type-1 vertex with mark~$u$ is infected, then the expected number of neighbours with mark below~$v$ that it infects before recovering is of order~$\lambda u^{-\gamma_1} v^{\bar{\gamma}_2}$. In particular, the lowest mark of a neighbour it will infect directly will likely be of order~$(u^{\gamma_1}/\lambda)^{1/\bar{\gamma}_2}$.

		If we parametrize~$u=\lambda^{\mu}$, where~$\mu > 0$, this says that a vertex with mark~$u$ can hope to infect, by direct transmission, a neighbour with mark as low as~$v=\lambda^{\varphi(\mu)}$, where
\begin{equation}\label{eq_varphi_first}
	\varphi(\mu):=\frac{\gamma_1}{\bar{\gamma}_2} \mu - \frac{1}{\bar{\gamma}_2},
\end{equation}
but not much lower. By similar considerations, a type-2 vertex of mark~$\lambda^\nu$ can hope to infect (by direct transmission) a neighbour of mark roughly~$\lambda^{\psi(\nu)}$, where
		\[
			\psi(\nu):= \frac{\gamma_2}{\bar{\gamma}_1} \nu - \frac{1}{\bar{\gamma}_1}a.
		\]
\end{itemize}

Guided by the above considerations, we now describe three values of~$\mu > 0$ (resp. three values of~$\nu > 0$) such that, if the infection reaches a type-1 vertex of mark much lower than~$\lambda^{\mu}$ (resp. a type-2 vertex of mark much lower than~$\lambda^\nu$), then it is very likely to survive.

\begin{itemize}
	\item \textbf{Survival from stars.} Let
		\[\mu_{\mathsf S} := \frac{1}{\gamma_1}+\frac{1}{\gamma_1}a.\]
		
		By~\eqref{eq_conversion}, if a type-1 vertex has mark much lower than~$u=\lambda^{\mu_{\mathsf S}}$, then it is likely to have degree much larger than~$\lambda^{-1-a}$, in which case it is a star. Moreover, by~\eqref{eq_type1_ne}, such a vertex is likely to have a neighbour with mark roughly~$v = u^{\gamma_1/\bar{\gamma}_2}$, which in turn (applying~\eqref{eq_type1_ne} with the types reversed), is likely to have a neighbour with mark roughly~$u' = v^{\gamma_2/\bar{\gamma}_1} = u^{\gamma_1\gamma_2/(\bar{\gamma}_1\bar{\gamma}_2)}$. Using~$\gamma_1+\gamma_2 > 1$ gives~$\gamma_1\gamma_2/(\bar{\gamma}_1\bar{\gamma}_2) > 1$, so~$u' < u$, which implies that~$u'$ is also likely to be a star. This suggests that there is a good chance that an infinite chain of type-1 stars can be formed, each one at distance~$2$ from the previous. By usual arguments involving propagation of the contact process in stars, this would suffice for the infection to survive. The symmetric argument starting from a type-2 vertex would have initial mark~$\lambda^{\nu_{\mathsf S}}$, where
		\[
			\nu_{\mathsf S}:=\frac{1}{\gamma_2}+\frac{1}{\gamma_2}a.
		\]
		
	\item \textbf{Survival from bridges.} Loosely speaking, a \emph{bridge} is a vertex that is likely to reach a star of the other type by direct transmission. In order for a type-1 vertex with mark expressed as~$\lambda^\mu$ (possibly times a logarithmic factor of~$1/\lambda$) to be a bridge, the value of~$\mu$ should be such that~$\varphi(\mu)=\nu_{\mathsf S}$, where~$\varphi$ is as in~\eqref{eq_varphi_first}. This is achieved by~$\mu$ equal to
		\[
\mu_{\mathsf B} := \frac{1}{\gamma_1\gamma_2} + \frac{\bar{\gamma}_2}{\gamma_1\gamma_2}a.
		\]
		
		The corresponding value for type 2, obtained by solving for~$\nu$ in~$\psi(\nu)=\mu_{\mathsf S}$, is
		\[
\nu_{\mathsf B} := \frac{\bar{\gamma}_1}{\gamma_1 \gamma_2} + \frac{1}{\gamma_1 \gamma_2} a.
		\]

	\item \textbf{Survival from direct spreaders.} Point $\mathrm{(c)}$ above roughly expresses the idea that
		\begin{align*}
			&\text{type 1 with mark $\lambda^\mu$ directly infects type 2 with mark~$\lambda^{\varphi(\mu)}$};\\
			&\text{type 2 with mark $\lambda^\nu$ directly infects type 1 with mark~$\lambda^{\psi(\nu)}$}.
		\end{align*}
		
		This suggests that, if~$\mu$ satisfies~$\psi(\varphi(\mu)) \ge \mu$, then a type-1 vertex with mark much smaller than~$\lambda^{\mu}$ will reach a type-1 vertex with lower mark by direct transmission in two steps, which in turn will do the same etc., causing survival of the infection by direct transmissions. We have
		\[
			\psi(\varphi(\mu)) = \frac{\gamma_1\gamma_2}{\bar{\gamma}_1\bar{\gamma}_2} \mu - \frac{\gamma_2}{\bar{\gamma}_1 \bar{\gamma}_2} - \frac{1}{\bar{\gamma}_1} a,
		\]
		so the smallest value of~$\mu$ for which~$\psi(\varphi(\mu))\ge \mu$ is
		\[
\mu_{\mathsf D} := \frac{\gamma_2}{\Delta} + \frac{\bar{\gamma}_2}{\Delta}a.
		\]
		
		The corresponding value for type 2 is the solution of~$\varphi(\psi(\nu))=\nu$, namely
		\[
\nu_{\mathsf D} :=  \frac{\bar{\gamma}_1}{\Delta} + \frac{\gamma_1}{\Delta}a.
		\]
\end{itemize}

The minimiser among~$\mu_{\mathsf S},\mu_{\mathsf B},\mu_{\mathsf D}$ (resp. among $\nu_{\mathsf S},\nu_{\mathsf B},\nu_{\mathsf D}$) depends on~$\gamma_1,\gamma_2,a$ (the answers are given in Proposition~\ref{prop_target_opt} in the \hyperref[appn]{Appendix}, and summarized in Figure~\ref{fig_first} there). We define
\[
	\mu_\star:= \mu_{\mathsf S}\wedge \mu_{\mathsf B}\wedge \mu_{\mathsf D},\qquad \nu_\star:=  \nu_{\mathsf S}\wedge \nu_{\mathsf B}\wedge\nu_{\mathsf D}.
\]

A type-1 vertex with mark lower than roughly~$\lambda^{\mu_\star}$ (resp. a type-2 vertex with mark lower than roughly~$\lambda^{\nu_\star}$) is called a \emph{target}.

Now that we know that infecting a target of either type essentially guarantees survival, we can turn to the question of the most economic way to reach one from the initial infection at the root. Of course, one possibility is that the root is itself a target, by having mark lower than~$\lambda^{\mu_\star}$; this has probability~$\lambda^{\mu_\star}$. In rough terms:
\begin{itemize}
	\item the probability to reach a target in one step from the root by direct transmission is of order~$\lambda^{1+\bar{\gamma}_2\nu_{\star}}$ (neglecting~$\log$ factors);
	\item the probability to reach a target in two steps from the root by direct transmission is of order~$\lambda^{1+a-\Delta_2 \nu_\star+\bar{\gamma}_1\mu_{\star}}$ (neglecting~$\log$ factors).
\end{itemize}

Moreover, we prove in Section~\ref{s_upper_bound} that reaching a target in more steps has probability of lower order.
This leads us to defining
	\begin{equation*}
		A_\star:=\mu_\star \wedge (1+\bar{\gamma}_2 \nu_\star)\wedge (1+a-\Delta_2 \nu_\star+\bar{\gamma}_1 \mu_\star)
	\end{equation*}
for the exponent of the optimal strategy.

\subsection{Main result}
We are now ready to state our main result. The statement includes a phase diagram with many regions. Figure~\ref{fig_opt} gives a visual aid, and an interactive explorer is available at~\url{https://christian-hirsch.netlify.app/publication/fhv/}.

\begin{theorem}
	\label{thm_main}
	Assume that~$(\gamma_1,\gamma_2) \in \mathcal S$ and~$a > 0$. 	
Let
\begin{align*}
	&\mu_{\mathsf S} := \frac{1}{\gamma_1}+\frac{1}{\gamma_1}a,\qquad \mu_{\mathsf B} := \frac{1}{\gamma_1\gamma_2} + \frac{\bar{\gamma}_2}{\gamma_1\gamma_2}a,\qquad \mu_{\mathsf D} := \frac{\gamma_2}{\Delta} + \frac{\bar{\gamma}_2}{\Delta}a,\\
	&\nu_{\mathsf S} := \frac{1}{\gamma_2} + \frac{1}{\gamma_2}a,\qquad \nu_{\mathsf B} := \frac{\bar{\gamma}_1}{\gamma_1 \gamma_2} + \frac{1}{\gamma_1 \gamma_2} a,\qquad \nu_{\mathsf D} :=  \frac{\bar{\gamma}_1}{\Delta} + \frac{\gamma_1}{\Delta}a.
\end{align*}

Also define
	\begin{equation}
	\mu_\star:= \mu_{\mathsf S}\wedge \mu_{\mathsf B}\wedge \mu_{\mathsf D},\qquad \nu_\star:=  \nu_{\mathsf S}\wedge \nu_{\mathsf B}\wedge\nu_{\mathsf D}
	\end{equation}
	and
	\begin{equation}\label{eq_def_A_star}
		A_\star:=\mu_\star \wedge (1+\bar{\gamma}_2 \nu_\star)\wedge (1+a-\Delta_2 \nu_\star+\bar{\gamma}_1 \mu_\star).
	\end{equation}
	
	Then, there exists~$C > 0$ such that, for~$\lambda$ small enough, the survival probability~$\theta(\lambda)$ of the contact process with infection rates~$(\lambda_1,\lambda_2)=(\lambda,\lambda^a)$ on~$\mathcal G$, started from only the root infected, satisfies:
	\[
		(\log \tfrac{1}{\lambda})^{-C} \cdot \lambda^{A_\star} \le \theta(\lambda) \le (\log \tfrac{1}{\lambda})^{C} \cdot \lambda^{A_\star}.
	\]
	
	Moreover,~$A_\star$ is given explicitly by the following:
	\begin{itemize}
	\item[$\mathrm{(Ia)}$] $\gamma_1 > \frac12$ and~$\gamma_2 \le \frac{\gamma_1}{1+\gamma_1} $, then
		\[
			A_\star = \mu_{\mathsf S}. 
		\]
	\item[$\mathrm{(Ib)}$] If~$\gamma_1 > \frac12$ and~$\frac{\gamma_1}{1+\gamma_1} < \gamma_2 \le \frac12$, then
		\[
			A_\star = \begin{cases} \mu_{\mathsf S} & \text{if }  a \le \frac{\gamma_1-\gamma_2}{\gamma_2-\gamma_1+\gamma_1\gamma_2};\\[.2cm]
				1+\bar{\gamma}_2 \nu_{\mathsf S} & \text{if } a >\frac{\gamma_1-\gamma_2}{\gamma_2-\gamma_1+\gamma_1\gamma_2}. 
			\end{cases}
		\]
	\item[$\mathrm{(II)}$] If~$\gamma_1 \le \frac12$ (and consequently~$\gamma_2 > \frac12$), we have
	\[
			A_\star = 1 + \bar{\gamma}_2 \nu_{\mathsf S}.
		\]
	\item[$\mathrm{(III)}$] If~$\gamma_1,\gamma_2 > \frac12$ and~$\frac{1}{\gamma_1}+\frac{1}{\gamma_2} > 3$, then
		\[
			A_\star = \begin{cases}
				1+\bar{\gamma}_2 \nu_{\mathsf S}&\text{if } a \le \frac{\bar{\gamma}_2}{\Delta_2};\\[.2cm]
				1+\bar{\gamma}_2\nu_{\mathsf B}&\text{if } a > \frac{\bar{\gamma}_2}{\Delta_2}.
			\end{cases}
		\]
	\item[$\mathrm{(IV)}$] If~$\frac{1}{\gamma_1}+\frac{1}{\gamma_2} \le 3$  (and consequently~$\gamma_1, \gamma_2 > \frac12$), then
		\[
			A_\star = \begin{cases}
				1+\bar{\gamma}_2 \nu_{\mathsf S}&\text{if } a \le \frac{\bar{\gamma}_1 \bar{\gamma}_2}{\gamma_1\gamma_2 + \gamma_2 - 1};\\[.2cm]
				1+\bar{\gamma}_2\nu_{\mathsf D}&\text{if } \frac{\bar{\gamma}_1 \bar{\gamma}_2}{\gamma_1\gamma_2 + \gamma_2 - 1} < a \le \frac{\gamma_1\gamma_2 + \gamma_1 - 1}{\bar{\gamma}_1 \bar{\gamma}_2};\\[.2cm]
				1+\bar{\gamma}_2\nu_{\mathsf B}&\text{if } a > \frac{\gamma_1\gamma_2 + \gamma_1 - 1}{\bar{\gamma}_1 \bar{\gamma}_2}.
			\end{cases}
		\]
	\end{itemize}
\end{theorem}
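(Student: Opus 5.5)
The proof splits into a lower bound (Section~\ref{s_lower_bound}), an upper bound (Section~\ref{s_upper_bound}), and an algebraic identification of $A_\star$ in each region (the \hyperref[appn]{Appendix}). For the explicit formulas I will first determine $\mu_\star,\nu_\star$ by pairwise comparison of the linear functions of $a$ (Proposition~\ref{prop_target_opt}). I will then compare the three candidates in \eqref{eq_def_A_star}. These are piecewise linear in $a$, so each region (Ia)--(IV) reduces to locating finitely many crossing points. Examples are $a=\bar\gamma_2/\Delta_2$, where $\nu_{\mathsf S}=\nu_{\mathsf B}$, and $a=(\gamma_1-\gamma_2)/(\gamma_2-\gamma_1+\gamma_1\gamma_2)$, where $\mu_{\mathsf S}=1+\bar\gamma_2\nu_{\mathsf S}$. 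I also expect to show that the two-step term $1+a-\Delta_2\nu_\star+\bar\gamma_1\mu_\star$ never strictly wins, although it must still be carried through the upper bound.

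\textbf{Lower bound.} I will build three explicit scenarios, each with probability $\lambda^{\text{exponent}}$ up to logs: the root is itself a target; the root directly infects a type-2 target; or the root infects an intermediate type-2 vertex, which then infects a type-1 target. For each one I need that infecting a target gives survival with probability bounded below by $(\log\frac1\lambda)^{-C}$. This is done separately for each of the mechanisms $\mathsf S,\mathsf B,\mathsf D$.

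For $\mathsf S$: a vertex of degree $\gg\lambda^{-1-a}$ together with its neighbours keeps the infection for time $\exp(c\lambda^{1+a}\deg)$. The proof is a two-type version of the classical star lemma, with the neighbours of the star playing the role of leaves re-infecting the centre at effective rate $\lambda_1\lambda_2$. During that time the star infects, through a common neighbour, a lower-marked star at distance $2$. A renormalisation/chain argument along stars of geometrically decreasing marks then gives survival.

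For $\mathsf B$: one direct transmission reaches a star of the other type, which reduces to the $\mathsf S$ case.

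For $\mathsf D$: I will run a sequence of direct transmissions in which marks decrease doubly-exponentially in $\lambda$-powers. Because the mark improves by a factor polynomial in $\lambda$ at each step, the failure probabilities are summable.

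\textbf{Upper bound.} I will bound the probability that the infection ever reaches a target by a union bound over infection paths from the root. Surviving without reaching a target is shown to be negligible, for example by comparing the process off targets with a subcritical branching (Galton--Watson) bound. The expected number of transmission paths $o=x_0,x_1,\dots,x_k$ with $x_k$ a target is bounded by a mark integral. Each step $x_i\to x_{i+1}$ contributes roughly $\lambda_{\text{type}}\cdot u_i^{-\gamma}$ integrated against $v^{-\gamma'}$. Vertices visited with high mark contribute factors of order $\lambda$ and are cheap to bound. The key estimate is that paths of length $\ge3$ are dominated by the one- and two-step ones: each extra step costs a factor $\lambda$ or $\lambda^a$, which outweighs the mark gain unless a node is already a target.

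This is where I expect the main obstacle. The difficulty is that vertices with moderately low marks, which are not yet targets but have degree around $\lambda^{-1-a}$, can sustain the infection for longer than one recovery time. This lets them transmit many times, which breaks the naive path counting. I would handle this by truncation at log scales: a vertex with mark above $\lambda^{\mu_\star}(\log\frac1\lambda)^{C}$ keeps its infection cluster alive for at most polylogarithmic time, proved via a supermartingale for the star-plus-leaves process. I would then multiply the per-step rates by these polylog factors and verify with the same exponent algebra that the resulting series still sums to $\lambda^{A_\star}$ up to $(\log\frac1\lambda)^C$.
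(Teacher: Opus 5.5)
Your lower bound and exponent algebra follow the paper closely. The paper needs only four strategies: the root is $\mathsf S$, or one step to $\mathsf S$, $\mathsf B$ or $\mathsf D$. Lemmas~\ref{lem_work_Astar} and~\ref{lem_can_discard_two} show $A_\star$ always equals one of those exponents, so your extra strategies are harmless but unnecessary.

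The genuine gap is in the upper bound, in how you handle repeated transmissions.

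A supermartingale for the star-plus-leaves process only controls back-and-forth between a blue vertex and its own neighbours. In other words, it only uses the $\mathsf S$ threshold. A blue vertex's infection can also be sustained in two other ways:
\begin{itemize}
\item through an excursion $x\to y\to z\to y\to x$ via a neighbour $y$ and one of $y$'s neighbours;
\item by relaying back and forth along moderately large same-type vertices linked through common neighbours (the direct-spreader mechanism).
\end{itemize}
These are negligible only because of the $\mathsf B$ and $\mathsf D$ thresholds, through \eqref{eq_just_one_and_twoa}, \eqref{eq_just_two_and_a} and \eqref{eq_just_directs}. No local star argument sees them.

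More fundamentally, ``each blue cluster dies within polylogarithmic time'' is essentially the subcriticality you are trying to prove, so it cannot be an input. Even granting duration bounds, the infected periods of $x_i$ depend on re-infections from $x_{i+1}$ and from anywhere further along. So multiplying per-step rates by $(\log\frac1\lambda)^C$ does not give a valid bound for the infection following a given vertex sequence.

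Nor is a one-step Galton--Watson comparison available. When $\mu_\star=\mu_{\mathsf S}$, blue type-1 vertices can have $\lambda\deg\approx\lambda^{-a}(\log\frac1\lambda)^{-\gamma_1 C}\gg 1$, and the spatial graph is strongly clustered.

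The missing idea is to avoid duration bounds altogether.
\begin{itemize}
\item Sum over \emph{ordered traces} with all repetitions recorded. Bound each by the supermartingale of Lemma~\ref{lem:martingale}, $(2\lambda_1)^{\lceil \ell/2\rceil}(2\lambda_2)^{\lfloor \ell/2\rfloor}$, so every re-infection pays its own factor.
\item Count graph realizations by Mecke's formula along the \emph{discovery tree} of the trace. The weights $\mathscr U_{\deg_T(m)}$, $\mathscr V_{\deg_T(m)}$ account for revisited high-degree vertices and are insensitive to clustering.
\item The problem is then about trees, not paths. The reductions Op1, Op2 and Op3 each gain $(\log\frac1\lambda)^{-16}$ via \eqref{eq_just_stars}, \eqref{eq_just_one_and_twoa}/\eqref{eq_just_two_and_a} and \eqref{eq_just_directs} respectively (Lemma~\ref{lem_factor_reduce}). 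The three target types are exactly what make the three reductions pay.
\item Every tree collapses to a segment of length $1$ or $2$, which is why only the one- and two-step exponents survive.
\item Surplus steps $\ell-2k$ and the count $\binom{\ell+1}{k}k^{\ell+1-k}$ are absorbed by Lemma~\ref{lem_bound_B}.
\item Survival without reaching a target is excluded by bounding visits to $\lfloor B\log\frac1\lambda\rfloor$ distinct blue vertices.
\end{itemize}

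A minor correction to your $\mathsf D$ lower bound: at mark $\lambda^{\mu_{\mathsf D}}$ you sit at the fixed point of $\psi\circ\varphi$, so there is no initial gain polynomial in $\lambda$. Insisting on one would cost a power of $\lambda$. The paper instead improves marks by constant factors, $2^{-k}\lambda^{\mu_{\mathsf D}}$ and $2^{-k}\lambda^{\nu_{\mathsf D}}$. The corresponding box areas are $2^{\Delta k}\lambda^{-a}$ and $2^{\Delta k}\lambda^{-1}$ (see \eqref{eq_magic_Delta}), so the failure probabilities decay geometrically in $k$.
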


\begin{figure}[H]
	\begin{center}
		\fbox{\includegraphics[width = .8\textwidth]{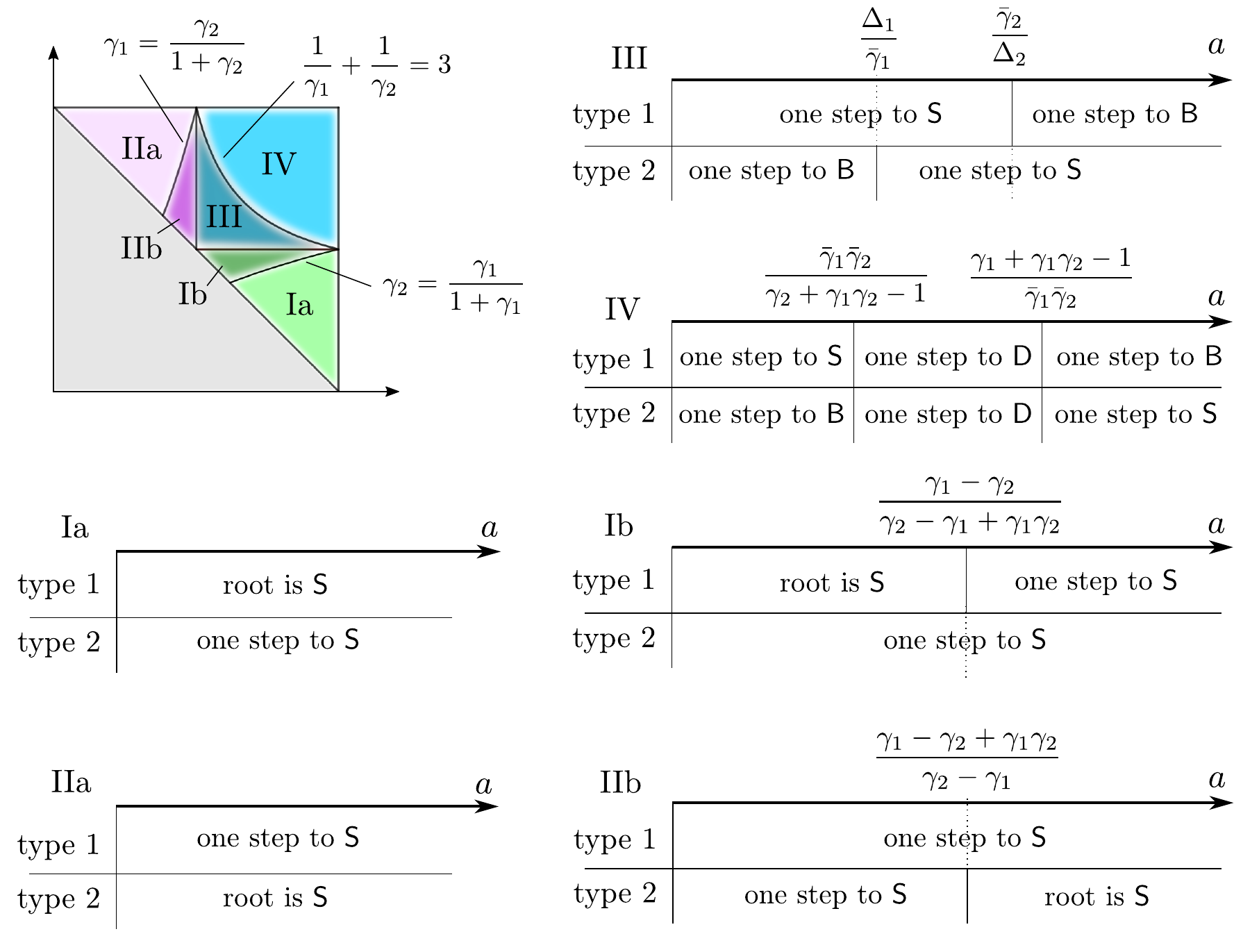}}
		\caption{Strategies that give the exponent in the survival probability for different values of~$(\gamma_1,\gamma_2,a)$. We include both the case where the root is of type~1 (which is given in the statement of Theorem~\ref{thm_main}) and of type~2. For a root of type~1, ``root is $\mathsf S$'' means that the exponent is~$\mu_{\mathsf S}$; ``one step to $\mathsf S$'' means that it is~$1+\bar{\gamma}_2 \nu_{\mathsf S}$; ``one step to $\mathsf B$'' means that is it~$1+\bar{\gamma}_2 \nu_{\mathsf B}$;  ``one step to $\mathsf D$'' means that is it~$1+\bar{\gamma}_2 \nu_{\mathsf D}$.
}\label{fig_opt}
	\end{center}
\end{figure}

\begin{remark}[Implied additional results]
	Analogous formulas are of course available for the case where the root vertex holding the initial infection is of type~2 rather than type~1. We do not include these in the statement of Theorem~\ref{thm_main} for brevity, but the optimal strategy that gives the correct exponent is shown in Figure~\ref{fig_opt}.
	
	Moreover for case where the two types have the same infection rate, i.e. when setting $a=1$, the theorem still says something interesting. Then we have a two-dimensional phase diagram---so it's easy to sketch, as we do in Figure \ref{fig_one_type}.
\end{remark}

\begin{figure}[H]
	\begin{center}
		\fbox{\includegraphics[width = .6\textwidth]{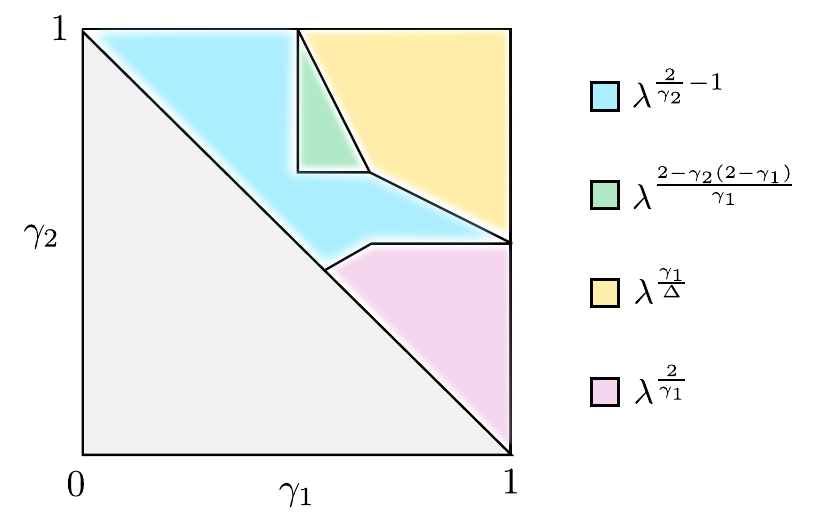}}
		\caption{Setting $a=1$, Theorem~\ref{thm_main} gives a result for the survival probability of the classical contact process on our bipartite spatial model (still of course neglecting polylogarithmic factors). Each of the four depicted regions corresponds to a dominating strategy, and in the above order these are: ``one step to $\mathsf S$'', ``one step to $\mathsf B$''; ``one step to $\mathsf D$''; ``root is $\mathsf S$''. 
}\label{fig_one_type}
	\end{center}
\end{figure}

\begin{remark}[Polylogarithmic corrections]
	In several of the aforementioned works that compute exponents of survival probabilities of the contact process on power law random graphs (configuration model in~\cite{mountford2013metastable,van2015metastability}, preferential attachment graph in~\cite{vanhaocan17}, random hyperbolic graph in~\cite{link}, geometric random graphs~\cite{grac}, dynamic graphs in~\cite{jacob2024metastability,jacob2019metastability}), the expression for the survival probability,  at least in some parameter regimes, is given in the more precise form~$\lambda^\alpha  (\log \tfrac{1}{\lambda})^\beta$ (up to a multiplicative constant not depending on~$\lambda$). We believe that establishing this level of precision would be straightforward in some of the regimes in our phase diagram, but significantly involved in others, potentially revealing even more parameter regimes. Given that the phase diagram without attention to the logarithmic term is already quite intricate, we have opted to leave further refinements for future work.
\end{remark}

Theorem~\ref{thm_main} follows immediately from the following three propositions:
\begin{proposition}[Exponent optimisation] \label{prop_opt}
	For~$(\gamma_1,\gamma_2) \in \mathcal S$ and~$a > 0$, the function~$A_\star$ defined by~\eqref{eq_def_A_star} is given explicitly as described in the statement of Theorem~\ref{thm_main}.
\end{proposition}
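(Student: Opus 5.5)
The proof is pure algebra: every quantity in \eqref{eq_def_A_star} is an explicit piecewise-linear function of~$a$ for fixed~$(\gamma_1,\gamma_2)$. I will proceed in two stages: first identify~$\mu_\star$ and~$\nu_\star$ (this is the content of Proposition~\ref{prop_target_opt} in the Appendix), then minimise over the three candidates in~$A_\star$.

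\textbf{Stage 1: comparing the~$\mu$'s.} Since all six targets are affine in~$a$, each pairwise comparison reduces to comparing intercepts and slopes.
\begin{itemize}
\item $\mu_{\mathsf S}$ against~$\mu_{\mathsf B}$: we have~$\gamma_1\gamma_2(\mu_{\mathsf B}-\mu_{\mathsf S}) = 1-\gamma_2 + a(\bar\gamma_2-\gamma_2)$. So~$\mu_{\mathsf S}\le\mu_{\mathsf B}$ always when~$\gamma_2\le\frac12$, and only for~$a\le \bar\gamma_2/\Delta_2$ when~$\gamma_2>\frac12$.
\item $\mu_{\mathsf D}$ against~$\mu_{\mathsf B}$ and~$\mu_{\mathsf S}$: I will use~$\gamma_1\gamma_2-\Delta=\bar\gamma_1\bar\gamma_2>0$ to simplify the differences.
\item The~$\nu$'s: the same computations apply with the roles of the indices swapped.
\end{itemize}
This yields regions in~$(\gamma_1,\gamma_2)$ whose boundaries are~$\gamma_i=\frac12$ and the curve~$\frac1{\gamma_1}+\frac1{\gamma_2}=3$; the latter is where the~$\mathsf D$ strategy can become minimal. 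For example, one checks that~$\mu_{\mathsf D}<\mu_{\mathsf S}\wedge\mu_{\mathsf B}$ is possible only when~$\gamma_1+\gamma_2-3\gamma_1\gamma_2\le0$.

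\textbf{Stage 2: minimising within~$A_\star$.} In each region I substitute the identified~$\mu_\star$ and~$\nu_\star$ into the three candidates and compare.
\begin{itemize}
\item Third candidate against second. I expect the two-step term~$1+a-\Delta_2\nu_\star+\bar\gamma_1\mu_\star$ never to be strictly smallest. Its difference from~$1+\bar\gamma_2\nu_\star$ is
\[
a+\bar\gamma_1\mu_\star-\gamma_2\nu_\star \quad(\text{if }\gamma_2\ge\tfrac12), \qquad a+\bar\gamma_1\mu_\star-\bar\gamma_2\nu_\star \quad(\text{otherwise}).
\]
I will show this is~$\ge0$ using~$\mu_\star\ge$ the~$\psi$-preimage relations: e.g.\ $\gamma_2\nu_{\mathsf B}=\bar\gamma_1\mu_{\mathsf S}\ldots$ type identities, namely~$\psi(\nu_{\mathsf B})=\mu_{\mathsf S}$, $\psi(\nu_{\mathsf D})=\mu_{\mathsf D}$, and~$\gamma_2\nu_{\mathsf S}=1+a$. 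The inequality is then checked in each case.
\item Second candidate against~$\mu_\star$. Compare~$\mu_\star$ with~$1+\bar\gamma_2\nu_\star$, working case by case.
\begin{itemize}
\item When~$\gamma_1\le\frac12$, a direct computation shows~$\mu_\star>1+\bar\gamma_2\nu_{\mathsf S}$, giving (II).
\item When~$\gamma_1>\frac12$ and~$\gamma_2\le\frac12$, we have~$\mu_\star=\mu_{\mathsf S}$ and~$\nu_\star=\nu_{\mathsf S}$. The comparison~$\mu_{\mathsf S}\le 1+\bar\gamma_2\nu_{\mathsf S}$ rearranges to~$a(\gamma_2-\gamma_1+\gamma_1\gamma_2)\le\gamma_1-\gamma_2$. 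This gives (Ia) when the coefficient is~$\le0$, i.e.\ $\gamma_2\le\gamma_1/(1+\gamma_1)$, and (Ib) otherwise.
\item In regions (III) and (IV) the root-star term loses. The answer is then~$1+\bar\gamma_2\nu_\star$, with~$\nu_\star$ switching from~$\mathsf S$ to~$\mathsf B$ (or to~$\mathsf D$ and then~$\mathsf B$) at the thresholds obtained in Stage~1. In region (IV) these thresholds are~$\bar\gamma_1\bar\gamma_2/(\gamma_1\gamma_2+\gamma_2-1)$ and~$(\gamma_1\gamma_2+\gamma_1-1)/(\bar\gamma_1\bar\gamma_2)$.
\end{itemize}
\end{itemize}

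\textbf{Main obstacle.} I expect the hardest part to be the bookkeeping of Stage~2 in region~(III)/(IV). There one must verify that~$\mu_\star$ (which may equal~$\mu_{\mathsf B}$ or~$\mu_{\mathsf D}$ for large~$a$) stays above~$1+\bar\gamma_2\nu_\star$ for all~$a$, and that the two-step term is dominated. My plan is to exploit linearity in~$a$: each difference is affine on each subinterval of~$a$, so it suffices to check its sign at the interval endpoints (including the limit~$a\to\infty$ via slopes). These finite checks reduce to polynomial inequalities in~$\gamma_1,\gamma_2$ on~$\mathcal S$, which I would settle using~$\Delta>0$, $\bar\gamma_i>0$ and the defining inequality of the region.
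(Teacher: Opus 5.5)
Your two-stage plan is the paper's: identify $(\mu_\star,\nu_\star)$ as in Proposition~\ref{prop_target_opt}, then compare the zero-, one- and two-step exponents as in Lemmas~\ref{lem_work_Astar} and~\ref{lem_can_discard_two}. However, several concrete claims in Stage~2 are false.

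First, for $\gamma_2<\frac12$ the quantity $a+\bar{\gamma}_1\mu_\star-\bar{\gamma}_2\nu_\star$ is not nonnegative. There $\mu_\star=\mu_{\mathsf S}$, so it equals $\mu_\star-(1+\bar{\gamma}_2\nu_\star)$, which is strictly negative throughout region (Ia); for $\gamma_1=0.9$, $\gamma_2=0.2$, $a=1$ it is about $-3.67$. So in this regime the two-step term is \emph{not} dominated by the one-step term. What disposes of it is the identity $1+a+\bar{\gamma}_1\mu_{\mathsf S}=\mu_{\mathsf S}$: it coincides with the zero-step term.

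Second, for $\gamma_1>\frac12\ge\gamma_2$ you take $\nu_\star=\nu_{\mathsf S}$. But $\gamma_1\gamma_2(\nu_{\mathsf S}-\nu_{\mathsf B})=(2\gamma_1-1)-\bar{\gamma}_1 a$, so $\nu_\star=\nu_{\mathsf B}$ for $a\le\Delta_1/\bar{\gamma}_1$. Your conclusions (Ia)/(Ib) still hold, but only after adding two checks: the comparison $\mu_{\mathsf S}\le 1+\bar{\gamma}_2\nu_{\mathsf B}$ (valid iff $\gamma_2\le\frac12$), and the inequality $\frac{\Delta_1}{\bar{\gamma}_1}\le\frac{\gamma_1-\gamma_2}{\gamma_2-\gamma_1+\gamma_1\gamma_2}$ on (Ib).

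The serious problem is in (III)/(IV), where $\gamma_1>\frac12$. The same identity shows that $\nu_\star$ passes from $\nu_{\mathsf B}$ to $\nu_{\mathsf S}$ as $a$ increases (through $\nu_{\mathsf D}$ in (IV)). This is the opposite of the $\mathsf S$-to-$\mathsf B$ ordering you assert. Since $A_\star\le 1+\bar{\gamma}_2(\nu_{\mathsf S}\wedge\nu_{\mathsf B})$, the formulas as printed in (III)/(IV) cannot be established by any argument. For $\gamma_1=\gamma_2=0.6$, $a=0.1$ (region (III)) one finds $A_\star=1+\bar{\gamma}_2\nu_{\mathsf B}=\frac{14}{9}$, while the printed formula gives $1+\bar{\gamma}_2\nu_{\mathsf S}=\frac{26}{15}$.

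Your Stage~1 and Stage~2 actually produce the following, and so does the paper's own combination of Proposition~\ref{prop_target_opt} with Lemmas~\ref{lem_work_Astar} and~\ref{lem_can_discard_two}. In (III), $A_\star=1+\bar{\gamma}_2\nu_{\mathsf B}$ for $a\le\Delta_1/\bar{\gamma}_1$ and $A_\star=1+\bar{\gamma}_2\nu_{\mathsf S}$ for $a>\Delta_1/\bar{\gamma}_1$. In (IV), $A_\star$ is $1+\bar{\gamma}_2\nu_{\mathsf B}$, then $1+\bar{\gamma}_2\nu_{\mathsf D}$, then $1+\bar{\gamma}_2\nu_{\mathsf S}$ on the three $a$-intervals cut out by the two thresholds you wrote.

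So the labels $\mathsf S$ and $\mathsf B$ in (III)/(IV) of the statement are interchanged. The threshold in (III) should also be $\Delta_1/\bar{\gamma}_1$ (where the $\nu$'s cross), not $\bar{\gamma}_2/\Delta_2$ (where the $\mu$'s cross). You should derive and state this corrected form rather than assert the printed ordering.
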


\begin{proposition}[Lower bound]\label{prop_lower_bound}
	For all~$(\gamma_1,\gamma_2) \in \mathcal S$ and~$a > 0$, there exists~$C > 0$ such that, if~$\lambda$ is small enough,
	\[
		\theta(\lambda) \ge  (\log \tfrac{1}{\lambda})^{-C} \cdot \lambda^{A_\star}.
	\]
\end{proposition}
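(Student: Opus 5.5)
Since $A_\star$ is a minimum of finitely many exponents, $\theta(\lambda)$ is bounded below by the probability of any single survival strategy. So it suffices to show that each of the following has probability at least $(\log\frac1\lambda)^{-C}$ times the corresponding power of $\lambda$, and then take the best one:
\begin{itemize}
\item the root is a target, giving $\lambda^{\mu_\star}$;
\item the root infects a type-2 target in one step, giving $\lambda^{1+\bar\gamma_2\nu_\star}$;
\item the root infects a type-2 vertex which infects a type-1 target, giving $\lambda^{1+a-\Delta_2\nu_\star+\bar\gamma_1\mu_\star}$.
\end{itemize}
The argument then splits into two parts: (i) a type-1 vertex of mark at most $\lambda^{\mu_\star}(\log\frac1\lambda)^{-C_0}$, or a type-2 vertex of mark at most $\lambda^{\nu_\star}(\log\frac1\lambda)^{-C_0}$, once infected, survives with probability bounded below by a constant (or at least $(\log\frac1\lambda)^{-C}$); (ii) reaching such a target from the root in at most two steps has the stated probability.

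For (i) I treat the three target types $\mathsf S,\mathsf B,\mathsf D$ separately.

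\emph{Stars.} First prove a star lemma for the bipartite process. A vertex of degree $k\gg(\lambda_1\lambda_2)^{-1}(\log\frac1\lambda)^{C}$ keeps a positive fraction of its neighbourhood infected for time $\exp(c\lambda_1\lambda_2k)$. The proof follows the usual two-step argument: the centre reinfects about $\lambda k$ (or $\lambda^a k$) leaves, and these leaves reinfect the centre at rate about $\lambda^a\cdot\lambda k$, which gives a supermartingale comparison. Along the chain $u\to v\approx u^{\gamma_1/\bar\gamma_2}\to u'\approx u^{\gamma_1\gamma_2/(\bar\gamma_1\bar\gamma_2)}$ the stars become exponentially stronger, because $\gamma_1\gamma_2>\bar\gamma_1\bar\gamma_2$. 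The passage from one star to the next is a two-step transmission through a common neighbour during the long active period, and it succeeds with probability $1-\exp(-\text{poly}(1/\lambda))$. A union bound over the chain, combined with the doubly exponential growth of the degrees, gives survival with positive probability.

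\emph{Bridges and direct spreaders.} A bridge directly infects, with probability bounded below, a neighbour of mark below the star threshold of the other type. This is a Poisson computation: the expected number of infected neighbours of mark at most $\lambda^{\nu_{\mathsf S}}$ is about $\lambda u^{-\gamma_1}\lambda^{\bar\gamma_2\nu_{\mathsf S}}\ge\log$-factor. For direct spreaders I build an infinite sequence of direct transmissions with marks decreasing geometrically in the exponent, since $\psi\circ\varphi$ is expanding above $\mu_{\mathsf D}$. Each step fails with probability $\exp(-(\log\frac1\lambda)^{c})$, or faster further along the chain, so the failure probabilities are summable.

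Some care is needed with the annealed correlations. The points used at each step should lie in fresh regions, for example neighbours on the far side, so that the Poisson processes involved are independent. Monotonicity (attractiveness) of the contact process lets me ignore all other infections.

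For (ii), the probability that the root's mark is below the threshold is $\lambda^{\mu_\star}$ up to log factors. For one step, integrate over the root mark $u$: the root infects a neighbour of mark at most $\lambda^{\nu_\star}$ with probability about $\min(1,\lambda u^{-\gamma_1}\lambda^{\bar\gamma_2\nu_\star})$, and integrating over $u\in(0,1)$ gives about $\lambda^{1+\bar\gamma_2\nu_\star}$, with typical root mark of order one. For two steps, take the root of order-one mark and let it infect an intermediate type-2 vertex of mark $v$. The intermediate vertex then infects a type-1 target with probability about $\lambda^a v^{-\gamma_2}\lambda^{\bar\gamma_1\mu_\star}$, capped at $1$. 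Optimising over $v$ gives a gain from low $v$ only when $\gamma_2>1/2$: the density of infected neighbours of mark about $v$ is $\lambda v^{-\gamma_2}$, so the combined weight is $\lambda^{1+a}\int v^{-2\gamma_2}\,dv$ down to $v=\lambda^{\nu_\star}$. This gives $\lambda^{1+a-\Delta_2\nu_\star+\bar\gamma_1\mu_\star}$. In the $\Delta_2=0$ case this costs a $\log\frac1\lambda$ factor, which is absorbed into $C$.

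The main obstacle is the star survival lemma with the asymmetric rates $\lambda,\lambda^a$, together with the star-to-star chaining that gives survival with non-vanishing probability rather than mere long survival. I would adapt the standard arguments of \cite{grac,link} to the bipartite setting, working with the effective rate $\lambda_1\lambda_2$.
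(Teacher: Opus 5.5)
Your outline is sound, but it organises the lower bound differently from the paper. The paper does not bound each term of $A_\star$ separately. It uses the explicit computation of $A_\star$ (Proposition~\ref{prop_opt}) to get $A_\star\in\{\mu_{\mathsf S},\,1+\bar{\gamma}_2\nu_{\mathsf S},\,1+\bar{\gamma}_2\nu_{\mathsf B},\,1+\bar{\gamma}_2\nu_{\mathsf D}\}$, and then needs only four constructions: the root is a type-1 star, or the root makes one transmission to a type-2 star, a type-2 bridge (which then hits type-1 stars), or a type-2 direct spreader.

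So the paper never needs type-1 bridges or type-1 direct spreaders. The appendix shows $\mu_{\mathsf B}\ge 1+\bar{\gamma}_2\nu_{\mathsf S}$ and $\mu_{\mathsf D}\ge 1+\bar{\gamma}_2\nu_{\mathsf D}$, so the one-step term wins whenever $\mu_\star\neq\mu_{\mathsf S}$. It also never needs a two-step construction (Lemma~\ref{lem_can_discard_two}).

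Your route bounds all three terms directly, for whichever targets attain $\mu_\star,\nu_\star$. This makes the lower bound largely independent of the case analysis and adapts verbatim to a type-2 root. The price is survival statements for all six target types plus a two-step estimate.

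The star ingredients coincide with Lemmas~\ref{lem_main_star} and~\ref{lem_line_and_star}. The paper chains along geometrically decreasing marks $2^{-k}u_0$, $2^{-k}v_0$ in explicit boxes rather than along your doubly-exponential sequence. For your sequence you need exponents strictly below $\gamma_1/\bar{\gamma}_2$ and $\gamma_2/\bar{\gamma}_1$ (product still $>1$, as with the paper's $\beta$). At the typical values the required neighbour exists only with constant probability.

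Two points need attention.

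\textbf{The two-step cap.} In the two-step estimate you mention the cap at $1$, but then integrate $\lambda^{1+a+\bar{\gamma}_1\mu_\star}\int_{\lambda^{\nu_\star}}^1 v^{-2\gamma_2}\,\mathrm{d}v$ as if the cap were never active. This is legitimate only if $\lambda^{a}v^{-\gamma_2}\lambda^{\bar{\gamma}_1\mu_\star}$ stays at most polylogarithmic down to $v=\lambda^{\nu_\star}$, i.e.\ $\Psi(\mu_\star,\nu_\star)=\bar{\gamma}_1\mu_\star-\gamma_2\nu_\star+a\ge 0$. Otherwise the construction gives strictly less than $\lambda^{1+a-\Delta_2\nu_\star+\bar{\gamma}_1\mu_\star}$.

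The inequality is true (Lemma~\ref{lem_positive_PP}) and has a short proof. Write $\Psi(\mu,\nu)=\bar{\gamma}_1(\mu-\psi(\nu))$, with $\psi$ increasing. Then $\psi(\nu_{\mathsf B})=\mu_{\mathsf S}$ and $\psi(\nu_{\mathsf D})=\mu_{\mathsf D}$. Also $\psi(\nu_{\mathsf S}\wedge\nu_{\mathsf D})\le\varphi^{-1}(\nu_{\mathsf S})=\mu_{\mathsf B}$, since $\psi\le\varphi^{-1}$ on $(-\infty,\nu_{\mathsf D}]$. Hence $\psi(\nu_\star)\le\mu_\star$. You must supply this step.

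Once you have it, you can drop the two-step construction altogether. For $\gamma_2\ge\frac12$ the third term equals $1+\bar{\gamma}_2\nu_\star+\Psi(\mu_\star,\nu_\star)\ge 1+\bar{\gamma}_2\nu_\star$. For $\gamma_2<\frac12$ it equals $1+a+\bar{\gamma}_1\mu_\star\ge\mu_\star$, because $\gamma_1\mu_\star\le\gamma_1\mu_{\mathsf S}=1+a$.

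\textbf{Direct-transmission failure rates.} A direct-transmission step from a vertex with $N$ suitable neighbours fails with probability $1/(1+\lambda_iN)$. With only polylogarithmic slack this is $(\log\frac1\lambda)^{-c}$, not $\exp(-(\log\frac1\lambda)^{c})$. The chain still works because $\lambda_iN$ grows geometrically along it, so the product of success probabilities stays bounded below, as the paper checks via~\eqref{eq_magic_Delta}. Neither point changes your conclusion.
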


\begin{proposition}[Upper bound]\label{prop_upper_bound}
	For all~$(\gamma_1,\gamma_2) \in \mathcal S$ and~$a > 0$, there exists~$C > 0$ such that, if~$\lambda$ is small enough,
	\[
		\theta(\lambda) \le  (\log \tfrac{1}{\lambda})^C \cdot \lambda^{A_\star}.
	\]
\end{proposition}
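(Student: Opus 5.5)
The plan is to bound $\theta(\lambda)$ by the probability that the infection ever reaches a \emph{target}, together with the probability that it survives without doing so, and to control both by a first-moment computation over infection paths. Fix a large constant $C'$ and call a type-1 vertex a target if its mark is below $\lambda^{\mu_\star}(\log\tfrac1\lambda)^{C'}$, and a type-2 vertex a target if its mark is below $\lambda^{\nu_\star}(\log\tfrac1\lambda)^{C'}$. Then
\[
\theta(\lambda)\le \mathbb P(\text{root is a target})+\mathbb P(\text{a target is infected})+\mathbb P(\text{survival while all infected vertices are non-targets}).
\]
The first term is at most $\lambda^{\mu_\star}(\log\tfrac1\lambda)^{C'}$. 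The remaining terms will be handled through the Harris graphical construction. Survival forces infection paths of arbitrary length, so the third term is also bounded by an expected number of infection paths. Both remaining terms therefore reduce to estimating
\[
\mathbb E\big[\#\{\text{infection paths } o=x_0\to x_1\to\dots\to x_k\}\big],
\]
where $x_k$ is the first target on the path in the second term. In the third term the paths are arbitrarily long through non-target vertices.

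The first key step is a path-weight lemma for a fixed graph. Consider the vertex sequence visited by an infection path and compress repeated back-and-forth excursions between a vertex and its neighbours. Each genuinely new transmission from a type-$i$ vertex costs a factor $\lambda_i$. Excursions around a vertex $x$ of degree $d_x$ contribute a geometric-type factor, at most $(1-c\lambda^{1+a}d_x)^{-1}$ up to constants. Away from targets no vertex is a star, since non-targets have $\lambda^{1+a}d_x\ll 1$ up to the chosen logarithms (using (a),(b) and the fact that $\mu_\star\le\mu_{\mathsf S}$, $\nu_\star\le\nu_{\mathsf S}$). Hence these factors stay bounded, apart from polylog corrections from the probability that degrees deviate from their conditional means. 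The second step averages over the environment with the multivariate Mecke formula for $\mathcal P_1,\mathcal P_2$. Integrating out positions produces, for each edge, the factor $u^{-\gamma_1}v^{-\gamma_2}$, and integrating marks recovers the heuristics of (b),(c):
\begin{itemize}
\item one step to a target gives $\lambda\int_0^{\lambda^{\nu_\star}}v^{-\gamma_2}\,dv\approx\lambda^{1+\bar\gamma_2\nu_\star}$;
\item two steps give $\lambda^{1+a-\Delta_2\nu_\star+\bar\gamma_1\mu_\star}$, where the term $-\Delta_2\nu_\star$ arises from optimising the intermediate type-2 mark and is present only when $\gamma_2>\frac12$.
\end{itemize}

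The third step, which I expect to be the main obstacle, is to show that paths of length $k\ge3$ through non-target intermediate vertices contribute no more than the one- and two-step bounds, and that the sum over $k$ converges with only polylog loss. The approach is an inductive "mark-exponent" bound. Track the best achievable exponent $\lambda^{\mu}$ (respectively $\lambda^{\nu}$) of the current vertex's mark, weighted by the accumulated probability cost. Since intermediate vertices are non-targets, $\mu<\mu_{\mathsf D}$, so $\psi(\varphi(\mu))<\mu$, and two steps strictly lose ground. Likewise $\mu<\mu_{\mathsf B}$ and $\nu<\nu_{\mathsf B}$ prevent a single hop into a target from any intermediate vertex from beating the direct routes. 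Turning this into a quantitative statement requires a contraction: each extra pair of steps multiplies the weight by $\lambda^{\varepsilon}$ or by a factor strictly below one. This is needed both to sum over $k$ and to make the third term above vanish, since an infinite non-target path has expected count zero.

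The delicate points are the spatial correlations, namely shared neighbours and clustering in the Mecke integrals, and vertices with marks near the thresholds. The correlations are handled by summing over distinct vertices with the factorial-moment (Mecke) formula. The threshold marks are absorbed into the $(\log\tfrac1\lambda)^{C}$ slack, which is where the final constant $C$ comes from.
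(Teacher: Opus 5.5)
Your outer decomposition (root is a target; a first target is reached; survival among non-targets), the first-moment-plus-Mecke strategy, and your one- and two-step exponents all agree with the paper. The gap is in the step you flag as the main obstacle: infection paths that revisit vertices. These cannot be discarded. A path with a repeated vertex generally cannot be shortened to a self-avoiding one, because recovery marks at the earlier vertex may block the shortcut, and such returns are exactly how stars and clusters sustain the infection.

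Your Step~1 only compresses depth-one excursions $x\to y\to x$ into a factor $(1-c\lambda^{1+a}d_x)^{-1}$. General traces return through longer excursions and around cycles of the strongly clustered graph. In a fixed graph their total weight is governed by the spectral radius of the weighted adjacency operator, not by individual degrees. For example, a complete bipartite cluster with $m$ type-1 and $n$ type-2 vertices has two-step weight of order $\lambda^{1+a}mn$, which can be large even though $\lambda^{1+a}\max(m,n)\ll 1$. Such clusters occur with positive density just above the thresholds. For $\gamma_1,\gamma_2>\frac12$ their subcriticality is precisely $\lambda^{1+a}\mathfrak u_1^{-\Delta_1}\mathfrak u_2^{-\Delta_2}\ll 1$, a joint consequence of $\mu_\star\le\mu_{\mathsf D}$ and $\nu_\star\le\nu_{\mathsf D}$ that no per-vertex non-star argument can see. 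In addition, $(1-c\lambda^{1+a}d_x)^{-1}$ is infinite with positive probability, so its annealed average needs a separate argument.

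On the annealed side, a vertex from which the trace branches to $n$ distinct new neighbours contributes $\int_{\mathfrak u_1}^1 u^{-\gamma_1 n}\,\mathrm du\asymp \mathfrak u_1^{1-\gamma_1 n}$ (for $\gamma_1 n>1$), which concentrates at the threshold. Your mark-exponent induction follows a single chain. It controls neither these integrals over arbitrary branching and revisit patterns, nor the number of traces sharing a given set of distinct vertices.

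What is missing is a bound that is uniform over the shape of the set of distinct vertices visited. The paper does this in four steps.
\begin{enumerate}
\item It bounds each ordered trace by the alternating martingale estimate $(2\lambda)^{\lceil \ell/2\rceil}(2\lambda^a)^{\lfloor \ell/2\rfloor}$.
\item It encodes the trace by its combinatorial path and discovery tree $T$. Mecke, after dropping the indicators of non-tree edges, then yields $\prod\mathscr U_{\deg_T(m)}\prod\mathscr V_{\deg_T(m)}$.
\item It applies three reductions: trimming a spare leaf, trimming a spare branch of length two, and collapsing a degree-two connector between non-leaves. Each extra distinct vertex costs a factor $(\log\tfrac1\lambda)^{-16}$. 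The reductions use, respectively, the star inequality, the two mixed star/bridge inequalities, and the direct-spreader inequality for $(\mathfrak u_1,\mathfrak u_2)$.
\item Iterating the reductions turns every tree into a segment of length one or two. This is how targets reached in three or more steps are shown to be dominated.
\end{enumerate}

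Finally, the bookkeeping must be by number of distinct vertices, not by length. The annealed expected number of traces of length $\ell$ is not summable in $\ell$: repeated bouncing off one vertex produces Poisson moments $\mathbb E[d^{\,j}]$, which grow super-exponentially. So killing the third term by a per-step contraction fails. The paper instead sums over $\ell$ for each fixed number $k$ of distinct vertices, weighing the count $\binom{\ell+1}{k}k^{\ell+1-k}$ against a factor $2\lambda^{a\wedge1}$ per step beyond $2k$. It then notes that survival without targets forces a trace with $k_\star\asymp\log\tfrac1\lambda$ distinct vertices, whose probability is negligible.
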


Proposition~\ref{prop_opt} is proved in the \hyperref[appn]{Appendix}, Proposition~\ref{prop_lower_bound} is proved in Section~\ref{s_lower_bound}, and Proposition~\ref{prop_upper_bound} is proved in Section~\ref{s_upper_bound}. We now say a few words about each of their proofs.

Proposition~\ref{prop_opt} is obtained by solving an optimisation problem using elementary Calculus arguments. 
Proposition~\ref{prop_lower_bound} is proved by giving rigorous versions of the strategies for infection survival that we have roughly outlined above.  We prove survival from the different kinds of target (star, bridge, direct spreader) by controlling the presence of vertices in large space-time boxes and infection events. For the star and bridge cases, a necessary first step is giving a lower bound on the extinction time of the infection on star graphs with infection rates~$\lambda_1$ towards the leaves and~$\lambda_2$ towards the centre.

For Proposition~\ref{prop_upper_bound}, the bulk of the work goes into proving that we can neglect infection paths that reach a target in more than two steps. One of our main contributions is a method to do this that is applicable simultaneously to all our numerous parameter regimes. It builds on the idea of classifying infection paths by their \emph{ordered trace}, as in~\cite{link}, and on the martingale bounds for infection paths in~\cite{mountford2013metastable}.
 
\subsection{Graphical construction of the contact process}
We briefly define the graphical construction of the contact process (paying attention to our slightly unusual setting with two different infection rates).

Let~$G=(V,E)$ be a bipartite graph, with~$V$ being split into disjoint sets~$V_1$ and~$V_2$, and each edge having a vertex of each. We take independent families of Poisson processes on~$[0,\infty)$, as follows:
\begin{itemize}
	\item \textit{recovery marks:} for each~$x \in V$, we take~$\mathcal R^x$ with rate~$1$;
	\item \textit{1-to-2 transmission marks:} for each~$(x,y)$ such that~$x \in V_1$,~$y \in V_2$ and~$\{x,y\} \in E$, we take~$\mathcal T^{(x,y)}$ with rate~$\lambda_1$;
	\item \textit{2-to-1 transmission marks:} for each~$(x,y)$ such that~$x \in V_2$,~$y \in V_1$ and~$\{x,y\} \in E$, we take~$\mathcal T^{(x,y)}$ with rate~$\lambda_2$.
\end{itemize}
We then define an \emph{infection path} in the usual way: it is a c\`adl\`ag function~$g:[s,t] \to V$ (where~$0 \le s < t$) that does not touch any recovery marks (that is: $r \notin \mathcal R^{g(r)}$ for all~$r$), and only jumps by traversing transmission arrows (that is:~$g(r)\neq g(r-)$ implies~$r \in \mathcal T^{(g(r-),g(r))}$). Given an initial configuration~$\xi_0 \in \{0,1\}^V$, we can then construct the contact process by setting
\[
	\xi_t(x) = \mathds{1}\left\{\begin{array}{l}\text{there is $y \in V$ with $\xi_0(y)=1$ and}\\ \text{an infection path~$g:[0,t]\to V$ with~$g(0)=y,\; g(t)=x$}\end{array}\right\}, \qquad x \in V,\; t > 0,
\]
where~$\mathds{1}$ denotes the indicator function.

\section{Lower bound}
\label{s_lower_bound}

To lower bound the survival probability, we construct an event in which there is an infinite infection path. In most cases these paths will use star subgraphs to sustain the infection, so we first analyse the bipartite contact process around a star.

\subsection{Extinction time on stars}
In what follows,~$S_n$ is a star graph with vertex set $\{o,x_1,\ldots,x_n\}$ and edge set~$\{\{o,x_1\},\ldots,\{o,x_n\}\}$. The vertex~$o$ is called the \emph{centre}.

\begin{lemma}\label{lem_first_starr}
	There exists~$c > 0$ such that the following holds. Let~$\lambda_1,\lambda_2 \in (0,1]$ and~$n \ge m \ge 2e/\lambda_2$. Let~$A$ be a set of leaves of~$S_n$ with~$|A|=m$. Let~$(\xi_t)_{t \ge 0}$ be the contact process on~$S_n$ where the centre has infection rate~$\lambda_1$ and the leaves have infection rate~$\lambda_2$, started from~$A$ infected. Then,
	\[
		\mathbb P \left(|\xi_1| \ge \lambda_1 n/(8e) \right) > 1 - e^{-cm} - e^{-c\lambda_2m} - e^{-c\lambda_1n}.
	\]
\end{lemma}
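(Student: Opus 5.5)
The plan is a two-phase argument using the graphical construction: first the centre $o$ is infected quickly from the $m$ infected leaves of $A$, and then $o$ spreads the infection to a positive fraction of the leaves during the remaining time before $t=1$.

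\textbf{Phase 1: infecting the centre.} Work on the time window $[0,1/2]$. A leaf $x\in A$ has no recovery mark in $[0,1/4]$ with probability $e^{-1/4}$. Given this, it sends a transmission arrow to $o$ in $[0,1/4]$ with probability $1-e^{-\lambda_2/4}\ge \lambda_2/8$. These events are independent across leaves. Let $N$ be the number of leaves in $A$ for which both happen. Then $N$ is binomial with parameters $m$ and at least $e^{-1/4}\lambda_2/8$, so $\mathbb E N\gtrsim \lambda_2 m \ge 2e\cdot(\text{const})$. A Chernoff bound gives $N\ge 1$ with probability at least $1-e^{-c\lambda_2 m}$. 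The hypothesis $m\ge 2e/\lambda_2$ is what keeps this mean of order at least a constant, so the bound is nontrivial. The term $e^{-cm}$ could come from a cruder preliminary step, for instance requiring that at least $m/2$ leaves of $A$ survive to time $1/4$ (Chernoff, failure probability $e^{-cm}$). I would keep that step, since it matches the statement.

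\textbf{Phase 2: spreading from the centre.} On the event from Phase 1, $o$ is infected at some time $\tau\le 1/4$. Let $E$ be the event that $o$ has no recovery mark in $[\tau,\tau+1/2]$. Since recovery marks are a Poisson process independent of $\tau$ (by the strong Markov property, or simply by conditioning on the arrows used to define $\tau$), $E$ has probability $e^{-1/2}$, which is a constant. A constant failure probability would be too large for the statement, so this step needs to be made robust.

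\textbf{Main obstacle: keeping $o$ infected.} The key difficulty is obtaining a constant-order infection window at $o$ with failure probability exponentially small. I would handle it by restarting. Divide $[0,1/2]$ into $K$ short slots and use the many infected leaves of $A$ to reinfect $o$ whenever it recovers. Alternatively, as above, keep $\Theta(m)$ leaves of $A$ alive on $[0,1/2]$ and note that they reinfect $o$ at total rate $\gtrsim\lambda_2 m\ge 2e$. With this rate, the fraction of time in $[0,1/2]$ during which $o$ is infected is at least a constant, except on an event of probability at most $e^{-c\lambda_2 m}$; this follows from a standard alternating renewal large deviation. If this turns out to be delicate, I would fall back to taking the factor $8e$ in the statement as the slack for a constant fraction of time, say $1/4$.

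\textbf{Phase 3: infecting the leaves.} Suppose $o$ is infected for a total time $T\ge 1/4$ in $[0,1/2]$. Consider each leaf $y\notin A$; there are $n-m$ such leaves, and we may also include those in $A$. For each, the events that $o$ fires an arrow to $y$ during the infected period and that $y$ then has no recovery mark until time $1$ are conditionally independent given the trajectory of $o$. Each has probability at least $(1-e^{-\lambda_1/4})e^{-1}\ge \lambda_1/(4e)$. A Chernoff bound then yields at least $\lambda_1 n/(8e)$ infected leaves at time $1$, except on an event of probability $e^{-c\lambda_1 n}$. A union bound over the three phases gives the claim.
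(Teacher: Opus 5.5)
\textbf{Verdict.} Your three ingredients are the right ones, and they are exactly what produce the three error terms. (The paper gives no argument of its own; it transfers Lemma~2.20 of the cited survey, relabelling $\lambda$ as $\lambda_1$ or $\lambda_2$.)
\begin{enumerate}
\item A set $A'\subseteq A$ of at least $m/2$ leaves has no recovery mark in $[0,\frac12]$, with error $e^{-cm}$.
\item Since $\lambda_2|A'|\ge e$, these leaves keep reinfecting the centre, which is therefore infected for a constant amount of time, with error $e^{-c\lambda_2 m}$.
\item Meanwhile the centre infects a $\lambda_1$-fraction of the leaves, with error $e^{-c\lambda_1 n}$.
\end{enumerate}
Your Phases~1 and~2 are superseded by this and can be dropped. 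However, Phase~3 fails as written, and the crux of step~2 is asserted rather than proved.

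\textbf{The gap in Phase~3.} In the true process, the infected periods of $o$ depend on the arrows $o\to y$ and on the recovery marks at $y$, because an infected $y$ can reinfect $o$. So the events for different $y$ are \emph{not} conditionally independent given the trajectory of $o$.

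The repair is to use instead the set $\tilde I\subseteq[0,\frac12]$ of times at which $o$ is infected when only the recovery marks at $o$ and the arrows from $A'$ to $o$ are used. By monotonicity of the graphical construction, $o$ is truly infected on $\tilde I$. But $\tilde I$ is a function of the recovery marks of the leaves of $A$. Hence only leaves outside $A$ give conditionally independent trials, and you cannot ``also include those in $A$''. With only $n-m$ trials the count is too small when $m$ is comparable to $n$; if $m=n$ there are none.

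Split into cases:
\begin{itemize}
\item If $m\ge n/4$, the leaves of $A$ with no recovery mark in $[0,1]$ already number at least $m/(2e)\ge\lambda_1 n/(8e)$ with probability $1-e^{-cm}$.
\item If $m<n/4$, use the more than $3n/4$ leaves outside $A$.
\end{itemize}
Also, your bound $(1-e^{-\lambda_1/4})e^{-1}\ge\lambda_1/(4e)$ is false, since $1-e^{-x}<x$. You only get $7\lambda_1/(32e)$. Over $3n/4$ leaves this still exceeds $\lambda_1 n/(8e)$ by a constant factor, so the Chernoff bound still applies.

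\textbf{The unproved crux.} You need $\mathrm{Leb}(\tilde I)\ge\frac14$ outside an event of probability $e^{-c\lambda_2 m}$. This should be proved rather than attributed to a renewal large deviation, and the proof is short. Given $A'$, the arrows from $A'$ to $o$ form a Poisson process of rate $r=\lambda_2|A'|\ge\lambda_2 m/2$, independent of the recovery marks at $o$.
\begin{itemize}
\item When $\lambda_2 m$ is bounded, it suffices that the event has probability bounded away from $0$, since $c$ may be taken small. For instance, take an arrow in $[0,\frac18]$ and no recovery mark at $o$ in $[0,\frac12]$.
\item When $\lambda_2 m$ is large, cut $[0,\frac12]$ into $\lfloor\delta r\rfloor$ equal pieces, with $\delta\ll\varepsilon$ small. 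Independently, each piece has an arrow in its first $\varepsilon$-fraction and no recovery mark at $o$, with probability close to $1$. A Chernoff bound over the pieces then gives $\mathrm{Leb}(\tilde I)\ge\frac14$ with error $e^{-c\delta r}\le e^{-c'\lambda_2 m}$.
\end{itemize}
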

\begin{proof}
	The proof is a line-by-line repetition of the proof of Lemma~2.20 in~\cite{valesin2024survey}, except that each occurrence of~$\lambda$ in that proof has to be replaced by either~$\lambda_1$ or~$\lambda_2$ here. 
Specifically, $\lambda_1$ appears in the bound for the probability of the event $E_1$,
$\lambda_2$ in the bound for the probability of $E_2$, and $\lambda_1$ again in the
definition of $E_3$ and in the corresponding probability bound.
\end{proof}

The next lemma isolates the mechanism behind survival by repeated transfer between well-connected type-1 vertices.

%
%
\begin{lemma}\label{lem_previous_star}
	There exists~$C > 0$ such that the following holds. Let~$\lambda_1,\lambda_2 \in (0,1]$ and~$n > \frac{C}{\lambda_1\lambda_2}$. Let~$(\xi_t)_{t \ge 0}$ be the contact process on~$S_n$ where the centre has infection rate~$\lambda_1$ and the leaves have infection rate~$\lambda_2$, started from only the centre infected. Then,
	\[
		\mathbb P \left( \sup_{ t > 0} |\xi_t| \ge \frac{\lambda_1 n}{8e}\right) \ge 1-\frac{3}{\sqrt{\lambda_1\lambda_2 n}}.
	\]
\end{lemma}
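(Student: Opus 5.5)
The plan is to reduce to Lemma~\ref{lem_first_starr}. Set $T := \frac{1}{\sqrt{\lambda_1\lambda_2 n}}\cdot\frac{1}{\lambda_1}\cdot k$ for a suitable time scale. The cleanest route is a two-stage argument. In the first stage the centre stays infected for a short time and seeds many leaves. In the second stage, once $m \ge 2e/\lambda_2$ leaves are infected, Lemma~\ref{lem_first_starr} applies and produces $|\xi_1|\ge \lambda_1 n/(8e)$ (relative to the restart time) with high probability.

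\textbf{Stage 1.} Let $\tau$ be the first recovery time of the centre, so $\tau\sim\mathrm{Exp}(1)$. Choose $s:=1/\sqrt{\lambda_1\lambda_2 n}$, which is small when $C$ is large. Then $\mathbb P(\tau<s)\le s$. On $\{\tau\ge s\}$, each leaf independently receives an infection arrow from the centre during $[0,s]$ and then has no recovery mark during the rest of $[0,s]$. The probability of this is at least $\lambda_1 s e^{-s}/2$, say, so the number $M$ of leaves infected at time $s$ dominates a $\mathrm{Bin}(n,c\lambda_1 s)$ variable. Its mean is $c\lambda_1 n s = c\sqrt{\lambda_1 n/\lambda_2}$, and this is at least $\sqrt{C}\cdot c/\lambda_2$, which exceeds $4e/\lambda_2$ once $C$ is large. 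Chebyshev, or a Chernoff bound, gives $\mathbb P(M < 2e/\lambda_2)\le 1/\mathrm{mean} \le \lambda_2^{1/2}/(c\sqrt{\lambda_1 n})$. This is at most $1/\sqrt{\lambda_1\lambda_2 n}$ because $\lambda_2\le 1$.

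\textbf{Stage 2.} Apply Lemma~\ref{lem_first_starr} at time $s$, using the Markov property and monotonicity. Take $A$ to be a subset of size $m=\lceil 2e/\lambda_2\rceil\le n$ of the infected leaves; it is enough to start from $A$ alone, since by attractiveness the process from a larger configuration dominates. The failure probability is then at most $e^{-cm}+e^{-c\lambda_2 m}+e^{-c\lambda_1 n}$. The term $e^{-c\lambda_2 m}\le e^{-2ec}$ is a constant, not small, so here I would instead choose $m$ larger. Stage 1 actually gives $M$ of order $\sqrt{\lambda_1 n/\lambda_2}$, so take $m := \lfloor c'\sqrt{\lambda_1 n/\lambda_2}\rfloor$. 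Then $\lambda_2 m \asymp \sqrt{\lambda_1\lambda_2 n}$, and $e^{-c\lambda_2 m}\le e^{-c''\sqrt{\lambda_1\lambda_2 n}}\le 1/\sqrt{\lambda_1\lambda_2 n}$. The Chebyshev bound in Stage 1 must then be redone for the event $\{M<m\}$ with $m$ equal to half the mean; it gives the same order $1/\sqrt{\lambda_1 n/\lambda_2}\le 1/\sqrt{\lambda_1\lambda_2 n}$. The remaining terms $e^{-cm}$ and $e^{-c\lambda_1 n}$ are even smaller, since $m\ge\lambda_2 m$ and $\lambda_1 n \ge \sqrt{\lambda_1\lambda_2 n}$ when $C\ge1$. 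Also $m\le n$ holds because $\lambda_1\le \lambda_2 n$ for large $C$, and $m\ge 2e/\lambda_2$ holds for large $C$.

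A union bound over the three error sources $\{\tau<s\}$, $\{M<m\}$ and the Lemma~\ref{lem_first_starr} failure gives a total of at most $3/\sqrt{\lambda_1\lambda_2 n}$. On the complement, $|\xi_{s+1}|\ge \lambda_1 n/(8e)$, which implies the supremum bound. The main obstacle is the bookkeeping in Stage 1: the mean $\lambda_1 n s$ has to be comfortably large compared with $1/\lambda_2$, while the error $s$ from the centre recovering early stays of order $(\lambda_1\lambda_2 n)^{-1/2}$. The choice $s=(\lambda_1\lambda_2 n)^{-1/2}$ balances these, and the constants are absorbed into $C$.
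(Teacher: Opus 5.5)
Your proposal is correct and is essentially the paper's proof: you use the same time $t_0=(\lambda_1\lambda_2 n)^{-1/2}$ and the same three error sources (early recovery of the centre, too few seeded leaves, and failure of Lemma~\ref{lem_first_starr} started from $m\asymp\sqrt{\lambda_1 n/\lambda_2}$ leaves), combined by a union bound. The one point to tighten is that you must use the Chernoff bound for $\{M<m\}$, as the paper does. Chebyshev only gives an error of $\frac{4\lambda_2}{c}\cdot(\lambda_1\lambda_2 n)^{-1/2}$ with $c\le 1$, which cannot be absorbed into $C$ when $\lambda_2$ is of order one and so does not give the explicit constant $3$; your inequality ``$\lambda_2^{1/2}/(c\sqrt{\lambda_1 n})\le 1/\sqrt{\lambda_1\lambda_2 n}$ because $\lambda_2\le 1$'' silently drops the factor $1/c$.
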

\begin{proof}
	Let~$t_0:=1/\sqrt{\lambda_1\lambda_2 n}$. Let~$E_1$ be the event that the centre has no recovery by time~$t_0$, so that
	\[\mathbb P(E_1) = e^{-t_0} > 1 - t_0 =1 - \frac{1}{\sqrt{\lambda_1\lambda_2 n}}.\]

	Let~$p:=(1-e^{-\lambda_1 t_0})e^{-t_0}$. Since we can take~$n$ much larger than~$1/(\lambda_1\lambda_2)$, we can bound~$e^{-t_0} > 1/2$ and~$1-e^{-\lambda_1 t_0} \ge \lambda_1 t_0/2$, so
	\begin{equation}\label{eq_bound_np} p \ge \tfrac14\lambda_1t_0= \tfrac14 \sqrt{\lambda_1/(\lambda_2n)}\qquad \text{and}\qquad np \ge \tfrac14 \sqrt{\lambda_1 n /\lambda_2}.\end{equation}

	Let~$E_2$ be the event that~$E_1$ occurs, and there are at least~$np/2$ leaves that receive a transmission from the centre before~$t_0$, and have no recovery mark before~$t_0$.
 Then, a Chernoff bound gives
	\begin{align*}
		\mathbb P(E_2 \mid E_1) = \mathbb P\left( \mathrm{Bin}(n,p) > np/2  \right) \ge 1 - e^{-np/8}\stackrel{\eqref{eq_bound_np}}{\ge}  1- e^{-\frac{1}{32}\sqrt{\lambda_1 n /\lambda_2}} > 1-\frac{1}{\sqrt{\lambda_1\lambda_2n}},
	\end{align*}
	where the last inequality follows from~$\lambda_2 \le 1$ and~$n \gg \frac{1}{\lambda_1\lambda_2}$.

	Now, let~$E_3$ be the event that~$E_2$ occurs, and at time~$t_0 + 1$, there are more than~$\lambda_1 n/(8e)$ infected vertices. Noting that~$np/2 > \frac18 \sqrt{\lambda_1 n /\lambda_2} \gg 1/\lambda_2$ (again by~$n \gg 1/(\lambda_1\lambda_2)$),  we can use Lemma~\ref{lem_first_starr}, which gives
	\begin{equation}\begin{split}
		\mathbb P(E_3 \mid E_2) &> 1 - e^{-np/2} - e^{-c\lambda_2np/2} - e^{-c\lambda_1n}\\
		&\stackrel{\eqref{eq_bound_np}}{\ge} 1 - e^{-\frac18 \sqrt{\lambda_1 n /\lambda_2}} - e^{-\frac{c}{8} \sqrt{\lambda_1 \lambda_2 n}} - e^{-c\lambda_1 n} > 1- \frac{1}{\sqrt{\lambda_1 \lambda_2 n}},
	\end{split}\label{eq_three_terms_bound}
	\end{equation}
completing the proof.
\end{proof}

\begin{lemma}\label{lem_main_star}
	There exists~$\varepsilon > 0$ such that the following holds. Let~$\lambda_1,\lambda_2 \in (0,1]$ and~$n > \frac{1}{\varepsilon\lambda_1\lambda_2}$. Let~$(\xi_t)_{t \ge 0}$ be the contact process on~$S_n$ where the centre has infection rate~$\lambda_1$ and the leaves have infection rate~$\lambda_2$, started from only the centre infected. Then,
	\[
		\mathbb P \left( \xi_{\exp\{\varepsilon \lambda_1 \lambda_2 n \}} \neq \varnothing\right) \ge 1-\frac{4}{\sqrt{\lambda_1\lambda_2 n}}.
	\]
\end{lemma}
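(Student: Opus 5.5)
Starting from the centre alone, I would first invoke Lemma~\ref{lem_previous_star}: with probability at least $1-3/\sqrt{\lambda_1\lambda_2 n}$ there is a time $\tau$ at which $|\xi_\tau| \ge \lambda_1 n/(8e)$. Because $\varepsilon$ may be taken small, $n > 1/(\varepsilon\lambda_1\lambda_2)$ implies $n > C/(\lambda_1\lambda_2)$, so that lemma applies. The time $\tau$ can be chosen as a stopping time. At $\tau$ the infected set contains at least $m_0 := \lambda_1 n/(8e) - 1$ infected leaves. Since $\lambda_1 n \ge \lambda_1\lambda_2 n \gg 1$, we have $m_0 \ge \lambda_1 n/(16e)$. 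To meet the hypothesis of Lemma~\ref{lem_first_starr} I also need $m_0 \ge 2e/\lambda_2$. This is equivalent to $\lambda_1\lambda_2 n \ge 32e^2$, which holds once $\varepsilon < 1/(32e^2)$.

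Next I iterate Lemma~\ref{lem_first_starr} over unit time intervals using the strong Markov property and monotonicity (attractiveness) of the contact process. If at time $\tau + k$ there are at least $m_0$ infected leaves, I keep an arbitrary subset $A$ of exactly $m_0$ of them. Restarting from $A$ and comparing with the process from $\xi_{\tau+k}$ by monotonicity, Lemma~\ref{lem_first_starr} shows that at time $\tau+k+1$ there are again at least $\lambda_1 n/(8e)$ infected vertices, hence at least $m_0$ infected leaves. The failure probability of each step is at most
\[
e^{-cm_0} + e^{-c\lambda_2 m_0} + e^{-c\lambda_1 n} \le 3 e^{-c' \lambda_1\lambda_2 n},
\]
with $c' = c/(16e)$, using $\lambda_2 \le 1$ and $m_0 \ge \lambda_1 n/(16e)$.

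Finally I take a union bound over $K := \lceil e^{\varepsilon\lambda_1\lambda_2 n}\rceil$ steps. If $\tau \le e^{\varepsilon\lambda_1\lambda_2 n}$, then all steps up to time $\tau + K \ge e^{\varepsilon\lambda_1\lambda_2 n}$ succeed except with probability at most
\[
3K e^{-c'\lambda_1\lambda_2 n} \le 6 e^{-(c'-\varepsilon)\lambda_1\lambda_2 n}.
\]
A star that stays infected on every integer time in $[\tau, \tau+K]$ is in particular infected at the deterministic time $e^{\varepsilon\lambda_1\lambda_2 n}$; there is a gap between integer times, but the set is nonempty at time $\tau+\lceil t-\tau\rceil$, and nonemptiness at a later time implies nonemptiness at earlier times. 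Choosing $\varepsilon < c'/2$ gives a bound of $6e^{-(c'/2)x}$ with $x = \lambda_1\lambda_2 n$. Since $x > 1/\varepsilon$ is large, this is at most $1/\sqrt{x}$. The total failure probability is then $3/\sqrt{x} + 1/\sqrt{x} = 4/\sqrt{x}$, as required.

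The main obstacle is controlling $\tau$ itself: Lemma~\ref{lem_previous_star} only gives $\sup_t|\xi_t|$ large, not when this happens. Looking into its proof, however, the event constructed there occurs at time $t_0 + 1 \le 2$. So I would either restate that lemma as $|\xi_{t_0+1}| \ge \lambda_1 n/(8e)$ with the same probability, or note that the supremum is attained at some finite stopping time and absorb the start by shifting the time window. This makes $\tau \le 2$ deterministic and the remaining steps routine.
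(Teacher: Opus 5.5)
Your proposal is correct and is essentially the paper's proof: you reach $\lambda_1 n/(8e)$ infected vertices via Lemma~\ref{lem_previous_star}, then iterate Lemma~\ref{lem_first_starr} over unit time steps and take a union bound. Your bookkeeping is more careful than the paper's sketch, since subtracting the centre and using per-step failure $3e^{-c'\lambda_1\lambda_2 n}$ with $\varepsilon<c'/2$ is exactly what makes the union bound over $e^{\varepsilon\lambda_1\lambda_2 n}$ steps close, a point the paper's displayed per-step bound leaves implicit. The \emph{main obstacle} you raise is not one: if $\tau > e^{\varepsilon\lambda_1\lambda_2 n}$, then $\xi_\tau\neq\varnothing$ already forces $\xi_{\exp\{\varepsilon\lambda_1\lambda_2 n\}}\neq\varnothing$ by your own backward-monotonicity remark, so $\tau<\infty$ suffices, as in the paper, and no restatement of Lemma~\ref{lem_previous_star} is needed.
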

\begin{proof}
	Fix~$\varepsilon < 1/C$, where~$C$ is the constant of Lemma~\ref{lem_previous_star}. 
	Letting
	\[\sigma:=\inf\{t: |\xi_t|\ge \lambda_1 n/(8e)\},\]
	Lemma~\ref{lem_previous_star} implies that~$\mathbb P(\sigma < \infty) > 1-3/\sqrt{\lambda_1\lambda_2 n}$. Now, for~$k \in \mathbb N_0$, let
	\[
		E_k := \{\sigma < \infty\} \cap \{|\xi_{\sigma + k + 1}| \ge \lambda_1 n/(8e)\}.
	\]
	
	Noting that~$\lambda_1 n /(8e) \gg 2e/\lambda_2$, we can apply Lemma~\ref{lem_first_starr}, which gives
	\[\mathbb P(E_{k+1} \mid E_k) > 1 - e^{-c\frac{\lambda_1 n}{8e}}-e^{-c \frac{\lambda_1\lambda_2 n}{8e}} - e^{-c\lambda_1 n} > 1- e^{-\varepsilon \lambda_1 \lambda_2 n},\]
	if~$\varepsilon$ is chosen small enough. The desired bound now easily follows.
\end{proof}

\subsection{Lower bounds on survival probability for four strategies}

With the lemmas of the previous section, we can now construct the survival events. In the first result we show how the infection can propagate along a path with an increasing sequence of degrees, and in the subsequent result we will use this to say that the infection survives whenever it infects a vertex beyond the star threshold.

\begin{lemma}[Survival along a sequence of stars]\label{lem_line_and_star}
	Let~$G$ be a bipartite graph. Let~$(x_k)_{k\ge 0}$ and~$(y_k)_{k \ge 1}$ be sequences of vertices of~$G$ such that:
	\begin{itemize}
		\item $(x_k)_{k \ge 0}$ have type 1 and~$(y_k)_{k \ge 1}$ have type~2, with
	\[
x_0 \sim y_1 \sim x_1 \sim y_2 \sim x_2 \sim \cdots;
	\]
\item the sequence~$(x_k)_{k\ge 0}$ has infinitely many distinct elements.
	\end{itemize}
	Then, the contact process on~$G$ with rates~$\lambda_1,\lambda_2$ and~$x_0$ initially infected survives forever with probability at least
	\begin{equation*}
		1-\sum_{k=0}^\infty ((\varepsilon \lambda_1 \lambda_2 \deg(x_k))^{-1/2} + \exp\{-c\lambda_1 \lambda_2^2 \lfloor \exp(\varepsilon \lambda_1 \lambda_2 \cdot \deg(x_k)) \rfloor \}),
	\end{equation*}
	where~$\varepsilon > 0$ is the constant of Lemma~\ref{lem_main_star} and~$c > 0$ is a constant not depending on~$\lambda_1,\lambda_2$.
\end{lemma}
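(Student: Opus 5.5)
We will build the infinite infection path inductively: at each stage $k$ the infection sits at $x_k$, survives there for a long time because $x_k$ is a star, and during that time it crosses the two-step path $x_k \sim y_{k+1} \sim x_{k+1}$. The probability bound is then obtained by adding up the failure probabilities over all stages. The two summands in the statement are the costs of these two events. The first, $(\varepsilon\lambda_1\lambda_2\deg(x_k))^{-1/2}$, is (up to constants absorbed by rescaling $\varepsilon$) the failure probability in Lemma~\ref{lem_main_star} for the star made of $x_k$ and its neighbours, with $n=\deg(x_k)$. The second is the chance that a long-lived star fails to pass the infection along the path.

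\textbf{Step 1 (the star phase).} Fix $k$ and suppose $x_k$ is infected at some stopping time $\tau_k$. Restrict the graphical construction to the star $S_{\deg(x_k)}$ with centre $x_k$, whose leaves are all neighbours of $x_k$ (all of type 2). By monotonicity, the contact process on $G$ dominates the process on this star. The strong Markov property and Lemma~\ref{lem_main_star} then show that, outside an event of probability at most $4(\lambda_1\lambda_2\deg(x_k))^{-1/2}$, the star process stays alive up to time $\tau_k+T_k$, where $T_k:=\lfloor\exp(\varepsilon\lambda_1\lambda_2\deg(x_k))\rfloor$. The proof of Lemma~\ref{lem_main_star} gives more than this: on this event, $|\xi_{\sigma+j}|\ge\lambda_1\deg(x_k)/(8e)$ at every integer time $j\le T_k$. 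Note that we must keep that proof's dependence on $\deg(x_k)$ if $\deg(x_k)$ is not large relative to $1/(\lambda_1\lambda_2)$. In that case the summand is at least $1$ (after adjusting $\varepsilon$), and the bound is trivial.

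\textbf{Step 2 (passing the infection on).} During each unit interval $[j,j+1]$ with $j\le T_k$, at least one vertex of the star is infected at time $j$. We want to bound from below the probability that, within that interval, the infection goes from the star to $x_k$ (if it is not already there), then to $y_{k+1}$, then to $x_{k+1}$. Conditioning on an infected leaf at time $j$, it transmits to $x_k$ with probability of order $\lambda_2$. The centre then infects $y_{k+1}$ with probability of order $\lambda_1$, and $y_{k+1}$ infects $x_{k+1}$ with probability of order $\lambda_2$. All of this happens within unit time with no recoveries with probability at least $c\lambda_1\lambda_2^2$. The attempts in disjoint unit intervals use disjoint parts of the graphical construction, so they are independent. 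Hence the probability that all $T_k$ attempts fail is at most $(1-c\lambda_1\lambda_2^2)^{T_k}\le\exp\{-c\lambda_1\lambda_2^2T_k\}$. The main obstacle is making this independence rigorous, because the event ``some vertex is infected at time $j$'' depends on earlier marks. The fix is to condition on $\mathcal F_j$ and use only marks in $(j,j+1]$ on the edges $\{x_k,\text{leaf}\}$, $\{x_k,y_{k+1}\}$ and $\{y_{k+1},x_{k+1}\}$, so that the conditional success probability is at least $c\lambda_1\lambda_2^2$ uniformly. We set $\tau_{k+1}$ to be the first time $x_{k+1}$ is infected through this mechanism.

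\textbf{Step 3 (iteration and survival).} Iterating with the strong Markov property at $\tau_{k+1}$ and applying a union bound over $k$, with probability at least one minus the stated sum every $\tau_k$ is finite. We then need to rule out the possibility that all $\tau_k<\infty$ while the process dies. This is handled as follows. If the sum is less than $1$, then infinitely many distinct $x_k$ all have $\varepsilon\lambda_1\lambda_2\deg(x_k)>1$, and the summands tend to $0$, so $\deg(x_k)\to\infty$ along the sequence. Moreover, each $\tau_{k+1}$ exceeds $\tau_k$, and an infection time of a vertex is only defined while the process is alive. Therefore, if infinitely many stages succeed, the infection is present at arbitrarily large times, or at least it is present at $x_k$ for infinitely many $k$. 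Since the $\tau_k$ are strictly increasing, the infection is nonempty at all times, which gives survival.
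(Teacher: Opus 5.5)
Your Steps 1 and 2 follow the paper's proof: the same stopping times $\tau_k$, Lemma~\ref{lem_main_star} applied to the star $B(x_k,1)$ via monotonicity and the strong Markov property, a conditional success probability of at least $c\lambda_1\lambda_2^2$ per unit interval using only the marks in that interval, the geometric bound $(1-c\lambda_1\lambda_2^2)^{T_k}$, and a union bound over $k$. The gap is at the end of Step~3, in passing from ``every $\tau_k$ is finite'' to survival. The inference ``since the $\tau_k$ are strictly increasing, the infection is nonempty at all times'' is not valid. A strictly increasing sequence of finite times can converge to a finite limit $\tau_\infty=\sup_k\tau_k$, and all you get is that the process is alive on $[0,\tau_\infty)$. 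Nothing you wrote excludes extinction shortly after $\tau_\infty$, and ``present at arbitrarily large times'' is asserted rather than derived. The paper closes this step in one line, using the hypothesis that $(x_k)$ has infinitely many distinct elements, which your Step~3 never uses for this purpose. If all $\tau_k<\infty$, infinitely many distinct vertices are infected. Implicitly, only finitely many vertices can be infected in a bounded time window, so this forces the process to be alive at arbitrarily large times.

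You can also repair Step~3 with ingredients you already have. Let $G_k$ be the event that the infection is present in $B(x_k,1)$ throughout $[\tau_k,\tau_k+T_k]$ and $\tau_{k+1}\le\tau_k+T_k$. Steps 1--2 bound $\mathbb P(G_k^c\cap\{\tau_k<\infty\})$ by the $k$-th summand, so $\mathbb P(\bigcap_k G_k)$ is at least one minus the stated sum. On $\bigcap_k G_k$, the relations $\tau_k\le\tau_{k+1}\le\tau_k+T_k$ (with $\tau_0=0$) make consecutive intervals $[\tau_k,\tau_k+T_k]$ overlap. Hence the process is alive on $[0,\sup_k(\tau_k+T_k))$. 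This supremum is infinite because, as you observed, a sum below $1$ forces $\deg(x_k)\to\infty$ and hence $T_k\to\infty$. This version needs neither the distinctness hypothesis nor any non-explosion property of $G$, so it is slightly more self-contained than the paper's one-liner. However, it requires keeping the star-survival event from Step~1 inside the good event, and your write-up drops that event when it draws the conclusion.
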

\begin{proof}
	Let~$(\xi_t)_t$ denote the contact process on~$G$ with rates~$\lambda_1, \lambda_2$ started from~$x_0$ infected. Define~$\tau_0:=0$ and, recursively,
	\[
		\tau_{k+1}:= \begin{cases} \inf\{t \ge \tau_k:\; x_{k+1} \text{ is infected at time }t\}&\text{if } \tau_k<\infty;\\ \infty&\text{otherwise}.\end{cases}
	\]
	
	Since~$(x_k)_k$  has infinitely many distinct elements, we have
	\[
		\mathbb P((\xi_t)_t \text{ survives forever}) \ge \mathbb P(\tau_k < \infty \text{ for all } k) \ge 1 - \sum_{k=0}^\infty \mathbb P(\tau_{k+1}=\infty \mid \tau_k < \infty).
	\]
	
	Define
\[
	t_k := \exp\left\{\varepsilon \lambda_1 \lambda_2 \cdot  \deg(x_k) \right\},\qquad k \ge 0.
\]

Letting~$B(x_k,1)$ be the star graph induced by~$x_k$ and its neighbours, we bound
\begin{align}
	\nonumber&\mathbb P(\tau_{k+1} = \infty \mid \tau_k < \infty) \le \mathbb P(\tau_{k+1} > \tau_k+t_k \mid \tau_k < \infty)\\[.2cm]
	\label{eq_new_star_lemma1}&\le \mathbb P(\xi_s \cap B(x_k,1) = \varnothing \text{ for some } s \in [\tau_k,\tau_k+t_k] \mid \tau_k < \infty)\\
	\label{eq_new_star_lemma2}&\quad + \mathbb P(\xi_s \cap B(x_k,1) \neq \varnothing  \text{ for all } s \in [\tau_k,\tau_k+t_k], \; \tau_{k+1}> \tau_k + t_k \mid \tau_k < \infty).
\end{align}

	By Lemma~\ref{lem_main_star}, the expression in~\eqref{eq_new_star_lemma1} is smaller than~$(\varepsilon \lambda_1 \lambda_2 \deg(x_k))^{-1/2}$.

	Next, note that if at some time~$t$ the infection is present somewhere in~$B(x_k,1)$, then with probability at least~$c\lambda_1 \lambda_2^2$ (for some universal constant~$c$), it reaches~$x_{k+1}$ by time~$t+1$. To see this, we first consider the case where at time~$t$, $x_k$ and~$y_{k + 1}$ are healthy, but some other neighbour of~$x_k$ is infected. Then, a factor~$\lambda_2$ is included for the infection to reach~$x_k$, then a factor~$\lambda_1$ for it to reach~$y_{k + 1}$, and finally another~$\lambda_1$ to reach~$x_{k+1}$ (the constant~$c>0$ being present to take care of recovery marks and the constraint that everything must happen in one unit of time). Other possible cases are when either~$x_k$ or~$y_{k + 1}$ are infected at time~$t$; in such situations, the expression is either~$c\lambda_2$ or~$c \lambda_1 \lambda_2$.

	Using this consideration, the expression in~\eqref{eq_new_star_lemma2} is smaller than
\begin{align*}
	& \mathbb P(\xi_s \cap B(x_k,1) \neq \varnothing  \text{ for all } s \in [\tau_k,\tau_k+t_k-1], \; \tau_{k+1}> \tau_k + t_k \mid \tau_k < \infty)\\
	&\le (1-c \lambda_1 \lambda_2^2) \cdot  \mathbb P\left(\left. \begin{array}{r}\xi_s \cap B(x_k,1) \neq \varnothing  \text{ for all } s \in [\tau_k,\tau_k+t_k-1], \\ \tau_{k+1}> \tau_k + t_k - 1 \end{array}\;\right|\;\tau_k < \infty\right).
\end{align*}

Iterating this, the right-hand side is smaller than
\begin{equation*}
	(1-c\lambda_1 \lambda_2^2)^{\lfloor t_k \rfloor} \le \exp\{-c\lambda_1 \lambda_2^2 \lfloor t_k \rfloor\} = \exp\{-c\lambda_1 \lambda_2^2 \lfloor \exp\left\{\varepsilon \lambda_1 \lambda_2\cdot  \deg(x_k) \right\} \rfloor\}. \qedhere
\end{equation*}

\end{proof}

Now we use this lemma to find survival from the star threshold. To be specific, define
\begin{equation}\label{eq_def_u0}
u_{0} := \lambda^{\mu_{\mathsf S}} \cdot  (\log \tfrac{1}{\lambda})^{-2/\gamma_1}.\end{equation}

	We always assume that~$\lambda$ is small enough that~$u_0 < 1/4$.
	Given~$h \in [u_0,2u_0]$, let~${\mathcal{G}}^+_{1}(h)$ be the random graph defined as follows:
\begin{itemize}
	\item we take the random bipartite graph~$\mathcal G$ the same way as before, except that we place the (type-1) root deterministically at~$(0,h)$;
	\item we remove all vertices from~$(-\infty,0) \times [0,1]$, and all type-1 vertices from~$\mathbb R \times [u_0,2u_0]$, except for the root.
\end{itemize}

\begin{proposition}[Survival from type-1 root below star threshold]\label{prop_all_lower_bound_new}
	Assume that~$(\gamma_1,\gamma_2) \in \mathcal S$ and~$a > 0$. If~$\lambda$ is small enough, then for all~$h \in [u_0,2u_0]$, the contact process on~${\mathcal G}^+_1(h)$ with rates~$(\lambda_1,\lambda_2)=(\lambda,\lambda^a)$ started with the root infected  survives with probability at least~$1/2$.
\end{proposition}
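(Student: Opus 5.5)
The plan is to build, on a high-probability event for the graph ${\mathcal G}^+_1(h)$, an infinite sequence $x_0 \sim y_1 \sim x_1 \sim y_2 \sim \cdots$ as in Lemma~\ref{lem_line_and_star}. Here $x_0$ is the root, the $x_k$ are type-1 vertices whose marks decrease rapidly, and the $y_k$ are type-2 intermediaries. I then apply Lemma~\ref{lem_line_and_star} and check that the resulting error sum is at most $1/2$. The sequence lives in $[0,\infty)\times[0,1]$. The removal of type-1 vertices with marks in $[u_0,2u_0]$ (other than the root) is only a technical device. It makes the environment of the root independent of the other type-1 points, and it lets later stars be chosen with marks strictly below $u_0$, so that the marks genuinely decrease and the $x_k$ are distinct.

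Concretely, I set $u_k := u_0^{\rho^k}$ with $\rho := \frac{\gamma_1\gamma_2}{\bar{\gamma}_1\bar{\gamma}_2}\cdot(1-\delta)$, where $\delta>0$ is small enough that $\rho>1$; this is possible because $\gamma_1+\gamma_2>1$. I also set $v_k := u_{k-1}^{\gamma_1(1-\delta)/\bar{\gamma}_2}$. I look for $y_k$ among type-2 points with mark at most $v_k$ within distance $u_{k-1}^{-\gamma_1}$ to the right of $x_{k-1}$. By \eqref{eq_type1_ne}, the expected number of such points is of order $u_{k-1}^{-\gamma_1}v_k^{\bar{\gamma}_2} = u_{k-1}^{-\gamma_1\delta}$. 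This tends to infinity super-exponentially in $k$, so a Poisson tail bound makes the failure probability summable. Similarly, $x_k$ is taken among type-1 points with mark at most $u_k$ within distance $u_{k-1}^{-\gamma_1}v_k^{-\gamma_2}$ of $y_k$, and I choose the exponents so that this mean also diverges like a negative power of $u_{k-1}$. To keep the steps independent, I restrict each search to disjoint mark strips: the marks in $(u_{k+1},u_k]$ are disjoint across $k$ and lie below $u_0$, away from the removed band. Then the corresponding Poisson counts are independent.

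Next I bound the degrees. By \eqref{eq_conversion}, $\deg(x_k)$ has mean of order $u_k^{-\gamma_1}$, and a Poisson concentration bound gives $\deg(x_k)\ge c\,u_k^{-\gamma_1}$ with failure probability $e^{-c u_k^{-\gamma_1}}$. For $k=0$, with $h\le 2u_0$, we get $u_0^{-\gamma_1}\gtrsim \lambda^{-1-a}(\log\frac1\lambda)^{2}$. Hence $\lambda_1\lambda_2\deg(x_0)\gtrsim (\log\frac1\lambda)^2$. The first term of the sum in Lemma~\ref{lem_line_and_star} is then $O(1/\log\frac1\lambda)$ for $k=0$, and it decays doubly exponentially in $k$ because $u_k=u_0^{\rho^k}$. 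The second term is $\exp\{-c\lambda^{1+2a}\exp(\varepsilon(\log\frac1\lambda)^2)\}$, which is superpolynomially small, and it only improves with $k$. The restriction to the half-line halves the mean degrees but changes only constants.

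The main obstacle is the joint choice of the exponents together with the logarithmic factor. The graph-construction failure probabilities must be summable with total below, say, $1/4$ uniformly in $h\in[u_0,2u_0]$. I also need $\deg(x_0)$ to beat $1/(\lambda_1\lambda_2)$ by a growing factor, and the $\log$ power $-2/\gamma_1$ in \eqref{eq_def_u0} is exactly what provides this. To handle the first step, where $u_0^{-\gamma_1\delta}$ is only polynomially large in $1/\lambda$, I would use that it still tends to infinity as $\lambda\to0$. The subsequent steps then contribute a geometric-type tail dominated by this first term. Combining everything, the survival probability is at least $1-o(1)\ge 1/2$ for small $\lambda$.
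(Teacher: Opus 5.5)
Your overall plan matches the paper's: a chain $x_0\sim y_1\sim x_1\sim\cdots$ of type-1 vertices of growing degree, Poisson lower bounds on the degrees, then Lemma~\ref{lem_line_and_star}, with the factor $(\log\tfrac1\lambda)^{-2/\gamma_1}$ in $u_0$ giving the $O(1/\log\tfrac1\lambda)$ leading error. However, the exponents you fix make the half-step $y_k\to x_k$ fail.

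With $v_k=u_{k-1}^{\gamma_1(1-\delta)/\bar{\gamma}_2}$ and $u_k=u_{k-1}^{\rho}$, $\rho=\frac{\gamma_1\gamma_2}{\bar{\gamma}_1\bar{\gamma}_2}(1-\delta)$, you get exactly $u_k=v_k^{\gamma_2/\bar{\gamma}_1}$. So all the slack $\delta$ sits in the first half-step. A type-2 vertex with mark of order $v_k$ then has in expectation $\asymp v_k^{-\gamma_2}u_k^{\bar{\gamma}_1}=1$ type-1 neighbours with mark at most $u_k$, so this half-step is critical, not supercritical. With the window $u_{k-1}^{-\gamma_1}v_k^{-\gamma_2}$ you actually wrote, the mean is even $u_{k-1}^{-\gamma_1}v_k^{-\gamma_2}u_k=u_{k-1}^{\gamma_1(\rho-1)}\to0$.

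Since you only use that $y_k$ has mark at most $v_k$, your estimates give per-step failure probabilities bounded away from zero, not summable ones. For, say, the leftmost candidate $y_k$, whose mark is typically of order $v_k$, the chain does break with probability one, given the conditional independence you arranged across strips. Your sentence ``I choose the exponents so that this mean also diverges'' cannot be honoured, because $\rho$ and $v_k$ are already fixed. Likewise, the search window for $y_k$ must be the reach $u_{k-1}^{-\gamma_1}v_k^{-\gamma_2}$, i.e.\ you must count genuine neighbours. A window of length $u_{k-1}^{-\gamma_1}$ gives mean $u_{k-1}^{-\gamma_1}v_k$, which vanishes for small $\delta$.

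The repair is to put slack into both half-steps. Take $v_k=u_{k-1}^{\beta_1}$ and $u_k=v_k^{\beta_2}$ with $\beta_1<\gamma_1/\bar{\gamma}_2$ and $\beta_2<\gamma_2/\bar{\gamma}_1$ strictly and $\beta_1\beta_2>1$, e.g.\ $\rho=\frac{\gamma_1\gamma_2}{\bar{\gamma}_1\bar{\gamma}_2}(1-\delta)^2$. This turns the second mean into $v_k^{-\gamma_2\delta}\to\infty$, which is what the paper's choice $v_0=u_0^\beta$ with $\beta$ strictly inside $(\bar{\gamma}_1/\gamma_2,\gamma_1/\bar{\gamma}_2)$ achieves.

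After this fix your argument is a legitimate alternative route. You use doubly exponentially decreasing marks and a sequential exploration over disjoint mark strips. The paper instead uses geometric marks $2^{-k}u_0$, $2^{-k}v_0$ and origin-anchored boxes $R_k,R_k'$ whose dimensions force any choice of points to be adjacent. That needs only a union bound over box occupancy and involves no conditioning on an exploration.

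On your route, you should also take the degree bound for the randomly chosen $x_k$ from type-2 points in a mark band disjoint from the explored strips (e.g.\ $[\tfrac12,1]$), so that the conditional Poisson estimate is justified. The paper handles this with the deterministic grid of cells $r_{k,j}$ and a union bound.
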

\begin{proof}
	The proof is an application of Lemma~\ref{lem_line_and_star}. Most of the work will go into finding sequences of vertices with the properties required in that lemma, and so that the degrees of the type-1 vertices are large, so that the bound provided by the lemma is a good one.\\[-.2cm]

	\textit{Graph events.}  We begin by introducing some constants and spatial regions.
	It follows from the assumption~$\gamma_1+\gamma_2 > 1$ that~$\frac{\bar{\gamma}_1}{\gamma_2} < \frac{\gamma_1}{\bar{\gamma}_2}$. We then choose some arbitrary~$\beta \in (\frac{\bar{\gamma}_1}{\gamma_2}, \frac{\gamma_1}{\bar{\gamma}_2})$ and define
	\[
		v_0 := u_0^{\beta}.
	\]
	
	We then also let
	\[
		u_k := 2^{-k}u_0,\qquad v_k := 2^{-k}v_0,\qquad \ell_k := 2^{(k-1)(\gamma_1+\gamma_2)}\cdot u_0^{-\gamma_1} v_0^{-\gamma_2},\qquad k \in \mathbb N.
	\]
	
	Next, define the rectangles
	\[
		R_k := [0,\ell_k] \times [0,u_k],\qquad R_k' := [0,\ell_k] \times [0,v_k],\qquad k \in \mathbb N
	\]
	and the event
	\[
		E_1 := \bigcap_{k \ge 1} \left\{ \mathcal{P}_1 \cap R_k \neq \varnothing,\; \mathcal{P}_2 \cap R_k' \neq \varnothing\right\}.
	\]
	
If~$E_1$ occurs, we can choose a sequence of vertices~$(x_k, y_k)_{k \ge 1}$ with
	\begin{equation}\label{eq_prop_xkyk1} x_k \in \mathcal P_1 \cap R_k,\qquad y_k \in \mathcal P_2 \cap R_k',\qquad  k\ge 1.
	\end{equation}
	
	Note that both sequences may contain repetitions, but the facts that~$x_k \in R_k$ and~$y_k \in R_k'$ for all~$k$ imply that both~$(x_k)_k$ and~$(y_k)_k$ have infinitely many distinct terms.  Inspecting the dimensions of the boxes and the definition of~$u_k,v_k,\ell_k$, it is easy to check that 
	\begin{equation}\label{eq_prop_xkyk2} 
		(0,h) =: x_0 \sim y_1 \sim x_1 \sim y_2 \sim x_2 \sim \cdots.
	\end{equation}
	
Now define the additional rectangles
	\[
		r_0 =  [0,(2u_0)^{-\gamma_1}]\times [\tfrac12, 1]
	\]
	and
	\[
		r_{k,j}:= [\tfrac12 j u_k^{-\gamma_1}, \;\tfrac12 (j+1) u_k^{-\gamma_1}] \times [\tfrac12, 1], \qquad k \in \mathbb N,\;j \in \{0, 1, \ldots,\left\lceil \tfrac12 \ell_k u_k^{-\gamma_1}\right\rceil  \},
	\]
	and define the event
	\[
		E_2:= \{|\mathcal P_2 \cap r_0| \ge \tfrac12\mathrm{Leb}(r_0)\} \cap  \bigcap_{k,j}\{|\mathcal P_2 \cap r_{k,j}| \ge \tfrac12 \mathrm{Leb}(r_{k,j}) \},
	\]
	where~$\mathrm{Leb}$ denotes the Lebesgue measure.

	Assume~$E_1$ and~$E_2$ occur, and fix a sequence~$(x_k,y_k)_k$ satisfying~\eqref{eq_prop_xkyk1} (hence also~\eqref{eq_prop_xkyk2}). Note that all vertices of~$\mathcal P_2 \cap r_0$ are neighbours of~$(0,h)$. Moreover, for each~$k \ge 1$ we can find~$j$ such that all vertices in~$\mathcal P_2 \cap r_{k,j}$ are neighbours of~$x_k$.  This implies that
	\begin{equation}\label{eq_prop_xkyk3}
			\deg((0,h))  \ge \tfrac12 \mathrm{Leb}(r_0) = \tfrac14(2u_0)^{-\gamma_1}\quad \text{and} \quad \deg(x_k) \ge \tfrac12 \mathrm{Leb}(r_{k,j}) = \tfrac18 u_k^{-\gamma_1}.
	\end{equation}
	
	 Let us give upper bounds to~$\mathbb P(E_1^c)$ and~$\mathbb P(E_2^c)$. 
We bound
	\begin{align}
		\nonumber\mathbb P(E_1^c) &\le \sum_k \left(e^{-u_k \ell_k } + e^{-v_k \ell_k } \right) \\
		&= \sum_k \left(\exp\{-2^{\Delta \cdot k + \gamma_1 + \gamma_2} \cdot u_0^{\bar{\gamma}_1} \cdot v_0^{-\gamma_2}\} + \exp\{-2^{\Delta \cdot k + \gamma_1 + \gamma_2} \cdot u_0^{-{\gamma}_1} \cdot v_0^{\bar{\gamma}_2} \} \right). \label{eq_exponentials}
	\end{align}
	We have
	\begin{align*}
		&u_0^{\bar{\gamma}_1} \cdot v_0^{-\gamma_2} = u_0^{\bar{\gamma}_1 - \gamma_2 \beta}, \qquad u_0^{-{\gamma}_1} \cdot v_0^{\bar{\gamma}_2} = u_0^{-{\gamma}_1 + \bar{\gamma}_2 \beta} .
	\end{align*}
	
	The choice of~$\beta$ implies that~$\bar{\gamma}_1 -  \gamma_2 \beta < 0$ and~$-\gamma_1 + \bar{\gamma}_2 \beta < 0$, which implies that both~$u_0^{\bar{\gamma}_1} \cdot v_0^{-\gamma_2}$ and~$u_0^{-{\gamma}_1} \cdot v_0^{\bar{\gamma}_2}$ tend to~$\infty$ as~$\lambda \to 0$, which in turn implies that the expression in~\eqref{eq_exponentials} can be made as small as desired by taking~$\lambda$ small.

Next, using the Chernoff bound~$\mathbb P(\mathrm{Poi}(\lambda) \le \lambda / 2) \le \exp\{-\tfrac12(1-\log 2) \lambda\}$ and a union bound, we have
	\begin{align*}
		\mathbb P(E_2^c) &\le \exp\{-\tfrac12 (1-\log 2)\cdot \mathrm{Leb}(r_0)\} + \sum_{k,j} \exp\{-\tfrac12 (1-\log 2)\cdot \mathrm{Leb}(r_{k,j}) \}\\
		&= \exp\{-\tfrac12 (1-\log 2)\cdot \tfrac12(2u_0)^{-\gamma_1}\} + \sum_{k} \left\lceil \tfrac12 \ell_k u_k^{-\gamma_1}\right\rceil \cdot  \exp\{-\tfrac12 (1-\log 2)\cdot \tfrac14 u_k^{-\gamma_1} \}.
	\end{align*}
	Using the definition of~$u_0$,~$u_k$ and~$\ell_k$, this can be made arbitrarily small by taking~$\lambda \to 0$.\\[-.2cm]

	\textit{Infection survival probability.} 
	Assume that~$E_1$ and~$E_2$ occur. By Lemma~\ref{lem_line_and_star}, the probability that the infection from~$x_0$ lives forever is then at least
\begin{equation*}
	1- \sum_{k=0}^\infty ( (\varepsilon \lambda^{1+a}\cdot  \tfrac18 u_k^{-\gamma_1})^{-1/2} +  \exp\left\{-c\lambda^{1+2a} \left\lfloor \exp\left\{\varepsilon \lambda^{1+a}\cdot \tfrac18 u_k^{-\gamma_1}  \right\} \right\rfloor\right\}).
\end{equation*}

We now bound, for each~$k$,
\[
	(\varepsilon \lambda^{1+a}\cdot  \tfrac18 u_k^{-\gamma_1})^{-1/2} \le {(8/\varepsilon)^{1/2}}\cdot 2^{-\tfrac{\gamma_1}{2} k} \cdot (\log\tfrac{1}{\lambda})^{-1}
\]
and
\[
\exp\left\{-c\lambda^{1+2a} \left\lfloor \exp\left\{\varepsilon \lambda^{1+a}\cdot \tfrac18 u_k^{-\gamma_1}  \right\} \right\rfloor\right\}) \le \exp\left\{-c\lambda^{1+2a} \left\lfloor \exp\left\{ \tfrac{\varepsilon}{8} \cdot 2^{\gamma_1 k} \cdot (\log \tfrac{1}{\lambda})^{2}   \right\} \right\rfloor\right\}.
\]

It is now easy to see that the sum
\[
	\sum_{k=0}^\infty ({(8/\varepsilon)^{1/2}}\cdot 2^{-\tfrac{\gamma_1}{2} k} \cdot (\log\tfrac{1}{\lambda})^{-1} + \exp\left\{-c\lambda^{1+2a} \left\lfloor \exp\left\{ \tfrac{\varepsilon}{8} \cdot 2^{\gamma_1 k} \cdot (\log \tfrac{1}{\lambda})^{2}   \right\} \right\rfloor\right\})
\]
can be made as small as desired by taking~$\lambda$ small.
\end{proof}

Finally, the remaining propositions of this section will construct events corresponding to all possible survival strategies. In the main theorem, each of these strategies will produce the lower bound in its own~$(a,\gamma_1,\gamma_2)$ parameter region.

\begin{proposition}[Lower bound with ``root is star'' strategy] \label{prop_strategy_rott_star}
	Assume that~$(\gamma_1,\gamma_2) \in \mathcal S$ and~$a > 0$. If~$\lambda$ is small enough, the contact process on~$\mathcal G$ started from the root infected survives with probability at least~$\tfrac12 \lambda^{\mu_{\mathsf S}} \cdot  (\log \tfrac{1}{\lambda})^{-2/\gamma_1}$.
\end{proposition}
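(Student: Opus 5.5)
We reduce to Proposition~\ref{prop_all_lower_bound_new} by conditioning on the root's mark and using monotonicity of the contact process in the underlying graph. Note that $u_0 = \lambda^{\mu_{\mathsf S}}(\log\tfrac1\lambda)^{-2/\gamma_1}$ by~\eqref{eq_def_u0}, so the target bound is $\tfrac12 u_0 = \tfrac12\,\mathbb P(U\in[u_0,2u_0])$. The argument therefore has two steps: show that the event $\{U\in[u_0,2u_0]\}$ has probability exactly $u_0$ (using $2u_0<1$, which holds since $u_0<1/4$), and show that on this event the conditional survival probability is at least $1/2$.

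For the second step, condition on $U=h$ for some $h\in[u_0,2u_0]$. By independence of $U$ and the Poisson processes, the conditional law of $\mathcal G$ is that of the graph with root placed deterministically at $(0,h)$. We then couple this graph with $\mathcal G^+_1(h)$. Construct both from the same point processes $\mathcal P_1,\mathcal P_2$, and obtain $\mathcal G^+_1(h)$ by deleting all vertices in $(-\infty,0)\times[0,1]$ and all non-root type-1 vertices with marks in $[u_0,2u_0]$. Edges between the surviving vertices are determined by the same rule, so $\mathcal G^+_1(h)$ is an induced subgraph of $\mathcal G$ containing the root.

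Next, we use the graphical construction, with the same recovery and transmission Poisson processes on common edges. Every infection path in $\mathcal G^+_1(h)$ is then an infection path in $\mathcal G$. Hence, if the process on the subgraph started from the root survives, so does the process on $\mathcal G$. This is the standard monotonicity (attractiveness) in the graph, and it needs no further work given the path construction above.

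Combining the two steps via Proposition~\ref{prop_all_lower_bound_new}, which holds uniformly in $h\in[u_0,2u_0]$ for $\lambda$ small,
\[
\theta(\lambda)\ \ge\ \int_{u_0}^{2u_0}\mathbb P\bigl(\text{survival on }\mathcal G^+_1(h)\bigr)\,\mathrm dh\ \ge\ \tfrac12 u_0 .
\]
This is exactly the claimed bound. The only mildly delicate point is the coupling. The deletions in $\mathcal G^+_1(h)$ (the left half-line, and type-1 vertices in the mark strip) are made on the same realisation, so no independence issue arises, and conditioning on $U=h$ does not alter the law of $\mathcal P_1,\mathcal P_2$. I do not expect any real obstacle beyond this.
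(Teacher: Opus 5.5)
Your proposal is correct and follows essentially the same route as the paper: restrict to the event that the root's mark lies in $[u_0,2u_0]$, which has probability $u_0$, and invoke Proposition~\ref{prop_all_lower_bound_new} to get conditional survival probability at least $1/2$. You also spell out the monotonicity coupling, namely that $\mathcal G^+_1(h)$ is an induced subgraph of $\mathcal G$ containing the root and that infection paths in the subgraph are infection paths in $\mathcal G$; the paper leaves this step implicit in the word ``directly''.
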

\begin{proof}
	Let~$E$ be the event that the vertical coordinate of the root is in~$[u_0,2u_0]$. Then, we have~$\mathbb P(E) = u_0$, and Proposition~\ref{prop_all_lower_bound_new} directly implies that conditionally on~$E$, the infection survives with probability at least~$1/2$.
\end{proof}

\begin{proposition}[Lower bound with ``one step to bridge'' strategy]\label{prop_strategy_bridge}
	For any~$(\gamma_1,\gamma_2) \in \mathcal S$ and~$a > 0$, there exists~$c > 0$ such that if~$\lambda$ is small enough, the contact process on~$\mathcal G$ started from the root infected survives with probability at least~$c \lambda^{1+\bar{\gamma}_2\nu_{\mathsf B}} \cdot \left(\log \tfrac{1}{\lambda}\right)^{-2\bar{\gamma}_1\bar{\gamma}_2/(\gamma_1\gamma_2)}$.
\end{proposition}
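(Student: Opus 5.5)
The plan is to make rigorous the ``one step to bridge'' path: root $\to$ type-2 bridge $\to$ type-1 vertex just below the star threshold. At the last vertex we apply Proposition~\ref{prop_all_lower_bound_new}, which gives survival with probability at least $1/2$.

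\textbf{Choice of scales.} Set
\[
v_{\mathsf B}:=\lambda^{\nu_{\mathsf B}}(\log\tfrac1\lambda)^{-2\bar\gamma_1/(\gamma_1\gamma_2)},
\]
and take $u_0$ as in~\eqref{eq_def_u0}. Using $\mu_{\mathsf S}=(1+a)/\gamma_1$ and $\nu_{\mathsf B}=(\bar\gamma_1+a)/(\gamma_1\gamma_2)$, one checks
\[
\bar\gamma_1\mu_{\mathsf S}-\gamma_2\nu_{\mathsf B}=-a,
\]
and the logarithmic factors cancel exactly. Hence the expected number of type-1 points with mark in $[u_0,2u_0]$ at distance at most $(2v_{\mathsf B})^{-\gamma_2}(2u_0)^{-\gamma_1}$ to the right of a type-2 vertex with mark in $[v_{\mathsf B},2v_{\mathsf B}]$ is of order
\[
u_0^{\bar\gamma_1}v_{\mathsf B}^{-\gamma_2}\asymp\lambda^{-a}.
\]
This is what makes the second transmission cost only a constant. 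The first transmission costs the factor
\[
\lambda\, v_{\mathsf B}^{\bar\gamma_2}=\lambda^{1+\bar\gamma_2\nu_{\mathsf B}}(\log\tfrac1\lambda)^{-2\bar\gamma_1\bar\gamma_2/(\gamma_1\gamma_2)},
\]
which is exactly the claimed bound.

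\textbf{Events, in order.}
\begin{enumerate}
\item $F_0$: the root mark lies in $[1/2,1]$. This has probability $1/2$.
\item $F_1$: there is a type-2 vertex $y=(y_1,v)$ with $v\in[v_{\mathsf B},2v_{\mathsf B}]$ and $y_1\in[0,\tfrac12(2v_{\mathsf B})^{-\gamma_2}]$, so that $y$ is a neighbour of the root. We choose $y$ measurably, for instance the leftmost such vertex. Since the box has area of order $v_{\mathsf B}^{\bar\gamma_2}\ll1$, $\mathbb P(F_1)\asymp v_{\mathsf B}^{\bar\gamma_2}$.
\item $F_2$: the root transmits to $y$ within time $1$ before recovering, and $y$ then has no recovery mark for one more unit of time. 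Given the graph, this has probability at least $c\lambda$.
\item $F_3$: in the box $(y_1,\,y_1+\tfrac12(2v_{\mathsf B})^{-\gamma_2}(2u_0)^{-\gamma_1}]\times[u_0,2u_0]$ there are at least $c\lambda^{-a}$ type-1 points. All of them are neighbours of $y$. This has probability close to $1$ by a Poisson Chernoff bound.
\item $F_4$: during that unit of time, $y$ transmits to at least one of these points. This has probability at least $1-e^{-c}$, which is bounded below.
\end{enumerate}

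\textbf{Independence and the final step.} Let $x=(x_1,h)$ be the leftmost infected point in the box of $F_3$. Then $x_1>y_1\ge0$, so the half-plane $(x_1,\infty)\times[0,1]$ contains neither the root nor $y$. The point $x$ is found by scanning from the left, which uses only Poisson information to the left of $x_1$, plus the type-1 points of the strip $\mathbb R\times[u_0,2u_0]$ and the clocks attached to them. By the spatial Markov property of the Poisson processes, the configuration in $(x_1,\infty)\times[0,1]$ with the type-1 strip points removed is a fresh Poisson configuration. The Harris clocks on edges and vertices not yet inspected are also independent of the past. So, translated to $x$, this configuration is exactly $\mathcal G_1^+(h)$.

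By monotonicity of the contact process under taking subgraphs, the process started from $x$ at the infection time survives with probability at least $1/2$ by Proposition~\ref{prop_all_lower_bound_new}. Multiplying the bounds gives
\[
\theta(\lambda)\ge c\,\lambda\, v_{\mathsf B}^{\bar\gamma_2},
\]
which is the claimed bound.

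\textbf{Main obstacle.} The delicate step is this independence/decoupling. One must ensure that choosing $y$ and $x$, together with the infection events used, reveals nothing about the region to the right of $x$ beyond what $\mathcal G_1^+(h)$ already discards. This is handled by the left-to-right exploration together with the choice to remove the type-1 strip points, which is built into the definition of $\mathcal G_1^+(h)$.
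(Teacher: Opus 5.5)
Your proposal is correct and follows the paper's proof essentially step by step. It uses the same path root $\to$ type-2 bridge with mark of order $\lambda^{\nu_{\mathsf B}}$ (first step, cost $\lambda\,(v')^{\bar\gamma_2}$) $\to$ type-1 vertex with mark in $[u_0,2u_0]$ (constant cost, via the cancellation $\bar\gamma_1\mu_{\mathsf S}-\gamma_2\nu_{\mathsf B}=-a$), followed by Proposition~\ref{prop_all_lower_bound_new}. The differences are cosmetic. The paper places the bridge to the left of the root, in $[-(v')^{-\gamma_2},0]\times[0,v']$, so the region to the right of the new type-1 root is unexplored without your leftmost-exploration argument. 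Your event $F_0$ is unnecessary, since the root's mark never restricts the first edge.
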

\begin{proof}
	Define
	\[
		v':= 2^{-\frac{\gamma_1+2}{\gamma_2}}\cdot  \lambda^{\nu_{\mathsf B}}\cdot (\log \tfrac{1}{\lambda})^{-2\bar{\gamma}_1/(\gamma_1 \gamma_2)}.
	\]
	
	This choice is made so that~$2^{-\gamma_1-1} \cdot u_0^{\bar{\gamma}_1}\cdot (v')^{-\gamma_2} = 2 \lambda^{-a}$, which will be used in~\eqref{eq_why_choice} below. Also define the rectangle
	\[
		\tilde R := [-(v')^{-\gamma_2}, 0] \times [0,v'].
	\]
	
	Next, define the event
	\begin{align*}
		E_1 := \{\text{the root of $\mathcal G$ has a neighbour in $\tilde R$}\}.
	\end{align*}
	
	On $E_1$, let $y^*$ denote an arbitrarily chosen neighbour of the root in $\widetilde R$.
	Then, let
	\[
		E_2 := \{\text{$y^*$ has at least~$\lambda^{-a}$ neighbours in~$[0,\infty) \times [u_0,2u_0]$} \},
	\]
	with~$u_0$ as in~\eqref{eq_def_u0}.

	We will give lower bounds for~$\mathbb P( E_1 \cap E_2)$ and for~$\mathbb P(\text{infection from root survives} \mid  E_1 \cap E_2)$. We start with the latter.
	Fix~$\mathcal G \in  E_1 \cap E_2$. Then,
	\begin{itemize}
		\item with probability~$\frac{\lambda}{1+\lambda}$, the root infects~$y^*$ before recovering;
		\item on the event that the above item happens, we then consider the sub-event that~$y^*$ stays infected for one unit of time, during which it transmits the infection to one of its neighbours in~$[0,\infty) \times [u_0,2u_0]$; there are more than~$\lambda^{-a}$ of these neighbours being infected each at rate~$\lambda^a$ and so this happens with (conditional) probability at least
			$e^{-1}\cdot (1 - e^{-1} ).$
		\item on the event that both the above happen, we then ask for the infection from the newly-reached element of~$(0,\infty) \times [u_0,2u_0]$ to live forever. Since at this point the graph is still unexplored in~$(0,\infty) \times ([0,1]\backslash [u_0,2u_0])$, this has (conditional) probability at least~$1/2$, by Proposition~\ref{prop_all_lower_bound_new} (after a spatial shift and re-rooting at~$y^*$).
	\end{itemize}
	
	This gives
	\begin{equation} \label{eq_for_bridge_bound1}
		\mathbb P(\text{infection from root survives} \mid  E_1 \cap E_2) \ge  \frac{\lambda(1+e^{-1})}{2e(1+\lambda)}\ge  \frac{\lambda}{4}.
	\end{equation}

	We now turn to estimating the graph event probabilities. We note that~$E_1$ contains the event that~$\mathcal P_2 \cap \tilde R$ is non-empty. Indeed, denoting the root by~$(0,h_o)$ and letting~$(z,h)$ be an arbitrary element of~$\tilde R$, we have
	\[
		|z-0|=|z|\le (v')^{-\gamma_2} \le h^{-\gamma_2} \le h_o^{-\gamma_1}\cdot  h^{-\gamma_2}.
	\]
	
 This gives
	\begin{equation}\label{eq_for_bridge_bound2}
		\mathbb P(E_1) \ge \mathbb P(\mathcal P_2 \cap \tilde R \neq \varnothing) = 1-\exp\{- (v')^{\bar\gamma_2}\} \ge \tfrac12  (v')^{\bar{\gamma}_2}
	\end{equation}
	if~$v'$ is small enough (which is achieved by taking~$\lambda$ small enough).

	To handle~$E_2$, we note that every type-2 vertex in~$\tilde R$ is a neighbour of every type-1 vertex in the region
	\[
		[0, (2u_0)^{-\gamma_1}\cdot (v')^{-\gamma_2} - (v')^{-\gamma_2}] \times [u_0,2u_0].
	\]
	
Let~$X$ be the number of type-1 vertices in the above region. Then~$X \sim \mathrm{Poi}(\kappa)$, where
	\begin{equation}\begin{split} \label{eq_why_choice}
		\kappa := \left((2u_0)^{-\gamma_1}\cdot (v')^{-\gamma_2} - (v')^{-\gamma_2}\right)\cdot u_0 &\ge \tfrac12 \cdot (2u_0)^{-\gamma_1}\cdot (v')^{-\gamma_2} \cdot u_0 \\
		&= 2^{-\gamma_1-1} \cdot u_0^{\bar{\gamma}_1}\cdot (v')^{-\gamma_2} = 2 \lambda^{-a}.
	\end{split}\end{equation}
	
	Hence,
	\begin{equation}\begin{split}\label{eq_for_bridge_bound3}
		\mathbb P(E_2 \mid E_1) \ge \mathbb P(X \ge \lambda^{-a} \mid E_1) &= \mathbb P( X \ge \lambda^{-a}) \\
		&\ge  \mathbb P(\mathrm{Poi}(2\lambda^{-a}) \ge \lambda^{-a}) \ge 1 - 2\lambda^a \ge 1/2,
	\end{split}
	\end{equation}
where the next-to-last step is Chebyshev's inequality.
	Putting together~\eqref{eq_for_bridge_bound1},~\eqref{eq_for_bridge_bound2}, and~\eqref{eq_for_bridge_bound3}, we have
	\[
		\mathbb P(\text{infection from root survives}) \ge \frac{1}{16}\cdot \lambda \cdot (v')^{\bar{\gamma}_2} = c \lambda^{1+\bar{\gamma}_2\nu_{\mathsf B}} \cdot \left(\log \tfrac{1}{\lambda}\right)^{-2\bar{\gamma}_1\bar{\gamma}_2/(\gamma_1\gamma_2)}. \qedhere
	\]
\end{proof}

We now introduce a quantity~$v_0$ and a random graph~$\mathcal G_2^+(h)$ which are analogous to~$u_0$ and~$\mathcal G_1^+(h)$, respectively, except that we take the viewpoint of type~2 rather than of type~1.
We let
\[
	v_0:= \lambda^{\nu_{\mathsf S}} \cdot  (\log \tfrac{1}{\lambda})^{-2/\gamma_2}.
\]

For~$h \in [v_0,2v_0]$, we define the graph~$\mathcal G_2^+(h)$ as follows:
\begin{itemize}
	\item we take the random bipartite graph~$\mathcal G$ the same way as before, except that we place the (type-2) root deterministically at~$(0,h)$;
	\item we remove all vertices from~$(-\infty,0) \times [0,1]$, and all type-2 vertices from~$\mathbb R \times [v_0,2v_0]$, except for the root.
\end{itemize}

We then have the analogue of Proposition~\ref{prop_all_lower_bound_new}:

%
%
\begin{proposition}[Survival from type-2 root below star threshold]\label{prop_all_lower_bound_new2}
	Assume that~$(\gamma_1,\gamma_2) \in \mathcal S$ and~$a > 0$. If~$\lambda$ is small enough, then for all~$h \in [v_0,2v_0]$, the contact process on~${\mathcal G}^+_2(h)$ with rates~$(\lambda_1,\lambda_2)=(\lambda,\lambda^a)$ started with the (type-2) root infected survives with probability at least~$1/2$.
\end{proposition}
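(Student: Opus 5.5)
The plan is to mirror the proof of Proposition~\ref{prop_all_lower_bound_new} with the roles of the two types exchanged, and then apply Lemma~\ref{lem_line_and_star}. The one subtlety is that Lemma~\ref{lem_line_and_star} is phrased with type-1 stars $x_k$ and infection rates $(\lambda_1,\lambda_2)$. Since the lemma only uses that $G$ is bipartite, I would apply it with the labels of the two parts swapped. The stars are then type-2 vertices $y_0 := (0,h), y_1, y_2, \ldots$, linked through type-1 vertices $x_1, x_2, \ldots$ as $y_0 \sim x_1 \sim y_1 \sim x_2 \sim \cdots$, and the rates are exchanged, $(\lambda_1,\lambda_2) \leftrightarrow (\lambda^a,\lambda)$. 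The star threshold $\lambda_1\lambda_2 = \lambda^{1+a}$ is symmetric, so Lemmas~\ref{lem_previous_star} and~\ref{lem_main_star} apply verbatim. The error term becomes $\exp\{-c\lambda^a\lambda^{2}\lfloor \exp(\varepsilon\lambda^{1+a}\deg(y_k))\rfloor\}$, which is still superpolynomially small once $\deg(y_k) \gg \lambda^{-1-a}(\log\tfrac1\lambda)^2$.

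For the graph construction, I take $v_0 = \lambda^{\nu_{\mathsf S}}(\log\tfrac1\lambda)^{-2/\gamma_2}$ and choose $\beta \in (\frac{\bar\gamma_2}{\gamma_1},\frac{\gamma_2}{\bar\gamma_1})$, which is nonempty because $\gamma_1+\gamma_2>1$. I set $u_0' := v_0^{\beta}$, $v_k := 2^{-k}v_0$, $u_k' := 2^{-k}u_0'$ and $\ell_k := 2^{(k-1)(\gamma_1+\gamma_2)}v_0^{-\gamma_2}(u_0')^{-\gamma_1}$, with rectangles $R_k = [0,\ell_k]\times[0,v_k]$ for type~2 and $R_k' = [0,\ell_k]\times[0,u_k']$ for type~1. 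The event $E_1$ asks that each rectangle contains a point of the right type. The key check is that the type-2 points lie in $\mathbb R\times[0,v_k]$ with $v_k \le v_0/2 < v_0$ for $k\ge1$, so they are not deleted in $\mathcal G^+_2(h)$; this is why the deleted strip is $[v_0,2v_0]$. The adjacency chain follows from the same box-dimension check as in~\eqref{eq_prop_xkyk2}. The bound on $\mathbb P(E_1^c)$ is the analogue of~\eqref{eq_exponentials}, with exponents $v_0^{\bar\gamma_2-\gamma_1\beta}$ and $v_0^{-\gamma_2+\bar\gamma_1\beta}$, both of which tend to $\infty$ by the choice of $\beta$.

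The degree event $E_2$ uses type-1 points in $[0,(2v_0)^{-\gamma_2}]\times[\tfrac12,1]$ and in blocks of width $\tfrac12 v_k^{-\gamma_2}$. This gives $\deg(y_k)\ge \tfrac18 v_k^{-\gamma_2}$, and Poisson Chernoff bounds with a union bound make $\mathbb P(E_2^c)$ small. No type-1 points are removed except those with negative spatial coordinate, so nothing is lost here.

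Finally, on $E_1\cap E_2$, the modified Lemma~\ref{lem_line_and_star} bounds the extinction probability by
\[
\sum_k\Big( (8/\varepsilon)^{1/2}2^{-\gamma_2 k/2}(\log\tfrac1\lambda)^{-1}+\exp\{-c\lambda^{2+a}\lfloor\exp(\tfrac{\varepsilon}{8}2^{\gamma_2k}(\log\tfrac1\lambda)^2)\rfloor\}\Big),
\]
which is small for small $\lambda$, because $v_0^{-\gamma_2}\lambda^{1+a} = (\log\tfrac1\lambda)^2$ by the definition of $\nu_{\mathsf S}$. Combining the three small error probabilities gives survival with probability at least $1/2$. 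I expect the main obstacle to be justifying the type-swapped version of Lemma~\ref{lem_line_and_star} cleanly. Its proof is symmetric except for the per-unit-time reinfection probability, which becomes $c\lambda_2^{\,}\lambda_1^2$ in the original labelling; this still works because of the double-exponential term.
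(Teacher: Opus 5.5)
Your proposal is correct and follows the paper's route. The paper omits this proof, stating that it is the proof of Proposition~\ref{prop_all_lower_bound_new} with the roles of the two types and of the rates $\lambda,\lambda^a$ exchanged, which is what you carry out. The obstacle you anticipate does not arise: Lemma~\ref{lem_line_and_star} is stated for an arbitrary bipartition and arbitrary rates in $(0,1]$, so you may apply it directly with $(\lambda_1,\lambda_2)=(\lambda^a,\lambda)$ and no re-proof is needed.

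One cosmetic point, equally present in the type-1 argument, concerns the first link $y_0\sim x_1$. Because the root mark can be as large as $2v_0$, the guaranteed connection radius for that link is only $2^{\gamma_1-\gamma_2}\ell_1$, which is smaller than $\ell_1$ when $\gamma_2>\gamma_1$. Shrinking every $\ell_k$ by the constant factor $2^{-\gamma_2}$ fixes this and changes nothing else.
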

We omit the details, since the proof is the same as that of Proposition~\ref{prop_all_lower_bound_new} (or alternatively, it follows from that proposition, by relabelling the infection rates, so that  type~2 infects with rate~$\eta:=\lambda^a$ and type~1 infects with rate~$\eta^b=\lambda$, with~$b:=1/a$, and then reversing the roles of types~1 and~2).

\begin{proposition}[Lower bound with ``one step to star'' strategy]\label{prop_lower_one_to_star}
	For any~$(\gamma_1,\gamma_2) \in \mathcal S$ and~$a > 0$, there exists~$c > 0$ such that if~$\lambda$ is small enough, the contact process on~$\mathcal G$ started from the root infected survives with probability at least~$c \lambda^{1+\bar{\gamma}_2 \nu_{\mathsf S}}\cdot (\log \tfrac{1}{\lambda})^{-2\bar{\gamma}_2/\gamma_2}$.
\end{proposition}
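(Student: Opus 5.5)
The plan mirrors the proof of Proposition~\ref{prop_strategy_bridge}, but the target is now a type-2 star rather than a type-2 bridge. The idea is that the root directly infects a type-2 neighbour $y^*$ whose mark lies in $[v_0,2v_0]$, with $v_0=\lambda^{\nu_{\mathsf S}}(\log\tfrac1\lambda)^{-2/\gamma_2}$ as defined above. We then invoke Proposition~\ref{prop_all_lower_bound_new2} from $y^*$.

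First we fix the spatial geometry. Let $E_1$ be the event that $\mathcal P_2$ has a point in the rectangle
\[
\tilde R:=[-v_0^{-\gamma_2},0]\times[v_0,2v_0].
\]
As in the bridge proof, any $(z,h)\in\tilde R$ satisfies $|z|\le v_0^{-\gamma_2}\le 2^{\gamma_2}h^{-\gamma_2}$. To avoid the factor $2^{\gamma_2}$, we shrink the rectangle to width $(2v_0)^{-\gamma_2}$. Since the root mark is at most $1$, the root's threshold $h_o^{-\gamma_1}h^{-\gamma_2}\ge h^{-\gamma_2}$ then covers it. Hence every point of the shrunken rectangle is a neighbour of the root, and
\[
\mathbb P(E_1)\ge 1-\exp\{-(2v_0)^{-\gamma_2}v_0\}\ge c\,v_0^{\bar\gamma_2}=c\,\lambda^{\bar\gamma_2\nu_{\mathsf S}}(\log\tfrac1\lambda)^{-2\bar\gamma_2/\gamma_2}.
\]
On $E_1$, let $y^*$ be an arbitrarily chosen such point (say, the rightmost one).

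Next, on $E_1$, the root transmits to $y^*$ before recovering with probability $\lambda/(1+\lambda)\ge\lambda/2$. We then need the infection started from $y^*$ to survive with probability at least $1/2$. For this we shift space so that $y^*$ sits at the origin and apply Proposition~\ref{prop_all_lower_bound_new2}. That proposition works on $\mathcal G_2^+(h)$, which uses only the vertices to the right of $y^*$, excluding other type-2 vertices with marks in $[v_0,2v_0]$. By monotonicity of the contact process in the graph, survival on $\mathcal G_2^+(h)$ implies survival on the larger graph.

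The main obstacle is independence. The conditioning on $E_1$, together with the choice of $y^*$, must leave the relevant region with its unconditioned Poisson law. Taking $y^*$ rightmost in the rectangle, the only information revealed about $\mathcal P_2\cap([y^*,\infty)\times[v_0,2v_0])$ concerns points at or left of $0$. Those type-2 points in that mark band are removed in $\mathcal G_2^+$ anyway. The root's mark and position are likewise irrelevant, since the root lies to the right of $y^*$ and is an extra type-1 point; adding it only enlarges the graph, so monotonicity again handles it. The remaining points of $\mathcal P_1$, and of $\mathcal P_2$ outside the mark band, to the right of $y^*$ are independent of $E_1$ and of the choice of $y^*$. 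Hence the conditional law of the graph seen from $y^*$ dominates $\mathcal G_2^+(h)$, and the survival probability from $y^*$ is at least $1/2$. The transmission from the root is independent of the graph restricted to the right of $y^*$, and of the later dynamics through the strong Markov property.

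Multiplying the three factors gives
\[
\theta(\lambda)\ge \tfrac12\cdot\tfrac\lambda2\cdot c\,\lambda^{\bar\gamma_2\nu_{\mathsf S}}(\log\tfrac1\lambda)^{-2\bar\gamma_2/\gamma_2},
\]
which is the claimed bound.
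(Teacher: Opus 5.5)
Your proof is correct and follows the paper's route: a single direct transmission from the root to a type-2 vertex of mark of order $v_0$ lying within distance of order $v_0^{-\gamma_2}$ to its left, followed by Proposition~\ref{prop_all_lower_bound_new2} after a shift. You differ from the paper in two details, both improvements. You use the band $[v_0,2v_0]$ rather than the paper's $[0,v_0]$, which matches that proposition's hypothesis $h\in[v_0,2v_0]$ exactly. You also take $y^*$ to be the rightmost point, which makes the conditioning step cleaner than the paper's appeal to ``information on $\mathcal P_2\cap\tilde R$''.
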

\begin{proof}
	Let~$\tilde R:=[-v_0^{-\gamma_2},0] \times [0,v_0]$.
	Let~$F$ be the event that the root of~$\mathcal G$ has a neighbour in~$\tilde R$. We have
	\[
		\mathbb P(F) \ge \mathbb P(\mathcal P_2 \cap \tilde R \neq \varnothing) = 1-\exp\left\{-v_0^{\bar{\gamma}_2} \right\} \ge  \tfrac12 v_0^{\bar{\gamma}_2} =\tfrac12 \lambda^{\bar{\gamma}_2 \nu_{\mathsf S}}\cdot (\log \tfrac{1}{\lambda})^{-2\bar{\gamma}_2/\gamma_2}
	\]
	if~$\lambda$ is small enough.
	Next, let~$F'$ be the event that~$F$ occurs, and moreover, the root transmits the infection to a neighbour in~$\tilde R$ before recovering. Then,
	\[
		\mathbb P(F') = \tfrac{\lambda}{1+\lambda}\cdot \mathbb P(F) \ge \tfrac{1}{4} \cdot   \lambda^{1+\bar{\gamma}_2 \nu_{\mathsf S}}\cdot (\log \tfrac{1}{\lambda})^{-2\bar{\gamma}_2/\gamma_2}.
	\]
	
	Conditioning on~$F'$ only involves information on~$\mathcal P_2 \cap \tilde R$, and on the graphical construction in the subgraph induced by the root and~$\mathcal P_2 \cap \tilde R$, up to the time when the transmission mentioned in the definition of~$F'$ happens.  We can then use Proposition~\ref{prop_all_lower_bound_new2} (with a spatial shift, and re-rooting at the neighbour that gets infected) gives
	\[
		\mathbb P(\text{infection from root survives}\mid F') \ge 1/2,
	\]
	which completes the proof.
\end{proof}

\begin{proposition}[Lower bound with ``one step to direct spreader'' strategy]\label{prop_strat_direct}
	For any~$(\gamma_1,\gamma_2) \in \mathcal S$ and~$a > 0$, there exists~$c > 0$ such that if~$\lambda$ is small enough, the contact process on~$\mathcal G$ started from the root infected survives with probability at least~$c\lambda^{1+\bar{\gamma}_2 \nu_{\mathsf D}}$.
\end{proposition}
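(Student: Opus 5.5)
The plan follows the template of Proposition~\ref{prop_lower_one_to_star}. The root first infects, by a single transmission, a type-2 neighbour $y_0$ with mark of order $\lambda^{\nu_{\mathsf D}}$. From there, survival comes from an infinite chain of direct transmissions along vertices of geometrically decreasing marks, alternating types, as in the heuristic defining $\mu_{\mathsf D},\nu_{\mathsf D}$.

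First I would find the starting vertex. Let $\tilde v_0 := K\lambda^{\nu_{\mathsf D}}$ for a large constant $K$, and $\tilde R := [-\tilde v_0^{-\gamma_2},0]\times[0,\tilde v_0]$. As in the earlier proofs, every point of $\mathcal P_2\cap\tilde R$ is a neighbour of the root, so the root infects some $y_0\in\tilde R$ with probability at least $c\lambda\,\tilde v_0^{\bar\gamma_2} = c'\lambda^{1+\bar\gamma_2\nu_{\mathsf D}}$. After spatially shifting to $y_0$ and removing vertices to the left, it suffices to prove a type-2 analogue of Proposition~\ref{prop_all_lower_bound_new}. The statement would be: a type-2 root at height $h\le \tilde v_0$, on the half-line graph with explored regions removed, survives with probability bounded below by a constant $c_0>0$.

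The chain is built by an inductive exploration. Put $\nu_0:=\nu_{\mathsf D}$ and choose slightly perturbed exponents. From a type-2 vertex of mark $\lambda^{\nu}$, I look for a type-1 neighbour of mark about $\lambda^{\psi(\nu)}$. From a type-1 vertex of mark $\lambda^{\mu}$, I look for a type-2 neighbour of mark about $\lambda^{\varphi(\mu)}$. Since $\nu_{\mathsf D}$ is the fixed point of $\varphi\circ\psi$ with slope $\gamma_1\gamma_2/(\bar\gamma_1\bar\gamma_2)>1$, starting slightly above the fixed point (which is what the constant $K$ buys) makes the marks decrease doubly-geometrically in exponent. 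Concretely, I would take target marks $v_k$ with $\lambda^a v_k^{-\gamma_2}u_{k+1}^{\bar\gamma_1}=M_k$ and $\lambda u_{k+1}^{-\gamma_1}v_{k+1}^{\bar\gamma_2}=M_k$, where $M_k\to\infty$ grows like $2^k$ times a large constant. Each transmission step then involves a Poisson number, with mean of order $M_k$, of neighbours in the fresh box $[x_{\rm cur},x_{\rm cur}+{\rm reach}]\times[\text{mark}/2,\text{mark}]$. Before recovering, the current vertex infects at least one of them with probability at least $1-e^{-cM_k}$. These are neighbours reached by the current vertex's infection during its infectious period. Each box lies to the right of the previous ones and at lower marks, so its Poisson points are unexplored; the graph and graphical-construction events across steps are therefore independent given the past. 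The failure probabilities $e^{-cM_k}$ are summable and can be made small overall by choosing $K$ large.

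The main obstacle is the algebra making a fixed constant $K$ suffice uniformly in $\lambda$. The ratio $M_k$ must be computed explicitly along the recursion, and one must show that the deviation from the fixed point grows geometrically with the factor $\gamma_1\gamma_2/(\bar\gamma_1\bar\gamma_2)$, so that $M_k\ge K^{\delta 2^k}$ or similar. One must also check that the chosen boxes really consist of neighbours: the spatial width of each box must be at most $u^{-\gamma_1}v^{-\gamma_2}$, and marks stay at most $1$, which needs $\lambda$ small. Finally, the infinitely many distinct infected vertices along the chain give survival. No star argument is needed, since each vertex only needs to transmit once.
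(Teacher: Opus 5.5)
Your overall plan matches the paper's: one transmission from the root into a type-2 vertex of mark of order $\lambda^{\nu_{\mathsf D}}$, then an infinite chain of single direct transmissions with growing expected counts. The problem is the step where the constant $K$ is supposed to put you on the good side of the fixed point. It goes the wrong way, and as written the chain fails.

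Put $v_k=\lambda^{\nu_{\mathsf D}}s_k$ and $u_{k+1}=\lambda^{\mu_{\mathsf D}}t_{k+1}$. Using $\bar\gamma_1\mu_{\mathsf D}-\gamma_2\nu_{\mathsf D}=-a$ and $-\gamma_1\mu_{\mathsf D}+\bar\gamma_2\nu_{\mathsf D}=-1$, your two defining equations become $t_{k+1}^{\bar\gamma_1}=M_k s_k^{\gamma_2}$ and $s_{k+1}^{\bar\gamma_2}=M_k t_{k+1}^{\gamma_1}$. That is,
\[
s_{k+1}=M_k^{1/(\bar\gamma_1\bar\gamma_2)}\, s_k^{\rho},\qquad \rho:=\frac{\gamma_1\gamma_2}{\bar\gamma_1\bar\gamma_2}>1 .
\]
With $s_0=K>1$ and $M_k\ge 1$ this gives $s_k\ge K^{\rho^k}$. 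So the target marks \emph{increase} doubly exponentially and exceed $1$ after $O(\log\log\tfrac{1}{\lambda})$ steps.

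A larger mark means a less connected vertex. Since $K$ does not change exponents, the relevant fixed point is at the level of constants. For $M_k\equiv M$ it is $s^*=M^{-1/\Delta}$, because $\bar\gamma_1\bar\gamma_2(\rho-1)=\Delta$. This fixed point is unstable. Demanding high-probability steps ($M$ large) therefore forces the start to be a \emph{small} multiple of $\lambda^{\nu_{\mathsf D}}$, and $K>1$ lands you on the divergent side.

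The same error undermines your freshness claim. The boxes do not lie to the right of the previous ones: each starts at the current vertex, which sits inside the previous box. Independence must therefore come from disjoint mark bands of same-type boxes, i.e. $u_{k+1}\le u_k/2$. That fails when the marks increase.

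The repair is cheap. Take $\tilde v_0=\epsilon\lambda^{\nu_{\mathsf D}}$ with $\epsilon$ small depending on $M$. For $M_k=M2^k$ you need $\log\epsilon$ to lie strictly below $-\frac{1}{\bar\gamma_1\bar\gamma_2}\sum_{j\ge 0}\rho^{-1-j}\log M_j$, which is finite. Then $\log s_k\le -c\rho^k$, the mark bands separate, and the first step costs only the constant factor $\epsilon^{\bar\gamma_2}$.

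Also correct the per-step bound. The current vertex recovers before transmitting to any of its $N$ box-neighbours with probability $1/(1+\lambda_i N)$. So a step fails with probability of order $1/M_k$, not $e^{-cM_k}$. With $M_k=M2^k$ this is still summable and small for $M$ large, so your conclusion survives.

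For comparison, the paper avoids both issues. It starts from marks in $[\lambda^{\nu_{\mathsf D}},2\lambda^{\nu_{\mathsf D}}]$, right at the fixed point. It uses rectangles $[0,\hat\ell_k]\times[0,2^{-k}\lambda^{\mu_{\mathsf D}}]$ and $[0,\hat\ell_k]\times[0,2^{-k}\lambda^{\nu_{\mathsf D}}]$, in which consecutive boxes are completely connected. All box counts are controlled by one graph event whose failure probability is super-polynomially small. Since $\lambda^a\,\mathrm{Leb}(\mathcal R_k)$ and $\lambda\,\mathrm{Leb}(\mathcal R_k')$ are of order $2^{\Delta k}$, the paper simply accepts order-one success probabilities in the first steps. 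The product $\prod_k \frac{x_k}{1+x_k}$ with $x_k\asymp 2^{\Delta k}$ is still a positive constant.
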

\begin{proof}
	\textit{Graph events.} Define
	\[
		\hat{u}_k:= 2^{-k} \lambda^{\mu_{\mathsf D}},\qquad \hat v_k:= 2^{-k} \lambda^{\nu_{\mathsf D}},\qquad \hat \ell_k:=2^{(\gamma_1 + \gamma_2)(k-1)}\cdot \lambda^{-\gamma_1 \mu_{\mathsf D}} \cdot \lambda^{-\gamma_2 \nu_{\mathsf D}},\qquad k \in \mathbb N_0.
	\]
	
	Then, define the regions
	$
		\mathcal R:= [0,(2\hat v_0)^{-\gamma_2}] \times [\hat v_0,2\hat v_0]
		$
	and
	\[
		\mathcal R_k:=[0,\hat \ell_k] \times [0,\hat u_k],\qquad \mathcal R_k':=[0,\hat \ell_k] \times [0,\hat v_k],\qquad k \in \mathbb N_0.
	\]
	
	Using the definitions of~$\mu_{\mathsf D}$ and~$\nu_{\mathsf D}$, we have
	\[\bar{\gamma}_1 \mu_{\mathsf D} - \gamma_2 \nu_{\mathsf D} = -a \qquad \text{and} \qquad -\gamma_1 \mu_{\mathsf D} + \bar{\gamma}_2 \nu_{\mathsf D} = -1;\]
	this implies that
	\begin{equation}\label{eq_magic_Delta}
		\mathrm{Leb}(\mathcal R_k) = 2^{-(\gamma_1+\gamma_2)+\Delta \cdot k } \cdot \lambda^{-a},\qquad \mathrm{Leb}(\mathcal R_k')= 2^{-(\gamma_1+\gamma_2)+\Delta \cdot k } \cdot \lambda^{-1}.
	\end{equation}
	
	Inspecting the dimensions of the boxes~$\mathcal R_k$ and~$\mathcal R_k'$, it is easy to check that
	\begin{equation}\label{eq_all_neighbours1}
		x \in \mathcal P_2 \cap \mathcal R,\; y \in \mathcal P_1 \cap \mathcal R_0 \quad \Longrightarrow \quad x \sim y
	\end{equation}
	and
	\begin{equation}\label{eq_all_neighbours2}
x \in \mathcal P_1 \cap \mathcal R_k,\;y \in \mathcal P_2 \cap \mathcal R_k',\; z \in \mathcal P_1 \cap \mathcal R_{k+1}\quad \Longrightarrow \quad x \sim y \sim z.
	\end{equation}

	We let~$E_1$ be the event that the root of~$\mathcal G$ has a neighbour in~$\mathcal R$. Also let
	\[
		E_2:=\bigcap_{k=0}^\infty \{|\mathcal P_1 \cap \mathcal R_k| \ge \tfrac12\mathrm{Leb}(\mathcal R_k),\;|\mathcal P_2 \cap \mathcal R_k'| \ge \tfrac12\mathrm{Leb}(\mathcal R_k')\}.
	\]
	
	We have
	\begin{align*}
		\mathbb P(E_1) \ge \mathbb P(\mathcal P_2 \cap \mathcal R \neq \varnothing) &= 1 - \exp\{-\mathrm{Leb}(\mathcal R)\} \\
		&= 1-\exp\{- 2^{-\gamma_2} (\hat v_0)^{\bar{\gamma}_2} \} \ge 2^{-\gamma_2-1} (\hat v_0)^{\bar{\gamma}_2} =2^{-\gamma_2-1} \cdot \lambda^{\bar{\gamma}_2 \nu_{\mathsf D}}.
	\end{align*}
	
	Moreover, using a Chernoff bound,
	\begin{align*}
		&\mathbb P(E_2^c) \le \sum_{k=0}^\infty (\exp\{- \tfrac12 (1-\log 2)\cdot \mathrm{Leb}(\mathcal R_k) \} + \exp\{- \tfrac12 (1-\log 2)\cdot \mathrm{Leb}(\mathcal R_k') \})\\
		&= \sum_{k=0}^\infty (\exp\{- \tfrac12 (1-\log 2)\cdot 2^{-(\gamma_1+\gamma_2) + \Delta \cdot k} \cdot \lambda^{-a} \} + \exp\{- \tfrac12 (1-\log 2)\cdot 2^{-(\gamma_1+\gamma_2) + \Delta \cdot k} \cdot \lambda^{-1}  \}),
	\end{align*}
	where the equality follows from~\eqref{eq_magic_Delta}.
	This shows that as~$\lambda \to 0$,~$\mathbb P(E_2^c)$ tends to zero faster than any power of~$\lambda$. Therefore, for~$\lambda$ small enough we have
	\begin{equation}\label{eq_prob_good_events_dir}
		\mathbb P(E_1 \cap E_2) \ge \mathbb P(E_1) - \mathbb P(E_2^c) \ge \tfrac12 \mathbb P(E_1) \ge 2^{-\gamma_2-2} \cdot \lambda^{\bar{\gamma}_2 \nu_{\mathsf D}}.
	\end{equation}

	\textit{Infection survival probability.} Assume that~$E_1$ and~$E_2$ occur. By the definition of~$E_1$, we can take~$z=(y,h) \in \mathcal R$ neighbouring the root. We make the infection spread as follows:
	\begin{itemize}
		\item The root infects~$z$ before recovering with probability~$\frac{\lambda}{1+\lambda}$. 
		\item Let~$N_0$ denote the number of neighbours of~$z$ in~$\mathcal R_0$. By~\eqref{eq_all_neighbours1} and the definition of~$E_2$, we have~$N_0 \ge |\mathcal P_1 \cap \mathcal R_0| \ge \tfrac12 \mathrm{Leb}(\mathcal R_0)$. With probability above~$\lambda^a N_0/(1+\lambda^a N_0)$,~$z$ transmits the infection to some element of~$\mathcal R_0$ (call it~$x_0$) before recovering.
		\item 
			Let~$N_0'$ denote the number of neighbours of~$x_0$ in~$\mathcal R_0'$. We have~$N_0' \ge |\mathcal P_2 \cap \mathcal R_0'| \ge \tfrac12 \mathrm{Leb}(\mathcal R_0')$. With probability above~$\lambda N_0'/(1+\lambda N_0')$,~$x_0$ transmits the infection to some element of~$\mathcal R_0'$ (call it~$y_0$) before recovering.
		\item 
	Let~$N_1$ denote the number of neighbours of~$y_0$ in~$\mathcal R_1$. We have~$N_1 \ge |\mathcal P_1 \cap \mathcal R_1| \ge \tfrac12 \mathrm{Leb}(\mathcal R_1)$. With probability above~$\lambda^a N_1/(1+\lambda N_1)$,~$y_0$ transmits the infection to some element of~$\mathcal R_1$ before recovering.
	\end{itemize}

Proceeding in this manner, we see that the infection starting from the root survives forever with probability larger than
	\[
		\frac{\lambda}{1+\lambda} \cdot \prod_{k=0}^\infty \frac{\tfrac12 \lambda^a \cdot  \mathrm{Leb}(\mathcal R_k)}{1+\tfrac12 \lambda^a \cdot  \mathrm{Leb}(\mathcal R_k)} \cdot \frac{\tfrac12 \lambda \cdot  \mathrm{Leb}(\mathcal R_k')}{1+\tfrac12 \lambda \cdot  \mathrm{Leb}(\mathcal R_k')}.	\]
		
	Using~\eqref{eq_magic_Delta}, it is straightforward to check that this is larger than~$c \lambda$, for some constant~$c > 0$. Combining this with~\eqref{eq_prob_good_events_dir} gives the desired bound.
\end{proof}

\begin{proof}[Proof of Proposition~\ref{prop_lower_bound}]
	By putting together Propositions~\ref{prop_strategy_rott_star},~\ref{prop_strategy_bridge},~\ref{prop_lower_one_to_star}, and~\ref{prop_strat_direct}, we see that by choosing~$C > 0$ large enough, for small~$\lambda$ we have
	\[
		\theta(\lambda) \ge (\log\tfrac{1}{\lambda})^{-C}\cdot \lambda^{\min\{\mu_{\mathsf S},\; 1+\bar{\gamma}_2 \nu_{\mathsf S},\; 1+\bar{\gamma}_2 \nu_{\mathsf B},\; 1+\bar{\gamma}_2 \nu_{\mathsf D} \}}.
	\]
	
	By Proposition~\ref{prop_opt}, we have~$A_\star \in \{\mu_{\mathsf S},\; 1+\bar{\gamma}_2 \nu_{\mathsf S},\; 1+\bar{\gamma}_2 \nu_{\mathsf B},\; 1+\bar{\gamma}_2 \nu_{\mathsf D}\}$, concluding the proof.
\end{proof}

\section{Upper bound}
\label{s_upper_bound}
In this section, we prove the upper bound. The main point is to show that, up to logarithmic
factors, only three mechanisms contribute to the survival probability: the root is already a
target, the infection reaches a target in one step, or it reaches a target in two steps.

\subsection{Ordered traces and martingale bound}
As in~\cite{link}, we use a martingale argument to bound the probability of individual
infection paths. However, because of the two-type structure, the existing results in the
literature do not apply directly. Since the proof is short and instructive, we include it here.


For a given graph $G$, we let $\mathbf{\Gamma}=\mathbf{\Gamma}(G)$ denote the set of all \emph{vertex paths}, i.e., (possibly finite) sequences of the form $(\Gamma_0, \Gamma_1, \dots)$ where $\Gamma_i \in V$ and $\Gamma_i \sim \Gamma_{i+1}$ for each~$i$. If~$\Gamma=(\Gamma_0,\ldots,\Gamma_\ell)$, then the \emph{length} of~$\Gamma$ is~$\ell$.

Given an infection path $g: I \to V$, its \emph{ordered trace} is the vertex path $\Gamma\in \mathbf{\Gamma}$ obtained by recording the vertices visited by $g$ in the order they are visited (possibly with repetitions). More precisely, if~$g$ is given by
\[
	g(t) = \begin{cases}
		v_0 &\text{for } t \in [t_0,t_1),\\
		v_1 &\text{for } t \in [t_1,t_2),\\
		\ldots\\
		v_\ell&\text{for } t \in [t_\ell,t_{\ell+1}],
	\end{cases}
\]
with~$t_0 < t_1 < \cdots < t_{\ell+1}$ and~$v_i \neq v_{i+1}$ for each~$i$, then the ordered trace of~$g$ is~$\Gamma=(v_0,\ldots,v_{\ell})$. Note that this sequence may have repeated vertices, but they are never consecutive.
 
\begin{lemma}[Martingale bounds on infection paths]
\label{lem:martingale}
Assume that~$G$ is a bipartite graph, and consider the contact process on~$G$ in which the infection rates of types~$1$ and~$2$ are~$\lambda_1$ and~$\lambda_2$, respectively. Assume that~$\lambda_1,\lambda_2 < 1/4$.
	Let $\Gamma=(\Gamma_0,\ldots,\Gamma_\ell)$ be a vertex path in~$G$ with initial vertex~$\Gamma_0$ of type~$1$. Then, the probability that there exist~$t > 0$ and an infection path~$g:[0,t]\to V$ having $\Gamma$ as its ordered trace is at most $(2\lambda_1)^{\lceil|\Gamma|/2\rceil}(2\lambda_2)^{\lfloor|\Gamma|/2\rfloor}$.
\end{lemma}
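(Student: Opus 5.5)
The plan is to adapt the martingale argument of~\cite{mountford2013metastable} (as used in~\cite{link}) to two rates. Since~$\Gamma_0$ has type~1 and~$G$ is bipartite, the vertices~$\Gamma_i$ with~$i$ even have type~1 and those with~$i$ odd have type~2. The jump~$\Gamma_i\to\Gamma_{i+1}$ therefore uses a transmission arrow of rate~$\lambda_1$ for even~$i$ and of rate~$\lambda_2$ for odd~$i$. Among the~$\ell=|\Gamma|$ jumps, $\lceil \ell/2\rceil$ have even index and~$\lfloor \ell/2\rfloor$ have odd index, which gives exactly the claimed exponents. So it suffices to show that each jump costs a factor at most~$2\lambda_i$, with~$\lambda_i$ the rate of its source type.

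To formalise this, I track the \emph{progress} of the best infection path that follows~$\Gamma$. Let~$K_t$ be the largest~$k\le \ell$ such that there is an infection path~$g:[0,s]\to V$, $s\le t$, starting at~$\Gamma_0$ at time~$0$, whose ordered trace is~$(\Gamma_0,\dots,\Gamma_k)$ and which is at~$\Gamma_k$ at time~$t$. If no such path exists, set~$K_t=-\infty$, the cemetery state. Equivalently, I define a set-valued process~$A_t\subseteq\{0,\dots,\ell\}$ of indices~$k$ for which such a path currently sits at~$\Gamma_k$. It evolves as follows: index~$k$ is removed at a recovery mark of~$\Gamma_k$, and index~$k+1$ is added when~$k\in A_t$ and an arrow~$\Gamma_k\to\Gamma_{k+1}$ fires. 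The event in the lemma is contained in~$\{\ell\in A_t \text{ for some } t\}$.

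Next I take the weight function~$w(k):=\prod_{i<k}(2\lambda_{\mathrm{type}(\Gamma_i)})^{-1}$ and the potential~$M_t:=\sum_{k\in A_t} w(k)$. I claim~$M_t$ is a supermartingale. Index~$k$ is lost at rate~$1$, which decreases~$M_t$ by~$w(k)$. Index~$k+1$ is gained, if absent, at rate at most~$\lambda_{\mathrm{type}(\Gamma_k)}$, which increases~$M_t$ by~$w(k+1)=w(k)/(2\lambda)$. The net drift from~$k$ is thus at most~$-w(k)+w(k)/2<0$.

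One subtlety is that the same vertex may appear at several indices, say~$\Gamma_k=\Gamma_j$. Then a single recovery mark removes several indices at once. This only decreases~$M_t$ further, so it does no harm. Similarly, one arrow firing may add several indices, but each addition comes from a distinct~$k\in A_t$ and is accounted for in that~$k$'s drift. The hypothesis~$\lambda_i<1/4$ ensures that the factors~$2\lambda_i<1/2$ leave room for this slack.

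Optional stopping then gives~$w(\ell)\,\mathbb P(\ell\text{ ever in }A)\le \mathbb E[M_0]=w(0)=1$, using that~$M\ge0$ and a localisation at~$\tau_\ell\wedge n$. This is the claimed bound.

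The main obstacle I expect is making the supermartingale property rigorous when vertices repeat in~$\Gamma$ and the state space is a set rather than a single index. I would first try the single-index version, with~$K_t=\max A_t$ and~$M_t=w(K_t)$. If repetitions break monotonicity there, I fall back to the sum version above, where everything is linear and the drift computation is term by term.
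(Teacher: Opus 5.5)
Your argument is correct, but it uses a different potential from the paper. The paper takes exactly the single-index version you list as a first attempt: $M_t=w(X_t)$ with $X_t=\max A_t$ and the same weights $w$.

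Repetitions do not break the single-index version. The leading index can only increase via the arrow $\Gamma_{X_t}\to\Gamma_{X_t+1}$, since an arrow out of $\Gamma_j$ with $j<X_t$ only adds $j+1\le X_t$. A recovery at $\Gamma_{X_t}$ moves it to some index $\le X_t-1$. Because $w$ is increasing, $M$ then drops to at most $2\lambda_{\mathrm{type}(\Gamma_{X_t-1})}M_t$ rather than to $0$. The resulting drift bound $M_t\bigl(\tfrac12-\lambda_{\mathrm{type}(\Gamma_{X_t})}+2\lambda_{\mathrm{type}(\Gamma_{X_t-1})}-1\bigr)<0$ is exactly where the paper uses the hypothesis $\lambda_1,\lambda_2<1/4$.

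Your additive potential $\sum_{k\in A_t}w(k)$ avoids this loss. Each live index is removed at rate exactly $1$, since simultaneous removal of indices sharing a vertex does not change the per-index rate. Each live $k<\ell$ creates $k+1$ at rate at most $\lambda_{\mathrm{type}(\Gamma_k)}$, with gain $w(k)/(2\lambda_{\mathrm{type}(\Gamma_k)})$. So the drift is at most $-M_t/2$ for arbitrary positive rates. Your version is therefore slightly more robust: the assumption $\lambda_i<1/4$ is never actually used, contrary to your remark about slack. The paper's version is instead a direct two-rate transcription of the single-index argument in the references it cites.

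For rigour, observe that $A_t$ is a Markov chain on the finite set of subsets of $\{0,\ldots,\ell\}$, driven by finitely many independent Poisson clocks. So $M$ is bounded and the generator bound gives the supermartingale property. With $\tau_\ell$ the first time $\ell\in A_t$, you then get $w(\ell)\,\mathbb P(\tau_\ell\le n)\le \mathbb E[M_{\tau_\ell\wedge n}]\le 1$ for every $n$.

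A small fix: in your definition of $K_t$ the path should be $g:[0,t]\to V$, not $g:[0,s]\to V$ with $s\le t$.
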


The symmetric statement with $(2\lambda_1)^{\lfloor|\Gamma|/2\rfloor}(2\lambda_2)^{\lceil|\Gamma|/2\rceil}$ is of course also true with initial vertex of type~$2$.

\begin{proof}
This martingale argument is similar to the one introduced in \cite[Lemma 5.1]{link}, but with alternating weights $2\lambda_1$ and $2\lambda_2$ along the path. 

	Fix a vertex path~$\Gamma=(\Gamma_0,\ldots, \Gamma_\ell)$. For each~$t \ge 0$, let~$X_t$ denote the largest index~$i \in \{0,\ldots,\ell\}$ such that there exists an infection path~$g:[0,t]\to V$ such that~$g(0)=\Gamma_0$ and~$g(t)=\Gamma_i$ (with~$X_t=-\infty$ if no such~$i$ exists).

	Using that an even length path must end at type $1$, and an odd length path at type $2$, we construct the process
\[
	M_t=\begin{cases} (4\lambda_1\lambda_2)^{-X_t/2}, & X_t \text{ even},\\
2\lambda_1(4\lambda_1\lambda_2)^{-(X_t-1)/2}, & X_t \text{ odd}.\\ \end{cases}
\]

	The process~$(M_t)_t$ is shown to be a supermartingale with respect to the natural filtration~$(\mathcal F_t)_t$, as follows:
	\[
		\left.\frac{\mathrm{d}}{\mathrm{d}s}\mathbb E[M_{t+s} \mid \mathcal F_t]\right|_{s=0+} \le \lambda_1 M_t \left( \frac{1}{2\lambda_1}-1  \right) + M_t \left( 2 \lambda_2 - 1 \right) \le M_t \left(-\frac12 - \lambda_1 + 2\lambda_2\right) < 0,
	\]
and the same bounds with swapped indices when it has odd length. 
Hence if we define $\tau$ as the first time when $\{X_t=\ell\}$, by optional stopping,
\[
	1=\mathbb E[M_0]\ge \mathbb E[M_\tau;\tau<\infty]=(2\lambda_1)^{\lfloor|\Gamma|/2\rfloor}(2\lambda_2)^{\lceil|\Gamma|/2\rceil}\mathbb P(\tau<\infty) \]
and so we have the claimed bound on $\mathbb P(X_\tau=|\pi|)=\mathbb P(\tau<\infty)$.
\end{proof}

\subsection{Infection paths classified by length and cardinality}
We classify infection paths according to their length and the number of distinct vertices they
visit, and then bound the probability of each class.

For the rest of this section, we fix~$(\gamma_1,\gamma_2) \in \mathcal S$ and~$a > 0$. We will often assume that~$\lambda$ is small enough.
Define
\[
	\delta:=\gamma_1 \wedge \gamma_2 \wedge  (\Delta_1+\Delta_2)
\]
and fix~$b > 17/\delta$. Let
\begin{align*}
	\mathfrak{u}_1:=\lambda^{\mu_\star} \cdot (\log\tfrac{1}{\lambda})^{b},\qquad \mathfrak{u}_2:=\lambda^{\nu_\star} \cdot (\log\tfrac{1}{\lambda})^{b}.
\end{align*}

We label vertices of~$\mathcal G$ as \emph{blue} or \emph{red} as follows:
\begin{itemize}
	\item a vertex of type 1 is blue if its mark is larger than~$\mathfrak{u}_1$, and red otherwise;
	\item a vertex of type 2 is blue if its mark is larger than~$\mathfrak{u}_2$, and red otherwise.
\end{itemize}

We now state the two key propositions that give upper bounds to the probabilities of infection paths.
Their proofs are deferred to Section~\ref{ss_combinatorial} below.
\begin{proposition}\label{prop_bounds_together}
	There exists~$C > 0$ such that the following holds for~$\lambda$ small enough.
	Let~$k,\ell \in \mathbb N$ with~$k \ge 2$ and~$\ell \ge k-1$. Let~$P(k,\ell)$ be the probability that there is an infection path in~$\mathcal G$ started at the root and having ordered trace~$\Gamma$ satisfying the following properties:
	\begin{itemize}
		\item $\Gamma$ has length~$\ell$ and visits~$k$ distinct vertices;
		\item all vertices of~$\Gamma$ are blue, except for the last one, which is red.
	\end{itemize}
	
	Then, defining
	\begin{equation}\label{eq_def_of_M_f}
		M(\lambda,k,\ell):=\binom{\ell +1}{k}  k^{\ell+1-k} \cdot  (2 \lambda^{a \wedge 1})^{(\ell-2k)\vee 0} \cdot \left(\log \frac{1}{\lambda}\right)^{-16\left\lfloor \frac{k-3}{2} \right\rfloor},
	\end{equation}
	we have
\begin{equation*}
	P(k,\ell) \le \begin{cases} 
		C M(\lambda,k,\ell)\cdot  \lambda^{1+a-\Delta_2 \nu_{\star}+\bar{\gamma}_1\mu_{\star}} &\text{if $\ell$ is even};\\
	C M(\lambda,k,\ell)\cdot  \lambda^{1+\bar{\gamma}_2 \nu_\star} &\text{if $\ell$ is odd}.\end{cases}
\end{equation*}
\end{proposition}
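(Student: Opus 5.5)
The proof will be a first-moment computation over \emph{skeletons}. A skeleton consists of the sequence of distinct vertices $w_1,\dots,w_k$ in order of first visit, together with the map $\{0,\dots,\ell\}\to\{1,\dots,k\}$ saying which distinct vertex occupies each position of $\Gamma$.

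\textbf{Step 1: counting patterns.} The factor $\binom{\ell+1}{k}k^{\ell+1-k}$ bounds the number of such patterns. We choose which $k$ positions are first visits, and each remaining position points to one of at most $k$ earlier vertices.

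\textbf{Step 2: conditioning on the graph.} Fix a pattern. Conditionally on the graph, Lemma~\ref{lem:martingale} bounds the probability that the infection path has ordered trace $\Gamma$ by $(2\lambda_1)^{\lceil \ell/2\rceil}(2\lambda_2)^{\lfloor \ell/2\rfloor}$. The root is type~1, so positions alternate types and the exponent pattern is deterministic.

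\textbf{Step 3: splitting the transmission weights.} I split the $\ell$ transmission factors into two groups.
\begin{itemize}
\item The first-entrance steps, at most $k-1$ of them, are those jumps that arrive at a new vertex. These are paired with the Mecke (Slivnyak) integral over the positions and marks of the new vertex.
\item The remaining $\ell-(k-1)$ steps revisit known vertices. Each costs at least $2\lambda^{a\wedge1}$.
\end{itemize}
Up to $k$ of the revisiting steps are sacrificed, namely one per vertex, to pay for the return edge needed to close cycles. This yields $(2\lambda^{a\wedge 1})^{(\ell-2k)\vee0}$.

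\textbf{Step 4: the integral over the tree.} By Mecke's formula, the expected number of embeddings is an integral over the positions and marks of $w_2,\dots,w_k$. Since $\Gamma$ is connected, it contains a spanning tree formed by the first-entrance edges. I integrate the leaves inductively from the end of the tree toward the root. A type-1 vertex of mark $u$ joins a type-2 vertex of mark $v$ with integrated weight $\int u^{-\gamma_1}v^{-\gamma_2}$ over the spatial displacement. I then apply the transmission factor $\lambda$ or $\lambda^a$ to that edge and integrate the mark over $(\mathfrak u_i,1]$ for blue vertices, or $[0,\mathfrak u_i]$ for the final red vertex.

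The key one-step estimates are the ones behind (b)--(c). A blue type-1 vertex of mark $u$ produces an expected effective offspring of type~2 of size about $\lambda u^{-\gamma_1}\int v^{-\gamma_2}\,dv$. The choice of $\mu_\star,\nu_\star$ (minimum over $\mathsf S,\mathsf B,\mathsf D$) guarantees the following.
\begin{itemize}
\item Alternating type-1/type-2 blue chains contract: two steps of the map $\psi\circ\varphi$ stay below the thresholds.
\item Stars are excluded, so blue vertices have degree at most $\lambda^{-1-a}(\log)^{O(b)}$. Revisits therefore cannot exploit a big star.
\end{itemize}

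Concretely, I will show that the integral of an inner blue pair (a type-1 vertex followed by a type-2 vertex) is at most $(\log\frac1\lambda)^{-16}$. This uses $b>17/\delta$: each factor gains $\mathfrak u^{\delta}$-type savings relative to the threshold, i.e.\ a $(\log)^{-b\delta}$ loss against at most $(\log)^{O(1)}$ gains. This accounts for the factor $(\log\frac1\lambda)^{-16\lfloor (k-3)/2\rfloor}$.

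\textbf{Step 5: the last one or two steps.} These give the main power, by parity of $\ell$.
\begin{itemize}
\item If $\ell$ is odd, the red vertex is type~2. Integrating its mark over $[0,\mathfrak u_2]$ with the preceding edge to a type-1 vertex gives $\lambda\,\mathfrak u_2^{\bar\gamma_2}\approx\lambda^{1+\bar\gamma_2\nu_\star}$. The earlier chain contributes only $O(1)$, and the root mark is integrated against $u^{-\gamma_1}$, which is integrable.
\item If $\ell$ is even, the red vertex is type~1. The last two steps give $\lambda^{a}\mathfrak u_1^{\bar\gamma_1}$ times the middle type-2 vertex's contribution, which is $\lambda\int_{\mathfrak u_2}^1 v^{-\gamma_2}\cdot v^{-\gamma_2}\,dv\lesssim \lambda\,\mathfrak u_2^{-\Delta_2}$ (up to logs). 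This produces $\lambda^{1+a-\Delta_2\nu_\star+\bar\gamma_1\mu_\star}$.
\end{itemize}

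\textbf{Main obstacle.} The hardest step is Step 4 when the skeleton has cycles, i.e.\ when non-tree edges impose extra spatial constraints. I plan to simply drop these constraints; this is an upper bound, as the indicator is at most 1. The only cost is that the transmission factors on those edges are already counted among revisit steps. The delicate point is making the uniform $(\log)^{-16}$ gain per pair hold simultaneously in every parameter regime. For that I would check the two-step contraction inequality separately against each of $\mu_{\mathsf S},\mu_{\mathsf B},\mu_{\mathsf D}$ being the minimiser, using the identities satisfied by these quantities.
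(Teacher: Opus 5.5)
Your Steps 1--3 and 5 follow the paper's architecture:
\begin{itemize}
\item counting combinatorial paths (Lemma~\ref{lem_count_combi});
\item the alternating martingale bound (Lemma~\ref{lem:martingale});
\item Mecke's formula on the discovery tree $T$ with the non-tree adjacency constraints simply dropped, so the cycles you call the main obstacle are harmless;
\item the terminal computations $\lambda\,\mathfrak u_2^{\bar{\gamma}_2}$ and $\lambda^{1+a}\mathscr V_2\,\mathfrak u_1^{\bar{\gamma}_1}$.
\end{itemize}

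The gap is Step~4. After Mecke the expectation factorises over the vertices of $T$. What is needed is the combinatorial bound of Lemma~\ref{lem_bound_on_F}: the product over $m\neq m^*$ of $(2\lambda^{\alpha(m)})^{(\deg_T(m)-1)\vee 1}$ times $\mathscr U_{\deg_T(m)}$ or $\mathscr V_{\deg_T(m)}$ is at most $(\log\tfrac1\lambda)^{-16\lfloor (k-3)/2\rfloor}$ times the weight of a segment of length $1$ or $2$, uniformly over all trees. This already requires the reserved revisit factors to be exactly the exit transmissions from the leaves of $T$ other than $m^*$ (the set $J'$ in the paper), not returns that ``close cycles''. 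Otherwise a leaf hanging from a blue vertex of mark $u$ costs $\lambda u^{-\gamma_1}$, which is not small.

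Your claim that an inner blue pair integrates to at most $(\log\tfrac1\lambda)^{-16}$ is not a local fact. Up to logarithms, a type-1 vertex of tree degree $d\ge 2$ carries $\lambda\,\mathfrak u_1^{-\Delta_1}(\lambda\,\mathfrak u_1^{-\gamma_1})^{d-2}$. The factor $\lambda\,\mathfrak u_1^{-\gamma_1}$ can be as large as $\lambda^{-a}$, and is compensated only by leaf factors elsewhere in the tree. Peeling leaves one at a time also fails. Deleting a type-1 leaf whose type-2 parent has tree degree $2$ changes the weight by a ratio of order $\lambda\,\mathscr V_2/\mathscr V_1\approx\lambda\,\mathfrak u_2^{-\Delta_2}$. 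For $\gamma_1=0.3$, $\gamma_2=0.9$, $a>1/8$ (where $\nu_\star=\nu_{\mathsf S}$), this ratio is $\lambda^{(1-8a)/9}$ up to logarithms, so it is polynomially large.

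Such a leaf must therefore be removed together with its parent, and this needs an ingredient your plan lacks. Star exclusion \eqref{eq_just_stars} handles a leaf with a sibling. The direct two-step contraction \eqref{eq_just_directs} handles degree-$2$ vertices between non-leaves. A pendant segment of length $2$ hanging from a vertex with other children is different: it costs $\lambda^{2+a}\mathfrak u_1^{-\gamma_1}\mathfrak u_2^{-\Delta_2}$, or symmetrically $\lambda^{1+2a}\mathfrak u_2^{-\gamma_2}\mathfrak u_1^{-\Delta_1}$. Controlling it requires the mixed bridge inequalities \eqref{eq_just_two_and_a} and \eqref{eq_just_one_and_twoa}, which use $\mu_\star\le\mu_{\mathsf B}$ and $\nu_\star\le\nu_{\mathsf B}$ respectively.

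These bridge inequalities are independent of the other two. In the example above, $(\mu_{\mathsf S},\nu_{\mathsf S})$ satisfies both the star and the direct inequality, yet $\gamma_1\mu_{\mathsf S}+\Delta_2\nu_{\mathsf S}=\tfrac{17}{9}(1+a)>2+a$. Only the actual value $\mu_\star=\mu_{\mathsf B}$ gives $\gamma_1\mu_\star+\Delta_2\nu_\star=2+a$, tight up to logarithms. So checking ``the two-step contraction inequality'' against each minimiser cannot suffice.

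The paper organises this through the reduction operations Op1--Op3 of Lemma~\ref{lem_factor_reduce}:
\begin{itemize}
\item trim a leaf with a sibling;
\item trim a pendant $2$-segment with a sibling;
\item collapse $x\sim y\sim z$ via $\mathscr U_{d_x}\mathscr U_{d_z}\lesssim \log\tfrac1\lambda\cdot\mathfrak u_1^{-\Delta_1}\mathscr U_{d_x+d_z-2}$.
\end{itemize}
Each operation removes at most two vertices and gains $(\log\tfrac1\lambda)^{-16}$, and some operation applies until only the terminal segment remains. Without such an exhaustive scheme and all four inequalities \eqref{eq_just_stars}--\eqref{eq_just_directs}, Step~4 remains a heuristic.

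A smaller point: blueness bounds degrees below $\lambda^{-1-a}$ by the factor $(\log\tfrac1\lambda)^{-\gamma_1 b}$. This is a negative power of the logarithm, not an unspecified $(\log\tfrac1\lambda)^{O(b)}$, and the sign is what makes leaves gain.
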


\begin{proposition}\label{prop_bounds_together2}
	There exists~$C > 0$ such that the following holds for~$\lambda$ small enough.
	Let~$k,\ell \in \mathbb N$ with~$k \ge 2$ and~$\ell \ge k-1$. Let~$Q(k,\ell)$ be the probability that there is an infection path in~$\mathcal G$ started at the root, having ordered trace~$\Gamma$ satisfying the following properties:
	\begin{itemize}
		\item $\Gamma$ has length~$\ell$ and visits~$k$ distinct vertices;
		\item the last vertex of~$\Gamma$ is distinct from all other vertices;
		\item all vertices of~$\Gamma$ are blue.
	\end{itemize}
	
	Then, taking~$M(\lambda,k,\ell)$ as in~\eqref{eq_def_of_M_f},
\begin{equation*}
	Q(k,\ell) \le \begin{cases}
		CM(\lambda,k,\ell)\cdot \lambda^{1+a}&\text{if $\ell$ is even};\\
		CM(\lambda,k,\ell)\cdot \lambda&\text{if $\ell$ is odd}.
	\end{cases}
\end{equation*}
\end{proposition}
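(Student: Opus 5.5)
We prove Proposition~\ref{prop_bounds_together2} by a first-moment computation over ordered traces. The bound is the graph-and-infection analogue of Proposition~\ref{prop_bounds_together}, but with no red endpoint, so no factor comes from reaching a target.

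\textbf{Union bound over shapes.} Encode an ordered trace $\Gamma=(\Gamma_0,\ldots,\Gamma_\ell)$ with $k$ distinct vertices by its \emph{shape}: the map $\{0,\ldots,\ell\}\to\{1,\ldots,k\}$ labelling each position by the order of first appearance of its vertex. The number of shapes is at most $\binom{\ell+1}{k}k^{\ell+1-k}$: choose the $k$ positions of first visits, then assign each other position one of the $k$ labels. This is exactly the combinatorial prefactor in $M(\lambda,k,\ell)$. For a fixed shape, take expectations over the Poisson environment of
\[
\sum_{\text{distinct blue } x_2,\ldots,x_k} \mathbb 1\{\text{edges required by the shape}\}\cdot(2\lambda)^{\lceil\ell/2\rceil}(2\lambda^a)^{\lfloor\ell/2\rfloor},
\]
using Lemma~\ref{lem:martingale} conditionally on $\mathcal G$, with $x_1$ the root. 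By Mecke's formula this becomes an integral over positions and marks of $k-1$ points.

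\textbf{Bounding the integral.} Each edge of the trace is traversed at least once, and there are at least $k-1$ distinct edges (a spanning tree $T$ of the visited set). Bound the edge indicators by those of $T$ only. Then integrate out the leaves of $T$ successively using the mark-degree conversion \eqref{eq_type1_ne}, restricted to blue marks. For a type-$1$ vertex of mark $u$, the expected number of blue type-$2$ neighbours, weighted by the probability $\lambda$ that they are infected, is of order $\lambda u^{-\gamma_1}$. Since $u\ge\mathfrak u_1=\lambda^{\mu_\star}(\log\frac1\lambda)^b$ and $\mu_\star\le\mu_{\mathsf D}$, each two-step excursion through a blue type-1 vertex and a blue type-2 vertex contributes at most $\lambda^{1+a}\cdot(\text{mark factors})$. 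The choice of the thresholds $\mu_{\mathsf D},\nu_{\mathsf D}$ (and of $\mu_{\mathsf S}$, $\mu_{\mathsf B}$ against $\mu_\star$) is designed so that the product is at most $O(1)\cdot(\log\frac1\lambda)^{-\delta b}$ per pair of new vertices. Because $b\delta>17$, this yields the factor $(\log\frac1\lambda)^{-16\lfloor(k-3)/2\rfloor}$. The first transmission from the root gives the factor $\lambda$, and, for even $\ell$, the final return gives the extra $\lambda^a$. The root-mark integral converges thanks to the blue constraint and $\gamma_i<1$.

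Repeated traversals beyond the first $2k$ steps give only extra rate factors, each at most $2\lambda^{a\wedge1}$, with no spatial gain. This produces $(2\lambda^{a\wedge1})^{(\ell-2k)\vee0}$. The last vertex being new guarantees that the final step is a tree edge and so contributes the full rate factor.

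\textbf{Main obstacle.} The hardest step is the uniform integration over tree shapes. Hub vertices of high degree in $T$ produce mark integrals like $\int u^{-m\gamma_1}\,du$, which diverge without the blue cutoff. We must check that the cutoff at $\mathfrak u_1,\mathfrak u_2$ always balances against the $\lambda$ factors, so that each vertex costs at most a polylog-small amount. We would organise this by induction, removing leaves in pairs and using the inequalities between $\mu_\star,\nu_\star$ and $\mu_{\mathsf D},\nu_{\mathsf D}$, $\mu_{\mathsf S},\nu_{\mathsf S}$. These inequalities are shared with the proof of Proposition~\ref{prop_bounds_together}, so we expect to prove both propositions simultaneously.
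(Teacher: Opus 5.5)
Your outline has the same architecture as the paper's proof: shapes counted as in Lemma~\ref{lem_count_combi}, Mecke plus Lemma~\ref{lem:martingale}, the discovery-tree product of $\mathscr U_{\deg}$, $\mathscr V_{\deg}$ from Lemma~\ref{lem_second_just_embed_blue}, rate factors charged to first traversals and leaf returns (the rest giving $(2\lambda^{a\wedge 1})^{(\ell-2k)\vee 0}$), then a polylog gain per reduction. The gap is in the even-$\ell$ endgame.

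After all reductions, an even-length trace leaves the segment $o\sim y\sim m^*$, whose weight is $4\lambda^{1+a}\mathscr U_1\mathscr V_2$. The middle type-2 vertex has tree degree $2$, so its mark integral is $\mathscr V_2=\int_{\mathfrak u_2}^1 v^{-2\gamma_2}\,\mathrm dv$. For $\gamma_2>\frac12$ this is of order $\mathfrak u_2^{-\Delta_2}$, i.e.\ polynomially large in $1/\lambda$. Your sentence ``for even $\ell$, the final return gives the extra $\lambda^a$'' treats this factor as $O(1)$. No vertex is left to pair it with, so none of the threshold inequalities can absorb it.

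This cannot be repaired, because the displayed even-$\ell$ bound is false when $\gamma_2>\frac12$. Take $k=3$, $\ell=2$, so $M(\lambda,3,2)=1$, and set $v:=\lambda^{a/\gamma_2}$. With probability at least of order $\lambda^{a\bar\gamma_2/\gamma_2}$, the (blue) root has a type-2 neighbour $y$ with mark in $[v,2v]$. This $y$ is blue because $\gamma_2\nu_\star>a$: indeed $\gamma_2\nu_{\mathsf S}=1+a$, $\gamma_2\nu_{\mathsf B}=(\bar\gamma_1+a)/\gamma_1$, and $\gamma_2\nu_{\mathsf D}>\gamma_1\gamma_2a/\Delta>a$. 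It has of order $\lambda^{-a}$ blue type-1 neighbours. Hence, conditionally, with probability of order $\lambda$ the root infects $y$ and $y$ then infects one of them before recovering. So $Q(3,2)\ge c\lambda^{1+a\bar\gamma_2/\gamma_2}\gg\lambda^{1+a}$.

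What your method, and the paper's own chain (Lemmas~\ref{lem_count_combi}, \ref{lem_use_bound_on_F12}, \ref{lem_bound_on_F}), actually delivers is
\[
Q(k,\ell)\le CM(\lambda,k,\ell)\,\mathscr V_2\lambda^{1+a}\ \text{for even }\ell,\qquad Q(k,\ell)\le CM(\lambda,k,\ell)\,\lambda\ \text{for odd }\ell.
\]
Lemma~\ref{lem_bound_on_F} ends in $(\mathscr V_2\lambda^{1+a})\vee\lambda$, so the printed $\lambda^{1+a}$ overstates what those lemmas give. The corrected bound is all that Proposition~\ref{prop_upper_bound} needs. There $\mathscr V_2\le\lambda^{-\Delta_2\nu_\star}$ is at most polynomially large, while $(\log\frac1\lambda)^{-16\lfloor (k_\star-3)/2\rfloor}$ with $k_\star\asymp B\log\frac1\lambda$ beats every power of $\lambda$. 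Aim for that version.

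A smaller gap is the reduction itself. ``Removing leaves in pairs'' cannot shrink a discovery tree that is a long path from $o$ to $m^*$ (the direct-spreading configuration), since it has no spare leaves. You need two pruning moves and one contraction:
\begin{itemize}
\item pruning a spare leaf, paid for by \eqref{eq_just_stars};
\item pruning a pendant two-edge segment, paid for by \eqref{eq_just_one_and_twoa} or \eqref{eq_just_two_and_a};
\item contracting an interior degree-2 vertex $y$ between non-leaves $x,z$, merging $x,z$ into one vertex of degree $\deg x+\deg z-2$.
\end{itemize}
For $x,z$ of type 1 (symmetrically otherwise), the contraction costs
\[
\lambda^{1+a}\,\mathscr U_{\deg x}\mathscr U_{\deg z}\mathscr V_2/\mathscr U_{\deg x+\deg z-2}\lesssim\lambda^{1+a}\log\tfrac1\lambda\,\mathfrak u_1^{-\Delta_1}\mathfrak u_2^{-\Delta_2}.
\]
This is exactly where $\mu_\star\le\mu_{\mathsf D}$ and $\nu_\star\le\nu_{\mathsf D}$ enter, via \eqref{eq_just_directs}. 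These three moves are the paper's Op1--Op3 in Lemma~\ref{lem_factor_reduce}.
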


In giving an upper bound to~$M(\lambda,k,\ell)$, the following will be useful:
\begin{lemma}\label{lem_bound_B}
	For any~$B > 0$, for~$\lambda$ small enough and every~$k \in \{3,\ldots, \lfloor B \log \frac{1}{\lambda}\rfloor\}$, we have
	\[
		\sum_{\ell=k-1}^\infty {\ell+1 \choose k}k^{\ell+1-k} \cdot (2\lambda^{a \wedge 1})^{(\ell-2k)\vee0} \le (4k)^{3k}.
	\]
\end{lemma}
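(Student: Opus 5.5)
We split the sum at $\ell = 2k$ and bound the two pieces separately. Write $q := 2\lambda^{a\wedge 1}$, which tends to $0$ as $\lambda \to 0$. Throughout we use $\binom{\ell+1}{k} \le 2^{\ell+1}$.

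\emph{Terms with $k-1 \le \ell \le 2k$.} Here the factor $q^{(\ell-2k)\vee 0}$ equals $1$. There are at most $k+2$ such values of $\ell$, and each term satisfies
\[
\binom{\ell+1}{k}k^{\ell+1-k} \le 2^{2k+1}k^{k+1}.
\]
So this piece contributes at most $(k+2)\,2^{2k+1}k^{k+1}$.

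\emph{Terms with $\ell = 2k+j$, $j \ge 1$.} Each term is at most
\[
2^{2k+j+1}\,k^{k+1+j}\,q^{j} = 2^{2k+1}k^{k+1}\,(2kq)^j.
\]
This is where the restriction $k \le B\log\frac1\lambda$ is used. It gives
\[
2kq \le 4B\log\tfrac1\lambda\cdot\lambda^{a\wedge1} \le \tfrac12
\]
for $\lambda$ small enough, uniformly in $k$ in the allowed range. The geometric series in $j$ is therefore at most $1$, so this piece contributes at most $2^{2k+1}k^{k+1}$.

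\emph{Combining.} Adding the two pieces gives a total of at most $(k+3)\,2^{2k+1}k^{k+1}$. For $k \ge 3$ this is bounded by $(4k)^{3k} = 4^{3k}k^{3k}$. To see this, note that $k+3 \le 2k \le k^{k}$ and $2^{2k+1} \le 4^{3k}$, so the total is at most $4^{3k}k^{2k+1} \le (4k)^{3k}$.

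The only step needing care is the uniform smallness of $2kq$. It holds because $\lambda^{a\wedge 1}\log\frac1\lambda \to 0$, and this is exactly why the lemma restricts $k$ to be at most $B\log\frac1\lambda$.
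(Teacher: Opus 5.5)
Your proof is correct and follows the same structure as the paper's. Both split at $\ell=2k$, bound the $O(k)$ terms carrying no $\lambda$-factor by a maximal term, and treat the tail as a geometric series whose ratio is small uniformly in $k$ because $k\le B\log\tfrac{1}{\lambda}$. The only difference is how the binomial coefficient is bounded: you use $\binom{\ell+1}{k}\le 2^{\ell+1}$ and absorb the extra factor $2$ per step into the ratio $2kq$, whereas the paper uses $\binom{j}{k}\le j^{k\wedge(j-k)}$ together with $x^y\ge y^x$. Your version is simpler and even gives the sharper intermediate bound $(k+3)\,2^{2k+1}k^{k+1}$.
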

\begin{proof}
We write the sum as
	\[
		\sum_{j=k}^\infty {j \choose k}k^{j-k} \cdot (2\lambda^{a \wedge 1})^{(j-1-2k)\vee0}.
	\]
	
We bound~${j \choose k} \le j^{k \wedge (j-k)}$.
For~$j\in \{k,2k\}$, we bound
\[
	j^{k \wedge (j-k)} k^{j-k}\cdot  (2\lambda^{a \wedge 1})^{(j-1-2k)\vee 0} = j^{j -k} k^{j-k} \le (2k)^kk^k \le (2k)^{2k},
\]
so that
\[
	\sum_{j=k}^{2k}j^{k \wedge (j-k)}  k^{j-k}\cdot  (2 \lambda^{a \wedge 1})^{(j-1-2k)\vee 0} \le k(2k)^{2k}.
\]

For~$j > 2k$, we bound
\[
	j^k \le (2(j-k))^k = 2^k(j-k)^k \le 2^kk^{j-k},
\]
where we have used the inequality~$x^y \ge y^x$, valid whenever~$y \ge x \ge e$ (because $z\mapsto\tfrac{\log z}{z}$ is decreasing on $[e,\infty)$).  Then, still assuming~$j > 2k$,
	\begin{align*}
		j^{k \wedge (j-k)} k^{j-k}\cdot  (2\lambda^{a \wedge 1})^{(j-1-2k)\vee 0} &= j^k k^{j-k}\cdot  (2\lambda^{a \wedge 1})^{j-1-2k} \\
		&\le 2^k k^{2(j-k)}\cdot (2\lambda^{a \wedge 1})^{j-1-2k} = 2^k k^{2+2k}\cdot  (2k^{2}\lambda^{a \wedge 1})^{j-1-2k}.
	\end{align*}
	
This gives
\[
	\sum_{j=2k+1}^\infty j^{k \wedge (j-k)} k^{j-k}\cdot (2 \lambda^{a \wedge 1})^{(j-1-2k)\vee 0} \le 2^k k^{2k+2} \sum_{j=2k+1}^\infty (2k^2\lambda^{a \wedge 1})^{j-1-2k} \le 2\cdot 2^k k^{2k+2},
\]
since~$2k^{2}\lambda^{a \wedge 1} \ll 1$.
We have thus proved that
	\[\sum_{j=k}^\infty {j \choose k}k^{j-k} \cdot (2\lambda^{a \wedge 1})^{(j-1-2k)\vee 0}\le 2^{2k}k^{2k+1}+ 2^{k+1} k^{2k+2} \le (4k)^{3k}. \qedhere\]
\end{proof}

\begin{proof}[Proof of Proposition~\ref{prop_upper_bound}]
	Fix~$B > 0$ large, to be chosen later (in a way that will not depend on~$\lambda$), and define~$k_\star:=\lfloor B \log\tfrac{1}{\lambda}\rfloor$.

	Almost surely, there is no infinite infection path that only visits finitely many vertices of~$\mathcal G$. Based on this observation, it readily follows that
	\[
		\theta(\lambda) \le \mathbb P(\text{root has mark }\le \mathfrak{u}_1) + \sum_{k=2}^{k_\star}\sum_{\ell= k-1}^\infty P(k,\ell) + \sum_{\ell=k_\star-1}^\infty Q(k_\star,\ell).
	\]
	
	The first probability on the right-hand side above is~$\mathfrak{u}_1$.   Next, by Proposition~\ref{prop_bounds_together} and Lemma~\ref{lem_bound_B}, with 
	the bound~$\lfloor (k-3)/2 \rfloor \ge k/4$ for~$k \ge 6$, we have
\begin{align*}
	&\sum_{k=1}^{k_\star}\sum_{\ell=k-1}^\infty P(k,\ell) \\
	&\le \left(\sum_{k=1}^5 (4k)^{3k} + \sum_{k=4}^{k_\star} \left((4k)^{3} \cdot \left(\log \tfrac{1}{\lambda} \right)^{-4} \right)^{k} \right)\cdot \lambda^{(1+a-\Delta_2 \nu_{\star}+\bar{\gamma}_1\mu_{\star}) \wedge (1+\bar{\gamma}_2 \nu_\star)}.
\end{align*}

Now note that, for~$k \le k_\star$,
\[
	(4k)^3\cdot  \left( \log\tfrac{1}{\lambda}\right)^{-4} \le 8B^3 \left( \log\tfrac{1}{\lambda}\right)^{-1} \ll 1,
\]
so 
\[
	\sum_{k=1}^{k_\star}\sum_{\ell=k-1}^\infty P(k,\ell) \le \left(\sum_{k=1}^5 (4k)^{3k} + 2\right)\cdot \lambda^{(1+a-\Delta_2 \nu_{\star}+\bar{\gamma}_1\mu_{\star}) \wedge (1+\bar{\gamma}_2 \nu_\star)}.
\]

	Next, using Proposition~\ref{prop_bounds_together2} and Lemma~\ref{lem_bound_B}, we bound
\begin{align*}
	\sum_{\ell=k_\star-1}^\infty Q(k_\star,\ell) \le \left((4k_\star)^{3}\cdot \left( \log \tfrac{1}{\lambda}\right)^{-4} \right)^{k_\star} \cdot \lambda \le 2^{-k_\star} \cdot \lambda \le \lambda^{\frac{B}{2}\log 2+1}.
\end{align*}

We have proved that
	\[
		\theta(\lambda) \le \mathfrak{u}_1 + C\lambda^{(1+a-\Delta_2 \nu_{\star}+\bar{\gamma}_1\mu_{\star}) \wedge (1+\bar{\gamma}_2 \nu_\star)} + \lambda^{\frac{B}{2}\log 2+1}.
	\]
	
	Recalling that~$\mathfrak{u}_1 := \lambda^{\mu_\star} (\log \tfrac{1}{\lambda})^b$ and~$A_\star:=\mu_\star \wedge (1+\bar{\gamma}_2 \nu_\star)\wedge (1+a-\Delta_2 \nu_\star+\bar{\gamma}_1 \mu_\star)$, the result follows by taking~$B$ large enough.
\end{proof}

\subsection{Combinatorial paths and discovery trees}
\label{ss_combinatorial}
We now develop the combinatorial estimates needed for the proofs of
Propositions~\ref{prop_bounds_together} and~\ref{prop_bounds_together2}.
We define a  \emph{combinatorial path} as a finite sequence of nonnegative integers, starting with~$0$ and then $1$ and from that point only introducing new integers when they are $1$ larger than the previous maximum.
The following simple combinatorial bound will be useful. The proof is omitted.

%
%
\begin{lemma}\label{lem_count_combi}
	There are at most~${\ell+1 \choose k}k^{\ell+1-k}$ combinatorial paths of length~$\ell$ with~$k$ distinct vertices.
\end{lemma}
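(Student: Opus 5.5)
The plan is to encode each combinatorial path injectively by two pieces of data: the set of positions where a new integer appears, and the values at all remaining positions. Then I count the possible encodings. Write a combinatorial path of length~$\ell$ as $(c_0,c_1,\ldots,c_\ell)$, so it has $\ell+1$ entries. Here $c_0=0$, and each $c_i$ either equals some earlier value or equals $1+\max\{c_0,\ldots,c_{i-1}\}$. Suppose exactly $k$ distinct integers occur. By the rule for introducing new integers, these must be $0,1,\ldots,k-1$.

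First I define the \emph{record set} $N:=\{i\in\{0,\ldots,\ell\}:\ c_i\notin\{c_0,\ldots,c_{i-1}\}\}$, the set of positions at which a new integer is introduced. Exactly one new integer enters at each position of~$N$, and all $k$ values must enter, so $|N|=k$. Hence $N$ is a $k$-element subset of $\{0,\ldots,\ell\}$, giving at most $\binom{\ell+1}{k}$ choices. This overcounts, since $0$ and $1$ are forced to lie in~$N$, but an upper bound is all we need.

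Second, given~$N$, the entries at positions in~$N$ are determined. The $j$-th smallest element of~$N$ carries the value $j-1$, because new integers appear in increasing order, each one larger than the previous maximum. The entries at the $\ell+1-k$ positions outside~$N$ are repetitions of earlier values, so each lies in $\{0,\ldots,k-1\}$. That gives at most $k$ choices per position, hence at most $k^{\ell+1-k}$ choices in total.

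Finally, the map from a path to the pair $\bigl(N,(c_i)_{i\notin N}\bigr)$ is injective, since the path can be reconstructed from this pair as just described. Multiplying the two counts gives at most $\binom{\ell+1}{k}k^{\ell+1-k}$ combinatorial paths, as claimed. There is no real obstacle here. The only point needing care is that the values at record positions are forced, which is exactly the defining property that new integers are introduced as the previous maximum plus one.
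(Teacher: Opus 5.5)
Your proof is correct. You encode a path by its set of first-visit positions, which has at most $\binom{\ell+1}{k}$ choices and forces the values there, together with the values at the remaining $\ell+1-k$ positions, at most $k$ choices each. This encoding is injective and gives exactly the stated bound. The paper omits the proof, but the form of the bound suggests that this is the intended counting argument.
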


We associate a particular combinatorial path to paths in~$\mathcal G$, in the following way. Let~$\gamma=(\gamma_0,\ldots,\gamma_\ell)$ be some finite path in~$\mathcal G$ started from the root, and let~$k$ denote the number of distinct vertices visited by~$\gamma$. Define
\[
	I:=\{0\} \cup \{i \in \{1,\ldots,\ell\}:\; \gamma_i \notin \{\gamma_0,\ldots,\gamma_{i-1}\}\},
\]
the set of indices when the path~$\gamma$ visits a new vertex.  Let~$0=i_0 < \cdots < i_{k-1}$ denote the elements of~$I$ in increasing order. Note that~$\{\gamma_0,\ldots,\gamma_\ell\} = \{\gamma_{i_0},\ldots,\gamma_{i_{k-1}}\}$.
Define~$\varphi:\{\gamma_0,\ldots, \gamma_\ell\} \to \mathbb N_0$ by setting~$\varphi(\gamma_{i_n})=n$ for~$n=0,\ldots,k-1$. The combinatorial path~$\pi$ associated to $(\gamma_0,\ldots,\gamma_\ell)$ is then $(\varphi(\gamma_0),\ldots,\varphi(\gamma_\ell))$.

Let~$\pi=(\pi_0,\ldots,\pi_\ell)$ be a combinatorial path of length~$\ell$ with~$k$ distinct integers. The \emph{discovery tree of~$\pi$} is the tree~$T=T_\pi$ defined as follows. The set of vertices of~$T_\pi$ is~$\{\pi_0,\ldots,\pi_\ell\}$. Given distinct vertices~$m,n$, the edge~$\{m,n\}$ is included if the path jumps from~$m$ to~$n$ at its first visit to~$n$. More formally, the set of edges is
\[
	 \{\{\pi_j,\pi_{j+1}\}:\; j \in \{i \ge 0:\;\pi_{i+1} \notin \{\pi_0,\ldots,\pi_i\}\}\}.
\]

We let~$\deg_{T}(\cdot)$ and~$\mathrm{dist}_T(\cdot,\cdot)$ denote the degree and graph distance in~$T$, respectively.

These concepts are illustrated in Figure~\ref{fig_sk} below.
\begin{figure}[H]
	\begin{center}
		\fbox{\includegraphics[width = .8\textwidth]{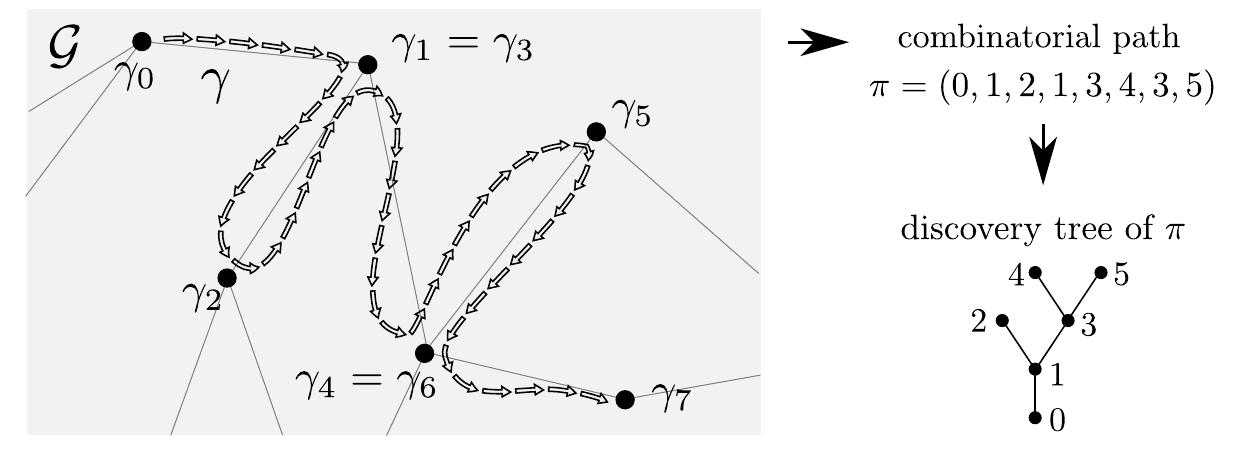}}
		\caption{Illustration of the combinatorial path~$\pi$ associated to a graph path~$\gamma$, and the discovery tree of~$\pi$}\label{fig_sk}
	\end{center}
\end{figure}

We define
\begin{align*}
	&\mathscr U_n := \int_{\mathfrak{u}_1}^1 u^{-\gamma_1\cdot n}\;\mathrm{d}u,\qquad\mathscr V_n := \int_{\mathfrak{u}_2}^1 v^{-\gamma_2\cdot n}\;\mathrm{d}v,\qquad n \in \mathbb N,
\end{align*}
The following two statements are consequences of Mecke's formula (see~\cite[Theorem~4.4]{last2018lectures}).
\begin{lemma}\label{lem_second_just_embed}
	Let~$\pi=(\pi_0,\ldots,\pi_\ell)$ be a combinatorial path of length~$\ell$ with~$k$ distinct vertices; assume that~$\pi_\ell \notin\{\pi_0,\ldots,\pi_{\ell-1}\}$. Let~$T$ be the discovery tree of~$\pi$.  Then, the expected number of paths~$\Gamma$ in~$\mathcal G$ that start at the root, have combinatorial path equal to~$\pi$, and have all vertices blue, except for the last, which is red, is:
			\[
				\prod_{\substack{m \in T\backslash \{\pi_\ell\}:\\ \mathrm{dist}_{T}(\pi_0,m) \text{ is even}}} \mathscr U_{\deg_T(m)} \cdot\prod_{\substack{m \in T\backslash \{\pi_\ell\}:\\ \mathrm{dist}_{T}(\pi_0,m) \text{ is odd}}} \mathscr V_{\deg_T(m)}\cdot \int_0^{\mathfrak{u}_1} u^{-\gamma_1}\;\mathrm{d}u
			\]
			if~$\ell$ is even, and
			\[
				\prod_{\substack{m \in T\backslash \{\pi_\ell\}:\\ \mathrm{dist}_{T}(\pi_0,m) \text{ is even}}} \mathscr U_{\deg_T(m)} \cdot\prod_{\substack{m \in T\backslash \{\pi_\ell\}:\\ \mathrm{dist}_{T}(\pi_0,m) \text{ is odd}}} \mathscr V_{\deg_T(m)}\cdot \int_0^{\mathfrak{u}_2} v^{-\gamma_2}\;\mathrm{d}v 
			\]
			if~$\ell$ is odd.
\end{lemma}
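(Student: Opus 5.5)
The plan is a direct computation with the multivariate Mecke formula, integrating out the spatial positions one tree edge at a time, starting from the leaves of the discovery tree $T$.

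\textbf{Setup.} First fix the types. Since $\mathcal G$ is bipartite, $\Gamma_0$ is the type-1 root and $\varphi$ is injective. So a vertex $m$ of $T$ corresponds to a vertex of $\mathcal G$ of type~1 if $\mathrm{dist}_T(\pi_0,m)$ is even, and of type~2 if it is odd. In particular $\pi_\ell$ has type~1 exactly when $\ell$ is even. This explains the two cases and the choice of $\int_0^{\mathfrak u_1}u^{-\gamma_1}\,\mathrm du$ versus $\int_0^{\mathfrak u_2}v^{-\gamma_2}\,\mathrm dv$ for the red final vertex.

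\textbf{Mecke formula.} Write the count as a sum over ordered tuples of distinct points $(z_1,\dots,z_{k-1})$, with $z_n$ taken from $\mathcal P_1$ or $\mathcal P_2$ according to the parity above. The summand is the indicator that $(\varphi^{-1}(\pi_j))_j=(z_{\pi_j})_j$ is a path, that $z_0=o$, and that the colour constraints hold.

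By the multivariate Mecke formula for the independent Poisson processes $\mathcal P_1,\mathcal P_2$ (each of unit intensity on $\mathbb R\times[0,1]$), together with the independent uniform mark of the root at position~$0$, the expectation equals an integral over $(\mathbb R\times[0,1])^{k-1}\times[0,1]$ of a product of indicators. These indicators are the adjacency constraints $|x_m-x_{m'}|\le u_m^{-\gamma_{\cdot}}u_{m'}^{-\gamma_{\cdot}}$, one for each consecutive pair, together with the mark constraints. The marks must lie in $[\mathfrak u_i,1]$ for the blue vertices and in $[0,\mathfrak u_i]$ for $\pi_\ell$. The adjacency constraints are the only coupling between the coordinates.

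\textbf{Integration over positions.} The edges of $T$ form a spanning tree, so I integrate the positions leaf by leaf, working towards the root. Consider a leaf $n$ with parent $p$, marks $u_n$ and $u_p$. The position integral $\int\mathds 1\{|x_n-x_p|\le u_p^{-\gamma}u_n^{-\gamma'}\}\,\mathrm dx_n$ equals a product of one power of $u_p$ and one power of $u_n$, under the normalisation used for the edge length.

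Iterating over all edges, each vertex $m$ collects exactly $\deg_T(m)$ factors $u_m^{-\gamma_1}$ (type~1) or $v_m^{-\gamma_2}$ (type~2). The mark integrals then factorise into the products of $\mathscr U_{\deg_T(m)}$ and $\mathscr V_{\deg_T(m)}$ over the blue vertices.

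The vertex $\pi_\ell$ is new at step $\ell$ and never revisited, so it is a leaf of $T$ with degree $1$. Its mark integral over the red range is therefore $\int_0^{\mathfrak u_i}u^{-\gamma_i}$. The root's position is fixed at $0$ and its mark integral is over $[\mathfrak u_1,1]$, which gives $\mathscr U_{\deg_T(\pi_0)}$.

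\textbf{Main obstacle.} The difficulty is the non-tree steps of $\pi$, namely the steps $\pi_j\to\pi_{j+1}$ where $\pi_{j+1}$ was already visited. These impose extra adjacency constraints that do not factorise along $T$. To obtain the identity as stated, I would account for these steps by showing they are already implied by the constraints along $T$, or else by interpreting the count as the number of embeddings of the discovery tree compatible with $\pi$. In the latter reading, the non-tree steps are determined by the tree embedding and impose no further integration.

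I also need to check that the normalising constants from the edge-length integrals are absorbed consistently into the paper's convention, which has no factor $2$ per edge. This bookkeeping is the step I expect to need the most care. Everything else is a routine application of Fubini once the leaf-by-leaf order is fixed.
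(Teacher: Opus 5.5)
Your route (multivariate Mecke formula, integrating positions along the discovery tree from the leaves towards the root, then factorising the mark integrals) is the one the paper has in mind; the paper gives nothing beyond ``consequence of Mecke's formula''. Your type/parity bookkeeping, your treatment of the root, and your observation that $\pi_\ell$ is a leaf of $T$ are all correct. The gap is at the step you flagged: neither of your resolutions of the non-tree steps works, because the non-tree steps genuinely change the value. Your first option is false already for $\pi=(0,1,2,3,0,4)$, where $\ell=k=5$ and $T$ has edges $\{0,1\},\{1,2\},\{2,3\},\{0,4\}$. The step $3\to 0$ imposes $|x_3|\le u_0^{-\gamma_1}v_3^{-\gamma_2}$, which is not implied by the tree constraints along $0\sim 1\sim 2\sim 3$. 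With all four marks close to $1$ (hence blue), the tree allows $x_3$ close to $3$. So this indicator removes a set of positive measure, and the expectation is strictly smaller than the tree integral your computation produces. No bookkeeping can turn the tree integral into an exact formula here. Your second option (counting embeddings of $T$ instead) simply replaces the quantity in the statement by a larger one. What is true, and what you should prove, is an inequality. Bound by $1$ every adjacency indicator coming from a step of $\pi$ that does not traverse an edge of $T$; after that your leaf-by-leaf Tonelli computation goes through verbatim. Equality holds only when every revisiting step of $\pi$ retraces an edge of $T$.

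The factor $2$ is not a matter of convention either. With the edge rule $|x-y|\le u^{-\gamma_1}v^{-\gamma_2}$ one has $\int_{\mathbb R}\mathds 1\{|x_n-x_p|\le r\}\,\mathrm dx_n=2r$, so each of the $k-1$ edges of $T$ contributes a factor $2$. Already for $\pi=(0,1)$ the expectation is exactly twice the displayed expression. The paper's heuristic~(b) drops the same $2$, whereas its degree-tail asymptotics keep it. So the correct conclusion of your argument is
\[
\mathbb E[\#\text{paths}]\;\le\;2^{k-1}\cdot\bigl(\text{the displayed product}\bigr).
\]
This is all that is used later. Lemma~\ref{lem_use_bound_on_F1} needs only an upper bound on $\mathcal E(\pi)$, and a factor geometric in $k$ is harmless. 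Each reduction in Lemma~\ref{lem_factor_reduce} removes at most two vertices, so $2^{k-1}\le 4\cdot 4^{N}$, where $N$ is the number of reductions. The slack $\delta b>17$ in that lemma absorbs a factor $4$ per reduction.
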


\begin{lemma}\label{lem_second_just_embed_blue}
	Let~$\pi=(\pi_0,\ldots,\pi_\ell)$ be a combinatorial path of length~$\ell$ with~$k$ distinct vertices, and let~$T$ be the discovery tree of~$\pi$.  Then, the expected number of paths~$\Gamma$ in~$\mathcal G$ that start at the root, have combinatorial path equal to~$\pi$, and only visit blue vertices, is:
			\[
				\prod_{\substack{m \in T:\\ \mathrm{dist}_{T}(\pi_0,m) \text{ is even}}} \mathscr U_{\deg_T(m)} \cdot\prod_{\substack{m \in T:\\ \mathrm{dist}_{T}(\pi_0,m) \text{ is odd}}} \mathscr V_{\deg_T(m)}.
			\]
\end{lemma}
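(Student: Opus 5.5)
We would prove this by a direct application of the multivariate Mecke formula (\cite[Theorem~4.4]{last2018lectures}), integrating out the spatial positions along the discovery tree $T$ from the leaves towards the root. Let $k$ be the number of distinct vertices of $\pi$. A path $\Gamma$ in $\mathcal G$ with combinatorial path $\pi$ is determined by the injective assignment $n\mapsto \Gamma_{i_n}$, $n=0,\ldots,k-1$, where $\Gamma_{i_0}=o$. Since $\mathcal G$ is bipartite and $\pi$ starts at the type-1 root, the assignment must send $m$ to a type-1 vertex when $\mathrm{dist}_T(\pi_0,m)$ is even, and to a type-2 vertex when it is odd. This holds because every edge of $T$ is a step of $\pi$, hence joins vertices of different types.

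\textbf{Step 1: reduce to embeddings of $T$.} The adjacency requirements imposed by $\pi$ are: $\Gamma_j\sim\Gamma_{j+1}$ for every $j$. We first isolate the requirements coming from the edges of $T$. For the consecutive pairs that are not first visits, we argue as the statement is intended in the sequel: the lemma counts assignments in which the constraints are exactly those of $T$. Concretely, every step of $\pi$ that moves between already-discovered vertices is taken to be a re-traversal of an edge of $T$, so it adds no new constraint. The count therefore becomes the number of injective, type-respecting, all-blue maps $\iota\colon V(T)\to\mathcal P_1\cup\mathcal P_2$ with $\iota(\pi_0)=o$ and $\iota(m)\sim\iota(n)$ for every $\{m,n\}\in E(T)$.

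\textbf{Step 2: apply Mecke.} Condition on the root position $0$ and its mark $u_0$. The Mecke formula, applied to the $k-1$ distinct non-root points, writes the expectation as
\[
\int \prod_{m\neq\pi_0}\mathrm d x_m\,\mathrm d h_m \prod_{\{m,n\}\in E(T)}\mathds 1\{|x_m-x_n|\le h_{\mathrm{odd}}^{-\gamma_1}h_{\mathrm{even}}^{-\gamma_2}\},
\]
with the marks restricted to $[\mathfrak u_1,1]$ for type 1 and $[\mathfrak u_2,1]$ for type 2, together with the outer integral over $u_0$.

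\textbf{Step 3: integrate positions, leaves first.} Take a leaf $m$ of $T$ with parent $p$. Integrating $x_m$ over $\mathbb R$ gives the interval length, which is proportional to $h_m^{-\gamma}h_p^{-\gamma'}$ for the appropriate type exponents. Remove $m$ and repeat. After all positions are integrated, each vertex $m$ has collected exactly one factor $h_m^{-\gamma_{\mathrm{type}(m)}}$ per incident tree edge, that is, $h_m^{-\gamma\deg_T(m)}$. Since $T$ is a tree, the position integrals factorise completely, and this is precisely where the tree structure is used.

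\textbf{Step 4: integrate marks.} The mark integrals then factorise vertex by vertex. For even-distance vertices they give $\mathscr U_{\deg_T(m)}$; for odd-distance vertices they give $\mathscr V_{\deg_T(m)}$. The root is included, since its mark is uniform and the blueness requirement restricts it to $[\mathfrak u_1,1]$. This yields the stated product.

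The main obstacle is the bookkeeping in Step 3. First, the leaf-peeling must produce exactly the exponent $\deg_T(m)$ at each vertex. Second, the numerical constant arising from each interval length must be handled consistently with the normalisation of $\mathscr U_n$ and $\mathscr V_n$: the interval length is $2$ times the threshold, so a factor $2^{k-1}$ appears, and we will check that it is absorbed by the conventions used in the formula.
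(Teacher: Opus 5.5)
Your route (multivariate Mecke formula for the $k-1$ non-root points, then integrating positions leaf by leaf along $T$ with marks fixed, then factorising the mark integrals) is the one the paper has in mind; the paper only says the lemma follows from Mecke's formula. Your Steps 3 and 4 are correct. The gap is in Step 1. It is not true that every step of $\pi$ between already-discovered vertices re-traverses an edge of $T$.

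Take $\pi=(0,1,2,3,0)$. Its discovery tree is the path $0\sim1\sim2\sim3$, yet the last step forces $\Gamma_3\sim\Gamma_0=o$, an adjacency not in $T$. In general the non-discovery steps add constraints beyond those of $T$. The expected number of paths with combinatorial path $\pi$ can then be strictly smaller than the expected number of embeddings of $T$. In the example, take all marks close to $1$ and $x_1\approx 1$, $x_2\approx 2$, $x_3\approx 3$.

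So you cannot ``take'' those steps to be re-traversals. What you can do is discard the extra constraints, which only enlarges the set of admissible injective, type-respecting, all-blue assignments. This gives an \emph{upper} bound; equality as written cannot be proved in general. The upper bound is all the paper uses: the lemma only enters Lemma~\ref{lem_use_bound_on_F12} as an upper bound on the expected number of paths.

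Second, the factor $2^{k-1}$ cannot be absorbed by any convention. $\mathscr U_n=\int_{\mathfrak u_1}^1 u^{-\gamma_1 n}\,\mathrm du$ and $\mathscr V_n$ contain no factor $2$, while $\int_{\mathbb R}\mathds 1\{|x_m-x_p|\le h_m^{-\gamma}h_p^{-\gamma'}\}\,\mathrm dx_m=2h_m^{-\gamma}h_p^{-\gamma'}$ exactly. Your computation therefore shows that the expected number of embeddings of $T$ is $2^{k-1}$ times the displayed product. The check you promise will fail; already for $\pi=(0,1)$ the answer is $2\,\mathscr U_1\mathscr V_1$. What your argument proves is that the expected number of such paths is at most
\[
2^{k-1}\prod_{m:\ \mathrm{dist}_T(\pi_0,m)\text{ even}}\mathscr U_{\deg_T(m)}\cdot\prod_{m:\ \mathrm{dist}_T(\pi_0,m)\text{ odd}}\mathscr V_{\deg_T(m)}.
\]
State it this way, and note why the extra factor is harmless downstream. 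In the proof of Proposition~\ref{prop_upper_bound}, the $k$-th term already carries $(\log\tfrac{1}{\lambda})^{-16\lfloor (k-3)/2\rfloor}\le(\log\tfrac1\lambda)^{-4k}$ for $k\ge 6$, against at most $(4k)^{3k}$ with $k\le B\log\tfrac1\lambda$. An extra $2^{k}$ therefore changes nothing, and it is a constant for $k\le 5$.

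Minor: in Step 2 the exponents in the indicator are swapped. Even-distance vertices are type~1 and carry $-\gamma_1$, so the threshold should read $h_{\mathrm{even}}^{-\gamma_1}h_{\mathrm{odd}}^{-\gamma_2}$.
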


Let~$T$ be a finite tree with root~$o$ and a distinguished leaf~$m^* \neq o$.  We define the \emph{tree weight function}
\begin{align*}
	F(T,o,m^*):= &\prod_{\substack{m \in T \backslash \{m^*\}:\\ \mathrm{dist}_T(o,m) \text{ is even}}} (2\lambda)^{(\deg_T(m)-1)\vee 1}\cdot \mathscr{U}_{\deg_T(m)} \\
	&\times \prod_{\substack{m \in T \backslash \{m^*\}:\\ \mathrm{dist}_T(o,m) \text{ is odd}}} (2\lambda^a)^{(\deg_T(m)-1)\vee 1}\cdot \mathscr{V}_{\deg_T(m)}.
\end{align*}

\begin{lemma}\label{lem_bound_on_F}
There exists~$C > 0$ such that, for any finite tree~$T$ with root~$o$ and a distinguished leaf~$m^* \neq o$, we have
	\[
		F(T,o,m^*) \le C\big(\log\tfrac{1}{\lambda}\big)^{-16\left\lfloor \frac{k-3}{2} \right\rfloor}\cdot ((\mathscr V_2\cdot \lambda^{1+a}) \vee \lambda).
	\]
\end{lemma}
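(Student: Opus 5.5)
The plan is to prove the bound by induction on the number of vertices~$k$ of~$T$. The induction peels off small pieces of the tree, and each piece is shown to cost at least a factor~$(\log\frac1\lambda)^{-16}$ per two vertices removed. The two base trees should produce exactly the two terms of the claimed bound.

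\emph{Base cases.} If~$T$ is the single edge~$o\sim m^*$, then~$F=2\lambda\,\mathscr U_1\le 2\lambda$, since~$\mathscr U_1\le\int_0^1u^{-\gamma_1}\,\mathrm du=\bar\gamma_1^{-1}$. If~$T$ is the path~$o\sim x\sim m^*$, then~$F=2\lambda\mathscr U_1\cdot 2\lambda^a\mathscr V_2\le C\lambda^{1+a}\mathscr V_2$.

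\emph{Asymptotics of~$\mathscr U_n,\mathscr V_n$.} First I record these for the relevant~$n$:
\begin{itemize}
\item $\mathscr U_1\asymp1$;
\item $\mathscr U_2$ is~$\asymp1$, $\asymp\log\frac1\lambda$, or~$\asymp\mathfrak u_1^{-\Delta_1}$, according to the sign of~$2\gamma_1-1$;
\item $\mathscr U_n\asymp\mathfrak u_1^{1-n\gamma_1}$ for~$n\ge3$;
\item the symmetric statements hold for~$\mathscr V_n$ with~$\mathfrak u_2$.
\end{itemize}
The definitions~$\mathfrak u_1=\lambda^{\mu_\star}(\log\frac1\lambda)^b$ and~$\mu_\star\le\mu_{\mathsf S},\mu_{\mathsf D}$, together with the analogues for~$\nu_\star$ and the choice~$b>17/\delta$, should then yield three elementary inequalities. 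The first two are
\[
\lambda^{1+a}\mathfrak u_1^{-\gamma_1}\le(\log\tfrac1\lambda)^{-b\gamma_1},\qquad \lambda^{1+a}\mathfrak u_2^{-\gamma_2}\le(\log\tfrac1\lambda)^{-b\gamma_2},
\]
which are the star thresholds. The third is the direct-spreader threshold
\[
\lambda^{1+a}\,\mathfrak u_1^{-\Delta_1}\mathfrak u_2^{-\Delta_2}\le(\log\tfrac1\lambda)^{-b(\Delta_1+\Delta_2)}.
\]
It encodes~$\psi(\varphi(\mu))\ge\mu$; it needs care when one of~$\Delta_i$ vanishes, where a~$\log\frac1\lambda$ factor from~$\mathscr U_2$ must be absorbed. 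The bridge threshold should be needed as well, to handle a degree-$2$ vertex adjacent to a high-degree vertex of the other type.

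\emph{Inductive step.} I root~$T$ at~$o$ and consider the path~$o\to m^*$, with subtrees hanging off it. For a vertex~$m$ of degree~$d\ge3$, the weight~$(2\lambda)^{d-1}\mathscr U_d\asymp\lambda^{d-1}\mathfrak u_1^{1-d\gamma_1}$ is split among its children. Each child~$x$ contributes at least a factor~$2\lambda^a\mathscr V_{\deg(x)}\ge c\lambda^a$. Grouping~$\lambda\mathfrak u_1^{-\gamma_1}$ with one~$\lambda^a$ per child gives
\[
(2\lambda)^{d-1}\mathscr U_d\prod_{\text{children }x}2\lambda^a\mathscr V_{\deg(x)} \lesssim \mathfrak u_1^{1-\gamma_1}\,\bigl(\lambda^{1+a}\mathfrak u_1^{-\gamma_1}\bigr)^{d-1}\prod_{x}\mathscr V_{\deg(x)}.
\]
By the star bound, each extra branch then gains~$(\log\frac1\lambda)^{-b\gamma_1}\le(\log\frac1\lambda)^{-17}$; the leftover factor~$\mathfrak u_1^{1-\gamma_1}\le1$ is harmless. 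A pendant leaf pair (a leaf and its parent, off the spine) is removed the same way. Runs of degree-$2$ vertices on the spine are removed two at a time using the direct-spreader inequality. Each removal therefore deletes at most two vertices and gains at least~$(\log\frac1\lambda)^{-16}$, leaving one of the two base trees. This gives the exponent~$16\lfloor(k-3)/2\rfloor$.

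\emph{Main obstacle.} I expect the bookkeeping in this step to be the hard part. A vertex's factor~$(2\lambda)^{(\deg-1)\vee1}$ must be assigned consistently between parent and children so that no~$\lambda$ is double-used. The mixed configurations, such as a degree-$2$ vertex next to a high-degree vertex of the other type, must fall under one of the three thresholds. I would first try the explicit charging scheme in the inductive step: every non-root vertex pays for its edge to its parent, and the~$\log$ gain is attached to each removed pair. I would then check case by case (type~$1$ or~$2$; degree~$1$, $2$, or~$\ge3$) that each charged pair is at most~$(\log\frac1\lambda)^{-16}$.
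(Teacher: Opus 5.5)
Your outline follows the paper's strategy. The paper uses three local reductions of $(T,o,m^*)$: Op1 prunes a leaf that is not an only child; Op2 prunes a leaf together with its degree-two parent, when that parent is not an only child; Op3 collapses a degree-two vertex lying between two non-leaves. It bounds each ratio $F(T,o,m^*)/F(T',o,m^*)$ by $(\log\tfrac1\lambda)^{-16}$ using four threshold inequalities, and counts at least $\lfloor (k-3)/2\rfloor$ reductions down to your two base trees.

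Working with these ratios is what settles your ``no $\lambda$ double-used'' worry, and your proposed grouping gets it wrong. A non-root vertex of degree $d$ carries $d-1$ factors $2\lambda$ (or $2\lambda^a$). One of them must survive the pruning, as the factor of a degree-two spine vertex or of a vertex that becomes a leaf. You pair all $d-1$ with branches, and the leftover $\mathfrak u_1^{1-\gamma_1}$ is no substitute; this is why Op1 never removes an only child. As you suspected, the pendant-pair step is also not handled ``the same way'' as a leaf. It needs the two mixed inequalities $\mathfrak u_1^{\gamma_1}\mathfrak u_2^{\Delta_2}\ge(\log\tfrac1\lambda)^{\delta b}\lambda^{2+a}$ and $\mathfrak u_1^{\Delta_1}\mathfrak u_2^{\gamma_2}\ge(\log\tfrac1\lambda)^{\delta b}\lambda^{1+2a}$, which use $\mu_\star\le\mu_{\mathsf B}$, resp.\ $\nu_\star\le\nu_{\mathsf B}$. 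Finally, $\mathscr U_n\asymp\mathfrak u_1^{1-n\gamma_1}$ is false when $n\gamma_1\le 1$; use $\mathfrak u_1^{(1-n\gamma_1)\wedge 0}$ up to a factor $\log\tfrac1\lambda$.

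The decisive gap is at the root, and there it cannot be closed, because the inequality as stated is false. The root has $\deg_T(o)$ child edges but only $(\deg_T(o)-1)\vee 1$ factors $2\lambda$. So when $\deg_T(o)=2$ and one child $y$ of $o$ is a leaf, pruning $y$ changes $F$ by $2\lambda^a\mathscr V_1\mathscr U_2/\mathscr U_1$, with no factor $\lambda$. Concretely, the three-vertex tree with edges $\{o,y\}$ and $\{o,m^*\}$ has $F=4\lambda^{1+a}\mathscr U_2\mathscr V_1$. Take $\gamma_2<\frac12<\gamma_1$ and $a<\Delta_1/\bar\gamma_1$, e.g.\ $\gamma_1=0.9$, $\gamma_2=0.3$, $a=0.1$. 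Then $\mu_\star=\mu_{\mathsf S}$, $\mathscr V_2\le(1-2\gamma_2)^{-1}$ and $\mathscr U_2\asymp\mathfrak u_1^{-\Delta_1}$. Hence $F\ge c\,\lambda^{1+a-\Delta_1\mu_{\mathsf S}}(\log\tfrac1\lambda)^{-b\Delta_1}$ with $1+a-\Delta_1\mu_{\mathsf S}<1$ (about $0.12$ in the example), while the right-hand side is $C\lambda$.

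The paper's proof has the same hole. Its Op1 ratio $\lambda^{1+a}\mathscr U_{\deg_T(x)}\mathscr V_1/\mathscr U_{\deg_T(x)-1}$ tacitly assumes $\deg_T(x)\ge 3$, which fails for $x=o$ of degree $2$. The remedy is to give the root the factor $(2\lambda)^{\deg_T(o)}$. This is what the transmission count in Lemma~\ref{lem_use_bound_on_F1} actually provides, since the path leaves $o$ once along each discovery edge at $o$; so the downstream upper bound on $\theta(\lambda)$ is unaffected. With this definition the base trees keep their weights, and Op1, Op2 and Op3 gain $\lambda^{1+a}$, $\lambda^{2+a}$ or $\lambda^{1+2a}$, and $\lambda^{1+a}$ respectively, also at the root. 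The reduction argument, the paper's or a careful version of yours, then goes through.
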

We postpone the proof of this lemma to Section~\ref{ss_tree_weights} below.

\begin{lemma} \label{lem_use_bound_on_F1}
Let~$\pi=(\pi_0,\ldots,\pi_\ell)$ be a combinatorial path of length~$\ell$ with~$k$ distinct vertices; assume that~$\pi_\ell \notin\{\pi_0,\ldots,\pi_{\ell-1}\}$. Let~$T$ be the discovery tree of~$\pi$. Let~$\mathcal P(\pi)$ be the probability that there exists an infection path in~$\mathcal G$ started at the root and having an ordered trace~$\Gamma$ satisfying the following properties:
\begin{itemize}
\item the combinatorial path associated to~$\Gamma$ is~$\pi$;
\item all vertices of~$\Gamma$ are blue, except for the last one, which is red.
\end{itemize}
Then,
	\[
		\mathcal P(\pi) \le \begin{cases}
			(2\lambda^{a \wedge 1})^{(\ell-2k)\vee 0} \cdot F(T,\pi_0,\pi_\ell) \cdot \int_0^{\mathfrak{u}_1} u^{-\gamma_1}\;\mathrm{d}u &\text{if $\ell$ is even;}\\[.2cm]
			(2\lambda^{a \wedge 1})^{(\ell-2k)\vee 0} \cdot F(T,\pi_0,\pi_\ell) \cdot \int_0^{\mathfrak{u}_2} v^{-\gamma_2}\;\mathrm{d}v&\text{if $\ell$ is odd.}
		\end{cases}
	\]
\end{lemma}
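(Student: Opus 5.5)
The plan is to combine a union bound over embeddings of $\pi$ in $\mathcal G$, the martingale bound of Lemma~\ref{lem:martingale} for each fixed vertex path, and the Mecke-formula computation behind Lemma~\ref{lem_second_just_embed}. The key idea is to distribute the infection factors from Lemma~\ref{lem:martingale} over the vertices of the discovery tree $T$ before integrating over the graph.

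First, conditional on $\mathcal G$, a union bound over the vertex paths $\Gamma$ in $\mathcal G$ that start at the root, have combinatorial path $\pi$, and are blue except for a red last vertex, gives $\mathcal P(\pi)\le \mathbb E\big[\sum_\Gamma \mathbb P(\text{some infection path has ordered trace }\Gamma\mid\mathcal G)\big]$. By Lemma~\ref{lem:martingale}, each summand is at most $(2\lambda)^{\lceil \ell/2\rceil}(2\lambda^a)^{\lfloor \ell/2\rfloor}$, since the root has type~1. This bound depends only on $\pi$. However, applying it directly and then using Lemma~\ref{lem_second_just_embed} does not suffice: the Mecke integrals $\mathscr U_{\deg_T(m)}$ grow with the degrees, and they must be compensated by powers of $\lambda$ attached to the same vertices. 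So I will not simply factor out the martingale bound; I will reorganise its exponent.

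The next step is the bookkeeping. Each step $\pi_j\to\pi_{j+1}$ of $\pi$ is a transmission from $\pi_j$, costing $2\lambda$ if $\pi_j$ has type~1 (even tree-distance from $\pi_0$, since the graph is bipartite) and $2\lambda^a$ otherwise. Every vertex $m\neq\pi_\ell$ of $T$ transmits at least $\deg_T(m)-1$ times along tree edges to its children, since each child is first discovered by a jump from $m$. Moreover, $m$ is left at least once because the path ends at $\pi_\ell\neq m$. Hence $m$ is the source of at least $(\deg_T(m)-1)\vee 1$ jumps. The root has no parent edge, but it needs at least $\deg_T(\pi_0)\ge (\deg_T(\pi_0)-1)\vee 1$ jumps. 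These assigned jumps are distinct steps of $\pi$, and their number is at most $\sum_m \deg_T(m)\le 2k$. The remaining at least $\ell-2k$ steps each cost at most $2\lambda^{a\wedge1}$, since $\lambda<1$. Therefore the martingale bound is at most
\[
(2\lambda^{a\wedge 1})^{(\ell-2k)\vee 0}\prod_{m\ne\pi_\ell}(2\lambda_{\mathrm{type}(m)})^{(\deg_T(m)-1)\vee 1}.
\]
The main obstacle I expect here is checking the counting carefully: the assigned steps must be disjoint, and the count $\ell-\sum_m((\deg_T(m)-1)\vee1)\ge \ell-2k$ must hold, which uses that $T$ has $k-1$ edges.

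Finally, since this deterministic bound does not depend on which embedding $\Gamma$ occurs, it multiplies the expected number of embeddings. Lemma~\ref{lem_second_just_embed} gives that number as the product of $\mathscr U_{\deg_T(m)}$ and $\mathscr V_{\deg_T(m)}$ over $m\ne\pi_\ell$, times $\int_0^{\mathfrak u_1}u^{-\gamma_1}\,\mathrm du$ if $\ell$ is even, or $\int_0^{\mathfrak u_2}v^{-\gamma_2}\,\mathrm dv$ if $\ell$ is odd. Here the parity of $\ell$ determines the type of $\pi_\ell$. Grouping each vertex's $\lambda$-factor with its $\mathscr U$- or $\mathscr V$-factor yields exactly $F(T,\pi_0,\pi_\ell)$, which proves the claimed inequality.
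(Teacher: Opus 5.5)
Your proposal is correct and is essentially the paper's argument. The paper likewise combines Mecke's formula (Lemma~\ref{lem_second_just_embed}) with the path bound of Lemma~\ref{lem:martingale}, and allocates the $\lambda$-factors of $F$ via the set $J$ of discovery steps plus a set $J'$ of one departure step from each leaf $m\neq\pi_\ell$, bounding the at least $(\ell-2k)\vee 0$ leftover steps by $2\lambda^{a\wedge 1}$. Your per-vertex assignment, with only $(\deg_T(\pi_0)-1)\vee 1$ of the root's $\deg_T(\pi_0)$ discovery steps, is a slightly cleaner version of that allocation; the paper states the corresponding identity as an equality where, strictly, only an inequality holds.
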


\begin{proof}
For~$i \in \mathbb N_0$, define
\[\alpha(i):=\begin{cases}1&\text{if $i$ is even};\\
a&\text{if $i$ is odd}. \end{cases}\]

Using Mecke's formula and Lemma \ref{lem:martingale}, we have
$
\mathcal P(\pi) \le \mathcal E(\pi)\cdot \prod_{i=0}^{\ell-1} 2\lambda^{\alpha(i)},
$
where~$\mathcal E(\pi)$ is the expected number of paths in~$\mathcal G$ that start at the root, have~$\pi$ as the associated combinatorial path, and have all vertices blue except for the last one, which is red.

Define
	\[
		J:=\{i \in \{0,\ldots, \ell-1\}:\;\pi_{i+1} \notin \{\pi_0,\ldots,\pi_i\}\}.
	\]
	
	For each leaf vertex~$m$ of~$T$ with~$m\neq \pi_\ell$, we can choose an index~$\iota(m) \in \{0,\ldots,\ell-1\}$ such that~$\pi_{\iota(m)} = m$. Define
	\[
		J':=\{\iota(m):\; m \text{ is a leaf vertex of $T$ with~$m \neq \pi_\ell$}\}.
	\]
	
	Using Lemma~\ref{lem_second_just_embed} and the definition of~$F(T,\pi_0,\pi_\ell)$, we now have the equality
	\[
		\prod_{i \in J \cup J'} \lambda^{\alpha(i)} \cdot \mathcal E(\pi) = \begin{cases} F(T,\pi_0,\pi_\ell) \cdot  \int_0^{u^\star} u^{-\gamma_1}\;\mathrm{d}u & \text{if $\ell$ is even};\\[.2cm]
		F(T,\pi_0,\pi_\ell) \cdot  \int_0^{v^\star} v^{-\gamma_2}\;\mathrm{d}v & \text{if $\ell$ is odd}.\end{cases}
	\]
	
The proof is concluded by bounding
	\[\prod_{i \in \{0,\ldots,\ell-1\} \backslash (J \cup J')} 2\lambda^{\alpha(i)} \le (2\lambda^{a \wedge 1})^{| \{0,\ldots,\ell-1\} \backslash (J \cup J')|} \le (2\lambda^{a \wedge 1})^{(\ell-2k)\vee 0},
	\]
	since~$|J| \le k$ and~$|J'| \le k$.
\end{proof}

\begin{proof}[Proof of Proposition~\ref{prop_bounds_together}]
	The statement follows from Lemma~\ref{lem_count_combi}, Lemma~\ref{lem_bound_on_F}, and Lemma~\ref{lem_use_bound_on_F1}.
\end{proof}

\begin{lemma} \label{lem_use_bound_on_F12}
Let~$\pi=(\pi_0,\ldots,\pi_\ell)$ be a combinatorial path of length~$\ell$ with~$k$ distinct vertices; assume that~$\pi_\ell \notin\{\pi_0,\ldots,\pi_{\ell-1}\}$. Let~$T$ be the discovery tree of~$\pi$. Let~$\mathcal Q(\pi)$ be the probability that there exists an infection path in~$\mathcal G$ started at the root and having an ordered trace~$\Gamma$ satisfying the following properties:
\begin{itemize}
\item the combinatorial path associated to~$\Gamma$ is~$\pi$;
\item all vertices of~$\Gamma$ are blue.
\end{itemize}
Then,
	\[
		\mathcal Q(\pi) \le 
			(2\lambda^{a \wedge 1})^{(\ell-2k)\vee 0} \cdot F(T,\pi_0,\pi_\ell). 
	\]
\end{lemma}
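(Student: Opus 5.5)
The plan is to follow the proof of Lemma~\ref{lem_use_bound_on_F1} line by line. The only change is that the last vertex is now blue rather than red, so the final integral factor disappears. This works because in $F(T,\pi_0,\pi_\ell)$ the distinguished leaf $m^*=\pi_\ell$ is excluded from both products.

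First, use Mecke's formula together with a union bound over the paths $\Gamma$ in $\mathcal G$ with combinatorial path $\pi$, and apply Lemma~\ref{lem:martingale} to each such path. This gives
\[
\mathcal Q(\pi)\le \mathcal E'(\pi)\cdot\prod_{i=0}^{\ell-1}2\lambda^{\alpha(i)},
\]
where $\alpha(i)$ is as in the proof of Lemma~\ref{lem_use_bound_on_F1}, and $\mathcal E'(\pi)$ is the expected number of such paths that start at the root and visit only blue vertices. The exponent $\alpha(i)$ is correct because the root has type~1 and the graph is bipartite, so the $i$-th step starts from a type-1 vertex exactly when $i$ is even.

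Next, Lemma~\ref{lem_second_just_embed_blue} gives $\mathcal E'(\pi)$ as a product of $\mathscr U_{\deg_T(m)}$ or $\mathscr V_{\deg_T(m)}$ over all $m\in T$. The factor for $m=\pi_\ell$ needs separate treatment. Since $\pi_\ell$ is visited only at the final index, it is a leaf of $T$, so it contributes $\mathscr U_1$ or $\mathscr V_1$. Both are at most $1$, because $\int_{\mathfrak u_1}^1u^{-\gamma_1}\,\mathrm du\le 1/\bar\gamma_1$. I would therefore either absorb this factor into a constant or use that $\mathscr U_1\le 1/\bar\gamma_1$; for the bound exactly as stated, the constant should be checked. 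One could also keep the factor $\int_{\mathfrak u_i}^1$, which is bounded. I expect the intended argument simply drops it as $O(1)$.

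Then, as before, define $J$ (the steps that discover a new vertex) and $J'$ (one chosen departure index $\iota(m)$ for each leaf $m\ne\pi_\ell$). For each vertex $m\neq\pi_\ell$ at even distance from the root, the indices in $J\cup J'$ that depart from $m$ supply $\lambda^{(\deg_T(m)-1)\vee1}$. This holds because $m$ is left along each of its $\deg_T(m)-1$ child edges in $T$, or, if $m$ is a leaf, at the step $\iota(m)$. Odd vertices likewise supply $\lambda^{a((\deg_T(m)-1)\vee 1)}$. Keeping the factors $2$, this reproduces $F(T,\pi_0,\pi_\ell)$ times the bounded leftover factor. The remaining indices $\{0,\dots,\ell-1\}\setminus(J\cup J')$ number at least $\ell-2k$, and each contributes at most $2\lambda^{a\wedge1}\le1$, which gives the prefactor $(2\lambda^{a\wedge1})^{(\ell-2k)\vee0}$.

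The only delicate point is the bookkeeping that matches the indices in $J\cup J'$ injectively to departures from the correct vertices with the correct parity. This is the same bookkeeping as in Lemma~\ref{lem_use_bound_on_F1}, so it carries over verbatim.
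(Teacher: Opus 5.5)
Your route is the paper's. The paper proves this lemma by rerunning the proof of Lemma~\ref{lem_use_bound_on_F1} with Lemma~\ref{lem_second_just_embed_blue} in place of Lemma~\ref{lem_second_just_embed}, with the details omitted. Your bookkeeping of $J\cup J'$, of the parities $\alpha(i)$ and of the leftover factor $(2\lambda^{a\wedge 1})^{(\ell-2k)\vee 0}$ is fine.

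The step that fails is the treatment of the blue final vertex $\pi_\ell$. From $\mathscr U_1=\int_{\mathfrak u_1}^1 u^{-\gamma_1}\,\mathrm{d}u\le 1/\bar\gamma_1$ you cannot conclude $\mathscr U_1\le 1$, because $1/\bar\gamma_1>1$. In fact $\mathscr U_1\to 1/\bar\gamma_1$ and $\mathscr V_1\to 1/\bar\gamma_2$ as $\lambda\to 0$. So what your argument actually establishes is
\[
\mathcal Q(\pi)\le \frac{1}{\bar\gamma_1\wedge\bar\gamma_2}\,(2\lambda^{a\wedge 1})^{(\ell-2k)\vee 0}\,F(T,\pi_0,\pi_\ell),
\]
not the inequality with constant $1$.

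This loss cannot be recovered from spare factors in general. Take a self-avoiding $\pi=(0,1,\ldots,\ell)$:
\begin{itemize}
\item $(\ell-2k)\vee 0=0$, the root has degree one, and $J\cup J'=\{0,\ldots,\ell-1\}$;
\item hence the martingale product $\prod_{i<\ell}2\lambda^{\alpha(i)}$ coincides exactly with the $\lambda$-part of $F$;
\item the factor $\mathscr U_1$ or $\mathscr V_1$ of $\pi_\ell$ is therefore left over.
\end{itemize}
Nor is this an artefact of the method. For $\pi=(0,1)$ you have $F=2\lambda\mathscr U_1$. On the other hand $\mathcal Q(\pi)=\mathbb E\bigl[\mathds{1}\{\text{root blue}\}\,\lambda N/(1+\lambda N)\bigr]$, where $N$ is the number of blue type-$2$ neighbours of the root. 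This is asymptotically $\lambda\,\mathbb E\bigl[N\,\mathds{1}\{\text{root blue}\}\bigr]$, which carries the extra factor $\mathscr V_1\approx 1/\bar\gamma_2$. So the constant-free inequality is false for this $\pi$ when $\gamma_2$ is close to $1$.

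The paper's one-line proof hides exactly this factor. The correct statement has a constant $C$ in front, and this is harmless: the factor occurs once, independently of $k$ and $\ell$, and Proposition~\ref{prop_bounds_together2} only asserts a bound up to a constant $C$. Your fallback of absorbing it into a constant is the right fix. Present it as a modification of the lemma, not as a consequence of $\mathscr U_1,\mathscr V_1\le 1$.
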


The proof is similar to that of Lemma~\ref{lem_use_bound_on_F1}, with the application of Lemma~\ref{lem_second_just_embed} being replaced by an application of Lemma~\ref{lem_second_just_embed_blue}. We skip the details.

\begin{proof}[Proof of Proposition~\ref{prop_bounds_together2}]
	The statement follows from Lemma~\ref{lem_count_combi}, Lemma~\ref{lem_bound_on_F}, and Lemma~\ref{lem_use_bound_on_F12}.
\end{proof}

\subsection{Bound on tree weight function}\label{ss_tree_weights}
It remains to prove Lemma~\ref{lem_bound_on_F}. We start with the following auxiliary lemma. Recall that we have defined~$\delta:=\gamma_1 \wedge \gamma_2 \wedge (\Delta_1+\Delta_2)$ and then fixed~$b > 17/\delta$.
\begin{lemma}
	For~$\lambda$ small enough, the following inequalities hold:
	\begin{align}
		\label{eq_just_stars}&\mathfrak{u}_1^{\gamma_1} \wedge \mathfrak{u}_2^{\gamma_2} \ge (\log\tfrac{1}{\lambda})^{\delta b} \cdot \lambda^{1+a};\\[.2cm]
		\label{eq_just_one_and_twoa}&\mathfrak{u}_1^{\Delta_1}\cdot \mathfrak{u}_2^{\gamma_2} \ge (\log\tfrac{1}{\lambda})^{\delta b} \cdot \lambda^{1+2a};\\[.2cm]
		\label{eq_just_two_and_a}&\mathfrak{u}_1^{\gamma_1}\cdot \mathfrak{u}_2^{\Delta_2} \ge (\log\tfrac{1}{\lambda})^{\delta b} \cdot \lambda^{2+a};\\[.2cm]
		\label{eq_just_directs}&\mathfrak{u}_1^{\Delta_1}\cdot \mathfrak{u}_2^{\Delta_2} \ge (\log\tfrac{1}{\lambda})^{\delta b} \cdot \lambda^{1+a}.
	\end{align}
\end{lemma}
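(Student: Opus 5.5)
The plan is to reduce all four inequalities to linear inequalities between exponents of~$\lambda$. Write~$L:=\log\tfrac{1}{\lambda}$. By definition~$\mathfrak{u}_1=\lambda^{\mu_\star}L^b$ and~$\mathfrak{u}_2=\lambda^{\nu_\star}L^b$, so each left-hand side has the form~$\lambda^{p\mu_\star+q\nu_\star}L^{b(p+q)}$, with nonnegative~$p,q$ read off from the relevant exponents. The logarithmic factor is harmless in every case. Indeed,~$p+q$ is in turn~$\gamma_1$ or~$\gamma_2$,~$\Delta_1+\gamma_2\ge\gamma_2$,~$\gamma_1+\Delta_2\ge\gamma_1$, and~$\Delta_1+\Delta_2$, and each of these is~$\ge\delta$ by the definition~$\delta=\gamma_1\wedge\gamma_2\wedge(\Delta_1+\Delta_2)$. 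Since~$\lambda<1$, it therefore suffices to show that the~$\lambda$-exponent on the left is at most the~$\lambda$-exponent on the right. The left exponent is increasing in~$\mu_\star$ and in~$\nu_\star$, so we may replace~$\mu_\star,\nu_\star$ by any of~$\mu_{\mathsf S},\mu_{\mathsf B},\mu_{\mathsf D}$ and~$\nu_{\mathsf S},\nu_{\mathsf B},\nu_{\mathsf D}$. For each inequality we choose the pair that makes the bound an exact identity.

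The identities come from the defining relations of the thresholds. We have~$\gamma_1\mu_{\mathsf S}=\gamma_2\nu_{\mathsf S}=1+a$. The relation~$\varphi(\mu_{\mathsf B})=\nu_{\mathsf S}$ gives~$\gamma_1\mu_{\mathsf B}=\bar{\gamma}_2\nu_{\mathsf S}+1$. The relation~$\psi(\nu_{\mathsf B})=\mu_{\mathsf S}$ gives~$\gamma_2\nu_{\mathsf B}=\bar{\gamma}_1\mu_{\mathsf S}+a$. For the direct-spreader thresholds, as already noted in the proof of Proposition~\ref{prop_strat_direct},~$\gamma_1\mu_{\mathsf D}-\bar{\gamma}_2\nu_{\mathsf D}=1$ and~$\gamma_2\nu_{\mathsf D}-\bar{\gamma}_1\mu_{\mathsf D}=a$.

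We first treat the case~$\gamma_1,\gamma_2>\tfrac12$, so that~$\Delta_i=2\gamma_i-1=\gamma_i-\bar{\gamma}_i$. The four estimates are then as follows.
\begin{itemize}
\item \eqref{eq_just_stars}: use~$\mu_{\mathsf S}$ and~$\nu_{\mathsf S}$, which give~$\gamma_1\mu_\star\le 1+a$ and~$\gamma_2\nu_\star\le1+a$.
\item \eqref{eq_just_one_and_twoa}: use~$(\mu_{\mathsf S},\nu_{\mathsf B})$, giving~$\Delta_1\mu_{\mathsf S}+\gamma_2\nu_{\mathsf B}=\gamma_1\mu_{\mathsf S}-\bar{\gamma}_1\mu_{\mathsf S}+\bar{\gamma}_1\mu_{\mathsf S}+a=1+2a$.
\item \eqref{eq_just_two_and_a}: use~$(\mu_{\mathsf B},\nu_{\mathsf S})$, giving~$\gamma_1\mu_{\mathsf B}+\Delta_2\nu_{\mathsf S}=\bar{\gamma}_2\nu_{\mathsf S}+1+\gamma_2\nu_{\mathsf S}-\bar{\gamma}_2\nu_{\mathsf S}=2+a$.
\item \eqref{eq_just_directs}: use~$(\mu_{\mathsf D},\nu_{\mathsf D})$, giving~$\Delta_1\mu_{\mathsf D}+\Delta_2\nu_{\mathsf D}=(\gamma_1\mu_{\mathsf D}-\bar{\gamma}_2\nu_{\mathsf D})+(\gamma_2\nu_{\mathsf D}-\bar{\gamma}_1\mu_{\mathsf D})=1+a$.
\end{itemize}

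The step I expect to need care is the case where some~$\gamma_i\le\tfrac12$, because then~$\Delta_i=0$ and the identities above are not available as written. In that case the corresponding term simply disappears, and the remaining terms are controlled by \eqref{eq_just_stars}. For \eqref{eq_just_one_and_twoa} with~$\Delta_1=0$ we get~$\gamma_2\nu_\star\le1+a\le1+2a$, and \eqref{eq_just_two_and_a} with~$\Delta_2=0$ is handled the same way. For \eqref{eq_just_directs} with, say,~$\Delta_1=0$, we use~$\Delta_2\nu_\star\le\gamma_2\nu_{\mathsf S}=1+a$; if both~$\Delta_i$ vanish, the~$\lambda$-exponent on the left is~$0$. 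The powers of~$L$ are unaffected, since~$p+q\ge\delta$ still holds. In every case the exponent on the left is at most the one on the right, which gives the claim for~$\lambda$ small, even with constant~$1$.
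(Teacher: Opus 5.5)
Your proposal is correct and is essentially the paper's proof. Both arguments reduce each inequality to a comparison of $\lambda$-exponents (the logarithmic powers $b(p+q)\ge b\delta$ are harmless once $\log\tfrac1\lambda\ge1$), bound $(\mu_\star,\nu_\star)$ by the pairs $(\mu_{\mathsf S},\nu_{\mathsf S})$, $(\mu_{\mathsf S},\nu_{\mathsf B})$, $(\mu_{\mathsf B},\nu_{\mathsf S})$, $(\mu_{\mathsf D},\nu_{\mathsf D})$ to get exact identities, and fall back on \eqref{eq_just_stars} when the relevant $\Delta_i$ vanishes. Two cosmetic remarks: the case where both $\Delta_i$ vanish cannot occur in $\mathcal S$, and when only $\Delta_1=0$ (resp.\ $\Delta_2=0$) your first-case identity for \eqref{eq_just_two_and_a} (resp.\ \eqref{eq_just_one_and_twoa}) still applies verbatim, since it only uses $\Delta_2=2\gamma_2-1$ (resp.\ $\Delta_1=2\gamma_1-1$).
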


\begin{proof}

	We write $f_{\circ}:=\gamma_1 \cdot \mu_{\circ}$ and $g_{\circ}:=\gamma_2 \cdot \nu_{\circ}$ for~$\circ \in \{\mathsf S, \mathsf B, \mathsf D, \star\}$.

	For~\eqref{eq_just_stars}, using~$f_\star \le f_{\mathrm{S}}=1+a$, we have
	\[
		\mathfrak{u}_1^{\gamma_1} = \left( \log\tfrac{1}{\lambda}\right)^{\gamma_1 b} \cdot \lambda^{f_\star} \ge \left( \log\tfrac{1}{\lambda}\right)^{\delta b} \cdot \lambda^{f_{\mathrm{S}}}  = \left( \log\tfrac{1}{\lambda}\right)^{\delta b} \cdot \lambda^{1+a};  
	\]
	the lower bound for~$\mathfrak{u}_2^{\gamma_2}$ in~\eqref{eq_just_stars} is treated in the same way, using~$g_\star \le g_{\mathrm{S}}=1+a$.

	For~\eqref{eq_just_one_and_twoa}, let us first assume that~$\gamma_1 \le 1/2$, which implies that~$\Delta_1 = 0$. In that case, the inequality~\eqref{eq_just_one_and_twoa} follows immediately from~\eqref{eq_just_stars}. Now assume that~$\gamma_1 > 1/2$, so that~$\Delta_1 = 2\gamma_1-1>0$.
	We then have
	\begin{align*}
		\mathfrak{u}_1^{\Delta_1}\cdot \mathfrak{u}_2^{\gamma_2} &= \left(\log \tfrac{1}{\lambda} \right)^{(\Delta_1+\gamma_2)b}\cdot \lambda^{\frac{\Delta_1}{\gamma_1}f_\star + g_\star}\\[.2cm]
		&\ge \left(\log \tfrac{1}{\lambda} \right)^{\delta b}\cdot \lambda^{\frac{\Delta_1}{\gamma_1}f_{\mathrm{S}}+ g_{\mathrm{B}}} = \left(\log \tfrac{1}{\lambda} \right)^{\delta b}\cdot \lambda^{1+2a}, 
	\end{align*}
	where the last equality follows from
	\[
		\frac{\Delta_1}{\gamma_1}f_{\mathrm{S}}+ g_{\mathrm{B}} = \frac{2\gamma_1 - 1}{\gamma_1} (1+a) + \frac{\bar{\gamma}_1}{\gamma_1} + \frac{1}{\gamma_1} a = 1+2a.
	\]

	Turning to~\eqref{eq_just_two_and_a}, the inequality is again immediate from~\eqref{eq_just_stars} if~$\gamma_2 \le 1/2$, so we assume that~$\gamma_2 > 1/2$, which implies that~$\Delta_2 = 2\gamma_2-1>0$. We then bound
	\begin{align*}
		\mathfrak{u}_1^{\gamma_1}\cdot \mathfrak{u}_2^{\Delta_2} &= \left(\log \tfrac{1}{\lambda} \right)^{(\gamma_1+\Delta_2)b}\cdot \lambda^{ f_\star + \frac{\Delta_2}{\gamma_2} g_\star}\\[.2cm]
		&\ge \left(\log \tfrac{1}{\lambda} \right)^{\delta b}\cdot \lambda^{ f_{\mathrm{B}}+ \frac{\Delta_2}{\gamma_2} g_{\mathrm{S}}} = \left(\log \tfrac{1}{\lambda} \right)^{\delta b}\cdot \lambda^{2+a}, 
	\end{align*}
	since
	\[
		f_{\mathrm{B}}+ \frac{\Delta_2}{\gamma_2} g_{\mathrm{S}} = \frac{1}{\gamma_2} + \frac{\bar{\gamma}_2}{\gamma_2}a + \frac{2\gamma_2-1}{\gamma_2}(1+a) = 2+a.
	\]

	We now prove~\eqref{eq_just_directs}. In case~$\gamma_1 \le 1/2$, we have~$\Delta_1=0$, and also~$\gamma_2 \ge 1/2$, so~$\Delta_2 = 2\gamma_2 - 1$, and then,
	\[
		\mathfrak{u}_1^{\Delta_1}\cdot \mathfrak{u}_2^{\Delta_2} = \mathfrak{u}_2^{2\gamma_2-1} \ge \mathfrak{u}_2^{\gamma_2} \stackrel{\eqref{eq_just_stars}}{\ge} \left( \log \tfrac{1}{\lambda}\right)^{\delta b} \cdot \lambda^{1+a}.
	\]
	
	The case where~$\gamma_2 \le 1/2$ is treated similarly. We now assume that~$\gamma_1,\gamma_2>1/2$, so that~$\Delta_1,\Delta_2 > 0$. Then,
	\begin{align*}
		\mathfrak{u}_1^{\Delta_1}\cdot \mathfrak{u}_2^{\Delta_2} = \left( \log \tfrac{1}{\lambda}\right)^{(\Delta_1+\Delta_2)b}\cdot \lambda^{\Delta_1 f_\star + \Delta_2 g_\star} \ge  \left( \log \tfrac{1}{\lambda}\right)^{\delta b}\cdot \lambda^{\Delta_1 f_{\mathrm{D}}+ \Delta_2 g_{\mathrm{D}}}=
		\left( \log \tfrac{1}{\lambda}\right)^{\delta b}\cdot \lambda^{1+a},
	\end{align*}
	since
	\[
		\Delta_1 f_{\mathrm{D}}+ \Delta_2 g_{\mathrm{D}} = \frac{2\gamma_1-1}{\gamma_1} \left(\frac{\gamma_1\gamma_2}{\Delta} + \frac{\gamma_1 \bar{\gamma}_2}{\Delta}a \right) + \frac{2\gamma_2-1}{\gamma_2} \left( \frac{\bar{\gamma}_1\gamma_2}{\Delta} + \frac{\gamma_1\gamma_2}{\Delta}a\right) = 1+a.
	\]
	This concludes the proof of the four claimed bounds.
\end{proof}

Let~$\mathscr T$ denote the set of triples~$(T,o,m^*)$, where~$T$ is a finite tree,~$o$ is a root vertex of~$T$ and~$m^* \neq o$ is a distinguished leaf of~$T$. We will sometimes abuse notation and write~$T$ instead of the triple~$(T,o,m^*)$.

We now define three \emph{reduction operations} that may be applied to the tree~$T$ of the triple~$(T,o,m^*) \in \mathscr T$, producing a smaller tree~$T'$, which still contains the root~$o$ and the distinguished leaf~$m^*$.\\[.2cm]
	\textbf{Op1}: \emph{Trimming a spare leaf}. Assume~$x,y$ are vertices of~$T\backslash \{m^*\}$ such that~$y$ is a leaf,~$y$ is a child, and not the only child of~$x$. Then, remove~$y$ (and the edge~$\{x,y\}$) from~$T$.\\[.2cm]
\textbf{Op2}: \emph{Trimming a spare segment of length~$2$}. Assume~$x,y,z$ are vertices of~$T\backslash \{m^*\}$ such that~$z$ is a leaf,~$z$ is the only child of~$y$, and~$y$ is a child, and not the only child, of~$x$. Then, remove~$y$ and~$z$ (and the edges~$\{x,y\}$ and~$\{y,z\}$) from~$T$.\\[.2cm]
\textbf{Op3}: \emph{Collapsing a line segment of length 2 connecting non-leaves}. Assume~$x,y,z$ are vertices of~$T\backslash \{m^*\}$ with~$x \sim y \sim z$, such that~$x$ and~$z$ are not leaves, and~$x$ and~$z$ are the only two neighbours of~$y$. Then, ``remove~$y$ and collapse~$x$ and~$z$''; formally, remove~$x,y,z$ (and the edges~$\{x,y\}$ and~$\{y,z\}$) from~$T$, add a new vertex~$w$, and make~$w$ a neighbour of all the vertices that were neighbours of~$x$ and~$z$, other than~$y$.

These operations have two important features. First, as illustrated in Figure~\ref{fig_trim}, they allow us to reduce a tree into a line segment with extremities~$o$ and~$m^*$, and length equal to either~$1$ or~$2$. Second, as Lemma~\ref{lem_factor_reduce} will show, each time one of the operations is applied, it produces a tree in which the weight~$F$ is at least a multiplicative factor~$(\log \tfrac{1}{\lambda})^{16}$ larger than the previous one. By iterating this, we will show that large trees have small weights; see the proof of Lemma~\ref{lem_bound_on_F} below.
\begin{figure}[H]
	\begin{center}
		\fbox{\includegraphics[width = .6\textwidth]{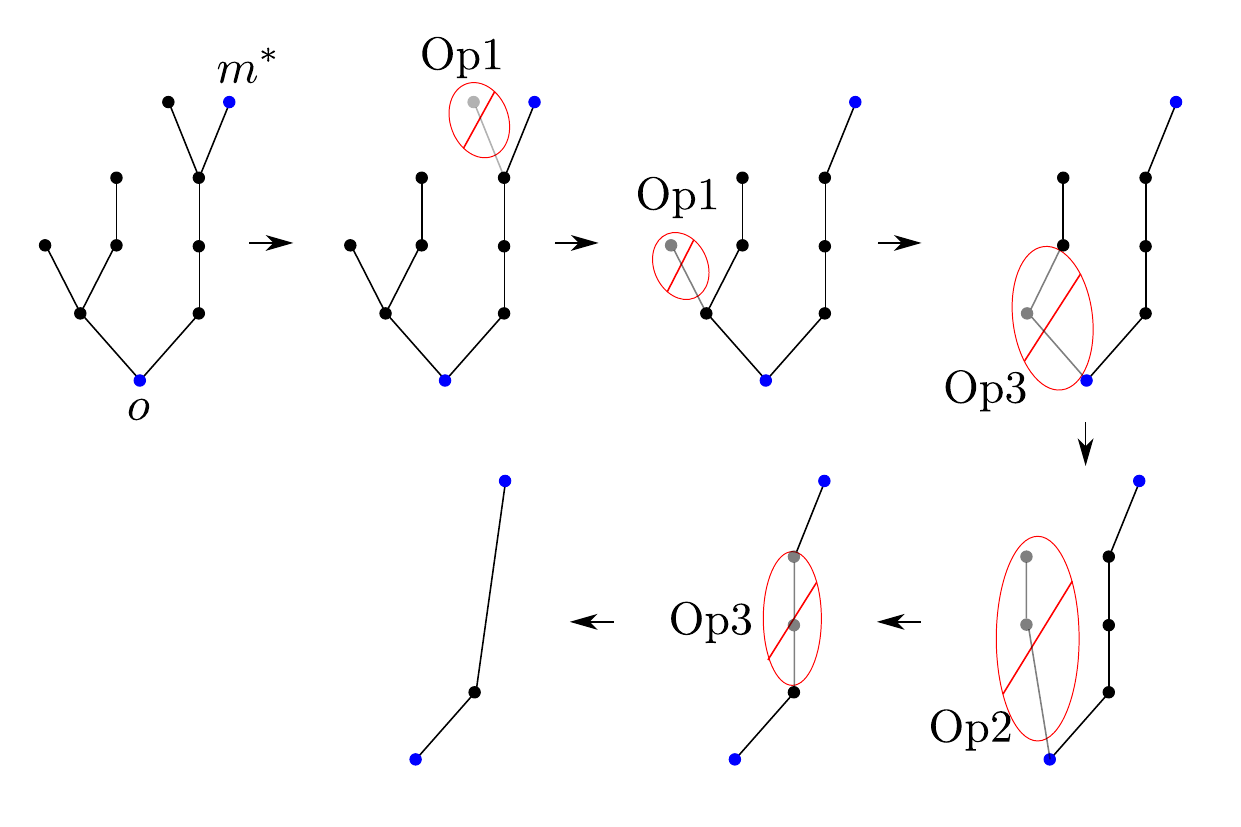}}
		\caption{Example of application of three operations to reduce a tree to a line segment of length two.}\label{fig_trim}
	\end{center}
\end{figure}

\begin{lemma}\label{lem_factor_reduce}
	Assume that~$(T',o,m^*)$ is obtained from~$(T,o,m)$ by one of the three operations above. Then,
	\[
		F(T,o,m^*) \le \left(\log\tfrac{1}{\lambda} \right)^{-16}\cdot  F(T',o,m^*).
	\]
\end{lemma}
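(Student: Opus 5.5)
The plan is to compare $F(T,o,m^*)$ and $F(T',o,m^*)$ factor by factor. Each operation changes only the weights of a bounded number of vertices. Every other vertex keeps its degree and the parity of its distance to $o$. For Op3, distances change by $2$, so parities are preserved. Hence $F(T,o,m^*)/F(T',o,m^*)$ is a product of a few explicit factors. I will bound each such ratio by a quantity controlled by one of \eqref{eq_just_stars}--\eqref{eq_just_directs}. Since $\delta b>17$, the resulting factor $(\log\tfrac1\lambda)^{-\delta b}$ leaves room to absorb constants and one extra logarithm.

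\textbf{Preliminary estimates.} First I record asymptotics for $\mathscr U_n$ and $\mathscr V_n$.
\begin{itemize}
\item $\mathscr U_1\asymp 1$.
\item For $n\ge 2$ with $n\gamma_1>1$, $\mathscr U_n\asymp \mathfrak u_1^{1-n\gamma_1}$.
\item If $n\gamma_1\le 1$, then $\mathscr U_n\lesssim \log\tfrac1\lambda$; this matters only for $n=2$ and $\gamma_1\le \tfrac12$.
\end{itemize}
These give, up to a single factor $\log\tfrac1\lambda$,
\[
\mathscr U_{n+1}/\mathscr U_n\lesssim \mathfrak u_1^{-\gamma_1}\ (n\ge2),\qquad \mathscr U_2/\mathscr U_1\lesssim \mathfrak u_1^{-\Delta_1},\qquad \mathscr U_p\mathscr U_q/\mathscr U_{p+q-2}\lesssim \mathfrak u_1^{-\Delta_1}\ (p,q\ge2).
\]
The last one holds because the exponents combine to $1-2\gamma_1$. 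The same bounds hold for $\mathscr V$ with $\mathfrak u_2,\gamma_2,\Delta_2$. Write $\alpha_m\in\{1,a\}$ for the $\lambda$-exponent attached to vertex $m$, which depends on its parity.

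\textbf{Op1.} Removing the leaf $y$ deletes its weight $(2\lambda^{\alpha_y})\mathscr U_1$ or $(2\lambda^{\alpha_y})\mathscr V_1$, which is $\asymp\lambda^{\alpha_y}$. It also lowers $\deg(x)$ from $d$ to $d-1$. In the generic case $d-1\ge2$, this costs one factor $2\lambda^{\alpha_x}$ and a ratio $\lesssim \mathfrak u_i^{-\gamma_i}$, where $i$ is the type of $x$. Since $\{\alpha_x,\alpha_y\}=\{1,a\}$, the total ratio is $\lesssim\lambda^{1+a}\mathfrak u_i^{-\gamma_i}$, which \eqref{eq_just_stars} bounds by $(\log\tfrac1\lambda)^{-\delta b}$.

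\textbf{Op2.} Removing $y$ and $z$ contributes three pieces:
\begin{itemize}
\item the factor for $x$, namely $\lambda^{\alpha_x}\mathfrak u_x^{-\gamma_x}$;
\item the factor $\lambda^{\alpha_y}\mathscr V_2$ (or $\mathscr U_2$), which is $\lesssim\lambda^{\alpha_y}\mathfrak u_y^{-\Delta_y}$;
\item the leaf factor $\lambda^{\alpha_z}$, with $\alpha_z=\alpha_x$.
\end{itemize}
This gives $\lambda^{2\alpha_x+\alpha_y}\mathfrak u_x^{-\gamma_x}\mathfrak u_y^{-\Delta_y}$. Depending on the type of $x$, this is exactly the configuration of \eqref{eq_just_one_and_twoa} or of \eqref{eq_just_two_and_a}.

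\textbf{Op3.} Here $x$ and $z$ have the same type and degrees $p,q\ge2$. The merged vertex $w$ has degree $p+q-2$. The $\lambda$-exponents satisfy $(p-1)+(q-1)-((p+q-3)\vee1)=1$ once $p+q-2\ge2$. The ratio is therefore
\[
\lambda^{\alpha_x}\cdot\lambda^{\alpha_y}\mathscr V_2\cdot \mathscr U_p\mathscr U_q/\mathscr U_{p+q-2}\lesssim\lambda^{1+a}\mathfrak u_1^{-\Delta_1}\mathfrak u_2^{-\Delta_2},
\]
or the same with types swapped. This is handled by \eqref{eq_just_directs}.

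\textbf{Main obstacle.} I expect the degenerate degree cases to be the hardest step. In Op1, the bad case is $d=2$, which happens only when $x=o$ has two children. Then the exponent $(d-1)\vee1$ does not drop, so no factor $\lambda^{\alpha_x}$ is gained. In that case the ratio is $\lambda^{\alpha_y}\mathscr U_2/\mathscr U_1$, and I would need a bound like $\mathfrak u_1^{\Delta_1}\gg\lambda^{\alpha_y}$, which is not obviously among the listed inequalities. I would first check whether the root's weight should be treated separately, since its mark is integrated against the fixed root law. Failing that, I would use the freedom in which operation to apply: when $o$ has two children, trim a whole branch via Op2 instead, or prove the needed extra inequality directly from $\mu_\star\le\mu_{\mathsf B}$. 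The analogous boundary cases in Op2 and Op3 (degrees dropping to $1$) need the same bookkeeping, and there may be a possible $\log$ factor at $\gamma_i=\tfrac12$. I would treat these by a short case list.
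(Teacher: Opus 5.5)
Your generic-case computations are the paper's own argument. You compare $F(T,o,m^*)$ and $F(T',o,m^*)$ factor by factor, close Op1 with \eqref{eq_just_stars}, close Op2 with \eqref{eq_just_two_and_a} or \eqref{eq_just_one_and_twoa}, and close Op3 with \eqref{eq_just_directs}. The gap is the case you leave open, and none of your three suggestions closes it as stated, because the inequality is false there. If $x=o$ has exactly two children, Op1 does not lower the root's exponent $(\deg_T(o)-1)\vee 1=1$. For $\gamma_1>\tfrac12$ this gives
\[
\frac{F(T,o,m^*)}{F(T',o,m^*)}=2\lambda^{a}\,\mathscr V_1\,\frac{\mathscr U_2}{\mathscr U_1}\asymp \lambda^{a}\,\mathfrak u_1^{-\Delta_1}.
\]
Take $\gamma_1=0.9$, $\gamma_2=0.3$, $a=1$. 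Then $\mu_\star=\mu_{\mathsf S}=20/9$, so $\Delta_1\mu_\star=16/9>a$, and this ratio tends to infinity. Hence no bound $\mathfrak u_1^{\Delta_1}\gg\lambda^{a}$ can be extracted from $\mu_\star\le\mu_{\mathsf B}$.

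Switching to Op2 is also not always possible. Let $T$ be $o$ with two leaf children $y$ and $m^*$, the discovery tree of $\pi=(0,1,0,2)$. Op1 is the only applicable operation. Moreover $F(T,o,m^*)\asymp\lambda^{1+a}\mathfrak u_1^{-\Delta_1}\gg\lambda$, which contradicts Lemma~\ref{lem_bound_on_F} (here $k=3$).

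The paper's proof has the same hole. It writes the Op1 ratio as $\lambda^{1+a}\mathscr U_{\deg_T(x)}\mathscr V_1/\mathscr U_{\deg_T(x)-1}$ for every $x$, which presupposes that the exponent at $x$ drops. The other boundary cases you worry about are harmless. Op2 at a root of degree $2$ gives the ratio $4\lambda^{1+a}\mathscr U_2\mathscr V_2$, which \eqref{eq_just_directs} controls. In Op3 the merged vertex always loses exactly one power, since $x$ and $z$ are non-leaves.

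The repair is your first idea, but the reason is not the root's mark law: the uniform mark of $o$ produces the same integrals $\mathscr U_n$ as any other type-1 vertex. The reason is a transmission count. In Lemma~\ref{lem_use_bound_on_F1}, the set $J$ contains exactly $\deg_T(o)$ steps out of $o$, one per child, each weighted by $\lambda$. So in $F$ you may give $o$ the factor $(2\lambda)^{\deg_T(o)}\mathscr U_{\deg_T(o)}$ instead of $(2\lambda)^{(\deg_T(o)-1)\vee 1}\mathscr U_{\deg_T(o)}$. Lemma~\ref{lem_use_bound_on_F1} remains valid with this weight. The values of $F$ on the terminal segments of length $1$ and $2$ are unchanged, since there $\deg_T(o)=1$.

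With the modified weight, the exponent at $x$ (or at the merged vertex) drops by exactly one in every operation, including when $x=o$. Op1 at a root of degree $2$ then gives
\[
4\lambda^{1+a}\,\mathscr V_1\,\frac{\mathscr U_2}{\mathscr U_1}\lesssim\lambda^{1+a}\log\tfrac1\lambda\,\mathfrak u_1^{-\gamma_1},
\]
using $\Delta_1\le\gamma_1$, and \eqref{eq_just_stars} handles this. The rest of your argument, and Lemma~\ref{lem_bound_on_F}, then go through unchanged.
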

\begin{proof}
	We can choose constants~$C_{\gamma_1}>0$ and~$C_{\gamma_2}>0$, only depending on~$\gamma_1$ and~$\gamma_2$, respectively, such that for~$\lambda$ small enough,
	\begin{equation}\label{eq_just_for_scru}
		C_{\gamma_1}^{-1} \le \frac{\mathscr U_n}{\mathfrak{u}_1^{(1-\gamma_1 n)\wedge 0}} \le C_{\gamma_1} \log\left(\tfrac{1}{\lambda}\right), \qquad C_{\gamma_2}^{-1} \le \frac{\mathscr V_n}{\mathfrak{u}_2^{(1-\gamma_2 n)\wedge 0}} \le C_{\gamma_2} \log\left(\tfrac{1}{\lambda}\right)
	\end{equation}
	for all~$n \in \mathbb N$; the~$\log$ factors are included to cover the cases where zero exponents appear in the integrals. In the bounds that follow, we will denote by~$C$ a product of powers of these constants, which may change from line to line.

	Assume~$T'$ is obtained from~$T$ using Op1. Let~$x$ and~$y$ be as in the description of~Op1. If~$\mathrm{dist}_T(o,x)$ is even, we have
	\[
		\frac{F(T,o,m^*)}{F(T',o,m^*)} = \lambda^{1+a} \cdot \frac{\mathcal{U}_{\deg_T(x)}}{\mathcal{U}_{\deg_T(x)-1}} \cdot \mathcal{V}_1 \stackrel{\eqref{eq_just_for_scru}}{\le}  \lambda^{1+a}\cdot C\log\tfrac{1}{\lambda}\cdot \mathfrak{u}_1^{-\gamma_1} \stackrel{\eqref{eq_just_stars}}{\le} \left( \log \tfrac{1}{\lambda}\right)^{-16}
	\]
	when~$\lambda$ is small.
	Similarly, if~$\mathrm{dist}_T(o,x)$ is odd, we have
	\[
		\frac{F(T,o,m^*)}{F(T',o,m^*)} = \lambda^{1+a} \cdot \frac{\mathcal{V}_{\deg_T(x)}}{\mathcal{V}_{\deg_T(x)-1}} \cdot \mathcal{U}_1 \le \left( \log \tfrac{1}{\lambda}\right)^{-16}.
	\]

	Now, assume~$T'$ is obtained from~$T$ using Op2, and let~$x,y,z$ be as in the description of that operation. If~$\mathrm{dist}_T(o,x)$ is even, we have
	\[
		\frac{F(T,o,m^*)}{F(T',o,m^*)} = \lambda^{2+a} \cdot \frac{\mathcal{U}_{\deg_T(x)}}{\mathcal{U}_{\deg_T(x)-1}} \cdot \mathcal{V}_2 \cdot \mathcal{U}_1 \stackrel{\eqref{eq_just_for_scru}}{\le}\lambda^{2+a} \cdot C\log\tfrac{1}{\lambda}\cdot \mathfrak{u}_1^{-\gamma_1} \cdot \mathfrak{u}_2^{-\Delta_2} \stackrel{\eqref{eq_just_two_and_a}}{\le} \left( \log \tfrac{1}{\lambda}\right)^{-16}.
	\]
	
	Similarly, if~$\mathrm{dist}_T(o,x)$ is odd,
	\[
		\frac{F(T,o,m^*)}{F(T',o,m^*)} = \lambda^{1+2a} \cdot \frac{\mathcal{V}_{\deg_T(x)}}{\mathcal{V}_{\deg_T(x)-1}} \cdot \mathcal{U}_2 \cdot \mathcal{V}_1 \stackrel{\eqref{eq_just_for_scru}}{\le}\lambda^{1+2a} \cdot C\log\tfrac{1}{\lambda}\cdot \mathfrak{u}_2^{-\gamma_2} \cdot \mathfrak{u}_1^{-\Delta_1} \stackrel{\eqref{eq_just_one_and_twoa}}{\le} \left( \log \tfrac{1}{\lambda}\right)^{-16}.
	\]
	
	Finally, assume~$T'$ is obtained from~$T$ using Op3, and let~$x,y,z$ be as in the description of that operation. If~$\mathrm{dist}_T(o,x)$ is even, we have
	\[
		\frac{F(T,o,m^*)}{F(T',o,m^*)} = \lambda^{1+a} \cdot \frac{\mathscr U_{\deg_T(x)}\cdot \mathscr U_{\deg_T(z)}}{\mathscr U_{\deg_T(x)+\deg_T(z)-2}} \cdot \mathscr V_2 \le \lambda^{1+a} \cdot C \log\tfrac{1}{\lambda}\cdot \mathfrak{u}_1^{-\Delta_1}\cdot \mathfrak{u}_2^{-\Delta_2} \stackrel{\eqref{eq_just_directs}}{\le} \left( \log \tfrac{1}{\lambda}\right)^{-16}.
	\]
	
	Similarly, if~$\mathrm{dist}_T(o,x)$ is odd, then
	\[
		\frac{F(T,o,m^*)}{F(T',o,m^*)} = \lambda^{1+a} \cdot \frac{\mathscr V_{\deg_T(x)}\cdot \mathscr V_{\deg_T(z)}}{\mathscr V_{\deg_T(x)+\deg_T(z)-2}} \cdot \mathscr U_2 \le \lambda^{1+a} \cdot C \log\tfrac{1}{\lambda}\cdot \mathfrak{u}_2^{-\Delta_2}\cdot \mathfrak{u}_1^{-\Delta_1} \stackrel{\eqref{eq_just_directs}}{\le} \left( \log \tfrac{1}{\lambda}\right)^{-16}.\qedhere
	\]
\end{proof}

\begin{proof}[Proof of Lemma~\ref{lem_bound_on_F}]
	First assume that~$\mathrm{dist}_T(o,m^*)$ is even. By successive applications of Op1, Op2, and Op3, we can reduce~$T$ to a line segment of length~$2$ between~$o$ and~$m^*$. Calling this final graph~$T'$, we have~$F(T',o,m^*)=C\mathscr V_2\lambda^{1+a}$ for some~$C > 0$. Each of the reduction operations reduces the number of vertices in the tree by at most two, and there are three vertices in the end, so the number of operations carried out is at least~$\lfloor (k-3)/2 \rfloor$. The desired bound then follows from repeatedly applying Lemma~\ref{lem_factor_reduce}.

	The case where~$\mathrm{dist}_T(o,m^*)$ is odd is treated in the same way, except that the final graph~$T'$ which arises from the reduction is just the two vertices~$o,m^*$, and the edge connecting them; then,~$F(T',o,m^*)\le C\lambda$.
\end{proof}

\begin{appendix}
\section*{}\label{appn} 

	Our first goal in this section is to find~$(\mu_\star,\nu_\star)$ explicitly.
	\subsection{Target optimisation}
\begin{proposition}\label{prop_target_opt}
	Assume that~$(\gamma_1,\gamma_2) \in \mathcal S$ and~$a > 0$.
\begin{itemize}
	\item[$\mathrm{(I)}$] If~$\gamma_2 \le \frac12$ (and consequently~$\gamma_1 > \frac12$), we have
		\[
			(\mu_\star,\nu_\star) = \begin{cases}
				(\mu_{\mathsf S}, \nu_{\mathsf B})&\text{if } a \le \frac{\bar{\gamma}_2}{\Delta_2};\\[.2cm]
				(\mu_{\mathsf S}, \nu_{\mathsf S})&\text{if } a > \frac{\bar{\gamma}_2}{\Delta_2}.
			\end{cases}
		\]
	\item[$\mathrm{(II)}$] If~$\gamma_1 \le \frac12$ (and consequently~$\gamma_2 > \frac12$), we have
		\[
			(\mu_\star,\nu_\star) = \begin{cases}
				(\mu_{\mathsf S}, \nu_{\mathsf S})&\text{if } a \le \frac{\Delta_1}{\bar{\gamma}_1};\\[.2cm]
				(\mu_{\mathsf B}, \nu_{\mathsf S})&\text{if } a > \frac{\Delta_1}{\bar{\gamma}_1}.
			\end{cases}
		\]

	\item[$\mathrm{(III)}$] If~$\gamma_1,\gamma_2 > \frac12$ and~$\frac{1}{\gamma_1}+\frac{1}{\gamma_2} > 3$, then
		\[
			(\mu_\star,\nu_\star) = \begin{cases}
				(\mu_{\mathsf S}, \nu_{\mathsf B})&\text{if } a \le \frac{\Delta_1}{\bar{\gamma}_1};\\[.2cm]
				(\mu_{\mathsf S}, \nu_{\mathsf S})&\text{if } \frac{\Delta_1}{\bar{\gamma}_1} < a \le \frac{\bar{\gamma}_2}{\Delta_2};\\[.2cm]
				(\mu_{\mathsf B}, \nu_{\mathsf S})&\text{if } a > \frac{\bar{\gamma}_2}{\Delta_2}.
			\end{cases}
		\]
	\item[$\mathrm{(IV)}$] If~$\gamma_1, \gamma_2 > \frac12$ and~$\frac{1}{\gamma_1}+\frac{1}{\gamma_2} \le 3$, then
		\[
			(\mu_\star,\nu_\star) = \begin{cases}
				(\mu_{\mathsf S}, \nu_{\mathsf B})&\text{if } a \le \frac{\bar{\gamma}_1 \bar{\gamma}_2}{\gamma_1\gamma_2 + \gamma_2 - 1};\\[.2cm]
				(\mu_{\mathsf D}, \nu_{\mathsf D})&\text{if } \frac{\bar{\gamma}_1 \bar{\gamma}_2}{\gamma_1\gamma_2 + \gamma_2 - 1} < a \le \frac{\gamma_1\gamma_2 + \gamma_1 - 1}{\bar{\gamma}_1 \bar{\gamma}_2};\\[.2cm]
				(\mu_{\mathsf B}, \nu_{\mathsf S})&\text{if } a > \frac{\gamma_1\gamma_2 + \gamma_1 - 1}{\bar{\gamma}_1 \bar{\gamma}_2}.
			\end{cases}
		\]
\end{itemize}
\end{proposition}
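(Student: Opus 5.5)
The plan is to treat each of $\mu_{\mathsf S},\mu_{\mathsf B},\mu_{\mathsf D}$ and $\nu_{\mathsf S},\nu_{\mathsf B},\nu_{\mathsf D}$ as an affine function of $a$ with coefficients depending on $(\gamma_1,\gamma_2)$. I would then determine the minimum in each coordinate by pairwise comparisons, organised through the maps $\varphi$ and $\psi$ of the heuristic discussion.

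\textbf{Step 1: direct S/B comparisons.} For the two comparisons not involving $\mathsf D$, subtract directly. One gets
\[
\mu_{\mathsf S}-\mu_{\mathsf B}=\frac{a(2\gamma_2-1)-\bar\gamma_2}{\gamma_1\gamma_2},\qquad \nu_{\mathsf S}-\nu_{\mathsf B}=\frac{(2\gamma_1-1)-a\bar\gamma_1}{\gamma_1\gamma_2}.
\]
Hence $\mu_{\mathsf S}\le\mu_{\mathsf B}$ always when $\gamma_2\le\frac12$, and otherwise iff $a\le \bar\gamma_2/\Delta_2$. Similarly, $\nu_{\mathsf S}\le \nu_{\mathsf B}$ always when $\gamma_1\le \frac12$, and otherwise iff $a\ge \Delta_1/\bar\gamma_1$. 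These produce the thresholds $\Delta_1/\bar\gamma_1$ and $\bar\gamma_2/\Delta_2$ appearing in the statement. I would reconcile the labelling of each case against these two formulas as I go, since when $\Delta_2=0$ the threshold $\bar\gamma_2/\Delta_2$ must be read as $+\infty$.

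\textbf{Step 2: comparisons with $\mathsf D$ via $\varphi,\psi$.} Rather than subtracting, I would use the defining identities
\[
\varphi(\mu_{\mathsf B})=\nu_{\mathsf S},\quad \psi(\nu_{\mathsf B})=\mu_{\mathsf S},\quad \varphi(\mu_{\mathsf D})=\nu_{\mathsf D},\quad \psi(\nu_{\mathsf D})=\mu_{\mathsf D}.
\]
The maps $\varphi,\psi$ are increasing affine, and $\psi\circ\varphi$ has slope $\gamma_1\gamma_2/(\bar\gamma_1\bar\gamma_2)>1$ on $\mathcal S$. Therefore $\mu\ge\mu_{\mathsf D}$ iff $\psi(\varphi(\mu))\ge\mu$, and likewise for $\nu$. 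This gives, for example, $\mu_{\mathsf D}\le\mu_{\mathsf B}$ iff $\psi(\nu_{\mathsf S})\ge\mu_{\mathsf B}$, and $\nu_{\mathsf D}\le\nu_{\mathsf B}$ iff $\varphi(\mu_{\mathsf S})\ge \nu_{\mathsf B}$. Each of these is a single linear inequality in $a$.

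The same monotonicity couples the two coordinates: $\mu_{\mathsf D}\le \mu_{\mathsf B}$ is equivalent to $\nu_{\mathsf D}=\varphi(\mu_{\mathsf D})\le \varphi(\mu_{\mathsf B})=\nu_{\mathsf S}$, and symmetrically $\nu_{\mathsf D}\le\nu_{\mathsf B}$ iff $\mu_{\mathsf D}\le\mu_{\mathsf S}$. This explains why $\mathsf D$ is always selected simultaneously in both coordinates, as in case (IV).

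\textbf{Step 3: case analysis.} I would solve these inequalities explicitly. I expect to find that $\mathsf D$ can beat both S and B only when $\Delta_1,\Delta_2>0$. In that case the interval of $a$ where $\mu_{\mathsf D}\le\mu_{\mathsf S}\wedge\mu_{\mathsf B}$ is
\[
\frac{\bar\gamma_1\bar\gamma_2}{\gamma_1\gamma_2+\gamma_2-1}<a\le\frac{\gamma_1\gamma_2+\gamma_1-1}{\bar\gamma_1\bar\gamma_2},
\]
and this interval is nonempty precisely when $\frac1{\gamma_1}+\frac1{\gamma_2}\le 3$. Outside this range, the minimiser is read off from Step 1, giving cases (I)–(III) and the outer parts of (IV).

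The main obstacle is Step 3: verifying that the two $\mathsf D$-thresholds are ordered exactly when $\frac1{\gamma_1}+\frac1{\gamma_2}\le 3$, and that they interlace correctly with $\Delta_1/\bar\gamma_1$ and $\bar\gamma_2/\Delta_2$. My first attempt would be to cross-multiply and factor the difference of the thresholds. I expect it to reduce to a positive multiple of $3\gamma_1\gamma_2-\gamma_1-\gamma_2$, from which the dichotomy between (III) and (IV) follows.
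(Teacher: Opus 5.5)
Your proposal is correct and follows a genuinely different route from the paper.

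\textbf{Comparison with the paper.} The paper works with $f_\circ=\gamma_1\mu_\circ$. It handles $\gamma_2\le\frac12$ by comparing intercepts and slopes. For $\gamma_2>\frac12$ it evaluates $f_{\mathsf D}$ at the $\mathsf S$/$\mathsf B$ crossing $a=\bar\gamma_2/\Delta_2$, where $f_{\mathsf S}=f_{\mathsf B}=\gamma_2/\Delta_2$. The sign there is decided by monotonicity of $f_{\mathsf D}$ in $\gamma_1$ and the curve $\gamma_1=\kappa(\gamma_2)=\gamma_2/(3\gamma_2-1)$. The crossings $a_1^*,a_2^*$ are then located using the slope bound $\partial_a f_{\mathsf D}\ge\partial_a f_{\mathsf B}$. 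The $\nu$-coordinate is obtained separately via the symmetry $\nu_\circ(\gamma_1,\gamma_2,a)=a\,\mu_\circ(\gamma_2,\gamma_1,1/a)$, and the two coordinates are matched case by case.

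Your fixed-point argument for $\psi\circ\varphi$ does two things the paper's route does not:
\begin{itemize}
\item It reduces all four $\mathsf D$-comparisons to $\varphi(\mu_{\mathsf S})=a/\bar\gamma_2\ge\nu_{\mathsf B}$ and $\psi(\nu_{\mathsf S})=1/\bar\gamma_1\ge\mu_{\mathsf B}$.
\item It explains structurally why $\mathsf D$ is selected in both coordinates at once; the paper only sees this a posteriori from its two separate computations.
\end{itemize}
What the paper's route buys is that it never divides by $\gamma_1\gamma_2+\gamma_2-1$ or $\gamma_1\gamma_2+\gamma_1-1$.

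\textbf{Your Step 3.} Your expectation is right:
\[
(\gamma_1\gamma_2+\gamma_2-1)(\gamma_1\gamma_2+\gamma_1-1)-\bar\gamma_1^2\bar\gamma_2^2=\Delta\,(3\gamma_1\gamma_2-\gamma_1-\gamma_2).
\]
The interlacing you flag as the main obstacle is immediate in (IV). At $a=a_1^*$ you have $\mu_{\mathsf S}=\mu_{\mathsf D}\le\mu_{\mathsf B}$ and $\nu_{\mathsf B}=\nu_{\mathsf D}\le\nu_{\mathsf S}$, which forces $a_1^*\le\min\{\bar\gamma_2/\Delta_2,\Delta_1/\bar\gamma_1\}$. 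Symmetrically, $a_2^*$ is at least the maximum of these two thresholds. In (III), the ordering $\Delta_1/\bar\gamma_1<\bar\gamma_2/\Delta_2$ follows from $\bar\gamma_1\bar\gamma_2-\Delta_1\Delta_2=\gamma_1+\gamma_2-3\gamma_1\gamma_2>0$.

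\textbf{Two cautions.}

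First, the denominators $\gamma_1\gamma_2+\gamma_2-1$ and $\gamma_1\gamma_2+\gamma_1-1$ can be negative inside case (III), for example at $\gamma_1=\gamma_2=0.6$. So ``$\mu_{\mathsf D}\le\mu_{\mathsf S}$ iff $a\ge a_1^*$'' holds only when $\gamma_1\gamma_2+\gamma_2-1>0$. Otherwise $\mu_{\mathsf D}>\mu_{\mathsf S}$ for every $a>0$, and likewise for $a_2^*$. Naive division would wrongly give $\mathsf D$-wins for all $a$. In case (IV) both denominators are positive, since $\gamma_2(1+\gamma_1)-1\ge(2\gamma_2-1)^2/(3\gamma_2-1)>0$, and symmetrically for the other.

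Second, your Step 1 shows that the thresholds printed in (I) and (II) are interchanged. In (I) the switch from $\nu_{\mathsf B}$ to $\nu_{\mathsf S}$ occurs at $\Delta_1/\bar\gamma_1$. In (II) the switch from $\mu_{\mathsf S}$ to $\mu_{\mathsf B}$ occurs at $\bar\gamma_2/\Delta_2$. This is exactly what the paper's own proof derives. Reading $\bar\gamma_2/\Delta_2$ as $+\infty$ in (I) would assert $\nu_\star=\nu_{\mathsf B}$ for all $a$, which your own formula for $\nu_{\mathsf S}-\nu_{\mathsf B}$ refutes. You should state the corrected thresholds rather than reinterpret the printed ones.
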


This is illustrated in Figure~\ref{fig_first}.  

\begin{figure}[H]
	\begin{center}
		\fbox{\includegraphics[width = .8\textwidth]{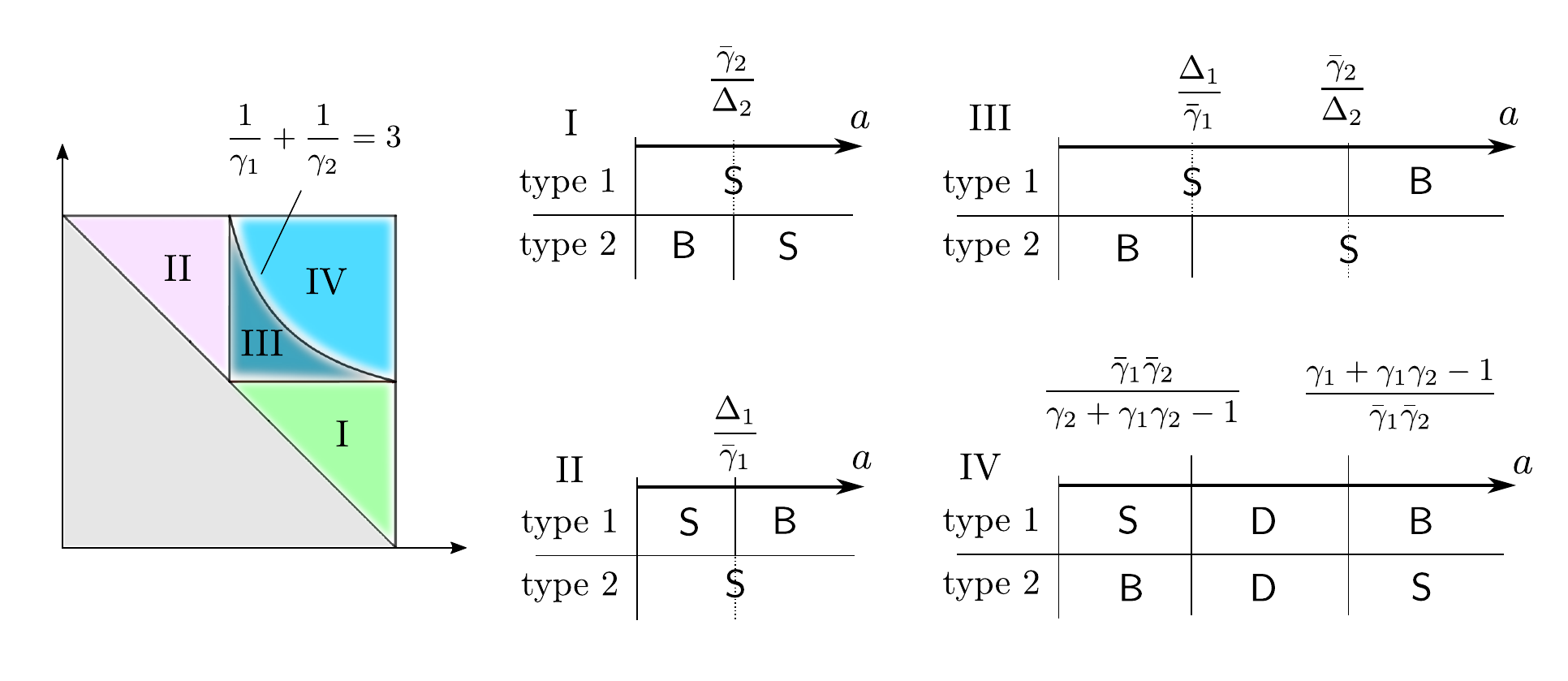}}
		\caption{Minimisers given by Proposition~\ref{prop_target_opt}, as functions of~$\gamma_1,\gamma_2, a$.}\label{fig_first}
	\end{center}
\end{figure}

It will be convenient to define~$f_{\circ}:=\gamma_1 \cdot \mu_{\circ}$ for~$\circ \in \{\mathsf S, \mathsf B, \mathsf D\}$, so that
	\[
		f_{\mathsf S}(\gamma_1,\gamma_2,a) = 1+a,\qquad f_{\mathsf B}(\gamma_1,\gamma_2, a) = \frac{1}{\gamma_2} + \frac{\bar{\gamma}_2}{\gamma_2} a,\qquad f_{\mathsf D}(\gamma_1,\gamma_2,a) = \frac{\gamma_1\gamma_2}{\Delta} + \frac{\gamma_1\bar{\gamma}_2}{\Delta}a.
	\]
	
	Note that although~$f_{\mathsf S}$ does not depend on~$\gamma_1,\gamma_2$ and~$f_{\mathsf B}$ does not depend on~$\gamma_1$, we write all three functions as functions of~$(\gamma_1,\gamma_2,a)$.

\begin{lemma}\label{lem_first_easy_case}
	If~$(\gamma_1,\gamma_2,a) \in \mathcal S \times (0,\infty)$ with~$\gamma_2 \le 1/2$, then~$\min\{f_{\mathsf S},f_{\mathsf B},f_{\mathsf D}\} = f_{\mathsf S}$.
\end{lemma}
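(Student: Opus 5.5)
Since $f_{\mathsf S}=1+a$ and each of $f_{\mathsf B}$, $f_{\mathsf D}$ is affine in $a$ of the form $\alpha+\beta a$, I will compare the coefficients one by one. If $\alpha \ge 1$ and $\beta\ge 1$, then $\alpha+\beta a\ge 1+a$ for every $a>0$. So the claim reduces to four elementary inequalities in $(\gamma_1,\gamma_2)$, valid on $\mathcal S\cap\{\gamma_2\le \frac12\}$. I will use that $\Delta=\gamma_1+\gamma_2-1>0$ on $\mathcal S$, so dividing by $\Delta$ preserves inequalities.

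\emph{Comparison with $f_{\mathsf B}$.} We have $f_{\mathsf B}=\frac{1}{\gamma_2}+\frac{\bar\gamma_2}{\gamma_2}a$. Since $\gamma_2\le\frac12$, the constant term satisfies $\frac1{\gamma_2}\ge 2>1$. The coefficient of $a$ satisfies $\frac{\bar\gamma_2}{\gamma_2}\ge 1$, because this is equivalent to $1-\gamma_2\ge\gamma_2$, i.e. $\gamma_2\le\frac12$. Hence $f_{\mathsf B}\ge f_{\mathsf S}$.

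\emph{Comparison with $f_{\mathsf D}$.} We have $f_{\mathsf D}=\frac{\gamma_1\gamma_2}{\Delta}+\frac{\gamma_1\bar\gamma_2}{\Delta}a$.
\begin{itemize}
\item For the constant term, $\gamma_1\gamma_2\ge\Delta$ is equivalent to $1-\gamma_1-\gamma_2+\gamma_1\gamma_2=\bar\gamma_1\bar\gamma_2\ge0$. This holds (strictly) since $\gamma_1,\gamma_2<1$.
\item For the coefficient of $a$, $\gamma_1\bar\gamma_2\ge\Delta$ rearranges to $\gamma_1-\gamma_1\gamma_2\ge\gamma_1+\gamma_2-1$, i.e. $\gamma_2(1+\gamma_1)\le 1$. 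This holds because $\gamma_2\le\frac12$ and $1+\gamma_1<2$.
\end{itemize}
Hence $f_{\mathsf D}\ge f_{\mathsf S}$, which together with the previous step gives the lemma.

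There is no real obstacle here. The only point needing care is the last rearrangement, where the hypothesis $\gamma_2\le\frac12$ is genuinely used (together with $\gamma_1<1$), and the observation that $\Delta>0$ on $\mathcal S$.
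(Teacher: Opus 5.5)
Your proof is correct and follows the paper's approach. Both arguments compare the intercepts and slopes of the affine functions $a\mapsto f_{\mathsf B}, f_{\mathsf D}$ against $f_{\mathsf S}=1+a$. The only difference is the slope of $f_{\mathsf D}$: the paper first proves the general bound $\partial_a f_{\mathsf D}\ge \bar{\gamma}_2/\gamma_2$ on all of $\mathcal S$, via monotonicity in $\gamma_1$, and reuses it later in the appendix, whereas your direct rearrangement to $\gamma_2(1+\gamma_1)\le 1$ is equally valid here.
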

\begin{proof}
We have
	$f_{\mathsf B}(\gamma_1,\gamma_2,0+) = \frac{1}{\gamma_2} > 1 = f_{\mathsf S}(\gamma_1,\gamma_2,0+)$
	and
	\[
		\frac{\partial f_{\mathsf B}}{\partial a} = \frac{\bar{\gamma}_2}{\gamma_2} > 1 = \frac{\partial f_{\mathsf S}}{\partial a} \quad \text{ when } \gamma_2 \le \frac12.
	\]
	
	This shows that~$f_{\mathsf S} < f_{\mathsf B}$ when~$\gamma_2 \le 1/2$. 
	Using~$0<\bar{\gamma}_1 \bar{\gamma}_2 = \gamma_1\gamma_2 - \Delta$, we can verify that
\begin{equation}
	\label{eq_bridge_zero}
	f_{\mathsf D}(\gamma_1,\gamma_2,0+) = \frac{\gamma_1\gamma_2}{\Delta}>1=f_{\mathsf S}(\gamma_1,\gamma_2, 0+).
\end{equation}

	Next, we compute
	\begin{equation*}
		\frac{\partial^2 f_{\mathsf D}}{\partial \gamma_1 \partial a} = \frac{\partial}{\partial \gamma_1} \left(\frac{\gamma_1\bar{\gamma}_2}{\Delta} \right) = \bar{\gamma}_2 \cdot \frac{\Delta - \gamma_1}{\Delta^2} = \bar{\gamma}_2 \cdot \frac{\gamma_2-1}{\Delta^2} < 0,
	\end{equation*}
	and
	\begin{equation*}
		\lim_{\gamma_1 \uparrow 1}\frac{\partial f_{\mathsf D}}{\partial a} =\lim_{\gamma_1 \uparrow 1}  \frac{\gamma_1 \bar{\gamma}_2}{\Delta} = \frac{\bar{\gamma}_2}{\gamma_2}.
	\end{equation*}
	
	Consequently,
	\begin{equation}
	\label{eq_new_compBD}
		\frac{\partial f_{\mathsf D}}{\partial a} \ge \frac{\bar{\gamma}_2}{\gamma_2} \qquad \text{ for all }(\gamma_1,\gamma_2,a) \in \mathcal S \times (0,\infty).
	\end{equation}
	
	In the case~$\gamma_2 \le 1/2$, we also have~$\frac{\bar{\gamma}_2}{\gamma_2} > 1$. This gives
	\begin{equation}
		\label{eq_bridge_01}
		\gamma_2 \le 1/2 \quad \Longrightarrow \quad \frac{\partial f_{\mathsf D}}{\partial a} > 1 = \frac{\partial f_{\mathsf S}}{\partial a}.
	\end{equation}
	
	Combining~\eqref{eq_bridge_zero} and~\eqref{eq_bridge_01} gives the desired statement.
\end{proof}

	A direct computation shows that, when~$\gamma_2 > 1/2$,
\begin{equation}\label{eq_direct_comp}
		 f_{\mathsf S}(\gamma_1, \gamma_2, \tfrac{\bar{\gamma}_2}{\Delta_2}) = f_{\mathsf B}(\gamma_1, \gamma_2, \tfrac{\bar{\gamma}_2}{\Delta_2}) = \tfrac{\gamma_2}{\Delta_2},
\end{equation}
with
\begin{equation}\label{eq_direct_comp0}
	\min\{f_{\mathsf S},f_{\mathsf B}\} = \begin{cases} f_{\mathsf S}&\text{if } a \le \tfrac{\bar{\gamma_2}}{\Delta_2};\\[.2cm]
	f_{\mathsf B}&\text{if } a > \tfrac{\bar{\gamma_2}}{\Delta_2}.\end{cases}
\end{equation}

We now define
	\[
		\kappa(\gamma) := \frac{\gamma}{3\gamma-1},\qquad \gamma \in (1/2,1).
	\]
	
Using that~$\kappa$ is decreasing, with~$\kappa(1/2)=1$ and~$\kappa(1)=1/2$, we have~$\kappa(\gamma_2)+\gamma_2 > 1$; hence,
\[(\kappa(\gamma_2),\gamma_2) \in \mathcal S \text{ for all } \gamma_2 \in (1/2,1).\]

Again using the fact that~$\kappa$ is decreasing, together with~$\frac{1}{\gamma}+\frac{1}{\kappa(\gamma)} = 3$, we have
\begin{equation}\label{eq_gamma_kappa}
		\frac{1}{\gamma_1}+\frac{1}{\gamma_2} < 3 \quad \text{if and only if} \quad \gamma_1 > \kappa(\gamma_2).
\end{equation}

\begin{lemma}\label{lem_to_postpone}
	If~$(\gamma_1,\gamma_2,a) \in \mathcal S \times (0,\infty)$ with~$\gamma_2 > 1/2$, then
	\begin{equation*}
		f_{\mathsf D}(\gamma_1,\gamma_2,\tfrac{\bar{\gamma}_2}{\Delta_2}) \begin{cases}
			> \tfrac{\gamma_2}{\Delta_2} &\text{if } \gamma_1 \in (\bar{\gamma}_2, \kappa(\gamma_2));\\[.2cm]
			= \tfrac{\gamma_2}{\Delta_2} &\text{if } \gamma_1  = \kappa({\gamma}_2);\\[.2cm]
			< \tfrac{\gamma_2}{\Delta_2} &\text{if } \gamma_1  \in (\kappa({\gamma}_2),1).
		\end{cases}
	\end{equation*}
\end{lemma}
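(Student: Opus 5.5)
The plan is to compute $f_{\mathsf D}$ at $a^*:=\bar{\gamma}_2/\Delta_2$ in closed form and reduce the comparison with $\gamma_2/\Delta_2$ to the sign of an explicit factored polynomial in $\gamma_1,\gamma_2$. Throughout, $\gamma_2>\tfrac12$, so $\Delta_2=2\gamma_2-1>0$. The range $\gamma_1\in(\bar{\gamma}_2,1)$ is exactly the condition $(\gamma_1,\gamma_2)\in\mathcal S$, so $\Delta=\gamma_1+\gamma_2-1>0$; hence all denominators below are positive and we may clear them without flipping inequalities.

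First, by definition
\[
f_{\mathsf D}(\gamma_1,\gamma_2,a)=\frac{\gamma_1(\gamma_2+\bar{\gamma}_2 a)}{\Delta}.
\]
Substituting $a=a^*$ gives
\[
\gamma_2+\frac{\bar{\gamma}_2^2}{\Delta_2}=\frac{\gamma_2(2\gamma_2-1)+(1-\gamma_2)^2}{\Delta_2}=\frac{3\gamma_2^2-3\gamma_2+1}{\Delta_2}.
\]
Therefore
\[
f_{\mathsf D}(\gamma_1,\gamma_2,a^*)=\frac{\gamma_1(3\gamma_2^2-3\gamma_2+1)}{\Delta\,\Delta_2}.
\]

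Next, I multiply the desired comparison by $\Delta\Delta_2>0$. The sign of $f_{\mathsf D}(\gamma_1,\gamma_2,a^*)-\gamma_2/\Delta_2$ is then the sign of
\[
\gamma_1(3\gamma_2^2-3\gamma_2+1)-\gamma_2(\gamma_1+\gamma_2-1)=\gamma_1(3\gamma_2-1)(\gamma_2-1)+\gamma_2(1-\gamma_2).
\]
This factors as
\[
(1-\gamma_2)\bigl(\gamma_2-\gamma_1(3\gamma_2-1)\bigr)=(1-\gamma_2)(3\gamma_2-1)\bigl(\kappa(\gamma_2)-\gamma_1\bigr).
\]
Since $1-\gamma_2>0$ and $3\gamma_2-1>0$ for $\gamma_2\in(\tfrac12,1)$, the sign is that of $\kappa(\gamma_2)-\gamma_1$. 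This is positive, zero or negative according as $\gamma_1<\kappa(\gamma_2)$, $\gamma_1=\kappa(\gamma_2)$ or $\gamma_1>\kappa(\gamma_2)$, which gives the three cases of the statement.

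There is no real obstacle; the only step needing care is the algebraic factorisation, namely expanding $\gamma_1(3\gamma_2^2-3\gamma_2+1)-\gamma_1\gamma_2$ into $\gamma_1(3\gamma_2-1)(\gamma_2-1)$ and collecting terms. I would double-check it at a test point, e.g.\ $\gamma_1=\gamma_2=\tfrac23$, where $\kappa(\tfrac23)=\tfrac23$ and both sides should agree. The first case is vacuous unless $\bar{\gamma}_2<\kappa(\gamma_2)$. This always holds, since $\kappa(\gamma_2)+\gamma_2>1$ as noted just before the lemma.
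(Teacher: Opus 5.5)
Your proof is correct and essentially the paper's: both evaluate $f_{\mathsf D}$ in closed form at $a=\bar{\gamma}_2/\Delta_2$, obtaining $\gamma_1(3\gamma_2^2-3\gamma_2+1)/(\Delta\Delta_2)$. The paper then checks equality at $\gamma_1=\kappa(\gamma_2)$ and uses $\partial f_{\mathsf D}/\partial\gamma_1<0$ to get the two strict cases. Your factorisation of the cleared difference as $(1-\gamma_2)(3\gamma_2-1)(\kappa(\gamma_2)-\gamma_1)$ is algebraically correct and gives all three cases in one step.
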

\begin{proof}
	We write, for arbitrary~$\gamma_1 \in (\bar{\gamma}_2,1)$:
	\[
		f_{\mathsf D}(\gamma_1,\gamma_2,\tfrac{\bar{\gamma}_2}{\Delta_2}) = \frac{\gamma_1\gamma_2}{\Delta} + \frac{\gamma_1 \bar{\gamma}_2^2}{\Delta\cdot \Delta_2} = \frac{\gamma_1}{\Delta}\cdot \frac{3\gamma_2^2-3\gamma_2+1}{\Delta_2} = \frac{1}{\gamma_2 (\frac{1}{\gamma_1} + \frac{1}{\gamma_2} - \frac{1}{\gamma_1\gamma_2})} \cdot \frac{3\gamma_2^2 - 3\gamma_2 + 1}{\Delta_2}.
	\]
	
	Using~$\frac{1}{\kappa(\gamma_2)} + \frac{1}{\gamma_2} = 3$ and~$\kappa(\gamma_2)\cdot \gamma_2 = \frac{\gamma_2^2}{3\gamma_2-1}$, when~$\gamma_1= \kappa(\gamma_2)$ the right-hand side above becomes
	\[
		\frac{1}{\gamma_2(3-\frac{3\gamma_2-1}{\gamma_2^2})} \cdot  \frac{3\gamma_2^2 - 3\gamma_2+1}{\Delta_2} = \frac{\gamma_2}{\Delta_2},
	\]
	proving that
	\begin{equation*}
		f_{\mathsf D}(\kappa(\gamma_2),\gamma_2,\tfrac{\bar{\gamma}_2}{\Delta_2}) = \frac{\gamma_2}{\Delta_2}.
	\end{equation*}
	
	The proof is then concluded by noting that
	\[
		\frac{\partial f_{\mathsf D}}{\partial \gamma_1}  = (\gamma_2+\bar{\gamma}_2 a)\cdot \frac{\partial}{\partial \gamma_1} \frac{\gamma_1}{\Delta} = (\gamma_2 + \bar{\gamma}_2 a) \cdot \frac{\gamma_2-1}{\Delta^2} < 0. \qedhere
	\]
\end{proof}

\begin{lemma}\label{lem_second_inter_case}
	If~$(\gamma_1,\gamma_2) \in \mathcal S$ with~$\gamma_2 > 1/2$ and~$\gamma_1 \in (\bar{\gamma}_2, \kappa(\gamma_2)]$, then
	\[
		\min\{f_{\mathsf S},f_{\mathsf B},f_{\mathsf D}\} = \begin{cases}
			f_{\mathsf S}&\text{if } a \le \tfrac{\bar{\gamma}_2}{\Delta_2};\\[.2cm]
			f_{\mathsf B}&\text{if } a > \tfrac{\bar{\gamma}_2}{\Delta_2}.
		\end{cases}
	\]
\end{lemma}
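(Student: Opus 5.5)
We reduce the claim to the facts already established: \eqref{eq_direct_comp0} gives $\min\{f_{\mathsf S},f_{\mathsf B}\}$ as in the statement. So it suffices to show that $f_{\mathsf D}\ge \min\{f_{\mathsf S},f_{\mathsf B}\}$ for all $a>0$ when $\gamma_1\in(\bar{\gamma}_2,\kappa(\gamma_2)]$. Ties do not matter, since only the value of the minimum is asserted.

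All three functions are affine in $a$. Hence $\min\{f_{\mathsf S},f_{\mathsf B}\}$ is a concave, piecewise affine function of $a$ with a single kink at $a_0:=\bar{\gamma}_2/\Delta_2$. It equals $f_{\mathsf S}$ on $(0,a_0]$ and $f_{\mathsf B}$ on $[a_0,\infty)$. The plan is to compare the affine function $f_{\mathsf D}$ with each piece on its own interval.

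On $(0,a_0]$ we compare $f_{\mathsf D}$ with $f_{\mathsf S}$. At $a=0+$, \eqref{eq_bridge_zero} gives $f_{\mathsf D}(\gamma_1,\gamma_2,0+)=\gamma_1\gamma_2/\Delta>1=f_{\mathsf S}(\gamma_1,\gamma_2,0+)$. At $a=a_0$, Lemma~\ref{lem_to_postpone} combined with \eqref{eq_direct_comp} gives $f_{\mathsf D}\ge \gamma_2/\Delta_2=f_{\mathsf S}$, using the hypothesis $\gamma_1\le\kappa(\gamma_2)$. Since $f_{\mathsf D}-f_{\mathsf S}$ is affine in $a$ and nonnegative at both endpoints, it is nonnegative on the whole interval.

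On $[a_0,\infty)$ we compare $f_{\mathsf D}$ with $f_{\mathsf B}$. At $a=a_0$ the same inequality $f_{\mathsf D}\ge\gamma_2/\Delta_2=f_{\mathsf B}$ holds. For the slopes, \eqref{eq_new_compBD} gives $\partial_a f_{\mathsf D}\ge\bar{\gamma}_2/\gamma_2=\partial_a f_{\mathsf B}$. Hence $f_{\mathsf D}-f_{\mathsf B}$ is nondecreasing in $a$ and nonnegative at $a_0$, so it stays nonnegative for $a\ge a_0$.

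Combining the two intervals gives $\min\{f_{\mathsf S},f_{\mathsf B},f_{\mathsf D}\}=\min\{f_{\mathsf S},f_{\mathsf B}\}$, and \eqref{eq_direct_comp0} finishes the proof.

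The only step that needs care is checking that \eqref{eq_new_compBD} applies here. It was derived from the monotonicity in $\gamma_1$ together with the limit as $\gamma_1\uparrow1$, and that derivation did not use $\gamma_2\le1/2$. So it holds on all of $\mathcal S$. Beyond that, the argument is just endpoint comparison of affine functions.
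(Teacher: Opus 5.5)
Your proof is correct and follows the paper's argument. You compare the affine functions $f_{\mathsf S}$ and $f_{\mathsf D}$ at $a=0+$ and at $a=\bar{\gamma}_2/\Delta_2$ via \eqref{eq_bridge_zero}, \eqref{eq_direct_comp} and Lemma~\ref{lem_to_postpone}, handle $a>\bar{\gamma}_2/\Delta_2$ using the value at the kink together with the slope bound \eqref{eq_new_compBD}, and close with \eqref{eq_direct_comp0}, just as the paper does. Your version is slightly more careful: the paper asserts $f_{\mathsf S}<f_{\mathsf D}$ for all $a>0$ (justified only on $(0,\bar{\gamma}_2/\Delta_2]$, which is all it uses), and it states a strict slope inequality while citing the non-strict \eqref{eq_new_compBD}.
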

\begin{proof}
	Fix~$(\gamma_1,\gamma_2) \in \mathcal S$ as in the statement. By~\eqref{eq_bridge_zero},~\eqref{eq_direct_comp} and Lemma~\ref{lem_to_postpone}, we have
	\[
		f_{\mathsf S}(\gamma_1,\gamma_2,0+) < f_{\mathsf D}(\gamma_1,\gamma_2,0+) \qquad \text{and}\qquad f_{\mathsf S}(\gamma_1,\gamma_2,\tfrac{\bar{\gamma}_2}{\Delta_2}) \le f_{\mathsf D}(\gamma_1,\gamma_2,\tfrac{\bar{\gamma}_2}{\Delta_2}).
	\]
	
	Since both~$a \mapsto f_{\mathsf S}(\gamma_1,\gamma_2,a)$ and~$a \mapsto f_{\mathsf D}(\gamma_1,\gamma_2,a)$ are affine functions of~$a$, we obtain~$f_{\mathsf S}(\gamma_1,\gamma_2,a) < f_{\mathsf D}(\gamma_1,\gamma_2,a)$ for all~$a > 0$. Together with~\eqref{eq_direct_comp0}, this implies that for~$a \in (0,\tfrac{\bar{\gamma}_2}{\Delta_2}]$, the minimum of the three functions is~$f_{\mathsf S}$.
	Next,~\eqref{eq_direct_comp}, Lemma~\ref{lem_to_postpone} and~\eqref{eq_new_compBD} give
	\[
		f_{\mathsf B}(\gamma_1,\gamma_2,\tfrac{\bar{\gamma}_2}{\Delta_2}) \le  f_{\mathsf D}(\gamma_1,\gamma_2,\tfrac{\bar{\gamma}_2}{\Delta_2}) \qquad \text{and}\qquad \frac{\partial f_{\mathsf B}}{\partial a} <  \frac{\partial f_{\mathsf D}}{\partial a}.
	\]
	
	Together with~\eqref{eq_direct_comp0}, this implies that for~$a > \tfrac{\bar{\gamma}_2}{\Delta_2}$, the minimum of the three functions is~$f_{\mathsf B}$.
\end{proof}

\begin{lemma}\label{lem_third_last_case}
	Assume that~$(\gamma_1,\gamma_2) \in \mathcal S$ with~$\gamma_2 > 1/2$ and~$\gamma_1 > \kappa(\gamma_2)$. Then, letting
	\begin{equation}\label{eq_two_stars}
		a^*_1=a^*_1(\gamma_1,\gamma_2):=\frac{\bar{\gamma}_1 \bar{\gamma}_2}{\gamma_2 + \gamma_1\gamma_2 -1 }\qquad\text{and}\qquad a^*_2 = a^*_2(\gamma_1,\gamma_2):= \frac{\gamma_1 + \gamma_1\gamma_2 -1 }{\bar{\gamma}_1 \bar{\gamma}_2},
	\end{equation}
	we have
	\[
		\min\{f_{\mathsf S},f_{\mathsf B},f_{\mathsf D}\} = \begin{cases}
			f_{\mathsf S}&\text{if } a \le a^*_1;\\
			f_{\mathsf D}&\text{if } a^*_1< a \le a^*_2;\\
			f_{\mathsf B}&\text{if } a > a^*_2.
		\end{cases}
	\]
\end{lemma}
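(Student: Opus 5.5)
Since $f_{\mathsf S}, f_{\mathsf B}, f_{\mathsf D}$ are all affine in $a$ for fixed $(\gamma_1,\gamma_2)$, the plan is to find the three pairwise crossing points and order them. By \eqref{eq_direct_comp} and \eqref{eq_direct_comp0}, $f_{\mathsf S}$ and $f_{\mathsf B}$ cross exactly at $a=\bar{\gamma}_2/\Delta_2$. By Lemma~\ref{lem_to_postpone}, in the regime $\gamma_1>\kappa(\gamma_2)$ we have $f_{\mathsf D}<f_{\mathsf S}=f_{\mathsf B}$ at $a=\bar\gamma_2/\Delta_2$. So $f_{\mathsf D}$ strictly undercuts both other lines there.

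The argument then has three steps.

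\textbf{Crossing of $f_{\mathsf S}$ and $f_{\mathsf D}$.} By \eqref{eq_bridge_zero}, $f_{\mathsf D}(0+)>f_{\mathsf S}(0+)$. Since $f_{\mathsf D}<f_{\mathsf S}$ at $a=\bar\gamma_2/\Delta_2$, the two affine functions cross exactly once in $(0,\bar\gamma_2/\Delta_2)$. The crossing solves
\[
1+a=\frac{\gamma_1\gamma_2+\gamma_1\bar\gamma_2 a}{\Delta},
\]
that is, $a(\Delta-\gamma_1\bar\gamma_2)=\gamma_1\gamma_2-\Delta$. Using $\gamma_1\gamma_2-\Delta=\bar\gamma_1\bar\gamma_2$ and $\Delta-\gamma_1\bar\gamma_2=\gamma_1\gamma_2+\gamma_2-1$, this gives $a=a_1^*$. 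The slope comparison shows $f_{\mathsf D}-f_{\mathsf S}$ is decreasing, so $f_{\mathsf S}\le f_{\mathsf D}$ iff $a\le a_1^*$. Here I need $\gamma_1\gamma_2+\gamma_2-1>0$, which follows from the existence of the crossing (the slope of $f_{\mathsf D}$ must be smaller than $1$).

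\textbf{Crossing of $f_{\mathsf B}$ and $f_{\mathsf D}$.} By \eqref{eq_new_compBD}, $\partial_a f_{\mathsf D}\ge \partial_a f_{\mathsf B}$, with strict inequality for $\gamma_1<1$; this follows from the strict monotonicity in $\gamma_1$ shown in the proof of Lemma~\ref{lem_first_easy_case}. Hence $f_{\mathsf D}-f_{\mathsf B}$ is increasing in $a$, and it is negative at $\bar\gamma_2/\Delta_2$. So there is a unique crossing $a>\bar\gamma_2/\Delta_2$. It solves
\[
\frac{1+\bar\gamma_2 a}{\gamma_2}=\frac{\gamma_1\gamma_2+\gamma_1\bar\gamma_2 a}{\Delta}.
\]
Clearing denominators gives $\Delta-\gamma_1\gamma_2^2=a\bar\gamma_2(\gamma_1\gamma_2-\Delta)$. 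We have $\gamma_1\gamma_2-\Delta=\bar\gamma_1\bar\gamma_2$, and
\[
\Delta-\gamma_1\gamma_2^2=\gamma_1+\gamma_2-1-\gamma_1\gamma_2^2=\bar\gamma_2(\gamma_1+\gamma_1\gamma_2-1).
\]
This yields $a=a_2^*$, and $f_{\mathsf D}\le f_{\mathsf B}$ iff $a\le a_2^*$.

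\textbf{Assembling.} Since $a_1^*<\bar\gamma_2/\Delta_2<a_2^*$, the three regimes follow, using \eqref{eq_direct_comp0} to handle $\mathsf S$ versus $\mathsf B$:
\begin{itemize}
\item for $a\le a_1^*$, $f_{\mathsf S}$ is minimal, since $a_1^*\le \bar\gamma_2/\Delta_2$ gives $f_{\mathsf S}\le f_{\mathsf B}$;
\item for $a_1^*<a\le a_2^*$, $f_{\mathsf D}$ beats both other lines;
\item for $a>a_2^*$, $f_{\mathsf B}<f_{\mathsf D}$ and $f_{\mathsf B}<f_{\mathsf S}$.
\end{itemize}

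The only delicate point is the sign and strictness bookkeeping: positivity of denominators, and strict slope inequalities at boundary parameters. That is where I would be most careful, rather than in the algebra itself.
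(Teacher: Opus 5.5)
Your proposal is correct and follows essentially the same route as the paper. The $\mathsf S$--$\mathsf D$ crossing is located in $(0,\bar\gamma_2/\Delta_2)$ via \eqref{eq_bridge_zero} and Lemma~\ref{lem_to_postpone}, the $\mathsf B$--$\mathsf D$ crossing beyond $\bar\gamma_2/\Delta_2$ via the slope comparison, and the two are combined with \eqref{eq_direct_comp0}. Your explicit algebra for $a_1^*,a_2^*$ fills in details the paper leaves implicit, as does your observation that $\partial_a f_{\mathsf B}<\partial_a f_{\mathsf D}$ is strict (equivalently $\bar\gamma_1\bar\gamma_2>0$); the paper cites \eqref{eq_new_compBD}, which as stated is only non-strict.
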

\begin{proof}
	Fix~$(\gamma_1,\gamma_2)$ as in the statement. By~\eqref{eq_bridge_zero},~\eqref{eq_direct_comp} and Lemma~\ref{lem_to_postpone}, we have
	\[
		f_{\mathsf S}(\gamma_1,\gamma_2,0+) < f_{\mathsf D}(\gamma_1,\gamma_2,0+) \qquad \text{and}\qquad f_{\mathsf S}(\gamma_1,\gamma_2,\tfrac{\bar{\gamma}_2}{\Delta_2}) \ge f_{\mathsf D}(\gamma_1,\gamma_2,\tfrac{\bar{\gamma}_2}{\Delta_2}).
	\]
	
	Hence, the two functions~$a \mapsto f_{\mathsf S}(\gamma_1,\gamma_2,a)$ and~$a \mapsto f_{\mathsf D}(\gamma_1,\gamma_2,a)$ coincide for some~$a \in [0,\tfrac{\bar{\gamma}_2}{\Delta_2}]$; by equating them, we see that this happens at~$a_1^*$. We then have~$f_{\mathsf S} \le f_{\mathsf D}$ for~$a \in (0,a^*_1]$ and~$f_{\mathsf D} \le f_{\mathsf S}$ for~$a \in [a^*_1, \tfrac{\bar{\gamma}_2}{\Delta_2}]$. We also know from~\eqref{eq_direct_comp0} that~$f_{\mathsf S} < f_{\mathsf B}$ for~$a \in (0,\tfrac{\bar{\gamma}_2}{\Delta_2})$. This implies that
		\[
			\min\{f_{\mathsf S},f_{\mathsf B},f_{\mathsf D}\} = \begin{cases} f_{\mathsf S}&\text{if } a \in (0,a^*_1];\\[.2cm]
			f_{\mathsf D}&\text{if } a \in (a^*_1,\tfrac{\bar{\gamma}_2}{\Delta_2}].\end{cases}
		\]

		Next, by~\eqref{eq_direct_comp}, Lemma~\ref{lem_to_postpone}, and~\eqref{eq_new_compBD}, we have
	\[
		f_{\mathsf B}(\gamma_1,\gamma_2,\tfrac{\bar{\gamma}_2}{\Delta_2}) \ge f_{\mathsf D}(\gamma_1,\gamma_2,\tfrac{\bar{\gamma}_2}{\Delta_2})\qquad \text{and}\qquad \frac{\partial f_{\mathsf B}}{\partial a} < \frac{\partial f_{\mathsf D}}{\partial a} .
	\]
	
		Hence, the two functions~$a \mapsto f_{\mathsf B}(\gamma_1,\gamma_2,a)$ and~$a \mapsto f_{\mathsf D}(\gamma_1,\gamma_2,a)$ coincide for some~$a \in [\tfrac{\bar{\gamma}_2}{\Delta_2},\infty)$; by equating them, we see that this happens at~$a^*_2$. Then,~$f_{\mathsf D} \le f_{\mathsf B}$ for~$a \in [\tfrac{\bar{\gamma}_2}{\Delta_2},a^*_2]$ and~$f_{\mathsf B} \le f_{\mathsf D}$ for~$a \ge a^*_2$.
		We also know from~\eqref{eq_direct_comp0} that~$f_{\mathsf B} < f_{\mathsf S}$ for~$a > \tfrac{\bar{\gamma}_2}{\Delta_2}$. Hence,
		\[
			\min\{f_{\mathsf S},f_{\mathsf B},f_{\mathsf D}\} = \begin{cases} f_{\mathsf D}&\text{if } a \in (\tfrac{\bar{\gamma}_2}{\Delta_2},a_2^*];\\[.2cm]
			f_{\mathsf B}&\text{if } a \in (a^*_2,\infty).\end{cases} \qedhere
		\]
\end{proof}

\begin{proof}[Proof of Proposition~\ref{prop_target_opt}]

Using the relation
	\begin{equation}
		\label{eq_symmetry}
		\nu_\circ(\gamma_1,\gamma_2,a) = a\cdot \mu_\circ(\gamma_2,\gamma_1,\tfrac{1}{a}),\qquad \circ \in \{\mathsf S,\mathsf B, \mathsf D\},
	\end{equation}
	Lemma~\ref{lem_first_easy_case}, Lemma~\ref{lem_second_inter_case} and Lemma~\ref{lem_third_last_case} respectively imply that:
	\begin{itemize}
		\item[($\mathrm a$)] if~$\gamma_1 \le 1/2$, then~$\nu_\star = \nu_{\mathsf S}$;
		\item[($\mathrm b$)] if~$\gamma_1 > 1/2$ and~$\gamma_2 \in (\bar{\gamma}_1,\kappa(\gamma_1)]$, then
			\[
				\nu_\star = \begin{cases}
					\nu_{\mathsf B} &\text{if } a \le \Delta_1/\bar{\gamma}_1;\\[.2cm]
					\nu_{\mathsf S} &\text{if } a > \Delta_1/\bar{\gamma}_1;
				\end{cases}
			\]
		\item[($\mathrm c$)] if~$\gamma_1 > 1/2$ and~$\gamma_2 \in (\kappa(\gamma_2),1]$, then, for~$a_1^*$ and~$a_2^*$ as in~\eqref{eq_two_stars},
			\[
				\nu_\star = \begin{cases}
					\nu_{\mathsf B} &\text{if } a \le a_1^*;\\[.2cm]
					\nu_{\mathsf D} &\text{if } a_1^* < a \le a_2^*;\\[.2cm]
					\nu_{\mathsf S} &\text{if } a > a_2^*.
				\end{cases}
			\]
	\end{itemize}

	We now consider all the cases that appear in the statement of Proposition~\ref{prop_target_opt}. It is useful to note that~\eqref{eq_gamma_kappa} implies that
	\begin{align*}
		\{(\gamma_1,\gamma_2) \in \mathcal S: \gamma_2 > 1/2,\; \gamma_1 > \kappa(\gamma_2)\} &= 
		\{(\gamma_1,\gamma_2) \in \mathcal S: \gamma_1 > 1/2,\; \gamma_2 > \kappa(\gamma_1)\} \\
		&= \left\{(\gamma_1,\gamma_2) \in \mathcal S:  \frac{1}{\gamma_1} + \frac{1}{\gamma_2} < 3\right\}.
	\end{align*}	
	\begin{itemize}
		\item
		Assume that~$\gamma_2 \le 1/2$ (and consequently,~$\gamma_1 > 1/2$ and~$\gamma_2 < \kappa(\gamma_1)$). Then, Lemma~\ref{lem_first_easy_case} implies that~$\mu_\star = \mu_{\mathsf S}$, and case ($\mathrm b$) above implies that~$\nu_\star=\nu_{\mathsf B}$ if~$a \le \Delta_1/\bar{\gamma}_1$, and~$\nu_\star = \nu_{\mathsf S}$ if~$a > \Delta_1/\bar{\gamma}_1$.
		\item
		Similarly, if~$\gamma_1 \le 1/2$, case~($\mathrm a$) above implies that~$\nu_\star = \nu_{\mathsf S}$, and Lemma~\ref{lem_second_inter_case} implies that~$\mu_\star = \mu_{\mathsf S}$ if~$a \le \bar{\gamma}_2/\Delta_2$, and~$\mu_\star = \mu_{\mathsf B}$ if~$a > \bar{\gamma}_2/\Delta_2$.
	\item Now assume that~$\gamma_1,\gamma_2 > 1/2$ and~$\tfrac{1}{\gamma_1}+\tfrac{1}{\gamma_2} > 3$. Note that the latter inequality gives~$\tfrac{\Delta_1}{\bar{\gamma}_1} \le \frac{\bar{\gamma}_2}{\Delta_2}$. Then, Lemma~\ref{lem_second_inter_case} and case~($\mathrm b$) above imply that~$(\mu_\star,\nu_\star)$ is equal to~$(\mu_{\mathsf S},\nu_{\mathsf B})$ when~$a \le \frac{\Delta_1}{\bar{\gamma}_1}$, equal to~$(\mu_{\mathsf S},\nu_{\mathsf S})$ when~$\frac{\Delta_1}{\bar{\gamma}_1} < a \le \frac{\bar{\gamma}_2}{\Delta_2}$, and equal to~$(\mu_{\mathsf B},\nu_{\mathsf S})$ when~$a > \frac{\bar{\gamma}_2}{\Delta_2}$.
	\item Finally, assume that~$\gamma_1,\gamma_2 > 1/2$ with~$\tfrac{1}{\gamma_1}+\tfrac{1}{\gamma_2} \le 3$. Then, Lemma~\ref{lem_third_last_case} and case ($\mathrm c$) above imply that~$(\mu_\star,\nu_\star)$ is equal to~$(\mu_{\mathsf S},\nu_{\mathsf B})$ if~$a \le a_1^*$, equal to~$(\mu_{\mathsf D},\nu_{\mathsf D})$ if~$a_1^* < a \le a_2^*$, and equal to~$(\mu_{\mathsf B},\nu_{\mathsf S})$ if~$a > a_2^*$. \qedhere
	\end{itemize}

\end{proof}

\subsection{Optimisation: proof of Proposition~\ref{prop_opt}}
\label{ss_proof_opt}

Recall that
\[A_\star:=\mu_\star \wedge (1+\bar{\gamma}_2 \nu_\star)\wedge (1+a-\Delta_2 \nu_\star+\bar{\gamma}_1 \mu_\star).\] 

We first compare the first two terms in the minimum (corresponding, respectively, to zero step and one step of the infection started at the root of type~1). Later, we will show that the third term (corresponding to two steps) is never smaller than the minimum between the first two.

In all that follows, we assume that $(\gamma_1,\gamma_2,a) \in \mathcal S \times (0,\infty)$.

\subsubsection{Comparison between zero and one step}
\begin{lemma}\label{lem_work_Astar}
Let
\begin{align*}
	\mathcal J = &\left\{(\gamma_1,\gamma_2,a): \gamma_1 \ge \frac12,\;\gamma_2 \le \frac{\gamma_1}{1+\gamma_1} \right\} \\
	&\cup \left\{ (\gamma_1,\gamma_2,a): \gamma_1 \ge \frac12,\;\frac{\gamma_1}{1+\gamma_1} \le \gamma_2 \le \frac12,\; a \le \frac{\gamma_1-\gamma_2}{\gamma_2-\gamma_1+\gamma_1\gamma_2}\right\}.
\end{align*}
We have
\[\mu_\star \le  1+ \bar{\gamma}_2 \nu_\star \quad \text{if }(\gamma_1,\gamma_2,a) \in \mathcal J\]
and
\[\mu_\star \ge  1+ \bar{\gamma}_2 \nu_\star\quad \text{if }(\gamma_1,\gamma_2,a) \in (\mathcal S \times (0,\infty)) \backslash \mathcal J.\]
\end{lemma}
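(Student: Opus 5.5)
The plan is to reduce the lemma to finitely many comparisons of affine functions of $a$. Proposition~\ref{prop_target_opt} gives $(\mu_\star,\nu_\star)$ explicitly in each region of $(\gamma_1,\gamma_2,a)$. On each resulting piece, both $\mu_\star$ and $1+\bar\gamma_2\nu_\star$ are affine in $a$. So it will suffice to check the sign of their difference at $a=0+$, at the breakpoints of Proposition~\ref{prop_target_opt}, and in the slope (or the limit $a\to\infty$). For $\nu_\star$ I will use the version derived inside the proof of Proposition~\ref{prop_target_opt} (cases (a)--(c) obtained via the symmetry \eqref{eq_symmetry}), since that is where the thresholds in $a$ are stated in terms of $\Delta_1/\bar\gamma_1$, $a_1^*$, $a_2^*$.

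\textbf{Step 1 (the case $\gamma_2\le \tfrac12$, which contains $\mathcal J$).} Here $\mu_\star=\mu_{\mathsf S}=(1+a)/\gamma_1$ by Lemma~\ref{lem_first_easy_case}. Also $\gamma_2\le\kappa(\gamma_1)$, so $\nu_\star=\nu_{\mathsf B}$ for $a\le \Delta_1/\bar\gamma_1$ and $\nu_\star=\nu_{\mathsf S}$ otherwise.

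On the $\nu_{\mathsf S}$ branch, the inequality $\mu_{\mathsf S}\le 1+\bar\gamma_2\nu_{\mathsf S}$ reads
\[
(1+a)\Bigl(\tfrac1{\gamma_1}-\tfrac{\bar\gamma_2}{\gamma_2}\Bigr)\le 1.
\]
If $\gamma_2\le \gamma_1/(1+\gamma_1)$, the bracket is $\le0$ and the inequality always holds. Otherwise it holds exactly when $a\le (\gamma_1-\gamma_2)/(\gamma_2-\gamma_1+\gamma_1\gamma_2)$. This reproduces the two pieces of $\mathcal J$.

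On the $\nu_{\mathsf B}$ branch, $a\le \Delta_1/\bar\gamma_1$, I compare $(1+a)/\gamma_1$ with $1+\bar\gamma_2\nu_{\mathsf B}$. This difference is again affine in $a$. I check its sign at $a=0+$ and at $a=\Delta_1/\bar\gamma_1$; at the latter point $\nu_{\mathsf B}=\nu_{\mathsf S}$, so the previous computation applies. I then need to verify that the sign pattern is consistent with membership in $\mathcal J$, i.e.\ that the thresholds $(\gamma_1-\gamma_2)/(\gamma_2-\gamma_1+\gamma_1\gamma_2)$ and $\Delta_1/\bar\gamma_1$ are ordered so that no new switching point appears. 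This ordering check is the step I expect to be the main obstacle; I would try to show directly that $\Delta_1/\bar\gamma_1\le (\gamma_1-\gamma_2)/(\gamma_2-\gamma_1+\gamma_1\gamma_2)$ whenever $\gamma_2\le\frac12$ and the denominator is positive.

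\textbf{Step 2 (regions II--IV, where $\gamma_2>\tfrac12$).} Here I must show $\mu_\star\ge 1+\bar\gamma_2\nu_\star$ for all $a$. Since $\nu_\star\le\nu_{\mathsf S}$, it suffices in most sub-cases to prove $\mu_\star\ge 1+\bar\gamma_2\nu_{\mathsf S}$.

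For $\mu_\star=\mu_{\mathsf S}$, this is the $\nu_{\mathsf S}$ computation of Step 1. Now $\gamma_2>\tfrac12$ gives $\bar\gamma_2/\gamma_2<1<1/\gamma_1$, so the bracket is positive, and one checks $(1+a)(1/\gamma_1-\bar\gamma_2/\gamma_2)\ge1$ already at $a=0$, since $1/\gamma_1-\bar\gamma_2/\gamma_2\ge 1$ iff $\gamma_2\ge\gamma_1\bar\gamma_2$. The latter holds because $\gamma_1\bar\gamma_2<\bar\gamma_2<\gamma_2$.

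For the branches $\mu_\star\in\{\mu_{\mathsf B},\mu_{\mathsf D}\}$, I use the corresponding explicit $\nu_\star$ ($\nu_{\mathsf S}$ or $\nu_{\mathsf D}$ respectively). I verify the inequality at the left breakpoint (where $\mu_\star$ agrees with the previous branch, by continuity of the minimum) and compare slopes in $a$. For instance, $\mu_{\mathsf B}$ has slope $\bar\gamma_2/(\gamma_1\gamma_2)\ge\bar\gamma_2/\gamma_2$, which is the slope of $1+\bar\gamma_2\nu_{\mathsf S}$. For the $(\mu_{\mathsf D},\nu_{\mathsf D})$ branch I would use the identity $\gamma_1\mu_{\mathsf D}-\bar\gamma_2\nu_{\mathsf D}=1$ from the proof of Proposition~\ref{prop_strat_direct}. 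It gives $\mu_{\mathsf D}-1-\bar\gamma_2\nu_{\mathsf D}=\mu_{\mathsf D}(1-\gamma_1)\ge0$ directly. This finishes the comparison.
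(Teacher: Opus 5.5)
Your overall plan is the paper's: run through the pairs $(\mu_\star,\nu_\star)$ from Proposition~\ref{prop_target_opt} and compare affine functions of $a$. Step~1 is sound. The ordering you flag as the main obstacle is true, since $\gamma_2\mapsto(\gamma_1-\gamma_2)/(\gamma_2-\gamma_1+\gamma_1\gamma_2)$ has derivative $-\gamma_1^2/(\gamma_2-\gamma_1+\gamma_1\gamma_2)^2<0$ and equals $\Delta_1/\bar\gamma_1$ at $\gamma_2=\frac12$. Your identity argument for $(\mu_{\mathsf D},\nu_{\mathsf D})$ is correct and slicker than the paper's intercept/slope comparison.

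The genuine gap is in Step~2, in the case $\mu_\star=\mu_{\mathsf S}$. The equivalence ``$\frac1{\gamma_1}-\frac{\bar\gamma_2}{\gamma_2}\ge1$ iff $\gamma_2\ge\gamma_1\bar\gamma_2$'' is an algebra slip. Multiplying by $\gamma_1\gamma_2$ gives $\gamma_2-\gamma_1\bar\gamma_2\ge\gamma_1\gamma_2$, i.e.\ $\gamma_2\ge\gamma_1$. So whenever $\gamma_1>\gamma_2>\frac12$ (which happens in both regions III and IV), you have $\mu_{\mathsf S}(0+)=\frac1{\gamma_1}<\frac1{\gamma_2}=1+\bar\gamma_2\nu_{\mathsf S}(0+)$. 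The shortcut through $\nu_\star\le\nu_{\mathsf S}$ then proves nothing for small $a$. Concretely, $\gamma_1=0.9$, $\gamma_2=0.6$, $a=0.1$ gives $\mu_{\mathsf S}\approx1.22<1.73\approx1+\bar\gamma_2\nu_{\mathsf S}$. The lemma survives there only because $\nu_\star=\nu_{\mathsf B}$ and $1+\bar\gamma_2\nu_{\mathsf B}\approx1.15$.

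To close the gap, you must use the true $\nu_\star$ on each $\mu_{\mathsf S}$-branch when $\gamma_2>\frac12$.

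On $(\mu_{\mathsf S},\nu_{\mathsf B})$, compare with $\nu_{\mathsf B}$ directly. The intercepts $\frac1{\gamma_1}$ versus $1+\frac{\bar\gamma_1\bar\gamma_2}{\gamma_1\gamma_2}$, and the slopes $\frac1{\gamma_1}$ versus $\frac{\bar\gamma_2}{\gamma_1\gamma_2}$, are both in the right order exactly when $\gamma_2\ge\frac12$ (Claim~\ref{cl_SB_target}).

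On $(\mu_{\mathsf S},\nu_{\mathsf S})$ in region III, i.e.\ $\Delta_1/\bar\gamma_1<a\le\bar\gamma_2/\Delta_2$, with $\frac12<\gamma_2<\gamma_1$, the inequality $\mu_{\mathsf S}\ge1+\bar\gamma_2\nu_{\mathsf S}$ holds only for $a\ge(\gamma_1-\gamma_2)/(\gamma_2-\gamma_1+\gamma_1\gamma_2)$. So you also need the reversed ordering $(\gamma_1-\gamma_2)/(\gamma_2-\gamma_1+\gamma_1\gamma_2)\le\Delta_1/\bar\gamma_1$ for $\gamma_2\in[\frac12,\gamma_1]$. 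This follows from the same monotonicity in $\gamma_2$ and is \eqref{eq_exponentsSS2} in the paper.

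Finally, your ``left breakpoint'' check for $(\mu_{\mathsf B},\nu_{\mathsf S})$ in region III leans on the flawed $\mu_{\mathsf S}$ step. It is cleaner to argue as you did for $\mu_{\mathsf D}$: since $\varphi(\mu_{\mathsf B})=\nu_{\mathsf S}$, one has $1+\bar\gamma_2\nu_{\mathsf S}=\gamma_1\mu_{\mathsf B}\le\mu_{\mathsf B}$.
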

\begin{proof}
For convenience of reference, we list the values:
\begin{align*}
\begin{array}{lp{.6cm}lp{.6cm}l} \mu_{\mathsf S} = \frac{1}{\gamma_1} + \frac{1}{\gamma_1}\cdot a,&&\mu_{\mathsf B}= \frac{1}{\gamma_1 \gamma_2} + \frac{\bar{\gamma}_2}{\gamma_1\gamma_2}\cdot a,&& \mu_{\mathsf D}= \frac{\gamma_2}{\Delta}+\frac{\bar{\gamma}_2}{\Delta}\cdot a,\\[.4cm]
1+\bar{\gamma}_2 \nu_{\mathsf{S}} = \frac{1}{\gamma_2} + \frac{\bar{\gamma}_2}{\gamma_2}\cdot a,&&1+\bar{\gamma}_2 \nu_{\mathsf B}= \frac{\gamma_1 \gamma_2 + \bar{\gamma}_1  \bar{\gamma}_2}{\gamma_1\gamma_2} + \frac{\bar{\gamma}_2}{\gamma_1 \gamma_2}\cdot a,&& 1+\bar{\gamma}_2 \nu_{\mathsf D}= \frac{\gamma_1\gamma_2}{\Delta}+ \frac{\gamma_1 \bar{\gamma}_2}{\Delta}\cdot a.
\end{array}
\end{align*}

Although~$\mu_{\mathsf S},\mu_{\mathsf B},\mu_{\mathsf D}, \nu_{\mathsf S},\nu_{\mathsf B}, \nu_{\mathsf D}$ are functions of~$\gamma_1,\gamma_2,a$, we will either omit all three variables as we have just done above, or write only~$a$ in the argument (for instance,~$\mu_{\mathsf S}(0+)$ denotes the value of~$\mu_{\mathsf S}$ when~$a$ approaches~$0$ from the right, and~$\gamma_1,\gamma_2$ are either unimportant or clear from the context). In the same spirit, we will use a prime symbol ($\mu_{\mathsf S}',\mu_{\mathsf B}',\mu_{\mathsf D}',\nu_{\mathsf S}', \nu_{\mathsf B}', \nu_{\mathsf D}'$)  to indicate the partial derivative with respect to~$a$; note that the values of these partial derivatives depend on~$\gamma_1$ and~$\gamma_2$ but not on~$a$, since these functions are all affine functions of~$a$ when~$\gamma_1,\gamma_2$ are fixed.

To inform the comparisons we have to carry out, we recall that the pair~$(\mu_\star,\nu_\star)$ is equal to one of the four pairs:~$(\mu_{\mathsf S},\nu_{\mathsf S})$,~$(\mu_{\mathsf S},\nu_{\mathsf B})$,~$(\mu_{\mathsf B},\nu_{\mathsf S})$,~$(\mu_{\mathsf D},\nu_{\mathsf D})$.
 We proceed through a series of claims.

\begin{claim}
We have~$\mu_{\mathsf D} \ge 1+\bar{\gamma}_2 \nu_{\mathsf D}$ for all~$(\gamma_1,\gamma_2,a) \in \mathcal S \times (0,\infty)$.
\end{claim}
\begin{proof}
This follows from noting that
\[\mu_{\mathsf D}(0+) = \frac{\gamma_2}{\Delta} \ge \frac{\gamma_1 \gamma_2}{\Delta} =  1+\bar{\gamma}_2 \nu_{\mathsf D}(0+),\qquad \mu_{\mathsf D}' = \frac{\bar{\gamma}_2}{\Delta} \ge \frac{\gamma_1\bar{\gamma}_2}{\Delta}=  \bar{\gamma}_2 \nu_{\mathsf D}'.\qedhere \]
\end{proof}

The above claim implies that~$\mu_\star \ge 1+\bar{\gamma}_2 \nu_\star$ wherever~$(\mu_\star,\nu_\star)=(\mu_{\mathsf D},\nu_{\mathsf D})$.

\begin{claim}
We have~$\mu_{\mathsf B} \ge 1+\bar{\gamma}_2 \nu_{\mathsf S}$ for all~$(\gamma_1,\gamma_2,a) \in \mathcal S \times (0,\infty)$.
\end{claim}
\begin{proof}
We have
\[
\mu_{\mathsf B}(0+) = \frac{1}{\gamma_1\gamma_2} \ge \frac{1}{\gamma_2} = 1 + \bar{\gamma}_2 \nu_{\mathsf S}(0+), \qquad   \mu_{\mathsf B}' = \frac{\bar{\gamma}_2}{\gamma_1\gamma_2} \ge \frac{\bar{\gamma}_2}{\gamma_2} = \bar{\gamma}_2 \nu_{\mathsf S}'. \qedhere
\]
\end{proof}
This claim implies that~$\mu_\star \ge 1+\bar{\gamma}_2 \nu_\star$ wherever~$(\mu_\star,\nu_\star)=(\mu_{\mathsf B},\nu_{\mathsf S})$.

\begin{claim}\label{cl_SB_target}
We have
\[
\mu_{\mathsf S} \ge 1+\bar{\gamma}_2 \nu_{\mathsf B} \quad \text{if } (\gamma_1,\gamma_2,a) \in \mathcal S \times (0,\infty) \text{ with } \gamma_2 \ge \frac12
\]
and
\[
\mu_{\mathsf S} \le 1+\bar{\gamma}_2 \nu_{\mathsf B} \quad \text{if } (\gamma_1,\gamma_2,a) \in \mathcal S \times (0,\infty) \text{ with } \gamma_2 \le \frac12.
\]
\end{claim}
\begin{proof}
Direct algebra shows that
\[
\mu_{\mathsf S}(0+) = \frac{1}{\gamma_1} \ge \frac{\gamma_1 \gamma_2 + \bar{\gamma}_1  \bar{\gamma}_2}{\gamma_1\gamma_2} = 1+\bar{\gamma}_2 \nu_{\mathsf B}(0+) \qquad \text{if and only if}\qquad \gamma_2 \ge \frac12
\]
and
\[
\mu_{\mathsf S}' = \frac{1}{\gamma_1} \ge \frac{\bar{\gamma}_2}{\gamma_1 \gamma_2} = \bar{\gamma}_2\nu_{\mathsf B}' \qquad \text{if and only if} \qquad \gamma_2 \ge \frac12. \qedhere
\]
\end{proof}
We write~$\{(\gamma_1,\gamma_2,a): (\mu_\star,\nu_\star)=(\mu_{\mathsf S},\nu_{\mathsf B}) \} = \mathcal J_{\mathsf{SB},1} \cup\mathcal J_{\mathsf{SB},2} \cup \mathcal J_{\mathsf{SB},3}$, where
\begin{align*}
&\mathcal J_{\mathsf{SB},1}:=\left\{(\gamma_1,\gamma_2,a): \gamma_1 \ge 1/2,\; \gamma_2 \le 1/2,\; a \le \frac{2\gamma_1-1}{\bar{\gamma}_1}\right\},\\[.2cm]
&\mathcal J_{\mathsf{SB},2}:=\left\{(\gamma_1,\gamma_2,a): \gamma_1 \ge  1/2,\; \gamma_2 \ge 1/2,\; H(\gamma_1,\gamma_2) \le \frac23,\; a \le \frac{2\gamma_1-1}{\bar{\gamma}_1}\right\},\\[.2cm]
&\mathcal J_{\mathsf{SB},3}:=\left\{(\gamma_1,\gamma_2,a): \gamma_1, \gamma_2 \ge 1/2,\; H(\gamma_1,\gamma_2) \ge \frac23,\; a \le \frac{\bar{\gamma}_1 \bar{\gamma}_2}{\gamma_1 \gamma_2 + \gamma_2 -1}\right\}.
\end{align*}
Then, Claim~\ref{cl_SB_target} implies that~$\mu_\star \ge 1+\bar{\gamma}_2 \nu_\star$ on~$\mathcal J_{\mathsf{SB},2} \cup \mathcal J_{\mathsf{SB},3}$ and~$\mu_\star \le 1+\bar{\gamma}_2 \nu_\star $ on~$\mathcal J_{\mathsf{SB},1}$.

It remains to treat the star-star case. For this, we will need two claims.
\begin{claim}\label{cl_tricky_star_star}
We have
\begin{equation}\label{eq_tricky_SS1}
\mu_{\mathsf S} \ge 1+\bar{\gamma}_2 \nu_{\mathsf S} \quad \text{if  either } \gamma_2 \ge \gamma_1 \text{ or } \left[\frac{\gamma_1}{1+\gamma_1} < \gamma_2 \le \gamma_1 \text{ and } a \ge \frac{\gamma_1-\gamma_2}{\gamma_2-\gamma_1+\gamma_1 \gamma_2}\right]
\end{equation}
and
\begin{equation}\label{eq_tricky_SS2}
\mu_{\mathsf S} \le 1+\bar{\gamma}_2 \nu_{\mathsf S} \quad \text{if  either }  \left[\frac{\gamma_1}{1+\gamma_1} < \gamma_2 \le \gamma_1 \text{ and } a \le \frac{\gamma_1-\gamma_2}{\gamma_2-\gamma_1+\gamma_1 \gamma_2}\right] \text{ or } \gamma_2 \le \frac{\gamma_1}{1+\gamma_1}.
\end{equation}
\end{claim}
(Note that~$\frac{\gamma_1-\gamma_2}{\gamma_2-\gamma_1+\gamma_1 \gamma_2} \ge 0$ when~$\frac{\gamma_1}{1+\gamma_1} < \gamma_2 \le \gamma_1$).
\begin{proof}
The statement again follows from comparing intercepts and slopes:
\[
	\mu_{\mathsf S}(0+) = \frac{1}{\gamma_1}  \ge \frac{1}{\gamma_2} = 1+\bar{\gamma}_2 \nu_{\mathsf S}(0+) \qquad \text{if and only if } \qquad \gamma_2 \ge \gamma_1
\]
and
\[
\mu_{\mathsf S}' = \frac{1}{\gamma_1} \ge \frac{\bar{\gamma}_2}{\gamma_2} = \bar{\gamma}_2 \nu_{\mathsf S}' \qquad \text{if and only if}\qquad \gamma_2 \ge \frac{\gamma_1}{1+\gamma_1}.
\]
\end{proof}

\begin{claim}
For~$\gamma_1 \ge \frac12$, we have
\begin{equation}\label{eq_exponentsSS1}
\frac{\gamma_1-\gamma_2}{\gamma_2-\gamma_1+\gamma_1\gamma_2} \ge \frac{2\gamma_1 - 1}{\bar{\gamma}_1} \qquad \text{if } \gamma_2 \in \left(\frac{\gamma_1}{1+\gamma_1},\;\frac12\right]
\end{equation}
and
\begin{equation}\label{eq_exponentsSS2}
\frac{\gamma_1-\gamma_2}{\gamma_2-\gamma_1+\gamma_1\gamma_2} \le \frac{2\gamma_1 - 1}{\bar{\gamma}_1} \qquad \text{if } \gamma_2 \in \left[\frac12, \; \gamma_1\right].
\end{equation}
\end{claim}
\begin{proof}
Equality holds when~$\gamma_2=\frac12$, and 
\[
\frac{\mathrm{d}}{\mathrm{d}\gamma_2} \left(\frac{\gamma_1-\gamma_2}{\gamma_2-\gamma_1+\gamma_1\gamma_2} \right) = -\frac{\gamma_1^2}{(\gamma_2-\gamma_1 +\gamma_1\gamma_2)^2} < 0. \qedhere
\]
\end{proof}

Now, recall that~$\{(\gamma_1,\gamma_2,a): (\mu_\star,\nu_\star)=(\mu_{\mathsf S},\nu_{\mathsf S}) \} = \mathcal J_{\mathsf{SS},1} \cup\mathcal J_{\mathsf{SS},2} \cup \mathcal J_{\mathsf{SS},3}$, where
\begin{align*}
&\mathcal J_{\mathsf{SS},1}:= \left\{(\gamma_1,\gamma_2,a): \gamma_1 \le \frac12,\; \gamma_2 \ge \frac12,\; a \ge \frac{\bar{\gamma}_2}{2\gamma_2-1}\right\},\\[.2cm]
&\mathcal J_{\mathsf{SS},2}:= \left\{(\gamma_1,\gamma_2,a): \gamma_1,\gamma_2 \ge \frac12,\; H(\gamma_1,\gamma_2) \le \frac23,\; \frac{2\gamma_1-1}{\bar{\gamma}_1} \le a \le \frac{\bar{\gamma}_2}{2\gamma_2-1}\right\},\\[.2cm]
&\mathcal J_{\mathsf{SS},3}:= \left\{(\gamma_1,\gamma_2,a): \gamma_1 \ge \frac12,\; \gamma_2 \le \frac12, \; a \ge \frac{2\gamma_1-1}{\bar{\gamma}_1}\right\}.
\end{align*}

On~$\mathcal J_{\mathsf{SS},1}$ we have~$\gamma_2 \ge \gamma_1$, so applying~\eqref{eq_tricky_SS1} directly, we see that~$\mu_\star \ge 1+\bar{\gamma}_2\nu_\star$ on~$\mathcal J_{\mathsf{SS},1}$. On~$\mathcal J_{\mathsf{SS},2}$, we either have~$\gamma_2 \ge \gamma_1$ (which again gives~$\mu_\star \ge \nu_\star$ by~\eqref{eq_tricky_SS1} directly) or we have both~$\frac12 \le \gamma_2 \le \gamma_1$ and~$a \ge \frac{2\gamma_1-1}{\bar{\gamma}_1}$; in this case,~\eqref{eq_exponentsSS2} gives~$a \ge \frac{\gamma_1-\gamma_2}{\gamma_1\gamma_2+\gamma_2-1}$, and then,~\eqref{eq_tricky_SS1} gives~$\mu_\star \ge 1+\bar{\gamma}_2\nu_\star$. Next, note that~\eqref{eq_exponentsSS1} implies that~$\frac{2\gamma_1-1}{\bar{\gamma}_1} \le \frac{\gamma_1-\gamma_2}{\gamma_2-\gamma_1+\gamma_1\gamma_2}$ on~$\mathcal J_{\mathsf{SS},3}$. We define
\begin{align*}
& \mathcal J_{\mathsf{SS},3,1}:=\mathcal J_{\mathsf{SS},3} \cap \left\{(\gamma_1,\gamma_2,a):\frac{\gamma_1}{1+\gamma_1} \le \gamma_2 \le \frac12,\; \frac{2\gamma_1-1}{\bar{\gamma}_1} \le a \le \frac{\gamma_1- \gamma_2}{\gamma_2-\gamma_1+\gamma_1\gamma_2}  \right\},\\[.2cm]
& \mathcal J_{\mathsf{SS},3,2}:=\mathcal J_{\mathsf{SS},3} \cap \left\{(\gamma_1,\gamma_2,a):\frac{\gamma_1}{1+\gamma_1} \le \gamma_2 \le \frac12,\; a \ge \frac{\gamma_1- \gamma_2}{\gamma_2-\gamma_1+\gamma_1\gamma_2}  \right\},\\[.2cm]
&\mathcal J_{\mathsf{SS},3,3}:=\mathcal J_{\mathsf{SS},3} \cap \left\{(\gamma_1,\gamma_2,a): \gamma_2 \le \frac{\gamma_1}{1+\gamma_1}\right\}.
\end{align*}

Now,~\eqref{eq_tricky_SS2} implies that~$\mu_\star \le 1+\bar{\gamma}_2 \nu_\star$ on~$\mathcal J_{\mathsf{SS},3,1} \cup \mathcal J_{\mathsf{SS},3,3}$, and~\eqref{eq_tricky_SS1} implies that and~$\mu_\star \ge 1+\bar{\gamma}_2 \nu_\star$ on~$\mathcal{J}_{\mathsf{SS},3,2}$. This concludes the treatment of all regimes.

Putting everything together, we have proved that~$\mu_\star \le 1+\bar{\gamma}_2 \nu_\star$ on~$\mathcal J_{\mathsf{SB},1} \cup \mathcal J_{\mathsf{SS},3,1} \cup \mathcal J_{\mathsf{SS},3,3}$, and~$\mu_\star \ge 1+\bar{\gamma}_2 \nu_\star$ everywhere else.
\end{proof}

\subsubsection{Two steps are never optimal}
We define the functions
\begin{equation*}
		\Phi(\mu,\nu):=-\gamma_1 \cdot \mu + \bar{\gamma}_2 \cdot \nu +1, \qquad \Psi(\mu,\nu):=\bar{\gamma}_1\cdot \mu - \gamma_2 \cdot \nu + a, \qquad \mu, \nu \in \mathbb R.
\end{equation*}

Note that we omit the dependence on~$\gamma_1,\gamma_2,a$. Also, we will mostly care about these functions for~$\mu,\nu > 0$, but it is convenient to allow these variables to be negative.
Next, define~$\varphi(\mu)$ and~$\psi(\nu)$ as the solutions of
\[
	\Phi(\mu,\varphi(\mu))=0\qquad \text{and}\qquad \Psi(\psi(\nu),\nu)=0,
\]
that is,
\begin{equation*}
	\varphi(\mu) := \frac{\gamma_1}{\bar{\gamma}_2} \mu - \frac{1}{\bar{\gamma}_2},\qquad \psi(\nu):= \frac{\gamma_2}{\bar{\gamma}_1} \nu - \frac{1}{\bar{\gamma}_1}a.
\end{equation*}

It is readily checked that
\begin{equation}
	\label{eq_bridge_equation}
	\mu_{\mathsf B} = \varphi^{-1}(\nu_{\mathsf S}) \qquad \text{and} \qquad \nu_{\mathsf B} = \psi^{-1}(\mu_{\mathsf S}).
\end{equation}

Noting that~$\psi^{-1}(\mu) = \frac{\bar{\gamma}_1}{\gamma_2}\mu + \frac{1}{\gamma_2} a$,
we have~$\varphi(0) < \psi^{-1}(0)$ and~$\varphi' > (\psi^{-1})'$ (the latter inequality follows from~$\gamma_1+\gamma_2 > 1$). This implies that there is a (unique) value of~$\mu$ for which~$\varphi(\mu)=\psi^{-1}(\mu)$; it is easy to check that this value is~$\mu_{\mathsf D}$. Similarly,~$\nu_{\mathsf D}$ is the (unique) value of~$\nu$ satisfying~$\psi(\nu)=\varphi^{-1}(\nu)$. Hence, we have
\begin{equation}
\label{eq_direct_conv}
\varphi(\mu_{\mathsf D}) = \nu_{\mathsf D},\qquad \psi(\nu_{\mathsf D}) = \mu_{\mathsf D},\qquad \psi(\nu_{\mathsf D}) = \varphi^{-1}(\nu_{\mathsf D}).
\end{equation}

It will be useful to note that
\begin{equation}\label{eq_useful_varphi}
	\varphi(\mu) < \psi^{-1}(\mu) \quad \text{ for all } \mu \in (-\infty, \mu_{\mathsf D}).
\end{equation}

\begin{lemma}\label{lem_positive_PP}
	For any~$(\gamma_1,\gamma_2) \in \mathcal S$ and~$a > 0$, we have
	\[\Phi(\mu_\star,\nu_\star) \ge 0 \qquad \text{and} \qquad \Psi(\mu_\star,\nu_\star) \ge 0.\]
\end{lemma}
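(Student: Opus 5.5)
Since $\Phi$ is nonincreasing in $\mu$ and nondecreasing in $\nu$, while $\Psi$ is nondecreasing in $\mu$ and nonincreasing in $\nu$, the plan is not to argue with monotonicity alone. Instead I will use Proposition~\ref{prop_target_opt}: $(\mu_\star,\nu_\star)$ is always one of the four pairs $(\mu_{\mathsf S},\nu_{\mathsf B})$, $(\mu_{\mathsf S},\nu_{\mathsf S})$, $(\mu_{\mathsf B},\nu_{\mathsf S})$, $(\mu_{\mathsf D},\nu_{\mathsf D})$, and I will check both inequalities for each pair on the parameter region where that pair is the minimiser. The identities \eqref{eq_bridge_equation} and \eqref{eq_direct_conv} handle the bridge and direct cases almost immediately. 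By definition, $\Phi(\mu,\nu)\ge 0$ is equivalent to $\nu\ge\varphi(\mu)$, and $\Psi(\mu,\nu)\ge0$ is equivalent to $\mu\ge\psi(\nu)$, equivalently $\nu\le\psi^{-1}(\mu)$.

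\emph{Direct pair.} By \eqref{eq_direct_conv}, $\varphi(\mu_{\mathsf D})=\nu_{\mathsf D}$ and $\psi(\nu_{\mathsf D})=\mu_{\mathsf D}$, so both quantities vanish.

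\emph{Pair $(\mu_{\mathsf S},\nu_{\mathsf B})$.} Here $\nu_{\mathsf B}=\psi^{-1}(\mu_{\mathsf S})$, so $\Psi=0$. It remains to show $\nu_{\mathsf B}\ge\varphi(\mu_{\mathsf S})$, i.e.\ $\psi^{-1}(\mu_{\mathsf S})\ge\varphi(\mu_{\mathsf S})$. By \eqref{eq_useful_varphi}, and since the two lines cross at $\mu_{\mathsf D}$ with $\varphi$ the steeper one, this is equivalent to $\mu_{\mathsf S}\le\mu_{\mathsf D}$. That inequality holds because $\mu_\star=\mu_{\mathsf S}$ on this region.

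\emph{Pair $(\mu_{\mathsf B},\nu_{\mathsf S})$.} Symmetrically, $\nu_{\mathsf S}=\varphi(\mu_{\mathsf B})$, so $\Phi=0$. It remains to show $\nu_{\mathsf S}\le\psi^{-1}(\mu_{\mathsf B})$, i.e.\ $\varphi(\mu_{\mathsf B})\le\psi^{-1}(\mu_{\mathsf B})$, which again reduces to $\mu_{\mathsf B}\le\mu_{\mathsf D}$. This holds because $\mu_\star=\mu_{\mathsf B}$ there.

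\emph{Pair $(\mu_{\mathsf S},\nu_{\mathsf S})$.} We need $\varphi(\mu_{\mathsf S})\le\nu_{\mathsf S}\le\psi^{-1}(\mu_{\mathsf S})$. Since $\varphi$ is increasing, $\nu_{\mathsf S}=\varphi(\mu_{\mathsf B})$ and $\mu_{\mathsf S}\le\mu_{\mathsf B}$ (because $\mu_\star=\mu_{\mathsf S}$) give the first inequality. Likewise $\psi^{-1}$ is increasing and $\mu_{\mathsf S}=\psi(\nu_{\mathsf B})$; the second inequality becomes $\psi(\nu_{\mathsf S})\le\psi(\nu_{\mathsf B})$, which follows from $\nu_{\mathsf S}\le\nu_{\mathsf B}$ (because $\nu_\star=\nu_{\mathsf S}$).

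The same monotonicity device settles the remaining inequalities in the other cases. In the $(\mu_{\mathsf S},\nu_{\mathsf B})$ case one can alternatively use $\mu_{\mathsf S}\le\mu_{\mathsf B}$ to get $\varphi(\mu_{\mathsf S})\le\nu_{\mathsf S}$, but the needed bound is against $\nu_{\mathsf B}$, and that requires the $\mu_{\mathsf D}$ comparison above.

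The main obstacle is justifying the reduction ``$\varphi(\mu)\le\psi^{-1}(\mu)$ if and only if $\mu\le\mu_{\mathsf D}$'' for all real $\mu$, not just $\mu<\mu_{\mathsf D}$. This follows from the two lines being affine with $\varphi'>(\psi^{-1})'$ and crossing exactly at $\mu_{\mathsf D}$. One must also check that $\mu_\star$ (not merely some other pair) is indeed $\le\mu_{\mathsf D}$; this is automatic from the definition of $\mu_\star$ as a minimum.
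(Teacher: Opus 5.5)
Your proof is correct and follows the paper's route: a case analysis over the four possible pairs $(\mu_\star,\nu_\star)$ from Proposition~\ref{prop_target_opt}, using \eqref{eq_bridge_equation}, \eqref{eq_direct_conv} and \eqref{eq_useful_varphi}, exactly as the paper does for $\Phi$. The only difference is that you write out the $\Psi$ half, which the paper leaves to symmetry; in the $(\mu_{\mathsf B},\nu_{\mathsf S})$ case you use $\mu_{\mathsf B}\le\mu_{\mathsf D}$ where the mirrored argument would use $\nu_{\mathsf S}\le\nu_{\mathsf D}$, and the two conditions are equivalent.
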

\begin{proof}
	The two inequalities are proved in the same way, so we only prove the first.
	 Recall that we have~$\Phi(\mu_\star,\varphi(\mu_\star))=0$ by the definition of~$\varphi$. Moreover,~$\nu \mapsto \Phi(\mu,\nu)$ is increasing. Hence, it suffices to prove that
	\begin{equation*}
	\varphi(\mu_\star) \le \nu_\star\qquad \text{for all } (\gamma_1,\gamma_2) \in \mathcal S,\; a > 0.
	\end{equation*}
	We verify this inequality for each of the four possible pairs given by Proposition~\ref{prop_target_opt}:
	\begin{itemize}
		\item Case~$(\mu_\star,\nu_\star)=(\mu_{\mathsf D},\nu_{\mathsf D})$: In this case we have~$\varphi(\mu_\star)=\nu_\star$ by~\eqref{eq_direct_conv}.
		\item Case~$(\mu_\star,\nu_\star)=(\mu_{\mathsf B},\nu_{\mathsf S})$: Here we have~$\varphi(\mu_\star)=\varphi(\mu_{\mathsf B})=\nu_{\mathsf S} = \nu_\star$, where the second equality follows from the definition of~$\mu_{\mathsf B}$.
		\item Case~$(\mu_\star,\nu_\star)=(\mu_{\mathsf S},\nu_{\mathsf S})$: We have~$\mu_\star=\min\{\mu_{\mathsf S},\mu_{\mathsf B},\mu_{\mathsf D}\} \le \mu_{\mathsf B}$; since~$\varphi(\mu_{\mathsf B}) = \nu_{\mathsf S}$ (by the definition of~$\mu_{\mathsf B}$) and~$\varphi$ is increasing, we obtain~$\varphi(\mu_{\star}) \le \nu_{\mathsf S} = \nu_\star$.
		\item Case~$(\mu_\star,\nu_\star)=(\mu_{\mathsf S},\nu_{\mathsf B})$: We have~$\mu_\star=\min\{\mu_{\mathsf S},\mu_{\mathsf B},\mu_{\mathsf D}\} \le \mu_{\mathsf D}$; then, by~\eqref{eq_useful_varphi}, we have~$\varphi(\mu_\star) \le \psi^{-1}(\mu_\star)$. Moreover,~$\nu_\star=\nu_{\mathsf B} = \psi^{-1}(\mu_{\mathsf S})=\psi^{-1}(\mu_\star)$. \qedhere
	\end{itemize}
\end{proof}

\begin{lemma}\label{lem_can_discard_two} For any~$(\gamma_1,\gamma_2) \in \mathcal S$ and~$a > 0$, we have
	\[1+a-\Delta_2 \nu_\star+\bar{\gamma}_1 \mu_\star\ge \min\left\{\mu_\star,\;1+\bar{\gamma}_2 \nu_\star \right\}.\]
	Consequently,
	\begin{equation*}
		A_\star = \min\left\{\mu_\star,\; 1+\bar{\gamma}_2 \nu_\star \right\}.
	\end{equation*}
\end{lemma}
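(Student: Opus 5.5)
The plan is to reduce the inequality to the sign information in Lemma~\ref{lem_positive_PP}, splitting into two cases according to whether~$\Delta_2$ vanishes. Write~$E:=1+a-\Delta_2\nu_\star+\bar{\gamma}_1\mu_\star$ for the two-step exponent. In each case I will compare~$E$ with one of the two terms in the minimum, chosen so that the difference is manifestly nonnegative. The identity~$A_\star=\min\{\mu_\star,1+\bar{\gamma}_2\nu_\star\}$ then follows at once from the definition~\eqref{eq_def_A_star}.

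\emph{Case~$\gamma_2>\frac12$.} Here~$\Delta_2=2\gamma_2-1$, and I compare~$E$ with the one-step term. A direct computation gives
\[
E-(1+\bar{\gamma}_2\nu_\star)=a+\bar{\gamma}_1\mu_\star-(2\gamma_2-1+1-\gamma_2)\nu_\star=a+\bar{\gamma}_1\mu_\star-\gamma_2\nu_\star=\Psi(\mu_\star,\nu_\star).
\]
This is nonnegative by Lemma~\ref{lem_positive_PP}, so~$E\ge 1+\bar{\gamma}_2\nu_\star$.

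\emph{Case~$\gamma_2\le\frac12$.} Here~$\Delta_2=0$, so~$E=1+a+\bar{\gamma}_1\mu_\star$, and I compare~$E$ with the zero-step term instead. We have
\[
E-\mu_\star=1+a-\gamma_1\mu_\star .
\]
Since~$\mu_\star\le\mu_{\mathsf S}=(1+a)/\gamma_1$, this is nonnegative, so~$E\ge\mu_\star$. One could alternatively note that here~$E-(1+\bar{\gamma}_2\nu_\star)=\Psi(\mu_\star,\nu_\star)+(1-2\gamma_2)\nu_\star\cdot(-1)$ may be negative. That is exactly why the comparison with~$\mu_\star$ is the right one in this regime.

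In both cases~$E\ge\min\{\mu_\star,1+\bar{\gamma}_2\nu_\star\}$. Hence the third term in~\eqref{eq_def_A_star} never attains the minimum strictly, and~$A_\star=\min\{\mu_\star,1+\bar{\gamma}_2\nu_\star\}$.

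The only real content is Lemma~\ref{lem_positive_PP}, which is already available. So the main obstacle is simply noticing that the case split on~$\Delta_2$ dictates which term of the minimum to compare against. I do not expect any genuine difficulty beyond this bookkeeping.
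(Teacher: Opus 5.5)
Your proof is correct and follows the paper's argument: you use the same split according to whether $\Delta_2$ vanishes, and you reduce the $\gamma_2>\frac12$ case to $\Psi(\mu_\star,\nu_\star)\ge 0$ from Lemma~\ref{lem_positive_PP}. The only difference is in the $\Delta_2=0$ case: the paper invokes Proposition~\ref{prop_target_opt} to get $\mu_\star=\mu_{\mathsf S}$ and hence equality, whereas you only need the bound $\mu_\star\le\mu_{\mathsf S}$, which holds by definition and is a slight simplification.
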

\begin{proof}
	First assume that~$\gamma_2 < \frac12$. We then have~$\Delta_2 = 0\vee (2\gamma_2-1)= 0$ and, by Proposition~\ref{prop_target_opt},~$\mu_\star = \mu_{\mathsf S}$. Then,
	\[1+a-\Delta_2 \nu_\star + \bar{\gamma}_1 \mu_\star = 1+a+\bar{\gamma}_1 \cdot \left( \frac{1}{\gamma_1} + \frac{1}{\gamma_1} a\right) = \frac{1}{\gamma_1} + \frac{1}{\gamma_1}a = \mu_\star.\]

	Now assume that~$\mu_2 \ge \frac12$, so that~$\Delta_2 = 2\gamma_2 - 1$. We then have
	\[
		1+a-\Delta_2 \nu_\star+\bar{\gamma}_1 \mu_\star = 1 + \bar{\gamma}_2 \nu_\star + \Psi(\mu_\star,\nu_\star) \ge 1 + \bar{\gamma}_2 \nu\star
	\]
	by Lemma~\ref{lem_positive_PP}.
\end{proof}

\begin{proof}[Proof of Proposition~\ref{prop_opt}]
	The statement follows readily by combining Proposition~\ref{prop_target_opt},  Lemma \ref{lem_work_Astar} and Lemma \ref{lem_can_discard_two}.
\end{proof}

\end{appendix}

\section*{Acknowledgements}
This research was partially carried out during CH's visit to the University of Warwick funded by AUFF visit grant AUFF-E-2024-6-18. 


\bibliographystyle{abbrv}
\bibliography{lit}

\end{document}